\newtheorem{thm}{Theorem}[section]
\newtheorem{cor}[thm]{Corollary}
\newtheorem{lem}[thm]{Lemma}
\theoremstyle{definition}
\newtheorem{defn}[thm]{Definition}
\theoremstyle{remark}
\newtheorem{rem}{Remark}
\newtheorem{exa}[thm]{Example}
\numberwithin{equation}{section}
\newcommand{\R}{{\mathbb R}}
\begin{document}
\title[Vanishing discount]
{Vanishing discount problem and the additive eigenvalues on changing domains}
\thanks{The author is supported in part by NSF grant DMS-1664424 and NSF CAREER grant DMS-1843320 to Hung Vinh Tran.}
\begin{abstract} 
    We study the asymptotic behavior, as $\lambda\rightarrow 0^+$, of the state-constraint Hamilton--Jacobi equation
\begin{equation}
\begin{cases}
   \phi(\lambda) u_\lambda(x) +  H(x,Du_\lambda(x)) \leq 0 \qquad\text{in}\,\;(1+r(\lambda))\Omega,\\
   \phi(\lambda) u_\lambda(x) +  H(x,Du_\lambda(x)) \geq 0 \qquad\text{on}\;(1+r(\lambda))\overline{\Omega}.
\end{cases} \tag{$S_\lambda$}
\end{equation}
and the corresponding additive eigenvalues, or ergodic constant 
\begin{equation}
\begin{cases}
   H(x,Dv(x)) \leq c(\lambda) \qquad\text{in}\,\;(1+r(\lambda))\Omega,\\
   H(x,Dv(x)) \geq c(\lambda) \qquad\text{on}\;(1+r(\lambda))\overline{\Omega}.
\end{cases} \tag{$E_\lambda$}
\end{equation}
Here, $\Omega$ is a bounded domain of $ \mathbb{R}^n$, $\phi(\lambda), r(\lambda):(0,\infty)\rightarrow \mathbb{R}$ are continuous functions such that $\phi$ is nonnegative and $\lim_{\lambda\rightarrow 0^+} \phi(\lambda) = \lim_{\lambda\rightarrow 0^+} r(\lambda) = 0$. We obtain both convergence and non-convergence results in the convex setting. Moreover, we provide a very first result on the asymptotic expansion of the additive eigenvalue $c(\lambda)$ as $\lambda\rightarrow 0^+$. The main tool we use is a duality representation of solution with viscosity Mather measures.
\end{abstract}

\author{Son N. T. Tu}
\address[S. N.T. Tu]
{
Department of Mathematics, 
University of Wisconsin Madison, 480 Lincoln  Drive, Madison, WI 53706, USA}
\email{thaison@math.wisc.edu}

\date{\today}
\keywords{first-order Hamilton--Jacobi equations; state-constraint problems; vanishing discount problem; additive eigenvalues; viscosity solutions.}
\subjclass[2010]{
35B40, %Asymptotic behavior of solutions, 
35D40, %Viscosity solutions
49J20, %Optimal control problems involving partial differential equations
49L25, %Viscosity solutions
70H20 %Hamilton-Jacobi equations
}
\maketitle

\tableofcontents
\clearpage

\section{Introduction}
Let $\phi(\lambda):(0,\infty)\rightarrow (0,\infty)$ be continuous nondecreasing and $r(\lambda):(0,\infty)\rightarrow \mathbb{R}$ be continuous such that $\lim_{\lambda\rightarrow 0^+} \phi(\lambda) = \lim_{\lambda\rightarrow 0^+} r(\lambda) = 0$. We study the asymptotic behavior, as the discount factor $\phi(\lambda)$ goes to $0$, of the viscosity solutions to the following state-constraint Hamilton--Jacobi equation
\begin{equation}\label{eq:S_lambda}
\begin{cases}
   \phi(\lambda) u_\lambda(x) +  H(x,Du_\lambda(x)) \leq 0 \qquad\text{in}\;\;(1+r(\lambda))\Omega,\\
   \phi(\lambda) u_\lambda(x) +  H(x,Du_\lambda(x)) \geq 0 \qquad\text{on}\;(1+r(\lambda))\overline{\Omega}.
\end{cases} \tag{$S_\lambda$}
\end{equation}
Here, $\Omega$ is a bounded domain of $\mathbb{R}^n$. For simplicity, we will write $\Omega_\lambda = (1+r(\lambda))\Omega$ for $\lambda > 0$. Roughly speaking, along some subsequence $\lambda_j\rightarrow 0^+$, we obtain the limiting equation as a state-constraint ergodic problem:
\begin{equation}\label{eq:S_0}
\begin{cases}
    H(x,Du(x)) \leq c(0) \qquad\text{in}\;\Omega,\\
    H(x,Du(x)) \geq c(0) \qquad\text{on}\;\overline{\Omega}.
\end{cases} \tag{$S_0$}
\end{equation}
Here $c(0)$ is the so-called critical value (additive eigenvalue) defined as
\begin{equation}\label{eq:c(0)}
    c(0) = \inf \Big\lbrace c\in \mathbb{R}: H(x,Du(x)) \leq c\;\;\text{in}\;\Omega\;\text{has a solution} \Big\rbrace.
\end{equation}
This quantity is finite and indeed the infimum in \eqref{eq:c(0)} can be replaced by minimum under our assumptions. We want to study the convergence of $u_\lambda$, solution to \eqref{eq:S_lambda}, under some normalization, to solution of \eqref{eq:S_0}. It turns out this problem is interesting and challenging as it concerns both the vanishing discount and the rate of changing domains at the same time.

The selection problem for the vanishing discount problems on fixed domains was studied extensively in the literature recently. The first-order equations on the torus was obtained in \cite{davini_convergence_2016}, %[DFIZ]
and the second-order equations on the torus were studied in \cite{ishii2017,mitake_selection_2017}. %[MT, IMT1]
The problems in bounded domains with boundary conditions were proved in \cite{al-aidarous_convergence_2016, Ishii2017a}. %[AAIY, IMT2]. 
The problem in $\mathbb{R}^n$ under additional assumptions that lead to the compactness of the Aubry set was studied in \cite{ishii_vanishing_2020}. %[IS]. 
For the selection problems with state-constraint boundary conditions, so far, there is only \cite{Ishii2017a} %[IMT2] 
that deals with a fixed domain, and there is not yet any result studying the situation of the changing domains. It turns out that the problem is much more subtle as we have to take into account the changing domain factor appropriately. Surprisingly, we can obtain both convergence results and non-convergence results in this setting.

This result is an extension of the selection principle in the setting of changing domains. Generally speaking, known results assert that in the convex setting the whole family of solutions of the discounted problems, which are uniquely solved if the ambient space is compact, converges to a distinguished solution of the ergodic limit equation 
\begin{equation}\label{eq:erg}
    H(x,Du(x)) = c(0).
\end{equation}
We emphasize that \eqref{eq:erg} has multiple solutions, therefore it is a non-trivial problem to characterize the limiting solution.

We show the convergence for some natural normalization of solutions to \eqref{eq:S_lambda} together with characterizing their limits, related characterizations are done in \cite{ishii_vanishing_2020} for the case the domain is $\mathbb{R}^n$ and in \cite{davini_convergence_2016, ishii2017,Hung2019} for the case the domain is torus $\mathbb{T}^n = \mathbb{R}^n/ \mathbb{Z}^n$. We also discuss other related results concerning the asymptotic behavior of the additive eigenvalue of $H$ in $\Omega_\lambda$ as $\lambda\rightarrow 0^+$.

\subsection{Assumptions} In this paper, by a domain, we mean an open, bounded, connected subset of $\mathbb{R}^n$. Without loss of generality, we will always assume $0\in \Omega$. To have well-posedness for \eqref{eq:S_lambda}, one needs to have a comparison principle. For simplicity, we will use the following structural assumptions on $\Omega$, which was introduced in \cite{Capuzzo-Dolcetta1990}. 
\begin{itemize}
    \item[(A1)] $\Omega$ a bounded star-shaped (with respect to the origin) open subset of $\mathbb{R}^n$ and there exists some $\kappa > 0$ such that
\begin{equation}\label{condA2}
    \mathrm{dist}(x,\overline{\Omega}) \geq \kappa r \qquad\text{for all}\; x\in (1+r) \partial\Omega, \;\text{for all}\;r>0. 
\end{equation}
\end{itemize}
It is worth noting that the first condition under which the comparison principle holds is the following, first introduced in \cite{Soner1986}.
\begin{itemize}
\item[(A2)] There exists a universal pair of positive numbers $(r,h)$ and $\eta\in \mathrm{BUC}(\overline{\Omega};\mathbb{R}^n)$ such that $B(x+t\eta(x), rt)\subset\Omega$ for all $x\in \overline{\Omega}$ and $t\in (0,h]$.
\end{itemize}
\begin{rem}\label{rem:Ishii} The assumption $\Omega$ is star-shaped can be removed in $\mathrm{(A1)}$, that is any bounded, open subset of $\mathbb{R}^n$ containing the origin that satisfies \eqref{condA2} for some $\kappa > 0$ is star-shaped, and  $\mathrm{(A2)}$ is a consequence of  $\mathrm{(A1)}$ (see Lemma \ref{thm:Ishii} in Appendix). Also $\mathrm{(A2)}$ can be generalized to a weaker \emph{interior cone} condition instead, that is there exists $\sigma\in  (0,1)$ such that $B(x+t\eta(x), rt^\sigma)\subset\Omega$ for all $x\in \overline{\Omega}$ and $t\in (0,h]$.
\end{rem}
We consider the following case in our paper about the vanishing and changing domain rates:
\begin{equation}\label{eq:asm}
    \lim_{\lambda\rightarrow 0^+}\left( \frac{r(\lambda)}{\phi(\lambda)}\right) = \gamma  \in [-\infty,+\infty].
\end{equation}
\begin{rem}\label{rem:first} Under the assumption \eqref{eq:asm}, there are only three possible cases:
\begin{enumerate}
    \item[1.] (Inner approximation) $r(\lambda)$ is negative for $\lambda\ll 1$, consequently $\gamma\leq 0$.
    \item[2.] (Outer approximation) $r(\lambda)$ is positive for $\lambda\ll 1$, consequently $\gamma\geq 0$.
    \item[3.] $r(\lambda)$ is oscillating around 0 when $\lambda\rightarrow 0^+$, consequently $\gamma = 0$. An example for this case is $r(\lambda) = \lambda\sin\left(\lambda^{-1}\right)$.
\end{enumerate}
We note that assumption \eqref{eq:asm} does not cover the case where $r(\lambda)/\phi(\lambda)$ is bounded but the limit at $\lambda\rightarrow 0^+$ does not exist, for example $r(\lambda) = \lambda\sin\left(\lambda^{-1}\right)$ and $\phi(\lambda) = \lambda$. Nevertheless, when the limit \eqref{eq:asm} exists and $r(\lambda)$ is oscillating near $0$, the limit must be $\gamma = 0$ and it turns out that the case $\gamma = 0$ is substantially simpler to analyze, as solutions of \eqref{eq:def_ulambda} converge to the maximal solution of \eqref{eq:S_0} (Theorem \ref{thm:subcritical}). 
\end{rem}

Throughout the paper, we will assume that $H:\overline{U}\times \mathbb{R}^n\rightarrow \mathbb{R}$ is a continuous Hamiltonian where $U = B(0,R_0)$ such that $2\Omega \subseteq U$. We list the main assumptions that will be used throughout the paper.
\begin{itemize}

\item[(H1)] For each $R>0$ there exists a constant $C_R$ such that
\begin{equation}\label{H3c}
\begin{cases}
|H(x,p) - H(y,p)| \leq C_R|x-y|,\\
|H(x,p) - H(x,q)| \leq C_R|p-q|,
\end{cases} \tag{H1}
\end{equation}
for $x,y \in \overline{U}$ and $p,q \in \mathbb{R}^n$ with $|p|,|q|\leq R$.

\item[(H2)] $H$ satisfies the coercivity assumption
\begin{equation}\label{H4}
\lim_{|p|\rightarrow \infty} \left(\min_{x\in \overline{U}} H(x,p)\right) = +\infty. \tag{H2}
\end{equation}
\item[(H3)]\label{H5} $p\mapsto H(x,p)$ is convex for each $x\in\overline{U}$.
\item[(H4)]\label{H6} For $v\in \mathbb{R}^n$, $x\mapsto L(x,v)$ is continuously differentiable on $\overline{U}$, where the Lagrangian $L$ of $H$ is defined as
\begin{equation*}
    L(x,v) = \sup_{p\in \mathbb{R}^n}\Big(p\cdot v - H(x,p)\Big), \qquad (x,v)\in \overline{U}\times \mathbb{R}^n.
\end{equation*}
\end{itemize}

The regularity assumption $\mathrm{(H4)}$ is needed for technical reason when we deal with changing domains, it satisfies for a vast class of Hamiltonians, for example $H(x,p) = H(p)+V(x)$ or $H(x,p) = V(x)H(p)$ with $V\in \mathrm{C}^1$.
\begin{rem} In fact we only need that $\lambda\mapsto L((1+\lambda)x,v)$ is continuously differentiable at $\lambda = 0$ but we assume $\mathrm{(H4)}$ for simplicity.
\end{rem}

\subsection{Literature on state-constraint and vanishing discount problems} There is a vast amount of works in the literature on the well-posedness of state-constraint Hamilton-Jacobi equations and fully nonlinear elliptic equations. The state-constraint problem for first-order convex Hamilton-Jacobi equations using optimal control frameworks was first studied in \cite{Soner1986, Soner1986a}. The general nonconvex, coercive first-order equations were then discussed in \cite{Capuzzo-Dolcetta1990}. For the nested domain setting, a rate of convergence for the discount problem is studied in \cite{kim2019stateconstraint}. We also refer to the classical books \cite{Bardi1997, Barles1994}, and the references therein.

There are also many works in the spirit of looking at a general framework of the vanishing discount problem. The convergence of solutions to the vanishing discount problems is proved in \cite{Ishii2017a}. A problem with a similar spirit to ours is considered in \cite{Qinbo2019}, in which the authors study the asymptotic behavior of solution on compact domain with respect to the Hamiltonian. In this work we take advantage of the clear from and structure of \eqref{eq:S_lambda} to obtain more explicit properties on solutions and furthermore the asymptotic expansion of the additive eigenvalues. We also remark that the continuity of the additive eigenvalue for general increasing domains for second-order equation is concerned in \cite{barles_large_2010}. See \cite{ishii_vanishing_2021} for a recent work on vanishing discount for weakly coupled system and \cite{murat_ergodic_2021} for second-order equation with Neumann boundary condition.

\subsection{Main results} There are two natural normalizations for solutions of \eqref{eq:S_lambda}. The first one is similar to what has been considered in \cite{Ishii2017a,ishii_vanishing_2020, Hung2019} as
\begin{equation}\label{eq:familyc0}
    \left\lbrace u_\lambda +\frac{c(0)}{\phi(\lambda)}\right\rbrace_{\lambda>0},
\end{equation}
and the second one is given by
\begin{equation}\label{eq:familyclambda}
    \left\lbrace u_\lambda+\frac{c(\lambda)}{\phi(\lambda)}\right\rbrace_{\lambda>0},
\end{equation}
where $c(\lambda)$ is the additive eigenvalues of $H$ in $\Omega_\lambda$, as defined in equation \eqref{eq:cell3}. Let $u^0$ be the limiting solution of the vanishing discount problem on fixed bounded domain (see \cite{davini_convergence_2016, ishii2017,Ishii2017a,Hung2019} and Theorem \ref{thm:conv_bdd}), our first result is as follows.

\begin{thm}\label{thm:subcritical} Assume \eqref{H3c}, \eqref{H4}, $\mathrm{(H3)}$ and $\mathrm{(A1)}$. If $\gamma = 0$ then both families \eqref{eq:familyc0} and \eqref{eq:familyclambda} converge to $u^0$ locally uniformly as $\lambda \rightarrow 0^+$.
\end{thm}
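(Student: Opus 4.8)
The plan is to exploit the fact that when $\gamma = 0$ one has $|r(\lambda)| = o(\phi(\lambda))$, so rescaling the moving domain $\Omega_\lambda = (1+r(\lambda))\Omega$ back to the fixed domain $\Omega$ produces a discounted state-constraint problem whose Hamiltonian differs from $H$ by only $O(r(\lambda)) = o(\phi(\lambda))$ on the relevant range of gradients; this reduces the statement to the known vanishing-discount result on a fixed domain, Theorem~\ref{thm:conv_bdd}. Concretely, I would first record the standard a priori bounds: $\|\phi(\lambda)u_\lambda\|_{L^\infty(\overline{\Omega_\lambda})}\le C_0:=\max_{\overline U}|H(\cdot,0)|$, and — using coercivity \eqref{H4} together with the subsolution inequality — that $\{u_\lambda\}$ is equi-Lipschitz on $\overline{\Omega_\lambda}$ with a constant $C_1$ independent of $\lambda$. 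Next, for $\lambda\ll 1$ I set $w_\lambda(x):=u_\lambda\big((1+r(\lambda))x\big)$ for $x\in\overline\Omega$; by \eqref{eq:S_lambda} and a change of variables, $w_\lambda$ is the unique (comparison holds by $\mathrm{(A1)}$) state-constraint viscosity solution of
\begin{equation*}
   \phi(\lambda) w_\lambda(x) + H_\lambda(x,Dw_\lambda(x)) = 0 \quad\text{on } \Omega,
   \qquad H_\lambda(x,p):=H\!\left((1+r(\lambda))x,\tfrac{p}{1+r(\lambda)}\right),
\end{equation*}
and \eqref{H3c} gives $\|H_\lambda - H\|_{L^\infty(\overline\Omega\times \overline B_R)}\le C|r(\lambda)|$ for $R:=2C_1+1$, while $H_\lambda$ still obeys \eqref{H3c}, \eqref{H4}, $\mathrm{(H3)}$ with constants uniform in $\lambda\ll 1$.

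The core step is a stability estimate. Let $\tilde u_\lambda$ solve the fixed-domain discounted problem $\phi(\lambda)\tilde u_\lambda + H(x,D\tilde u_\lambda)=0$ in $\Omega$ (state-constraint); by Theorem~\ref{thm:conv_bdd}, $\tilde u_\lambda + c(0)/\phi(\lambda)\to u^0$ uniformly on $\overline\Omega$. Both $w_\lambda$ and $\tilde u_\lambda$ are equi-Lipschitz with constant at most $R$, so only gradients in $\overline B_R$ matter in the viscosity inequalities and there $\|H_\lambda - H\|_\infty\le \delta_\lambda:=C|r(\lambda)|$; then $\tilde u_\lambda + \delta_\lambda/\phi(\lambda)$ is a supersolution and $\tilde u_\lambda - \delta_\lambda/\phi(\lambda)$ a subsolution of the $H_\lambda$-problem, and the comparison principle under $\mathrm{(A1)}$ yields $\|w_\lambda - \tilde u_\lambda\|_{L^\infty(\overline\Omega)}\le \delta_\lambda/\phi(\lambda)=C|r(\lambda)|/\phi(\lambda)\to 0$ precisely because $\gamma=0$. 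Hence $w_\lambda + c(0)/\phi(\lambda)\to u^0$ uniformly on $\overline\Omega$. Undoing the rescaling: given a compact $K\subset\Omega$ and $\lambda$ small, $y\in K$ gives $y/(1+r(\lambda))\in\Omega$ with $|y/(1+r(\lambda))-y|\le C|r(\lambda)|\to 0$ uniformly on $K$; since $u_\lambda(y)=w_\lambda(y/(1+r(\lambda)))$, $w_\lambda + c(0)/\phi(\lambda)\to u^0$ uniformly, and $u^0$ is Lipschitz on $\overline\Omega$, it follows that \eqref{eq:familyc0} converges to $u^0$ locally uniformly on $\Omega$.

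For the family \eqref{eq:familyclambda} it remains to check that $\big(c(\lambda)-c(0)\big)/\phi(\lambda)\to 0$. Rescaling as above, $c(\lambda)$ equals the additive eigenvalue of $H_\lambda$ on $\Omega$; since by \eqref{H4} any subsolution of $G_1(x,Dv)\le c$ is automatically $R'$-Lipschitz (with $R'$ depending only on $c$ and, for $G=H_\lambda$, uniform in $\lambda\ll1$) and hence a subsolution of $G_2(x,Dv)\le c+\|G_1-G_2\|_{L^\infty(\overline\Omega\times\overline B_{R'})}$, the eigenvalue depends Lipschitz-continuously on the Hamiltonian, so $|c(\lambda)-c(0)|\le C|r(\lambda)|$ and $\big(c(\lambda)-c(0)\big)/\phi(\lambda)\to 0$ when $\gamma=0$. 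Writing \eqref{eq:familyclambda} $=$ \eqref{eq:familyc0} $+\big(c(\lambda)-c(0)\big)/\phi(\lambda)$ then gives the convergence of \eqref{eq:familyclambda} to $u^0$ as well. The main obstacle is the stability step of the second paragraph: it needs the comparison principle for the discounted state-constraint problem — which is exactly what $\mathrm{(A1)}$ provides — together with the uniform-in-$\lambda$ gradient bound that lets $\|H_\lambda-H\|_\infty$ be measured on one fixed ball; the point that the resulting error is $o(\phi(\lambda))$ rather than merely $o(1)$ is what makes dividing by $\phi(\lambda)$ harmless, and this is the feature special to $\gamma=0$.
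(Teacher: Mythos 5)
Your proposal is correct and is essentially the paper's own argument: you rescale $u_\lambda$ back to the fixed domain, compare the rescaled function with the fixed-domain discounted solution via the state-constraint comparison principle (Theorem \ref{CP continuous}, available since $\mathrm{(A1)}$ implies $\mathrm{(A2)}$), get an error of size $C|r(\lambda)|/\phi(\lambda)\to 0$ precisely because $\gamma=0$, and then treat \eqref{eq:familyclambda} through $|c(\lambda)-c(0)|\le C|r(\lambda)|$ --- the paper does the same with the slightly different scaling $\tilde u_\lambda(x)=(1+r(\lambda))^{-1}u_\lambda\big((1+r(\lambda))x\big)$ and obtains the eigenvalue bound from \eqref{eq:bound} rather than your direct perturbation of subsolutions, but these are cosmetic differences. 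One small caution: the claim that ``only gradients in $\overline B_R$ matter'' is literally valid for the subsolution tests, whereas the state-constraint supersolution test at boundary points admits arbitrarily large test gradients; the conclusion still holds because coercivity \eqref{H4} makes the supersolution inequality trivial for large test gradients (after enlarging $R$ independently of $\lambda$), a one-line patch that the paper's own proof also leaves implicit.
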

We note that Theorem \ref{thm:subcritical} includes the case where $r(\lambda)$ is oscillating, as long as the limit \eqref{eq:asm} exists. For example $r(\lambda) = \lambda \sin\left(\lambda^{-1}\right)$ and $\phi(\lambda) = \lambda^p$ with $p\in (0,1)$.

If $\gamma$ is finite then \eqref{eq:familyc0} is bounded and convergent. Its limit can be characterized in terms of probability minimizing measures $\mathcal{M}_0$ (or viscosity Mather measures, see Section 2). For a ball $\overline{B}_h\subset \mathbb{R}^n$ and a measure $\mu$ defined on $\overline{\Omega}\times \overline{B}_h$, we define 
\begin{equation}\label{def:integral}
    \langle \mu, f\rangle := \int_{\overline{\Omega}\times \overline{B}_h} f(x,v)\;d\mu(x,v), \qquad\text{for}\;f\in \mathrm{C}(\overline{\Omega}\times \overline{B}_h).
\end{equation}
\begin{thm}\label{thm:general} Assume \eqref{H3c}, \eqref{H4}, $\mathrm{(H3)},  \mathrm{(H4)}$ and $\mathrm{(A1)}$. If $\gamma \in \mathbb{R}$ then the family \eqref{eq:familyc0} converge to $u^\gamma$ locally uniformly in $\Omega$ as $\lambda\rightarrow 0^+$. Furthermore
\begin{equation}\label{eq:thm}
    u^\gamma = \sup_{w\in \mathcal{E}^\gamma} w,
\end{equation}
where $\mathcal{E}^\gamma$ denotes the family of subsolutions $w$ to the ergodic problem \eqref{eq:S_0} such that
\begin{equation*}
  \gamma\big\langle \mu,  (-x)\cdot D_xL(x,v)\big\rangle +\langle \mu, w\rangle \leq 0
  \qquad\text{for all}\; \mu\in \mathcal{M}_0.
\end{equation*}
\end{thm}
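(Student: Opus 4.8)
The plan is to run the standard three-step scheme for vanishing-discount selection --- compactness of the suitably renormalized family, an upper bound on every limit point obtained by pairing the discounted subsolution with viscosity Mather measures, and a matching lower bound from the optimal-control representation of $u_\lambda$ --- while carrying the dilation factor $1+r(\lambda)$ along to first order throughout; the regularity hypothesis $\mathrm{(H4)}$ is precisely what legitimizes these first-order expansions. Write $w_\lambda := u_\lambda + c(0)/\phi(\lambda)$. \textbf{Step 1 (compactness).} By the coercivity \eqref{H4} and the interior-cone geometry behind $\mathrm{(A1)}$ (Remark \ref{rem:Ishii}, Lemma \ref{thm:Ishii}), the functions $u_\lambda + c(\lambda)/\phi(\lambda)$ are equi-Lipschitz and equibounded on $\overline{\Omega_\lambda}$ uniformly for $\lambda\ll1$; since $c(\lambda)-c(0)=O(r(\lambda))$ --- a consequence of the dilation-of-closed-measures estimate used below --- and $r(\lambda)/\phi(\lambda)\to\gamma\in\mathbb{R}$, the ratio $(c(\lambda)-c(0))/\phi(\lambda)$ stays bounded, so $\{w_\lambda\}_{\lambda\ll1}$ is equi-Lipschitz and equibounded on every compact subset of $\Omega$ (in fact, after a uniform Lipschitz extension in the inner-approximation case, on $\overline{\Omega}$). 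By Arzel\`a--Ascoli it suffices to show that every locally uniform limit $w^\ast$ along a sequence $\lambda_j\to0^+$ equals $u^\gamma:=\sup_{w\in\mathcal{E}^\gamma}w$. \textbf{Step 2 (stability).} Since $\phi(\lambda_j)u_{\lambda_j}=-c(0)+\phi(\lambda_j)w_{\lambda_j}\to-c(0)$ locally uniformly while $\Omega_{\lambda_j}\to\Omega$, stability of viscosity sub/supersolutions --- the state-constraint boundary condition on the moving domain being handled by the device of \cite{Capuzzo-Dolcetta1990} available under $\mathrm{(A1)}$ --- shows that $w^\ast$ is a subsolution of \eqref{eq:S_0}.

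\textbf{Step 3 (upper bound: $w^\ast\in\mathcal{E}^\gamma$).} Fix $\mu\in\mathcal{M}_0$, a holonomic probability measure on $\overline{\Omega}\times\overline{B}_h$ with $\langle\mu,L\rangle=-c(0)$. Its push-forward $\mu_\lambda$ under $(x,v)\mapsto\bigl((1+r(\lambda))x,v\bigr)$ is a holonomic measure on $\overline{\Omega_\lambda}\times\overline{B}_h$ with, by $\mathrm{(H4)}$,
\[
\langle\mu_\lambda,L\rangle=\int_{\overline{\Omega}\times\overline{B}_h}L\bigl((1+r(\lambda))x,v\bigr)\,d\mu(x,v)=-c(0)+r(\lambda)\bigl\langle\mu,\,x\cdot D_xL(x,v)\bigr\rangle+o(r(\lambda)).
\]
Pairing the subsolution inequality $\phi(\lambda)u_\lambda+H(x,Du_\lambda)\le0$ with $\mu_\lambda$ --- via the Fenchel inequality $v\cdot p\le L(x,v)+H(x,p)$ and the holonomy identity $\langle\mu_\lambda,v\cdot Du_\lambda\rangle=0$, the latter justified for the merely Lipschitz $u_\lambda$ by mollification (here is where $\mu_\lambda$ must be a genuine \emph{viscosity} Mather measure, so that the mollified inequality survives passage to the limit up to the state-constraint boundary) --- gives $\phi(\lambda)\langle\mu_\lambda,u_\lambda\rangle\le\langle\mu_\lambda,L\rangle$, hence
\[
\bigl\langle\mu_\lambda,w_\lambda\bigr\rangle=\Bigl\langle\mu_\lambda,\,u_\lambda+\tfrac{c(0)}{\phi(\lambda)}\Bigr\rangle\le\frac{r(\lambda)}{\phi(\lambda)}\bigl\langle\mu,\,x\cdot D_xL(x,v)\bigr\rangle+o\!\Bigl(\frac{r(\lambda)}{\phi(\lambda)}\Bigr).
\]
Letting $\lambda=\lambda_j\to0^+$: $\operatorname{supp}\mu_{\lambda_j}$ lies within $O(|r(\lambda_j)|)$ of $\overline{\Omega}$, $w_{\lambda_j}\to w^\ast$ uniformly there, $\mu_{\lambda_j}$ narrowly converges back to $\mu$, and $r(\lambda_j)/\phi(\lambda_j)\to\gamma$, whence $\langle\mu,w^\ast\rangle\le\gamma\langle\mu,x\cdot D_xL(x,v)\rangle$, i.e.\ $\gamma\langle\mu,(-x)\cdot D_xL(x,v)\rangle+\langle\mu,w^\ast\rangle\le0$. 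As $\mu\in\mathcal{M}_0$ was arbitrary, $w^\ast\in\mathcal{E}^\gamma$, so $w^\ast\le u^\gamma$.

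\textbf{Step 4 (lower bound and conclusion).} Let $w\in\mathcal{E}^\gamma$ and $x\in\Omega$. Take an optimal curve $\xi_\lambda$ in $\overline{\Omega_\lambda}$ for the control representation of $u_\lambda(x)$ (oriented as in \eqref{eq:S_lambda}), with normalized discounted occupation measure $\hat\mu^{\,x}_\lambda$ on $\overline{\Omega_\lambda}\times\overline{B}_h$. Feeding the domination property $w\prec L+c(0)$ (valid since $w$ is a subsolution of \eqref{eq:S_0}) into the discounted dynamic-programming identity for $u_\lambda(x)$ and expanding the $\Omega$-versus-$\Omega_\lambda$ discrepancy to first order via $\mathrm{(H4)}$ as in Step 3 --- the remaining first-order terms pairing to $0$ against Mather measures because $\langle\mu,D_vL(x,v)\cdot v\rangle=0$ on the Mather set --- yields an estimate of the form
\[
w_\lambda(x)\ge w(x)-\langle\hat\mu^{\,x}_\lambda,w\rangle+\frac{r(\lambda)}{\phi(\lambda)}\bigl\langle\hat\mu^{\,x}_\lambda,\,x\cdot D_xL(x,v)\bigr\rangle+o(1).
\]
The measures $\hat\mu^{\,x}_\lambda$ are asymptotically holonomic and asymptotically $L$-minimizing, so along a subsequence $\hat\mu^{\,x}_{\lambda_j}$ narrowly converges to some $\mu^x\in\mathcal{M}_0$ --- this convergence of discounted occupation measures to Mather measures, adapted to the moving domain, is the heart of the ``duality representation with viscosity Mather measures''. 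Passing to the limit and using $w\in\mathcal{E}^\gamma$,
\[
w(x)-w^\ast(x)\le\langle\mu^x,w\rangle-\gamma\langle\mu^x,x\cdot D_xL(x,v)\rangle=\gamma\langle\mu^x,(-x)\cdot D_xL(x,v)\rangle+\langle\mu^x,w\rangle\le0.
\]
Hence $u^\gamma\le w^\ast$, so $w^\ast=u^\gamma$ by Step 3; as the limit is independent of the subsequence, the whole family \eqref{eq:familyc0} converges to $u^\gamma$ locally uniformly in $\Omega$, and \eqref{eq:thm} holds. (For $\gamma=0$ the constraint defining $\mathcal{E}^0$ becomes $\langle\mu,w\rangle\le0$ for all $\mu\in\mathcal{M}_0$, so $u^0$ agrees with the fixed-domain vanishing-discount limit, in accordance with Theorems \ref{thm:subcritical} and \ref{thm:conv_bdd}.)

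\textbf{Main obstacle.} The crux is the first-order analysis of the domain perturbation common to Steps 3 and 4: one must show that the ergodic constants, the pushed-forward Mather measures, and the discounted occupation measures on $\Omega_\lambda$ all converge --- \emph{with the correct $O(r(\lambda))$ rate} --- to the corresponding objects on $\Omega$, and that this rate is captured exactly by $\langle\mu,(-x)\cdot D_xL(x,v)\rangle$. This requires marrying the mollification/duality machinery for viscosity Mather measures near a state-constraint boundary (already delicate on a fixed domain, cf.\ the proof of Theorem \ref{thm:conv_bdd}) with the Taylor expansion $L((1+r)x,v)=L(x,v)+r\,x\cdot D_xL(x,v)+o(r)$ legitimized by $\mathrm{(H4)}$, and checking that every other first-order contribution vanishes against $\mathcal{M}_0$. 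The case $\gamma=0$ (Theorem \ref{thm:subcritical}) is easier precisely because this $O(r(\lambda))/\phi(\lambda)$ correction then disappears.
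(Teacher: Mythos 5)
Your proposal is correct and follows essentially the same route as the paper: compactness of the renormalized family, then the two dual pairings with viscosity Mather measures --- testing a Mather measure against the discounted subsolution to show every limit lies in $\mathcal{E}^\gamma$ (the paper's inequality \eqref{eq:s1}), and testing a competitor $w\in\mathcal{E}^\gamma$ against the discounted minimizing measures, whose rescaled weak limits lie in $\mathcal{M}_0$, to get maximality (the paper's \eqref{eqn:ge1} via Theorem \ref{thm:lambdau} and Lemma \ref{lem:stability}) --- with the $\gamma\langle\mu,(-x)\cdot D_xL\rangle$ term produced exactly as in Lemma \ref{lem:regu}; your choice to push measures forward to $\Omega_\lambda$ rather than rescale $u_\lambda$ back to $\Omega$ is only a bookkeeping variant (it conveniently avoids the $\gamma c(0)$ shift of Remark \ref{rem:Aug29-2}). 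The only blemishes are cosmetic: the Fenchel-plus-mollification pairing near the state-constraint boundary is more cleanly handled by the paper's dual-cone formalism ($\mathcal{F}$, $\mathcal{G}$, $\mathcal{G}'$), and the parenthetical identity $\langle\mu, v\cdot D_vL(x,v)\rangle=0$ is neither true in general nor needed, since your displayed Step-4 estimate already follows without it.
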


\begin{rem} The factor $\gamma\left\langle \mu, (-x)\cdot D_xL(x,v)\right\rangle$ here captures the scaling property of the problem, which is where $u^\gamma$ and $u^0$ are different from each other. Also, if $\gamma = \infty$ then the family \eqref{eq:familyc0} could be unbounded (Example \ref{ex:a}). We note that Theorem \ref{thm:general} includes the conclusion of Theorem \ref{thm:subcritical} for the family \eqref{eq:familyc0} but we do not need the technical assumption $\mathrm{(H4)}$ for Theorem \ref{thm:subcritical}. 
\end{rem}

\begin{cor}\label{cor:mycor} The mapping $\gamma\mapsto u^\gamma(\cdot)$ from $\mathbb{R}$ to $\mathrm{C}(\overline{\Omega})$ is concave and decreasing. Precisely, if $\alpha,\beta\in \mathbb{R}$ with $\alpha\leq \beta$ then $u^\beta\leq u^\alpha$ and
\begin{equation*}
    (1-\lambda)u^\alpha + \lambda u^\beta \leq u^{(1-\lambda)\alpha+\lambda\beta} \qquad\text{for every}\;\lambda\in [0,1].
\end{equation*}
\end{cor}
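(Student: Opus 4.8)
The plan is to argue entirely from the representation $u^\gamma=\sup_{w\in\mathcal E^\gamma}w$ of Theorem~\ref{thm:general}, which is available for every $\gamma\in\mathbb R$, and to reduce both assertions to manipulations of the two defining properties of membership $w\in\mathcal E^\gamma$: that $w$ is a viscosity subsolution of $H(x,Dw)\le c(0)$ in $\Omega$, and that $\gamma\,a_\mu+\langle\mu,w\rangle\le 0$ for all $\mu\in\mathcal M_0$, where I abbreviate $a_\mu:=\langle\mu,(-x)\cdot D_xL(x,v)\rangle\in\mathbb R$. Observe first that $\mathcal E^\gamma\ne\emptyset$ for each $\gamma\in\mathbb R$, since otherwise the supremum would be $-\infty$, contradicting that $u^\gamma$ is the finite locally uniform limit furnished by Theorem~\ref{thm:general}.

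Concavity is the purely algebraic part. Fix $\alpha,\beta\in\mathbb R$ and $\lambda\in[0,1]$, and put $\gamma_\lambda:=(1-\lambda)\alpha+\lambda\beta$. Given arbitrary $w_\alpha\in\mathcal E^\alpha$ and $w_\beta\in\mathcal E^\beta$, set $w:=(1-\lambda)w_\alpha+\lambda w_\beta$. Because $p\mapsto H(x,p)$ is convex by $\mathrm{(H3)}$, a convex combination of viscosity subsolutions of $H(x,Du)\le c(0)$ in $\Omega$ is again such a subsolution (most transparently via the equivalent description of these subsolutions as the $c(0)$-dominated functions, a class plainly stable under convex combinations), so $w$ is a subsolution of $(S_0)$. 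Moreover, by linearity of $w\mapsto\langle\mu,w\rangle$, for every $\mu\in\mathcal M_0$,
\begin{equation*}
\gamma_\lambda\,a_\mu+\langle\mu,w\rangle=(1-\lambda)\bigl(\alpha\,a_\mu+\langle\mu,w_\alpha\rangle\bigr)+\lambda\bigl(\beta\,a_\mu+\langle\mu,w_\beta\rangle\bigr)\le 0,
\end{equation*}
so $w\in\mathcal E^{\gamma_\lambda}$ and hence $(1-\lambda)w_\alpha+\lambda w_\beta\le u^{\gamma_\lambda}$ pointwise on $\Omega$. Taking the supremum over $w_\beta\in\mathcal E^\beta$ and then over $w_\alpha\in\mathcal E^\alpha$ (the two families are independent, so the supremum of the separable sum distributes), we obtain $(1-\lambda)u^\alpha+\lambda u^\beta\le u^{\gamma_\lambda}$.

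For monotonicity it suffices to show $\mathcal E^\beta\subseteq\mathcal E^\alpha$ whenever $\alpha\le\beta$, since then $u^\beta=\sup_{\mathcal E^\beta}w\le\sup_{\mathcal E^\alpha}w=u^\alpha$. If $w\in\mathcal E^\beta$ and $\mu\in\mathcal M_0$, then $\alpha\,a_\mu+\langle\mu,w\rangle\le\beta\,a_\mu+\langle\mu,w\rangle\le 0$ as soon as we know that $a_\mu\ge 0$ for all $\mu\in\mathcal M_0$, i.e. $\langle\mu,(-x)\cdot D_xL(x,v)\rangle\ge 0$. This sign property is the scaling heart of the corollary, and I would prove it as follows. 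For $t$ small the pushforward of $\mu$ under the base-point dilation $(x,v)\mapsto((1+t)x,v)$ is a closed probability measure carried by $\overline{\Omega_t}\times\overline{B}_h$, where $\Omega_t:=(1+t)\Omega$; indeed, testing against $\varphi\in\mathrm{C}^1$ on $\overline{\Omega_t}$ reduces, via $\tilde\varphi(\cdot):=\varphi((1+t)\,\cdot\,)$, to the closedness of $\mu$. Writing $\tilde c(t)$ for the additive eigenvalue of $H$ in $\Omega_t$, the duality representation underlying Theorem~\ref{thm:general} gives $-\tilde c(t)\le g(t):=\langle\mu,L((1+t)x,v)\rangle$, with equality at $t=0$ since $\mu$ is a minimizing measure for $\Omega$, so $g(0)=-c(0)=-\tilde c(0)$. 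For $t<0$ small, $\mathrm{(A1)}$ (star-shapedness of $\Omega$ about $0$) gives $\Omega_t\subseteq\Omega$, hence $\tilde c(t)\le\tilde c(0)$ by monotonicity of the eigenvalue under inclusion of domains, so $g(t)\ge-\tilde c(t)\ge-\tilde c(0)=g(0)$. Thus $g$ has a left-sided local minimum at $t=0$, and since $t\mapsto L((1+t)x,v)$ is continuously differentiable at $t=0$ by $\mathrm{(H4)}$ (indeed by the weaker hypothesis in the remark after $\mathrm{(H4)}$), differentiation under the integral sign (legitimate because $\operatorname{supp}\mu$ is compact) yields $0\ge g'(0)=\langle\mu,x\cdot D_xL(x,v)\rangle=-a_\mu$, i.e. $a_\mu\ge 0$, as required.

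I expect the sign property $a_\mu\ge 0$ to be the only nontrivial point; the remaining ingredients — nonemptiness of $\mathcal E^\gamma$, stability of subsolutions of the convex equation under convex combinations, and the distribution of the supremum over the independent families $\mathcal E^\alpha,\mathcal E^\beta$ — are routine. Within the proof of that property the delicate steps are the duality inequality $-\tilde c(t)\le\langle\nu,L\rangle$ for closed probability measures $\nu$ on the dilated domain (already part of the machinery behind Theorem~\ref{thm:general}) and the differentiation of $g$ at $t=0$, which is precisely why $\mathrm{(H4)}$ — or the relaxation in the remark that $t\mapsto L((1+t)x,v)$ be $\mathrm{C}^1$ at $0$ — is imposed.
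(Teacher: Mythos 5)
Your argument is correct, and its outer skeleton is the same as the paper's: both deduce monotonicity and concavity from the representation $u^\gamma=\sup_{w\in\mathcal{E}^\gamma}w$, the linearity of $w\mapsto\langle\mu,w\rangle$, the stability of subsolutions of the convex equation under convex combinations, and the sign property $\langle\mu,(-x)\cdot D_xL(x,v)\rangle\ge 0$ for all $\mu\in\mathcal{M}_0$. The paper simply plugs in $w_\alpha=u^\alpha$ and $w_\beta=u^\beta$ directly, since \eqref{eq:s1} from the proof of Theorem \ref{thm:general} already shows $u^\alpha\in\mathcal{E}^\alpha$ and $u^\beta\in\mathcal{E}^\beta$, whereas you take suprema over arbitrary members of $\mathcal{E}^\alpha,\mathcal{E}^\beta$; that difference is cosmetic. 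Where you genuinely diverge is the proof of the sign property. The paper isolates it as Lemma \ref{rem: on L} and never leaves the fixed domain: it rescales $\mu$ into $\mu_\lambda$ via $\langle\mu_\lambda,f\rangle=\langle\mu,f((1-\lambda)x,v)\rangle$, checks by scaling the subsolution attached to each $f\in\mathcal{G}_{0,\Omega}$ that $\mu_\lambda\in\mathcal{P}\cap\mathcal{G}'_{0,\Omega}$, and then uses the minimality of $\mu$ over that same cone to get $\langle\mu,L((1-\lambda)x,v)\rangle\ge\langle\mu,L\rangle$, concluding with Lemma \ref{lem:regu}. You instead push $\mu$ forward to the shrunken domain $\Omega_t=(1+t)\Omega$, $t<0$, invoke the duality of Theorem \ref{thm:c0} on $\Omega_t$ together with monotonicity of the additive eigenvalue under domain inclusion, and differentiate $g(t)=\langle\mu,L((1+t)x,v)\rangle$ at its one-sided minimum $t=0$. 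Both routes work; the paper's is leaner because it needs neither the eigenvalue of the perturbed domain nor its monotonicity, while yours makes visible the link to $c(\lambda)$ that later drives Theorem \ref{thm:limit}.

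One imprecision in your version should be fixed. To invoke the duality $-c_{\Omega_t}\le\langle\nu,L\rangle$ on $\Omega_t$ you need the pushforward measure to lie in $\mathcal{P}\cap\mathcal{G}'_{0,\Omega_t}$ (that is the class over which Theorem \ref{thm:c0} minimizes), but you only verified holonomy by testing against $v\cdot D\varphi$ with $\varphi\in\mathrm{C}^1$, which is strictly weaker since $\mathcal{G}_{0,\Omega_t}$ contains more functions than $\pm\, v\cdot D\varphi$. The repair is the same scaling you already used: given $f\in\mathcal{G}_{0,\Omega_t}$ with associated subsolution $u$ on $\Omega_t$, the function $x\mapsto(1+t)^{-1}u((1+t)x)$ is a subsolution for $f((1+t)x,v)$ on $\Omega$, so $f((1+t)x,v)\in\mathcal{G}_{0,\Omega}$ and hence $\langle\mu,f((1+t)x,v)\rangle\ge 0$ because $\mu\in\mathcal{G}'_{0,\Omega}$; this is exactly the mechanism of Lemma \ref{lem:inM0} run in the opposite direction. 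With that adjustment your proof is complete.
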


For the second family \eqref{eq:familyclambda}, we observe that it is bounded even if $\gamma = \infty$, and the difference between the two normalization \eqref{eq:familyclambda} and \eqref{eq:familyc0} is given by
\begin{equation}\label{eq:introlimit}
    \left\lbrace \frac{c(\lambda)-c(0)}{\phi(\lambda)}\right\rbrace_{\lambda>0}. 
\end{equation}
If $\gamma<\infty$ then the two families \eqref{eq:familyc0} and \eqref{eq:familyclambda} are convergent if and only if the limit of \eqref{eq:introlimit} as $\lambda\rightarrow 0^+$ exists. In that case we have 
\begin{equation}\label{eq:introlimit2}
   \lim_{\lambda\rightarrow 0^+}\left(\frac{c(\lambda)-c(0)}{\phi(\lambda)}\right)= \gamma\lim_{\lambda\rightarrow 0^+}\left(\frac{c(\lambda)-c(0)}{r(\lambda)}\right).
\end{equation}
The limit on the right-hand side should be understood as taking along sequences where $r(\lambda)\neq 0$. In other words we only concern those functions $r(\cdot)$ that are not identically zero near $0$, since otherwise $c(\lambda) = c(0)$ for $\lambda\ll 1$ and the problem is not interesting. It leads naturally to the question of the asymptotic expansion of the critical value
\begin{equation}\label{asymptotic}
c(\lambda) = c(0) + c^{(1)} r(\lambda) + o(r(\lambda)) \qquad\text{as}\; \lambda \rightarrow 0^+.
\end{equation}
To our knowledge, this kind of question is new in the literature. We prove that the limit in \eqref{eq:introlimit2} always exists if $r(\lambda)$ does not oscillate its sign near $0$, and as a consequence it provides a necessary and sufficient condition under which the limit \eqref{eq:introlimit2} exists for a general oscillating $r(\lambda)$. Of course this oscillating behavior is excluded when we only concern about the convergence of \eqref{eq:familyc0} and \eqref{eq:familyclambda} (since $\gamma = 0$). We also give a characterization for the limit in \eqref{eq:introlimit2} in terms of $\mathcal{M}_0$.

\begin{thm}\label{thm:limit} Assume \eqref{H3c}, \eqref{H4}, $\mathrm{(H3)}, \mathrm{(H4)}$, $\mathrm{(A1)}$, we have
\begin{align*}
    &\lim_{\substack{\lambda\rightarrow 0^+\\ r(\lambda) > 0}} \left(\frac{c(\lambda) - c(0)}{r(\lambda)}\right) = \max_{\mu\in \mathcal{M}_0} \left\langle \mu, (-x)\cdot D_xL(x,v)\right\rangle,\\
    &\lim_{\substack{\lambda\rightarrow 0^+\\ r(\lambda) < 0}} \left(\frac{c(\lambda) - c(0)}{r(\lambda)}\right) = \min_{\mu\in \mathcal{M}_0} \left\langle \mu, (-x)\cdot D_xL(x,v)\right\rangle.
\end{align*}
Thus $(c(\lambda)-c(0))/r(\lambda)$ converges as $\lambda\rightarrow 0^+$ if and only if the following invariant holds
\begin{equation*}
    \left\langle \mu,(-x)\cdot D_xL(x,v)\right\rangle = c^{(1)} \qquad\text{for all}\; \mu\in \mathcal{M}_0
\end{equation*}
where $c^{(1)}$ is a positive constant.
\end{thm}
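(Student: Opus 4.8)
The plan is to reduce the statement to a first‑order perturbation estimate for a linear minimization over closed (holonomic) measures, and then to differentiate that minimum by an envelope argument. I would start from the duality representation of the additive eigenvalue established in Section~2: for $\lambda$ small,
\begin{equation*}
-c(\lambda)=\min\big\{\langle\nu,L\rangle:\ \nu\ \text{a closed probability measure on}\ \overline{\Omega_\lambda}\times\overline{B}_h\big\},
\end{equation*}
where the velocity radius $h$ can be chosen independently of $\lambda$ near $0$ — this uniformity is exactly where the coercivity $(\mathrm{H2})$ and the uniform structure $(\mathrm{A1})$ of the $\Omega_\lambda$ enter, via uniform Lipschitz bounds on the eigenfunctions — and $\mathcal{M}_\lambda$ is the set of minimizers. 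The linear map $\Phi_\lambda(x,v)=\big((1+r(\lambda))x,v\big)$ is a homeomorphism of $K:=\overline{\Omega}\times\overline{B}_h$ onto $\overline{\Omega_\lambda}\times\overline{B}_h$, and since $D_x\big[\psi((1+r)x)\big]=(1+r)(D\psi)((1+r)x)$ its pushforward identifies closed probability measures on $K$ with those on $\overline{\Omega_\lambda}\times\overline{B}_h$; hence, writing $L_\lambda(x,v):=L\big((1+r(\lambda))x,v\big)$ (well defined on $K$ since $(1+r(\lambda))\overline{\Omega}\subset 2\overline{\Omega}\subset\overline{U}$),
\begin{equation*}
-c(\lambda)=\min_{\mu\in\mathcal{P}}\langle\mu,L_\lambda\rangle,\qquad -c(0)=\min_{\mu\in\mathcal{P}}\langle\mu,L\rangle,
\end{equation*}
where $\mathcal{P}$ is the weak‑$*$ compact convex set of closed probability measures on $K$ and $\mathcal{M}_0=\{\mu\in\mathcal{P}:\langle\mu,L\rangle=-c(0)\}$.

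Next I would expand the cost to first order in $r(\lambda)$. By $(\mathrm{H4})$ the map $x\mapsto L(x,v)$ is $C^1$ on $2\overline{\Omega}$ and $D_xL$ is uniformly continuous on the compact set $2\overline{\Omega}\times\overline{B}_h$, so from
\begin{equation*}
L_\lambda(x,v)-L(x,v)=r(\lambda)\int_0^1 x\cdot D_xL\big((1+tr(\lambda))x,v\big)\,dt
\end{equation*}
one gets $L_\lambda=L+r(\lambda)\,g+r(\lambda)\,\omega_\lambda$ on $K$, where $g(x,v):=x\cdot D_xL(x,v)\in\mathrm{C}(K)$ and $\|\omega_\lambda\|_{\mathrm{C}(K)}\to0$ as $\lambda\to0^+$. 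Since the elements of $\mathcal{P}$ are probability measures, $\langle\mu,L_\lambda\rangle=\langle\mu,L\rangle+r(\lambda)\langle\mu,g\rangle+r(\lambda)\langle\mu,\omega_\lambda\rangle$ with $\sup_{\mu\in\mathcal{P}}|\langle\mu,\omega_\lambda\rangle|\le\|\omega_\lambda\|_{\mathrm{C}(K)}\to0$.

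The core is an envelope lemma, to be proved by soft compactness: if $\mathcal{P}$ is weak‑$*$ compact, $f,g\in\mathrm{C}(K)$, $m:=\min_{\mu\in\mathcal{P}}\langle\mu,f\rangle$ is attained on $S:=\{\mu\in\mathcal{P}:\langle\mu,f\rangle=m\}$, and $\omega_s\in\mathrm{C}(K)$ with $\|\omega_s\|_{\mathrm{C}(K)}\to0$, then
\begin{equation*}
\frac{1}{s}\Big(\min_{\mu\in\mathcal{P}}\big\langle\mu,\,f+s g+s\omega_s\big\rangle-m\Big)\ \longrightarrow\ \min_{\mu\in S}\langle\mu,g\rangle \qquad\text{as}\ s\to0^+.
\end{equation*}
The upper bound follows by testing a minimizer of $\langle\cdot,g\rangle$ over $S$; for the lower bound, a minimizer $\mu_s$ of the perturbed functional satisfies $0\le\langle\mu_s,f\rangle-m\le Cs$, hence $\langle\mu_s,f\rangle\to m$, so every weak‑$*$ cluster point of $(\mu_s)$ lies in $S$, and then $\frac1s\big(\text{perturbed minimum}-m\big)\ge\langle\mu_s,g\rangle+\langle\mu_s,\omega_s\rangle$ has lower limit $\ge\min_{S}\langle\cdot,g\rangle$. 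Applying the lemma with $s=r(\lambda)>0$, $f=L$, $g$ as above, $S=\mathcal{M}_0$, $\omega_s=\omega_\lambda$, and recalling $\min_{\mu\in\mathcal{P}}\langle\mu,L_\lambda\rangle=-c(\lambda)$, gives
\begin{equation*}
\lim_{\substack{\lambda\to0^+\\ r(\lambda)>0}}\frac{c(0)-c(\lambda)}{r(\lambda)}=\min_{\mu\in\mathcal{M}_0}\langle\mu,g\rangle,
\end{equation*}
that is $\lim_{\lambda\to0^+,\,r(\lambda)>0}\big(c(\lambda)-c(0)\big)/r(\lambda)=-\min_{\mathcal{M}_0}\langle\mu,g\rangle=\max_{\mu\in\mathcal{M}_0}\langle\mu,(-x)\cdot D_xL(x,v)\rangle$; applying it instead with $s=-r(\lambda)>0$ and $g$ replaced by $-g$ yields the companion identity with $\max$ replaced by $\min$.

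Finally, for a general $r(\cdot)$ not identically zero near $0$, along any sequence $\lambda_j\to0^+$ with $r(\lambda_j)\neq0$ the quotient $\big(c(\lambda_j)-c(0)\big)/r(\lambda_j)$ can only accumulate at the two one‑sided limits just computed, so it converges if and only if these coincide, i.e. if and only if $\mu\mapsto\langle\mu,(-x)\cdot D_xL(x,v)\rangle$ is constant, say $\equiv c^{(1)}$, on $\mathcal{M}_0$ — when $r$ keeps a fixed sign near $0$ only one of the two limits is relevant and convergence is automatic, consistent with the discussion preceding the theorem. Moreover $\Omega_\lambda\supseteq\Omega$ when $r(\lambda)>0$ and $\Omega_\lambda\subseteq\Omega$ when $r(\lambda)<0$, so the monotonicity of the state‑constraint additive eigenvalue under domain inclusion forces $\big(c(\lambda)-c(0)\big)/r(\lambda)\ge0$ in both regimes, whence $c^{(1)}\ge0$. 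The main obstacle is Step~1 — the duality representation together with the uniform choice of the velocity radius $h$ across $\lambda$, which rests on uniform Lipschitz estimates for the eigenfunctions coming from $(\mathrm{H2})$ and $(\mathrm{A1})$ — and, within the present argument, the compactness step in the envelope lemma showing that near‑minimizers of the perturbed problem cluster inside $\mathcal{M}_0$; the remaining manipulations are routine.
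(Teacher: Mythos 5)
Your argument is correct and is, at bottom, the paper's own proof in a different packaging: the two estimates inside your envelope lemma (testing a fixed element of $\mathcal{M}_0$ against the perturbed cost for the upper bound, and passing weak-$*$ cluster points of the perturbed minimizers into $\mathcal{M}_0$ for the lower bound) are exactly the paper's two inequalities obtained from the scaled ergodic subsolutions and from the scaled minimizing measures via Lemma \ref{lem:inM0}. The one point to tighten is your Step 1: what Section 2 actually establishes (Theorem \ref{thm:c0}) is $-c(\lambda)=\min_{\nu\in\mathcal{P}\cap\mathcal{G}'_{0,\Omega_\lambda}}\langle\nu,L\rangle$, with $\mathcal{M}_0$ defined as the minimizers in $\mathcal{P}\cap\mathcal{G}'_{0,\Omega}$, not a minimum over all closed (holonomic) probability measures, and your justification of the pushforward identification via $\mathrm{C}^1$ test functions only addresses holonomy; to keep the minimizer set equal to the paper's $\mathcal{M}_0$ you should instead check that the scaling $\sigma\mapsto\tilde{\sigma}$ of Definition \ref{defn:scaledown} is a bijection between $\mathcal{P}\cap\mathcal{G}'_{0,\Omega_\lambda}$ and $\mathcal{P}\cap\mathcal{G}'_{0,\Omega}$, which follows by scaling subsolutions, i.e. $f\in\mathcal{G}_{0,\Omega}$ if and only if $f\left(\tfrac{x}{1+r(\lambda)},v\right)\in\mathcal{G}_{0,\Omega_\lambda}$ — the same computation already used in Lemmas \ref{rem: on L} and \ref{lem:inM0}. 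With that substitution your fixed-constraint-set reformulation, the Danskin-type envelope lemma, and the final reduction of the oscillating case to the two one-sided limits go through as written.
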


%\begin{rem}\label{rem:==} Let $c_{(1)}^- = \min_{\mu\in \mathcal{M}_0} \left\langle \mu,(-x)\cdot D_xL\right\rangle$ and $c_{(1)}^+ = \max_{\mu\in \mathcal{M}_0} \left\langle \mu,(-x)\cdot D_xL\right\rangle$. For $r(\lambda)\geq 0$, we can regard the inner and outer approximations of $\Omega$ as $(1-r(\lambda))\Omega$ and $(1+r(\lambda))\Omega$, respectively. Let $c(\lambda)^-$ and $c(\lambda)^+$ be the corresponding eigenvalues of $H$ in $(1-r(\lambda))\Omega$ and $(1+r(\lambda))\Omega$ respectively, then Theorem \ref{thm:limit} gives us that
%\begin{equation}\label{eq:c_1+-}
%    \lim_{\lambda\rightarrow 0^+} \left(\frac{ c(\lambda)^- - c(0)}{r(\lambda)}\right)=-c_{(1)}^-  \qquad\text{and}\qquad  \lim_{\lambda\rightarrow 0^+} \left(\frac{c(\lambda)^+ - c(0)}{r(\lambda)}\right) = c^{(1)}_+ .
%\end{equation}
%We see that the limits $(c^{\pm}_\lambda - c(0))/r(\lambda)$ are independent of $r(\lambda)$, and $c_{(1)}^- = c_{(1)}^+=c_{(1)}$ if and only if $\left\langle \mu,(-x)\cdot D_xL(x,v)\right\rangle = c_{(1)}$ for all $\mu\in \mathcal{M}_0$.
%\end{rem}

\begin{cor}\label{cor:Aug30-1} If $\left\langle \mu,(-x)\cdot D_xL(x,v)\right\rangle = c^{(1)}$ for all $\mu\in \mathcal{M}_0$ then $u^\gamma(\cdot) +\gamma c_{(1)} = u^0(\cdot)$.
\end{cor}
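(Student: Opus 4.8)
The plan is to read off the identity directly from the variational characterization in Theorem~\ref{thm:general}, applied twice: once for the given $\gamma\in\mathbb{R}$ and once for $\gamma=0$. First I would record that, by the remark following Theorem~\ref{thm:general} (which identifies its conclusion with that of Theorem~\ref{thm:subcritical} for the family \eqref{eq:familyc0}), the case $\gamma=0$ yields $u^0=\sup_{w\in\mathcal{E}^0}w$, where $\mathcal{E}^0$ consists of the subsolutions $w$ of \eqref{eq:S_0} with $\langle\mu,w\rangle\le 0$ for all $\mu\in\mathcal{M}_0$. Under the standing hypothesis $\langle\mu,(-x)\cdot D_xL(x,v)\rangle=c^{(1)}$ for every $\mu\in\mathcal{M}_0$, the constraint defining $\mathcal{E}^\gamma$ collapses to $\gamma c^{(1)}+\langle\mu,w\rangle\le 0$ for all $\mu\in\mathcal{M}_0$.

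The key observation is then that the translation $w\mapsto w+\gamma c^{(1)}$ is a bijection of $\mathcal{E}^\gamma$ onto $\mathcal{E}^0$. Indeed, adding a constant changes neither the property of being a viscosity subsolution of $H(x,Du)\le c(0)$ in $\Omega$ nor of being a supersolution on $\overline{\Omega}$, since constants have vanishing gradient; and because each $\mu\in\mathcal{M}_0$ is a probability measure on $\overline{\Omega}\times\overline{B}_h$, one has $\langle\mu,w+\gamma c^{(1)}\rangle=\langle\mu,w\rangle+\gamma c^{(1)}$, so $\gamma c^{(1)}+\langle\mu,w\rangle\le 0$ holds if and only if $\langle\mu,w+\gamma c^{(1)}\rangle\le 0$. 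Hence $w\in\mathcal{E}^\gamma\iff w+\gamma c^{(1)}\in\mathcal{E}^0$.

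Taking suprema gives $u^\gamma=\sup_{w\in\mathcal{E}^\gamma}w=\sup_{\tilde w\in\mathcal{E}^0}(\tilde w-\gamma c^{(1)})=u^0-\gamma c^{(1)}$, that is, $u^\gamma(\cdot)+\gamma c_{(1)}=u^0(\cdot)$ (with $c_{(1)}=c^{(1)}$ as in Theorem~\ref{thm:limit}).

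I do not expect a genuine obstacle here, since all the substance is already contained in Theorem~\ref{thm:general}; the argument is essentially a change of variables in the sup. The only points requiring care are (i) that the formula of Theorem~\ref{thm:general} is legitimately available at $\gamma=0$, which is exactly the content of the remark identifying it with Theorem~\ref{thm:subcritical}, and (ii) the probability-measure normalization of the elements of $\mathcal{M}_0$, which is what makes the constant shift \emph{exact} rather than a one-sided inequality. If one preferred not to invoke the $\gamma=0$ case of Theorem~\ref{thm:general}, the same shift could instead be run against the defining property of $u^0$ as the selected limit of the fixed-domain discounted solutions, but the variational route is the cleanest.
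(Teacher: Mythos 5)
Your proposal is correct and follows essentially the same route as the paper: both arguments exploit the variational characterizations $u^0=\sup_{v\in\mathcal{E}}v$ (Theorem~\ref{thm:conv_bdd}, which coincides with $\mathcal{E}^0$) and $u^\gamma=\sup_{w\in\mathcal{E}^\gamma}w$ (Theorem~\ref{thm:general}), together with the fact that the Mather measures are probability measures so the constraint collapses to $\gamma c^{(1)}+\langle\mu,w\rangle\le 0$ under the invariance hypothesis. The only cosmetic difference is that the paper verifies the two one-sided inequalities by testing the specific elements $u^\gamma+\gamma c^{(1)}\in\mathcal{E}$ and $u^0-\gamma c^{(1)}\in\mathcal{E}^\gamma$, whereas you phrase the same shift as a bijection $\mathcal{E}^\gamma\to\mathcal{E}^0$ and take suprema.
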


\begin{cor}\label{cor:equala} If $u^0(z) = u^\gamma(z)$ for some $z\in \Omega$ and $\gamma > 0$ then $ c^{(1)}_-= 0$.
\end{cor}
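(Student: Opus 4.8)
The plan is to obtain Corollary~\ref{cor:equala} as a short consequence of the variational formula for $u^\gamma$ in Theorem~\ref{thm:general} together with the sign information on the eigenvalue derivative furnished by Theorem~\ref{thm:limit}. Write $c^{(1)}_- = \min_{\mu\in\mathcal{M}_0}\langle\mu,(-x)\cdot D_xL(x,v)\rangle$, which by Theorem~\ref{thm:limit} is the one-sided limit of $(c(\lambda)-c(0))/r(\lambda)$ taken along $r(\lambda)<0$. First I would record that $c^{(1)}_-\ge 0$: for $\lambda\ll 1$ with $r(\lambda)<0$ one has $\Omega_\lambda\subset\Omega$, and the critical value is monotone under inclusion of domains (a subsolution of $H(x,Du)\le c$ in $\Omega$ restricts to one in $\Omega_\lambda$), so $c(\lambda)\le c(0)$, whence $(c(\lambda)-c(0))/r(\lambda)\ge 0$ and $c^{(1)}_-\ge 0$.

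The heart of the argument is the pointwise bound
\begin{equation}\label{eq:cor-equala-key}
   u^\gamma + \gamma\,c^{(1)}_- \le u^0 \qquad\text{on }\overline{\Omega}, \quad\text{for }\gamma\ge 0.
\end{equation}
To prove \eqref{eq:cor-equala-key}, fix $w\in\mathcal{E}^\gamma$ and set $\tilde w := w + \gamma c^{(1)}_-$. Since $\tilde w$ differs from $w$ by a constant, $D\tilde w = Dw$, so $\tilde w$ is again a subsolution of \eqref{eq:S_0}. Moreover, as every $\mu\in\mathcal{M}_0$ is a probability measure, $\langle\mu,\tilde w\rangle = \langle\mu,w\rangle + \gamma c^{(1)}_-$; using the defining inequality of $\mathcal{E}^\gamma$ and then $\langle\mu,(-x)\cdot D_xL(x,v)\rangle\ge c^{(1)}_-$ together with $\gamma\ge 0$,
\begin{equation*}
   \langle\mu,\tilde w\rangle \le -\gamma\langle\mu,(-x)\cdot D_xL(x,v)\rangle + \gamma c^{(1)}_- = -\gamma\big(\langle\mu,(-x)\cdot D_xL(x,v)\rangle - c^{(1)}_-\big) \le 0
\end{equation*}
for all $\mu\in\mathcal{M}_0$. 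Hence $\tilde w\in\mathcal{E}^0$, and Theorem~\ref{thm:general} applied with $\gamma=0$ gives $\tilde w\le u^0$. Taking the supremum over $w\in\mathcal{E}^\gamma$ and invoking $u^\gamma = \sup_{w\in\mathcal{E}^\gamma} w$ yields \eqref{eq:cor-equala-key}.

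Finally, suppose $u^0(z) = u^\gamma(z)$ for some $z\in\Omega$ and $\gamma>0$. Evaluating \eqref{eq:cor-equala-key} at $z$ gives $\gamma c^{(1)}_- \le u^0(z) - u^\gamma(z) = 0$, so $c^{(1)}_-\le 0$; combined with $c^{(1)}_-\ge 0$ this forces $c^{(1)}_- = 0$. I do not expect any genuine obstacle: the argument is essentially a one-line reading of the variational characterization once \eqref{eq:cor-equala-key} is in place, and the only points deserving a little care are the sign $c^{(1)}_-\ge 0$ (i.e.\ monotonicity of the critical value under the inner approximation) and the elementary fact that shifting a subsolution by a constant preserves the subsolution property while decreasing every pairing $\langle\mu,\cdot\rangle$ by exactly that constant. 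If one prefers, the last step can be recast as a contradiction: were $c^{(1)}_->0$, then \eqref{eq:cor-equala-key} would give $u^\gamma<u^0$ everywhere on $\overline{\Omega}$, incompatible with $u^0(z)=u^\gamma(z)$.
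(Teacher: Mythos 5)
Your proof is correct, but it follows a genuinely different route from the paper. The paper obtains the pointwise inequality $u^0(z)\ge u^\gamma(z)+\gamma\langle\sigma_0,(-x)\cdot D_xL(x,v)\rangle$ from Lemma \ref{lem:sigma0}(ii), i.e.\ by passing to the limit in the duality identity for the minimizing measures $\sigma_\lambda\in\mathcal{P}\cap\mathcal{G}'_{z,\phi(\lambda),\Omega}$ of the fixed-domain discounted problem with vertex $z$, and then concludes via $\langle\sigma_0,(-x)\cdot D_xL\rangle\ge c^{(1)}_-\ge 0$ (the last inequality being Lemma \ref{rem: on L}). You bypass the measure-limit machinery entirely: using only the variational characterizations $u^\gamma=\sup\mathcal{E}^\gamma$ and $u^0=\sup\mathcal{E}^0=\sup\mathcal{E}$ from Theorems \ref{thm:general} and \ref{thm:conv_bdd}, you shift any $w\in\mathcal{E}^\gamma$ by the constant $\gamma c^{(1)}_-$ and check, via $\langle\mu,(-x)\cdot D_xL\rangle\ge c^{(1)}_-$ and $\gamma\ge 0$, that the shift lands in $\mathcal{E}^0$; this is essentially the mechanism of the paper's proof of Corollary \ref{cor:Aug30-1}, run with an inequality in place of the invariance hypothesis. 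Your route buys a stronger intermediate statement, the global bound $u^\gamma+\gamma c^{(1)}_-\le u^0$ on $\overline{\Omega}$ for all $\gamma\ge 0$, from which the corollary is immediate; the paper's route is more local but reuses the duality framework already built and pins down the gap at $z$ in terms of a specific limiting measure $\sigma_0\in\mathcal{U}_0(z)$. One small remark: your justification of $c^{(1)}_-\ge 0$ via monotonicity of $c(\lambda)$ along inner approximations and Theorem \ref{thm:limit} tacitly presumes a sequence with $r(\lambda)<0$ (or an auxiliary choice such as $r(\lambda)=-\lambda$); since $c^{(1)}_-$ is by definition $\min_{\mu\in\mathcal{M}_0}\langle\mu,(-x)\cdot D_xL(x,v)\rangle$, it is cleaner to quote Lemma \ref{rem: on L} directly, as the paper does. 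This does not affect the validity of your argument.
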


Theorem \ref{thm:limit} gives us the convergence of the second normalization \eqref{eq:familyclambda} for finite $\gamma$. We recall that the case $\gamma = 0$ is already considered in Theorems \ref{thm:subcritical} and \ref{thm:general}.

\begin{cor}\label{cor:second_norm} Assume \eqref{H3c}, \eqref{H4}, $\mathrm{(H3)}, \mathrm{(H4)}$, $\mathrm{(A1)}$ and $\gamma \in \mathbb{R}\backslash \{0\}$, then 
\begin{equation*}
    \lim_{\lambda\rightarrow 0^+} \left( u_\lambda(x) + \frac{c(\lambda)}{\phi(\lambda)} \right) =  u^\gamma(x) + \gamma\lim_{\lambda\rightarrow 0^+} \left(\frac{c(\lambda)-c(0)}{r(\lambda)}\right)
\end{equation*}
locally uniformly in $\Omega$.
\end{cor}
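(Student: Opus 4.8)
The plan is to reduce everything to Theorems~\ref{thm:general} and~\ref{thm:limit} via the elementary splitting
\[
u_\lambda(x) + \frac{c(\lambda)}{\phi(\lambda)} \;=\; \left(u_\lambda(x) + \frac{c(0)}{\phi(\lambda)}\right) \;+\; \frac{c(\lambda) - c(0)}{\phi(\lambda)},
\]
in which the first summand is precisely the family \eqref{eq:familyc0}. Since $\gamma\in\mathbb{R}$, Theorem~\ref{thm:general} shows this family converges locally uniformly in $\Omega$ to $u^\gamma$. The second summand depends only on $\lambda$, so the corollary will follow once I show that $\dfrac{c(\lambda)-c(0)}{\phi(\lambda)}$ converges, as $\lambda\to 0^+$, to the constant $\gamma\lim_{\lambda\to 0^+}\dfrac{c(\lambda)-c(0)}{r(\lambda)}$: a locally uniformly convergent family of functions plus a convergent family of scalars converges locally uniformly to the sum of the two limits.

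First I would record that, because $\gamma\in\mathbb{R}\setminus\{0\}$, Remark~\ref{rem:first} rules out the oscillating case, so $r(\lambda)$ has a fixed sign — positive when $\gamma>0$, negative when $\gamma<0$ — for all sufficiently small $\lambda>0$; in particular $r(\lambda)\neq 0$ there, so $\dfrac{c(\lambda)-c(0)}{r(\lambda)}$ is well defined near $0$ and its one-sided limit is a genuine two-sided limit. For such $\lambda$ I factor
\[
\frac{c(\lambda)-c(0)}{\phi(\lambda)} \;=\; \frac{r(\lambda)}{\phi(\lambda)} \cdot \frac{c(\lambda)-c(0)}{r(\lambda)}.
\]
By \eqref{eq:asm}, $r(\lambda)/\phi(\lambda)\to\gamma$; by Theorem~\ref{thm:limit}, $\dfrac{c(\lambda)-c(0)}{r(\lambda)}$ converges to $\max_{\mu\in\mathcal{M}_0}\langle\mu,(-x)\cdot D_xL(x,v)\rangle$ if $\gamma>0$ and to $\min_{\mu\in\mathcal{M}_0}\langle\mu,(-x)\cdot D_xL(x,v)\rangle$ if $\gamma<0$ — in either case a finite number which, since $r$ is eventually single-signed, equals $\lim_{\lambda\to 0^+}\dfrac{c(\lambda)-c(0)}{r(\lambda)}$. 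Multiplying the two convergences gives $\dfrac{c(\lambda)-c(0)}{\phi(\lambda)}\to\gamma\lim_{\lambda\to 0^+}\dfrac{c(\lambda)-c(0)}{r(\lambda)}$, which is exactly relation \eqref{eq:introlimit2} specialized to the present situation.

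Adding the two displayed limits finishes the proof; the resulting identity is moreover consistent with Corollary~\ref{cor:Aug30-1}, since when $\langle\mu,(-x)\cdot D_xL(x,v)\rangle\equiv c^{(1)}$ the right-hand side reduces to $u^\gamma(x)+\gamma c^{(1)}=u^0(x)$, matching the expectation that normalizing by $c(\lambda)$ rather than $c(0)$ cancels the scaling correction. I do not anticipate a genuine obstacle: all the analytic content sits in Theorems~\ref{thm:general} and~\ref{thm:limit}, and the only point requiring care is the appeal to Remark~\ref{rem:first} to ensure $r(\lambda)$ is eventually of one sign, so that the two-sided limit on the right-hand side of the asserted identity is well defined and coincides with the relevant one-sided limit supplied by Theorem~\ref{thm:limit}.
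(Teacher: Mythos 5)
Your argument is correct and is essentially the paper's own: the paper proves Corollary \ref{cor:second_norm} exactly by combining Theorem \ref{thm:general} (convergence of the first normalization to $u^\gamma$) with Theorem \ref{thm:limit} via the splitting $u_\lambda+\phi(\lambda)^{-1}c(\lambda)=\bigl(u_\lambda+\phi(\lambda)^{-1}c(0)\bigr)+\phi(\lambda)^{-1}\bigl(c(\lambda)-c(0)\bigr)$ and the relation \eqref{eq:introlimit2}. Your explicit observation that $\gamma\neq 0$ forces $r(\lambda)$ to have a fixed sign near $0$, so the one-sided limit in Theorem \ref{thm:limit} is the full limit, is precisely the point the paper leaves implicit.
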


Even though the second normalization \eqref{eq:familyclambda} remains uniformly bounded when $\gamma = \pm\infty$, it is rather surprising that we have a divergent result in this convex setting. Using tools from weak KAM theory, we can construct an example where divergence happens when approximating from the inside. To our knowledge, this kind of example is new in the literature.

 \begin{thm}\label{thm:counter-example} There exists a Hamiltonian where given any $r(\lambda)\leq 0$ we can construct $\phi(\lambda)$ such that along a subsequence $\lambda_j \rightarrow 0^+$ we have $r(\lambda_j)/\phi(\lambda_j) \rightarrow -\infty$ and \eqref{eq:familyclambda} diverges.
\end{thm}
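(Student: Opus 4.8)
The plan is to reduce, by rescaling, the state-constraint problem on the shrinking domains $\Omega_\lambda$ to an equivalent vanishing--discount problem on the \emph{fixed} domain $\Omega$ with a $\lambda$-dependent, but convergent, Hamiltonian, and then to use that along a subsequence with $r(\lambda_j)/\phi(\lambda_j)\to-\infty$ the Hamiltonian perturbation overwhelms the discount, so that the selection of the limiting critical solution is governed by a perturbed ergodic mechanism which we make unstable. Concretely, if $u_\lambda$ solves \eqref{eq:S_lambda} then $w_\lambda:=u_\lambda+c(\lambda)/\phi(\lambda)$ solves $\phi(\lambda)w_\lambda+H(x,Dw_\lambda)-c(\lambda)=0$ on $\Omega_\lambda$ in the state-constraint sense; after the change of variables $y=x/(1+r(\lambda))$, the function $\widetilde w_\lambda(y):=w_\lambda((1+r(\lambda))y)$ solves
\[
\phi(\lambda)\,\widetilde w_\lambda(y)+\widetilde H_\lambda\big(y,D\widetilde w_\lambda(y)\big)=0\qquad\text{in }\Omega,\ \text{state-constraint on }\overline\Omega,
\]
with $\widetilde H_\lambda(y,p):=H\big((1+r(\lambda))y,(1+r(\lambda))^{-1}p\big)-c(\lambda)$ and Lagrangian $\widetilde L_\lambda(y,v)=L((1+r(\lambda))y,(1+r(\lambda))v)+c(\lambda)$. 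Since rescaling is a bijection on subsolutions, the additive eigenvalue of $\widetilde H_\lambda$ on $\Omega$ is $0$, so $\widetilde w_\lambda$ is the critically normalized discount solution of $\widetilde H_\lambda$ (which, incidentally, is the reason \eqref{eq:familyclambda} stays uniformly bounded), and $\widetilde H_\lambda\to H_0:=H-c(0)$ with $\|\widetilde H_\lambda-H_0\|\asymp|r(\lambda)|$ on bounded sets in $p$ (by $\mathrm{(H1)}$ and Theorem~\ref{thm:limit}). Divergence of $\{w_\lambda\}$ locally uniformly in $\Omega$ is equivalent to divergence of $\{\widetilde w_\lambda\}$, so it suffices to produce $H$, $\phi$ and $\lambda_j\downarrow0$ with $r(\lambda_j)/\phi(\lambda_j)\to-\infty$ such that $\{\widetilde w_{\lambda_j}\}$ has two subsequential limits differing at a point of $\Omega$.

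From the Lagrangian representation $\widetilde w_\lambda(y)=\inf_\xi\int_0^\infty e^{-\phi(\lambda)s}\widetilde L_\lambda(\xi,\dot\xi)\,ds$ over admissible curves with $\xi(0)=y$, together with the duality with viscosity Mather measures of Section~2, every locally uniform subsequential limit of $\widetilde w_\lambda$ solves \eqref{eq:S_0}, and which solution is picked out is governed by which $\mu\in\mathcal M_0$ the minimizing trajectories charge. As in the proofs of Theorems~\ref{thm:general} and \ref{thm:limit}, using $\mathrm{(H4)}$ and $c(\lambda)-c(0)=c^{(1)}_-r(\lambda)+o(r(\lambda))$ with $c^{(1)}_-:=\min_{\mu\in\mathcal M_0}\langle\mu,(-x)\cdot D_xL(x,v)\rangle$ (valid here because $r(\lambda)\le0$), the leading-order cost the perturbation $\widetilde L_\lambda-L_0$ attaches to $\mu\in\mathcal M_0$ is $|r(\lambda)|\big(\langle\mu,(-x)\cdot D_xL(x,v)\rangle-c^{(1)}_-\big)\ge0$, which vanishes exactly on $\mathcal M_0^\star:=\{\mu\in\mathcal M_0:\langle\mu,(-x)\cdot D_xL(x,v)\rangle=c^{(1)}_-\}$. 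When $r(\lambda_j)/\phi(\lambda_j)$ stays bounded this first-order cost is of the same order as the discount and Theorems~\ref{thm:general}, \ref{thm:limit} and Corollary~\ref{cor:second_norm} force convergence of \eqref{eq:familyclambda}; but when $r(\lambda_j)/\phi(\lambda_j)\to-\infty$ the perturbation dominates the discount, so it pins the minimizing measures to $\mathcal M_0^\star$ without selecting \emph{within} it. The selection inside $\mathcal M_0^\star$ is then resolved only by data at the discount scale (the $O(r(\lambda)^2)$ corrections of $\widetilde L_\lambda$, the remainder $c(\lambda)-c(0)-c^{(1)}_-r(\lambda)$, and the weight $e^{-\phi(\lambda)s}$), and hence depends on $\phi$; this is the instability to exploit.

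For the example I would take $\Omega=B(0,1)\subset\mathbb R^2$ — star-shaped about $0$ and satisfying \eqref{condA2} — and a Ma\~n\'e-type Hamiltonian $H(x,p)=\tfrac12|p+b(x)|^2-V(x)$ with $b,V\in\mathrm C^\infty$, so $\mathrm{(H1)}$--$\mathrm{(H4)}$ hold at once ($L(x,v)=\tfrac12|v|^2-b(x)\cdot v+V(x)$, and the rest points of the Euler--Lagrange flow are the critical points of $V$). I would fine-tune $(b,V)$ so that $V$ has exactly two global minima $x_1\ne x_2$ of equal depth, each interior, surrounded by a small region where $V$ is rotationally symmetric, with $|x_1|=|x_2|$ and with no isometry swapping them, and $b$ a small non-gradient drift: then $\mathcal M_0=\mathcal M_0^\star=\{\theta\delta_{(x_1,0)}+(1-\theta)\delta_{(x_2,0)}:\theta\in[0,1]\}$, the two Ma\~n\'e potentials $u^{(1)},u^{(2)}$ of $H_0$ based at $x_1,x_2$ are distinct solutions of \eqref{eq:S_0}, the ``parking costs'' $\widetilde L_\lambda(x_i,0)=V((1+r(\lambda))x_i)+c(\lambda)$ coincide for $i=1,2$ and for small $\lambda$, so nothing at the perturbation scale distinguishes $x_1$ from $x_2$, and $\widetilde H_\lambda$ carries no symmetry forcing $\widetilde w_\lambda$ to be invariant. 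Given $r(\lambda)\le0$ not identically zero near $0$, I would then pick $\lambda_j\downarrow0$ with $r(\lambda_j)\ne0$ and construct a continuous nondecreasing $\phi>0$ with $\phi(\lambda)\to0^+$ such that $r(\lambda_j)/\phi(\lambda_j)\to-\infty$ and the value $\phi(\lambda_j)$ is chosen within the admissible range to tip the remaining tie so that $\widetilde w_{\lambda_j}$ lies within $\tfrac14\|u^{(1)}-u^{(2)}\|_{\mathrm C(K)}$ of $u^{(1)}$ for odd $j$ and of $u^{(2)}$ for even $j$, on a fixed compact $K\subset\Omega$. Then \eqref{eq:familyclambda} diverges.

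The decisive step — and the main obstacle — is the quantitative two-scale analysis behind the last sentence of the construction: one must show that, in the regime $\phi(\lambda)\ll|r(\lambda)|$, the critically normalized discount solution $\widetilde w_\lambda$ is genuinely steered toward $u^{(1)}$ versus $u^{(2)}$ by an explicit quantity that the choice of $\phi(\lambda)$ can push either way while keeping $r(\lambda)/\phi(\lambda)$ large. This requires a refined description of the optimal discounted trajectories (splitting into a $\lambda$-independent transit toward $\mathcal M_0^\star$ and a long stay near it, with careful accounting of the $O(r(\lambda)^2)$ and $e^{-\phi(\lambda)s}$ corrections), which goes beyond the $\gamma$-finite theory of Theorems~\ref{thm:general}--\ref{thm:limit}; it may also turn out that a richer Aubry set than two rest points (for instance a closed orbit, along which the selected ergodic measure can vary continuously) is needed to make the tie genuinely reversible by $\phi$. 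Verifying $\mathrm{(A1)}$ and $\mathrm{(H1)}$--$\mathrm{(H4)}$ for the constructed $H$, checking that the fine-tuning conditions are simultaneously realizable and that $u^{(1)}\ne u^{(2)}$, and turning a possibly oscillatory $r(\lambda)\le0$ into an admissible $\phi$ with the prescribed alternation, are comparatively routine.
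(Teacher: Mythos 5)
Your proposal stops exactly where the content of the theorem lies, and the concrete example you choose appears to have no divergence mechanism at all. You concede that the ``decisive step'' --- showing that the value of $\phi(\lambda_j)$ can steer the critically normalized solution toward $u^{(1)}$ for odd $j$ and toward $u^{(2)}$ for even $j$ --- is an open obstacle, and nothing in the proposal identifies a quantity at the discount scale that depends on $\phi$ and selects different limits; so the proof is not complete. Worse, for the Ma\~n\'e-type example with two \emph{interior} equal-depth minima the machinery of the paper suggests the opposite of what you need: the Aubry set is compactly contained in $\Omega$, so by Theorem \ref{thm:eigenvalue} one has $c(\lambda)=c(0)$ for all small $\lambda$; the measures in $\mathcal{M}_0$ are convex combinations of the point masses at the rest points $(x_i,0)$, where $D_xL(x_i,0)=DV(x_i)=0$, hence $\langle\mu,(-x)\cdot D_xL(x,v)\rangle=0$ for \emph{every} $\mu\in\mathcal{M}_0$, your set $\mathcal{M}_0^\star$ is all of $\mathcal{M}_0$, and Corollary \ref{cor:Aug30-1} gives $u^\gamma=u^0$ for every finite $\gamma$. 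In other words the scaling perturbation is degenerate on all relevant measures (cf.\ Remark \ref{remark:Aubry}(ii)), so there is no first-order ``lever'' left for $\phi$ to act on, and no reason to expect the tie to be reversible by the discount schedule; as stated the construction is more likely to yield convergence than divergence.

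For contrast, the paper's divergence does not come from $\phi$ breaking a tie on a fixed configuration, but from instability of the Aubry set under the domain truncation itself. It takes $H(x,p)=|p|-V(x)$ on $\Omega=(-1,1)$ with $V$ built (Definition \ref{defV}) so that the argmin of $V$ over $\overline{\Omega}_\lambda=[-1+r(\lambda),1-r(\lambda)]$ jumps between neighborhoods of $-1$ and of $+1$ infinitely often as $\lambda\to0^+$; the maximal ergodic solutions $u^0_\lambda=S_{\Omega_\lambda}(\cdot,z_\lambda)$ then converge along two subsequences to $S_\Omega(\cdot,-1)\neq S_\Omega(\cdot,1)$ (Lemma \ref{lem:divergence}), a statement about the domains and $V$ alone, independent of $\phi$. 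The discount enters only passively, by diagonalization: Theorem \ref{thm:conv_bdd} on each fixed $\Omega_\lambda$ gives a threshold $\tau(\lambda)$ below which the discounted solution normalized by $c(\lambda)$ is within $r(\lambda)$ of $u^0_\lambda$, and setting $\phi(\lambda)=\tau(\lambda)r(\lambda)^2$ both forces $r(\lambda)/\phi(\lambda)\to\infty$ and lets \eqref{eq:familyclambda} inherit the divergence of $u^0_\lambda$. If you want to rescue your route, you must either push the competing parts of the Aubry set to the boundary so that the truncation genuinely reorders them (which is essentially the paper's construction), or actually carry out the two-scale analysis of discounted optimal trajectories that you flag as the main obstacle; without one of these, the key step is missing.
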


\subsection{Organization of the paper} 
The paper is organized in the following way. In Section \ref{sec2}, we provide the background results on state-constraint Hamilton--Jacobi equations, the duality representation, and some weak KAM theory backgrounds that will be needed throughout the paper. We also review the selection principle for vanishing discount on a fixed bounded domain and its characterization of the limit, as well as set up the problems of vanishing discount on changing domains. Section \ref{sec3} is devoted to proving the main results (Theorems \ref{thm:general}) on the convergence of the first normalization, together with some examples. In Section \ref{sec4} we provide the proof for Theorem \ref{thm:limit} which addresses the question of asymptotic expansion of the eigenvalue, which relates the limits of different normalization in vanishing discount and proof for Corollaries \ref{cor:Aug30-1} and \ref{cor:equala}. In Section \ref{sec5} we restate the convergence of the second normalization as a consequence from Sections \ref{sec3} and \ref{sec4}, and provide a new counterexample where divergence happens (Theorem \ref{thm:counter-example}). The proofs of some Theorems and Lemmas are provided in the Appendix.

\section{Preliminaries}\label{sec2}

\subsection{State-constraint solutions}
For an open subset $\Omega \subset U \subset\mathbb{R}^n$, we denote the space of  bounded uniformly continuous functions defined in $\Omega$ by $\mathrm{BUC}(\Omega;\mathbb{R})$. Assume that $H:\overline{U}\times \mathbb{R}^n \rightarrow \mathbb{R}$ is a Hamiltonian such that $H\in \mathrm{BUC}(\overline{U}\times \mathbb{R}^n)$ satisfying $\mathrm{(H1)}, \mathrm{(H2)}$. We consider the following equation with $\delta \geq 0$:
\begin{equation}\label{HJ-static}
    \delta u(x) + H(x,Du(x)) = 0 \qquad\text{in}\;\Omega \tag{HJ}.
\end{equation}
\begin{defn}\label{defn:1} We say that
\begin{itemize}
\item[(i)] $v\in \mathrm{BUC}(\Omega;\mathbb{R})$ is a viscosity subsolution of \eqref{HJ-static} in $\Omega$ if, for every $x\in \Omega$ and $\varphi\in \mathrm{C}^1(\Omega)$ such that $v-\varphi$ has a local maximum over $\Omega$ at $x$, $\delta v(x) + H\big(x,D\varphi(x)\big) \leq 0$ holds.

\item[(ii)] $v\in \mathrm{BUC}(\overline\Omega;\mathbb{R})$ is a viscosity supersolution of \eqref{HJ-static} on $\overline\Omega$ if, for every $x\in \overline{\Omega}$ and $\varphi\in \mathrm{C}^1(\overline{\Omega})$ such that $v-\varphi$ has a local minimum over $\overline{\Omega}$ at $x$, $\delta v(x) + H\big(x,D\varphi(x)\big) \geq 0$ holds.
\end{itemize}

If $v$ is a viscosity subsolution to \eqref{HJ-static} in $\Omega$, and is a viscosity supersolution to \eqref{HJ-static} on $\overline{\Omega}$, that is, $v$ is a viscosity solution to
\begin{equation}\label{state-def}
\begin{cases}
\delta v(x) + H(x,Dv(x)) \leq 0 &\quad\text{in}\; \Omega,\\
\delta v(x) + H(x,Dv(x)) \geq 0 &\quad\text{on}\; \overline{\Omega},
\end{cases} \tag{HJ$_\delta$}
\end{equation}
then we say that $v$ is a state-constraint viscosity solution of \eqref{HJ-static}. 
\end{defn}

\begin{defn} For a real valued function $w(x)$ define for $x\in \Omega$, we define the super-differential and sub-differential of $w$ at $x$ as
\begin{align*}
D^{+}w(x)&=\left \lbrace p \in \R^n : \limsup_{y \rightarrow x} \frac{w(y)-w(x)-p\cdot (y-x)}{|y-x|} \leq 0\right\rbrace,\\
D^{-}w(x)&=\left \lbrace p \in \R^n : \liminf_{y \rightarrow x} \frac{w(y)-w(x)- p \cdot (y-x)} {|y-x|} \geq 0 \right\rbrace.
\end{align*}
\end{defn}

We refer the readers to \cite{ Capuzzo-Dolcetta1990,kim2019stateconstraint,Soner1986} for the existence and wellposedness, as well as the Lipschitz bound on solutions $u_\lambda$ of \eqref{eq:S_lambda} and a description on state-constraint boundary condition. See also \cite{Bardi1997,Barles1994, Hung2019} for the equivalent definition of viscosity solution using super-differential and sub-differential.

\begin{thm}\label{Perron} Assume \eqref{H4} and $\delta > 0$. Then, there exists a state-constrained viscosity solution $u\in \mathrm{C}(\overline{\Omega})\cap \mathrm{W}^{1,\infty}(\Omega)$ to \eqref{state-def} with $\delta|u(x)|+ |Du(x)| \leq C_H$ for $x\in \Omega$ where $C_H$ only depends on $H$.
\end{thm}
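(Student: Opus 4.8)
\emph{Proof proposal.} The plan is to run Perron's method, using the coercivity assumption \eqref{H4} both to produce constant barriers and to obtain the a priori Lipschitz estimate. Since $H$ is continuous and coercive on $\overline{U}\times\R^n$, the quantities
\[
m_0:=\max\Big(0,\,-\min_{\overline{U}\times\R^n}H\Big),\qquad M_0:=\max_{\overline{U}}|H(\cdot,0)|
\]
are finite and depend only on $H$. The constant function $\overline{u}:=m_0/\delta$ is a supersolution of \eqref{HJ-static} on $\overline{\Omega}$ (indeed $\delta\overline{u}+H(x,p)=m_0+H(x,p)\ge0$ for every $x,p$), while $\underline{u}:=-M_0/\delta$ is a subsolution of \eqref{HJ-static} in $\Omega$ (since $\delta\underline{u}+H(x,0)=-M_0+H(x,0)\le0$), and $\underline{u}\le\overline{u}$. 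Put $C_0:=\max(m_0,M_0)$ and let $\mathcal{F}$ be the set of all $v\in\mathrm{BUC}(\Omega;\R)$ that are subsolutions of \eqref{HJ-static} in $\Omega$ with $|v|\le C_0/\delta$; then $\underline{u}\in\mathcal{F}$, and since $\underline{u}$ is itself a subsolution, $\max(v,\underline{u})\in\mathcal{F}$ whenever $v$ is a subsolution with $v\le C_0/\delta$. Define $u:=\sup_{v\in\mathcal{F}}v$.

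Next I would establish the quantitative bounds. Any $v\in\mathcal{F}$ satisfies, in the viscosity sense, $H(x,Dv(x))\le-\delta v(x)\le C_0$ in $\Omega$; by \eqref{H4} the number $R_H:=\sup\{|p|:\min_{\overline{U}}H(\cdot,p)\le C_0\}$ is finite and depends only on $H$, so every $p\in D^{+}v(x)$ has $|p|\le R_H$, hence every $v\in\mathcal{F}$ is $R_H$-Lipschitz on $\Omega$. Consequently $u$, being a supremum of a nonempty family of uniformly $R_H$-Lipschitz functions bounded by $C_0/\delta$, is itself $R_H$-Lipschitz and bounded by $C_0/\delta$; it therefore extends to an element of $\mathrm{C}(\overline{\Omega})\cap\mathrm{W}^{1,\infty}(\Omega)$ with $\delta|u(x)|+|Du(x)|\le C_0+R_H=:C_H$, a constant depending only on $H$.

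It remains to check that $u$ solves the state-constraint problem \eqref{state-def}. That $u$ is a subsolution of \eqref{HJ-static} in $\Omega$ is the standard fact that (the upper semicontinuous envelope of) a supremum of subsolutions of an equation monotone in the zeroth-order term is again a subsolution; here $u$ is already continuous, so $u\in\mathcal{F}$ is its maximal element. For the supersolution property on $\overline{\Omega}$ I would argue by contradiction: suppose $u-\varphi$ has a local minimum over $\overline{\Omega}$ at some $x_0\in\overline{\Omega}$ with $\delta u(x_0)+H(x_0,D\varphi(x_0))<0$. If $u(x_0)<m_0/\delta$, then for small $\theta,\rho>0$ the function $w$ equal to $\max\big(u,\;\varphi+u(x_0)-\varphi(x_0)+\theta\big)$ on $B(x_0,\rho)\cap\overline{\Omega}$ and to $u$ elsewhere is, by continuity of $H$ and strictness of the minimum, still a subsolution in $\Omega$ with $|w|\le C_0/\delta$ and $w\not\le u$, contradicting the maximality of $u$ in $\mathcal{F}$. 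If instead $u(x_0)=m_0/\delta=\overline{u}(x_0)$, then from $u\le\overline{u}$ and the local minimum of $u-\varphi$ at $x_0$ one gets $(\overline{u}-\varphi)(x)\ge(u-\varphi)(x)\ge(u-\varphi)(x_0)=(\overline{u}-\varphi)(x_0)$ near $x_0$, so $\overline{u}-\varphi$ also has a local minimum over $\overline{\Omega}$ at $x_0$, and the supersolution property of the constant $\overline{u}$ forces $\delta u(x_0)+H(x_0,D\varphi(x_0))=\delta\overline{u}(x_0)+H(x_0,D\varphi(x_0))\ge0$, again a contradiction. Hence $u$ is a state-constraint viscosity solution of \eqref{HJ-static} with the asserted bounds.

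The routine parts are the constant barriers and the coercivity estimate; the delicate point I expect to spend the most care on is the obstacle (``bump up'') construction for the supersolution property, in particular ensuring the perturbed function stays within the class $\mathcal{F}$ near $x_0$ and remains a subsolution across the gluing sphere. This is exactly where the separate treatment of the boundary points at which $u$ meets the upper barrier $\overline{u}$ is needed, and it is also the reason that neither regularity of $\partial\Omega$ nor convexity of $H$ is required for this particular statement.
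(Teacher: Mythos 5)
The paper itself gives no proof of this theorem (it is quoted from \cite{Capuzzo-Dolcetta1990, Soner1986, kim2019stateconstraint}), and your Perron-with-constant-barriers construction is exactly the standard route those references take in the coercive, possibly nonconvex case: constant sub/supersolutions from \eqref{H4}, a coercivity gradient bound giving equi-Lipschitz subsolutions, the sup as candidate, and the bump argument whose only possible failure point is where $u$ touches the upper barrier, which is precisely where the constant supersolution rescues the test. The core of your argument is correct.

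Three points need tightening. First, your case split at $x_0$ (either $u(x_0)<m_0/\delta$ or $u(x_0)=m_0/\delta$) is exhaustive only if $u\leq \overline{u}=m_0/\delta$, which you use but do not prove; with your definition of $\mathcal{F}$ you only know $u\leq C_0/\delta$, and $M_0$ can exceed $m_0$ (e.g.\ $H(x,p)=|p|^2+100$). The fix is one line: at every point where $D^+v(x)\neq\emptyset$ (a dense set) one has $\delta v(x)\leq -H(x,p)\leq m_0$, so every continuous subsolution satisfies $v\leq m_0/\delta$; alternatively define $\mathcal{F}$ by $\underline{u}\leq v\leq\overline{u}$ from the start. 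Second, in the bump construction you invoke ``strictness of the minimum'' while the contradiction hypothesis only provides a local minimum; you should say explicitly that one may replace $\varphi$ by $\varphi-|x-x_0|^2$ (unchanged value and gradient at $x_0$) to get the strict gap on the annulus that makes the gluing $w=u$ near $\partial B(x_0,\rho)\cap\Omega$ legitimate. Third, the step ``a supremum of uniformly $R_H$-Lipschitz functions is $R_H$-Lipschitz, hence extends to $\mathrm{C}(\overline{\Omega})$'' is not purely a statement about $H$: the superdifferential bound gives $|Dv|\leq R_H$ a.e., i.e.\ Lipschitz continuity with respect to the \emph{intrinsic} distance of $\Omega$, and passing to the Euclidean Lipschitz bound and the continuous extension to $\overline{\Omega}$ uses the geometry of the domain (under $\mathrm{(A1)}$, star-shapedness plus \eqref{condA2} make the intrinsic and Euclidean distances comparable, so the step is fine here). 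Your closing remark that no regularity of $\partial\Omega$ is required is therefore too strong: for a general bounded open set the sup need not have a uniform modulus up to the boundary, and this is where the paper's standing assumption on $\Omega$ quietly enters.
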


\begin{cor}\label{cor:Inv_max} Let $u\in \mathrm{C}(\overline{\Omega})$ be a viscosity subsolution to \eqref{HJ-static} in $\Omega$ with $\delta>0$. If $v\leq u$ on $\overline{\Omega}$ for all viscosity subsolutions $v\in \mathrm{C}(\overline{\Omega})$ of \eqref{HJ-static} in $\Omega$ then $u$ is a viscosity supersolution to \eqref{HJ-static} on $\overline{\Omega}$.
\end{cor}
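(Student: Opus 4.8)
The plan is to argue by contradiction via a Perron-type "bump" construction: if $u$ were the pointwise-maximal subsolution but failed the supersolution inequality at some point, we could locally lift $u$ a little and produce a strictly larger subsolution. Suppose then that $u$ is \emph{not} a viscosity supersolution of \eqref{HJ-static} on $\overline{\Omega}$. By Definition \ref{defn:1} there exist $x_0\in\overline{\Omega}$ and $\varphi\in\mathrm{C}^1(\overline{\Omega})$ such that $u-\varphi$ has a local minimum over $\overline{\Omega}$ at $x_0$ while $\delta u(x_0)+H(x_0,D\varphi(x_0))<0$. Subtracting a constant we may assume $\varphi(x_0)=u(x_0)$, and replacing $\varphi(x)$ by $\varphi(x)-|x-x_0|^2$ (which does not alter $D\varphi(x_0)$, hence preserves the strict inequality) we may assume the minimum is strict, i.e. $u(x)>\varphi(x)$ for $x\in(\overline{\Omega}\cap\overline{B}(x_0,\rho_0))\setminus\{x_0\}$ for some $\rho_0>0$.

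By continuity of $H$ and of $D\varphi$, there exist $\rho\in(0,\rho_0)$ and $\theta_0>0$ with
\[
\delta\bigl(\varphi(x)+\theta\bigr)+H\bigl(x,D\varphi(x)\bigr)\le 0\qquad\text{for all }x\in\overline{\Omega}\cap\overline{B}(x_0,\rho),\ \theta\in[0,\theta_0],
\]
so that $\varphi+\theta$ is a classical, hence viscosity, subsolution of \eqref{HJ-static} in $\Omega\cap B(x_0,\rho)$. Since $u-\varphi$ has a strict minimum at $x_0$, the number $m:=\min\{u(x)-\varphi(x):x\in\overline{\Omega},\ |x-x_0|=\rho\}$ is strictly positive; fix $\theta\in(0,\min\{\theta_0,m\})$ and set
\[
w(x):=\begin{cases}\max\{u(x),\ \varphi(x)+\theta\} & x\in\overline{\Omega}\cap\overline{B}(x_0,\rho),\\[2pt] u(x) & x\in\overline{\Omega}\setminus\overline{B}(x_0,\rho).\end{cases}
\]
On a neighborhood of $\{x\in\overline{\Omega}:|x-x_0|=\rho\}$ we have $\varphi+\theta\le u-m+\theta<u$, so the two branches coincide there and $w\in\mathrm{C}(\overline{\Omega})$.

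It remains to check that $w$ is a viscosity subsolution of \eqref{HJ-static} in $\Omega$ and that $w\not\le u$ on $\overline{\Omega}$, which contradicts the hypothesis on $u$. Away from $\overline{B}(x_0,\rho)$ and near the sphere $\{|x-x_0|=\rho\}$ we have $w=u$, a subsolution; on $\Omega\cap B(x_0,\rho)$, $w$ is the maximum of the two viscosity subsolutions $u$ and $\varphi+\theta$, hence a viscosity subsolution there (if $w-\psi$ has a local maximum at $y$ for $\psi\in\mathrm{C}^1$, the branch realizing $w(y)$ also has a local maximum of itself minus $\psi$ at $y$, yielding the subsolution inequality for that branch, and the two branches give the same inequality since $H$ depends on the unknown only through the monotone term $\delta u$). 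As subsolutions are defined by a local test, $w$ is a viscosity subsolution of \eqref{HJ-static} in $\Omega$. Finally $w(x_0)=\max\{u(x_0),\varphi(x_0)+\theta\}=u(x_0)+\theta>u(x_0)$, contradicting the assumption that every such $v$ satisfies $v\le u$ on $\overline{\Omega}$. Hence $u$ is a viscosity supersolution on $\overline{\Omega}$.

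The argument is essentially routine; the only points that require care are that the bump and the gluing must be carried out with respect to $\overline{\Omega}$ (so that a boundary point $x_0\in\partial\Omega$ is handled correctly), while the subsolution property only needs to be verified on the open set $\Omega$; and the compatibility of the two perturbations — the strictification of the minimum and the choice $\theta<m$ — which is exactly what makes $w$ globally continuous and strictly above $u$ at $x_0$. I expect no serious obstacle beyond bookkeeping these choices and invoking the standard "maximum of subsolutions is a subsolution" fact.
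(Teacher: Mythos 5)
Your proof is correct: the Perron-type bump construction (strictify the minimum, lift $\varphi$ by a small $\theta$ on a small ball around $x_0\in\overline{\Omega}$, glue via the maximum, and contradict maximality of $u$) is exactly the standard argument this corollary rests on, and your handling of the two delicate points — working relative to $\overline{\Omega}$ so that boundary points $x_0$ are covered while the subsolution test is only needed in $\Omega$, and choosing $\theta<m$ so the gluing is continuous with $w(x_0)>u(x_0)$ — is sound. The paper states this corollary without proof (deferring to standard references on Perron's method), and your argument coincides with that classical proof, so there is nothing to correct beyond bookkeeping you have already done.
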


%For the comparison principle, we state it with some more general assumptions on the continuity of $H$ as follows.
%\begin{itemize}
 %   \item[(H1a)] There exists a modulus $\omega_H:[0,\infty) \rightarrow [0,\infty)$, which is a nondecreasing function satisfying $\omega_H(0^+) = 0$ and
%\begin{equation}\label{H3a}
%\begin{cases}
%|H(x,p) - H(y,p)| \leq \omega_H\big(|x-y|(1+|p|)\big),\\
%|H(x,p) - H(x,q)| \leq \omega_H(|p-q|)
%\end{cases} \tag{H1a}
%\end{equation}
%for $x,y \in \overline{U}$ and $p,q\in \mathbb{R}^n$.

%\item[(H1b)] For every $R>0$, there exists a modulus $\omega_R:[0,+\infty) \rightarrow [0,+\infty)$, which is nondecreasing with $\omega_R(0^+) = 0$ and
%\begin{equation}\label{H3b}
%\begin{cases}
%|H(x,p) - H(y,p)| \leq \omega_R\big(|x-y|),\\
%|H(x,p) - H(x,q)| \leq \omega_R(|p-q|)
%\end{cases} \tag{H1b}
%\end{equation}
%for $x,y \in \overline{U}$ and $p,q\in \mathbb{R}^n$ with $|p|,|q|\leq R$.
%\end{itemize}
\begin{thm}\label{CP continuous} 
Assume \eqref{H3c}, $\mathrm{(A2)}$ and $\delta > 0$. If $v_1\in \mathrm{BUC}(\overline{\Omega};\mathbb{R})$ is a Lipschitz viscosity subsolution of \eqref{HJ-static} in $\Omega$, and $v_2\in \mathrm{BUC}(\overline{\Omega};\mathbb{R})$ is a viscosity supersolution of \eqref{HJ-static} on $\overline{\Omega}$ then $v_1(x)\leq v_2(x)$ for all $x\in \overline{\Omega}$.
\end{thm}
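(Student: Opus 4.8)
The plan is to run the classical doubling-of-variables argument, with the modification — specific to state-constraint problems — that the quadratic penalization is \emph{recentered inward} using the vector field $\eta$ from $\mathrm{(A2)}$, so that the maximizer in the doubled function is forced into the open set $\Omega$, where the subsolution inequality is available. Suppose for contradiction that $M:=\max_{\overline{\Omega}}(v_1-v_2)>0$ (the max exists since $\overline{\Omega}$ is compact and $v_1-v_2\in\mathrm{C}(\overline{\Omega})$), fix $\bar x\in\overline{\Omega}$ with $v_1(\bar x)-v_2(\bar x)=M$, and put $L:=\mathrm{Lip}(v_1)$. By a standard mollification I first replace the field $\eta\in\mathrm{BUC}(\overline{\Omega};\R^n)$ of $\mathrm{(A2)}$ by some $\eta_*\in\mathrm{C}^1$ with $\|\eta_*-\eta\|_{\infty}\le r/2$; then $B(x+t\eta_*(x),rt/2)\subset\Omega$ for all $x\in\overline{\Omega}$, $t\in(0,h]$.

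For $\varepsilon\in(0,1)$ set $t=t(\varepsilon):=K_0\sqrt{\varepsilon}$, where $K_0$ is a large constant (depending only on $r$ and $\|v_1\|_{\infty}+\|v_2\|_{\infty}$) chosen below and small enough that $t(\varepsilon)\le h$, and consider
\begin{equation*}
\Phi_\varepsilon(x,y):=v_1(x)-v_2(y)-\frac{|x-y-t\,\eta_*(y)|^2}{2\varepsilon},\qquad (x,y)\in\overline{\Omega}\times\overline{\Omega},
\end{equation*}
which attains its maximum $M_\varepsilon$ over the compact $\overline{\Omega}\times\overline{\Omega}$ at some $(x_\varepsilon,y_\varepsilon)$. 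Testing with $y=\bar x$ and $x=\bar x+t\eta_*(\bar x)\in\Omega$ (admissible, and making the penalty vanish) gives $M_\varepsilon\ge M-L\|\eta_*\|_{\infty}t$; together with $v_1(x_\varepsilon)-v_2(y_\varepsilon)\le\|v_1\|_{\infty}+\|v_2\|_{\infty}$ this forces $\tfrac{|x_\varepsilon-y_\varepsilon-t\eta_*(y_\varepsilon)|^2}{2\varepsilon}\le C_1$ for a constant $C_1$ independent of $\varepsilon$ (for $\varepsilon$ small), hence $|x_\varepsilon-y_\varepsilon-t\eta_*(y_\varepsilon)|\le\sqrt{2C_1\varepsilon}$. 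Choosing $K_0>2\sqrt{2C_1}/r$ we get $\sqrt{2C_1\varepsilon}<rt/2$, so that $x_\varepsilon\in B(y_\varepsilon+t\eta_*(y_\varepsilon),rt/2)\subset\Omega$ — the crucial interiority. We also record $v_1(x_\varepsilon)-v_2(y_\varepsilon)\ge M_\varepsilon\ge M-L\|\eta_*\|_{\infty}t$ and $|x_\varepsilon-y_\varepsilon|\le\sqrt{2C_1\varepsilon}+t\|\eta_*\|_{\infty}\to 0$.

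Freezing variables: $x\mapsto v_1(x)-\tfrac{|x-y_\varepsilon-t\eta_*(y_\varepsilon)|^2}{2\varepsilon}$ has a maximum over $\overline{\Omega}$ at the interior point $x_\varepsilon$, so Definition \ref{defn:1}(i) gives $\delta v_1(x_\varepsilon)+H(x_\varepsilon,p_\varepsilon)\le 0$ with $p_\varepsilon:=\varepsilon^{-1}(x_\varepsilon-y_\varepsilon-t\eta_*(y_\varepsilon))\in D^{+}v_1(x_\varepsilon)$, whence $|p_\varepsilon|\le L$; and $y\mapsto v_2(y)+\tfrac{|x_\varepsilon-y-t\eta_*(y)|^2}{2\varepsilon}$ has a minimum over $\overline{\Omega}$ at $y_\varepsilon\in\overline{\Omega}$, so Definition \ref{defn:1}(ii), after computing the gradient of the ($\mathrm{C}^1$) test function, gives $\delta v_2(y_\varepsilon)+H(y_\varepsilon,q_\varepsilon)\ge 0$ with $q_\varepsilon:=(I+t\,D\eta_*(y_\varepsilon))^{\top}p_\varepsilon$, so $|q_\varepsilon-p_\varepsilon|\le tL\|D\eta_*\|_{\infty}$ and $|q_\varepsilon|\le R:=L(1+h\|D\eta_*\|_{\infty})$. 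Subtracting the two inequalities and invoking $\mathrm{(H1)}$ at level $R$,
\begin{equation*}
\delta\big(v_1(x_\varepsilon)-v_2(y_\varepsilon)\big)\le H(y_\varepsilon,q_\varepsilon)-H(x_\varepsilon,p_\varepsilon)\le C_R\big(|q_\varepsilon-p_\varepsilon|+|x_\varepsilon-y_\varepsilon|\big)\le C_R\big(tL\|D\eta_*\|_{\infty}+\sqrt{2C_1\varepsilon}+t\|\eta_*\|_{\infty}\big).
\end{equation*}
Letting $\varepsilon\to 0^{+}$ (so $t=K_0\sqrt{\varepsilon}\to 0$) and using $v_1(x_\varepsilon)-v_2(y_\varepsilon)\ge M-L\|\eta_*\|_{\infty}t$ yields $\delta M\le 0$, contradicting $\delta>0$ and $M>0$; hence $M\le 0$, i.e.\ $v_1\le v_2$ on $\overline{\Omega}$.

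The only genuinely delicate point — the one the whole construction is built around — is that $v_1$ is assumed to be a subsolution only in the \emph{open} $\Omega$, so the argument would collapse if $x_\varepsilon$ could sit on $\partial\Omega$. The inward recentering by $t\eta_*$ combined with the scaling balance $t\asymp\sqrt{\varepsilon}$, with the proportionality constant large relative to $1/r$, is exactly what forces $x_\varepsilon\in\Omega$ while keeping every error term $O(t)\to 0$; this is where $\mathrm{(A2)}$ and the Lipschitz bound on $v_1$ (which also bounds $|p_\varepsilon|$, uniformly in $\varepsilon$) are used. Everything else — existence of maximizers, the penalization estimates, and the two viscosity inequalities — is routine.
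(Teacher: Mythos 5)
Your proof is correct, and it is essentially the classical argument for state-constraint comparison: doubling of variables with the quadratic penalty recentered inward along a smoothed version of the Soner field $\eta$ from $\mathrm{(A2)}$, with the scaling $t\asymp\sqrt{\varepsilon}$ forcing the $x$-maximizer into $\Omega$. The paper itself does not prove Theorem \ref{CP continuous} but cites \cite{Soner1986,Capuzzo-Dolcetta1990,kim2019stateconstraint}, and your argument is precisely the proof found there, so there is nothing to add.
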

When the uniqueness of \eqref{state-def} is guaranteed, the unique viscosity solution to \eqref{state-def} is the maximal viscosity subsolution of \eqref{HJ-static}.

\subsection{Duality representation of solutions} 

The duality representation is well-known in the literature (see \cite{Ishii2017a} or \cite[Theorem 5.3]{Hung2019}). We present here a variation of that result. For $\delta > 0$, let $u_\delta$ be the unique solution to \eqref{state-def}, we have the following bound:
\begin{equation}\label{eq:priori}
    \delta|u_\delta(x)| + |Du_\delta(x)| \leq C_H\qquad \text{for all}\; x\in \Omega.
\end{equation}
That means the value of $H(x,p)$ for large $|p|$ is irrelevant, therefore without loss of generality we can assume that there exists $h>0$ such that, the Legendre's transform $L$ of $H$ will satisfy: 
\begin{equation}
\begin{cases}
\displaystyle H(x,p) = \sup_{|v|\leq h} \Big(p\cdot v - L(x,v)\Big), &\qquad (x,p)\in \overline{\Omega}\times\mathbb{R}^n\\
\displaystyle L(x,v) = \sup_{p\in \mathbb{R}^n} \Big(p\cdot v - H(x,p)\Big), &\qquad (x,v)\in \overline{\Omega}\times \overline{B}_h.
\end{cases} \label{eq:H&L}
\end{equation}
This simplification allows us to work with the compact subset $\overline{\Omega}\times \overline{B}_h$ rather than $\overline{\Omega}\times \mathbb{R}^n$, as will be utilized to obtain the duality representation. Let us define for each $f\in \mathrm{C}(\overline{\Omega}\times \overline{B}_h)$ the function
\begin{equation*}
H_f(x,p) = \max_{|v|\leq h} \Big(p\cdot v - f(x,v)\Big), \qquad (x,p)\in \overline{\Omega}\times \overline{B}_h.
\end{equation*}
Recall the definition of the action $\langle \cdot, \cdot\rangle$. The underlying domain of the integral will be implicitly understood. Let $\mathcal{R}(\overline{\Omega}\times \overline{B}_h)$ be the space of Radon measures on $\overline{\Omega}\times \overline{B}_h$. For $\delta>0$, $z\in \overline{\Omega}$ we define
\begin{align*}
\mathcal{F}_{\delta,\Omega} &= \Big\lbrace (f,u) \in \mathrm{C}(\overline{\Omega}\times \overline{B}_h)\times \mathrm{C}(\overline{\Omega}): \delta u + H_f(x,Du)\leq 0\;\text{in}\;\Omega \Big\rbrace\\
 \mathcal{G}_{z,\delta,\Omega} &= \Big\lbrace f - \delta u(z): (f, u)\in \mathcal{F}_{\delta,\Omega} \Big\rbrace\\
 \mathcal{G}'_{z,\delta,\Omega} &= \Big\lbrace \mu\in \mathcal{R}(\overline{\Omega}\times \overline{B}_h): \langle \mu, f\rangle \geq 0\;\text{for all}\;f\in \mathcal{G}_{z,\delta,\Omega} \Big\rbrace.
\end{align*}
Here $\mathcal{G}_{z,\delta,\Omega} \subset \mathrm{C}(\overline{\Omega}\times \overline{B}_h)$ is the evaluation cone of $\mathcal{F}_{\delta,\Omega}$, and its dual cone consists of Radon measures with non-negative actions against elements in $\mathcal{G}_{z,\delta, \Omega}$. Note that $\mathcal{R}(\overline{\Omega}\times \overline{B}_h)$ is the dual space of $\mathrm{C}(\overline{\Omega}\times \overline{B}_h)$. We also denote $\mathcal{P}$ the set of probability measures.
\begin{lem}\label{lem:cv} $\mathcal{F}_{\delta,\Omega}$ is a convex set, $\mathcal{G}_{z,\delta,\Omega}$ is a convex cone with vertex at the origin, and $\mathcal{G}_{z,\delta,\Omega}'$ consists of only non-negative measures.
\end{lem}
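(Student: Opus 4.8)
All three assertions follow from three elementary properties of the map $f\mapsto H_f$, where $H_f(x,p)=\max_{|v|\le h}\big(p\cdot v - f(x,v)\big)$: (i) for each fixed $x$ the map $(f,p)\mapsto H_f(x,p)$ is \emph{jointly convex}, being the supremum over $|v|\le h$ of the jointly linear maps $(f,p)\mapsto p\cdot v - f(x,v)$; in particular $p\mapsto H_f(x,p)$ is convex; (ii) it is positively homogeneous in the sense $H_{sf}(x,sp)=s\,H_f(x,p)$ for every $s>0$; and (iii) it is coercive in $p$, with $H_f(x,p)\ge h|p|-\|f\|_\infty$. For convexity of $\mathcal F_{\delta,\Omega}$, take $(f_1,u_1),(f_2,u_2)\in\mathcal F_{\delta,\Omega}$ and $t\in[0,1]$, and set $g=tf_1+(1-t)f_2$, $w=tu_1+(1-t)u_2$. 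By (iii), each $u_i$ is locally Lipschitz in $\Omega$ (a standard consequence of coercivity), hence so is $w$, and by Rademacher all are differentiable a.e.; since $H_{f_i}$ is convex in $p$, each $u_i$ satisfies $\delta u_i+H_{f_i}(x,Du_i)\le 0$ a.e.\ in $\Omega$. Using (i) pointwise, $\delta w+H_g(x,Dw)\le t\big(\delta u_1+H_{f_1}(x,Du_1)\big)+(1-t)\big(\delta u_2+H_{f_2}(x,Du_2)\big)\le 0$ a.e.\ in $\Omega$; since $H_g$ is convex in $p$, this a.e.\ inequality upgrades back to the viscosity sense, so $(g,w)\in\mathcal F_{\delta,\Omega}$.

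\textbf{The cone $\mathcal G_{z,\delta,\Omega}$.} Convexity of $\mathcal G_{z,\delta,\Omega}$ is inherited from that of $\mathcal F_{\delta,\Omega}$ because $(f,u)\mapsto f-\delta u(z)$ is affine. For positive homogeneity I would argue directly: if $(f,u)\in\mathcal F_{\delta,\Omega}$ and $s>0$, then whenever $su-\varphi$ has a local maximum at $x_0$ so does $u-\varphi/s$, whence $\delta u(x_0)+H_f(x_0,D\varphi(x_0)/s)\le 0$; multiplying by $s$ and using (ii) gives $\delta(su)(x_0)+H_{sf}(x_0,D\varphi(x_0))\le 0$, so $(sf,su)\in\mathcal F_{\delta,\Omega}$ and therefore $s\big(f-\delta u(z)\big)=sf-\delta(su)(z)\in\mathcal G_{z,\delta,\Omega}$. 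Finally $0\in\mathcal G_{z,\delta,\Omega}$ since $(0,0)\in\mathcal F_{\delta,\Omega}$ (because $H_0(x,0)=0$). Hence $\mathcal G_{z,\delta,\Omega}$ is a convex cone with vertex at the origin.

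\textbf{Positivity of $\mathcal G'_{z,\delta,\Omega}$.} The point is that $\mathcal G_{z,\delta,\Omega}$ already contains every nonnegative $f\in\mathrm{C}(\overline{\Omega}\times\overline{B}_h)$: for such $f$ the constant function $u\equiv 0$ is a classical, hence viscosity, subsolution of $\delta u+H_f(x,Du)\le 0$ in $\Omega$, because $H_f(x,0)=-\min_{|v|\le h}f(x,v)\le 0$; thus $(f,0)\in\mathcal F_{\delta,\Omega}$ and $f=f-\delta u(z)\in\mathcal G_{z,\delta,\Omega}$. Consequently any $\mu\in\mathcal G'_{z,\delta,\Omega}$ satisfies $\langle\mu,f\rangle\ge 0$ for every continuous $f\ge 0$, and by the Riesz representation theorem this means exactly that $\mu$ is a nonnegative Radon measure.

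\textbf{Main obstacle.} The only non-formal ingredient is the equivalence, for Hamiltonians convex in $p$, between locally Lipschitz viscosity subsolutions and almost-everywhere subsolutions, which is used to push the convex combination in the first step through the viscosity notion. I would invoke this as a classical fact (standard in the viscosity-solution literature and implicit in the state-constraint references already cited); should a self-contained argument be preferred, only the implication from an a.e.\ subsolution to a viscosity subsolution requires proof, and that follows from a routine mollification-and-Jensen computation using convexity of $H_g(x,\cdot)$. Everything else — the joint convexity and homogeneity of $f\mapsto H_f$, the affinity of the evaluation map, and the duality between measure positivity and the positive cone of $\mathrm{C}(\overline{\Omega}\times\overline{B}_h)$ — is elementary.
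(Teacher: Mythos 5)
Your proof is correct, and it follows essentially the same route as the argument the paper omits and defers to the cited references (\cite{Ishii2017a,Hung2019}): coercivity of $H_f$ makes subsolutions Lipschitz, the equivalence of a.e.\ and viscosity subsolutions for Hamiltonians convex in $p$ gives convexity of $\mathcal{F}_{\delta,\Omega}$ via joint convexity of $(f,p)\mapsto H_f(x,p)$, homogeneity of $f\mapsto H_f$ gives the cone property, and the observation that $(f,0)\in\mathcal{F}_{\delta,\Omega}$ whenever $f\geq 0$ (since $H_f(x,0)=-\min_{|v|\leq h}f(x,v)\leq 0$) forces every $\mu\in\mathcal{G}'_{z,\delta,\Omega}$ to be a nonnegative measure. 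No gaps; the only external facts you invoke (Lipschitz bound from coercivity, a.e.--viscosity equivalence under convexity) are exactly the ones the paper itself relies on elsewhere.
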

\begin{thm}\label{thm:lambdau} For $(z,\delta) \in \overline{\Omega}\times (0,\infty)$ and $u$ is the viscosity solution to \eqref{state-def}, we have
\begin{equation*}
\delta u(z) = \min_{\mu\in \mathcal{P} \cap \mathcal{G}'_{z,\delta,\Omega}} \langle \mu, L\rangle = \min_{\mu\in \mathcal{P} \cap \mathcal{G}'_{z,\delta,\Omega}} \int_{\overline{\Omega}\times\overline{B}_h} L(x,v)\;d\mu(x,v).
\end{equation*}
\end{thm}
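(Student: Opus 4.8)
The plan is to realize $\delta u(z)$ as the value at $g=L$ of a concave, Lipschitz functional on $\mathrm{C}(\overline{\Omega}\times\overline{B}_h)$ whose Fenchel conjugate is $0$ on $\mathcal{P}\cap\mathcal{G}'_{z,\delta,\Omega}$ and $-\infty$ elsewhere, and then to read off the identity from Fenchel--Moreau duality. Define $\Phi:\mathrm{C}(\overline{\Omega}\times\overline{B}_h)\to\mathbb{R}$ by
\[
\Phi(g)=\sup\Big\{\delta v(z)\;:\;(g,v)\in\mathcal{F}_{\delta,\Omega}\Big\}.
\]
The first step is to check that $\Phi(L)=\delta u(z)$. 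By \eqref{eq:H&L} we have $H_L=H$ on the relevant range, so $(L,v)\in\mathcal{F}_{\delta,\Omega}$ exactly when $v\in\mathrm{C}(\overline{\Omega})$ is a viscosity subsolution of $\delta v+H(x,Dv)\le 0$ in $\Omega$; by the coercivity \eqref{H4} every bounded such $v$ is Lipschitz, so Theorem \ref{CP continuous} (comparison, using that $u$ is in particular a supersolution on $\overline{\Omega}$) gives $v\le u$ and hence $\delta v(z)\le\delta u(z)$, while $v=u$ is admissible.

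Next I would establish the soft properties of $\Phi$. Concavity is immediate from the convexity of $\mathcal{F}_{\delta,\Omega}$ in Lemma \ref{lem:cv}. For Lipschitz continuity, note the identity $H_{g-c}(x,p)=H_g(x,p)+c$ for a constant $c$, which shows $(g,v)\in\mathcal{F}_{\delta,\Omega}$ implies $(g-c,\,v-c/\delta)\in\mathcal{F}_{\delta,\Omega}$; combined with the monotonicity $g\le g'\Rightarrow H_g\ge H_{g'}$ this yields $\lvert\Phi(g_1)-\Phi(g_2)\rvert\le\lVert g_1-g_2\rVert_\infty$, so $\Phi$ is finite and $1$-Lipschitz. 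In particular $\Phi$ is a continuous concave function on a Banach space, hence its superdifferential at $L$ is nonempty; pick $\mu\in\partial\Phi(L)$, which lies in $\mathrm{C}(\overline{\Omega}\times\overline{B}_h)^*=\mathcal{R}(\overline{\Omega}\times\overline{B}_h)$ and satisfies $\Phi^\circ(\mu)=\langle\mu,L\rangle-\Phi(L)$, where $\Phi^\circ(\nu):=\inf_g\{\langle\nu,g\rangle-\Phi(g)\}$.

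The heart of the argument is then the computation of $\Phi^\circ$. Unwinding the definitions, $\Phi^\circ(\nu)=\inf_{(g,v)\in\mathcal{F}_{\delta,\Omega}}\big(\langle\nu,g\rangle-\delta v(z)\big)$. Testing with the translates $(g-c,\,v-c/\delta)$ forces $\nu(\overline{\Omega}\times\overline{B}_h)=1$ unless $\Phi^\circ(\nu)=-\infty$; testing with $(g+tf,v)$ for $f\ge 0$, $t\ge 0$ (admissible since $g+tf\ge g$ and $\mathcal F_{\delta,\Omega}$ is monotone in $g$) forces $\langle\nu,f\rangle\ge 0$ for all continuous $f\ge 0$, i.e.\ $\nu\ge 0$, unless $\Phi^\circ(\nu)=-\infty$; and once $\nu$ is a probability measure one has $\langle\nu,g\rangle-\delta v(z)=\langle\nu,\,g-\delta v(z)\rangle=\langle\nu,h\rangle$ with $h\in\mathcal{G}_{z,\delta,\Omega}$, so $\Phi^\circ(\nu)=\inf_{h\in\mathcal{G}_{z,\delta,\Omega}}\langle\nu,h\rangle$, which equals $0$ when $\nu\in\mathcal{G}'_{z,\delta,\Omega}$ and $-\infty$ otherwise because $\mathcal{G}_{z,\delta,\Omega}$ is a cone with vertex at the origin (Lemma \ref{lem:cv}). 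Applying this to the chosen $\mu$: since $\Phi(L)$ and $\langle\mu,L\rangle$ are finite, $\Phi^\circ(\mu)\ne-\infty$, so $\mu\in\mathcal{P}\cap\mathcal{G}'_{z,\delta,\Omega}$, $\Phi^\circ(\mu)=0$, and therefore $\langle\mu,L\rangle=\Phi(L)=\delta u(z)$. Finally, for any $\nu\in\mathcal{P}\cap\mathcal{G}'_{z,\delta,\Omega}$ the element $L-\delta u(z)\in\mathcal{G}_{z,\delta,\Omega}$ (coming from $(L,u)\in\mathcal{F}_{\delta,\Omega}$) gives $\langle\nu,L\rangle\ge\delta u(z)$, so $\delta u(z)=\min_{\nu\in\mathcal{P}\cap\mathcal{G}'_{z,\delta,\Omega}}\langle\nu,L\rangle$, which is the claim. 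The main obstacle is concentrated in the last two steps: getting $\Phi$ finite, concave and Lipschitz (so that $\partial\Phi(L)\ne\emptyset$, which simultaneously delivers the Fenchel--Moreau identity and the attainment of the minimum), and then extracting from the translation/monotonicity structure of $\mathcal{F}_{\delta,\Omega}$ that every superdifferential element is automatically a probability measure in the dual cone $\mathcal{G}'_{z,\delta,\Omega}$; the identification $\Phi(L)=\delta u(z)$ and the matching lower bound are routine given the comparison principle and the standard Lipschitz regularity of bounded subsolutions of coercive equations.
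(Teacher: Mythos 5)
Your proposal is correct. Note that the paper does not actually prove Theorem \ref{thm:lambdau}: it omits the argument and refers to \cite{Ishii2017a,Hung2019}, where the representation is obtained by a convex-duality argument organized around the cones $\mathcal{G}_{z,\delta,\Omega}$, $\mathcal{G}'_{z,\delta,\Omega}$ and a minimax/separation step (Sion-type minimax or Hahn--Banach separation), together with a compactness argument to produce a minimizing measure. Your route is of the same duality family but packaged differently: you introduce the perturbation (value) function $\Phi(g)=\sup\{\delta v(z):(g,v)\in\mathcal{F}_{\delta,\Omega}\}$, verify via the translation $(g,v)\mapsto(g-c,v-c/\delta)$ and monotonicity in $g$ that $\Phi$ is finite, concave and $1$-Lipschitz, and then extract a supergradient $\mu\in\partial\Phi(L)$; the computation of the concave conjugate $\Phi^\circ$ shows exactly that finiteness of $\Phi^\circ(\mu)$ forces $\mu$ to be a probability measure in $\mathcal{G}'_{z,\delta,\Omega}$ with $\langle\mu,L\rangle=\Phi(L)=\delta u(z)$, while the trivial inequality $\langle\nu,L-\delta u(z)\rangle\ge 0$ for $\nu\in\mathcal{P}\cap\mathcal{G}'_{z,\delta,\Omega}$ gives the matching bound. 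What this buys is that existence of a minimizer comes for free from the supergradient (no separate compactness or minimax interchange is needed), at the cost of invoking the standard facts that a continuous concave function on $\mathrm{C}(\overline{\Omega}\times\overline{B}_h)$ has a nonempty superdifferential and the Riesz identification of the dual with Radon measures. The ingredients you use from the paper are exactly the right ones: $\Phi(L)=\delta u(z)$ rests on the comparison principle (Theorem \ref{CP continuous}), which requires the subsolution to be Lipschitz -- your appeal to coercivity \eqref{H4} covers this, since continuous subsolutions on $\overline{\Omega}$ are bounded and hence Lipschitz in $\Omega$ -- and the cone structure of $\mathcal{G}_{z,\delta,\Omega}$ (Lemma \ref{lem:cv}) is what makes $\Phi^\circ$ equal to $0$ on the dual cone and $-\infty$ off it (one can also observe $(0,0)\in\mathcal{F}_{\delta,\Omega}$, so the infimum over the cone is genuinely $0$). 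I see no gap.
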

As $\delta \rightarrow 0^+$, we also have a representation for the erogdic problem \eqref{eq:S_0} in the same manner. Let us define
\begin{align*}
\mathcal{F}_{0,\Omega} &= \Big\lbrace (f, u)\in \mathrm{C}(\overline{\Omega}\times \overline{B}_h)\times \mathrm{C}(\overline{\Omega}): H_f(x,Du(x)) \leq 0\;\text{in}\;\Omega \Big\rbrace\\
\mathcal{G}_{0,\Omega} &= \Big\lbrace f: (f, u)\in \mathcal{F}_{0,\Omega}\;\text{for some}\;u\in \mathrm{C}(\overline{\Omega}) \Big\rbrace\\
\mathcal{G}'_{0,\Omega} &= \Big\lbrace \mu\in \mathcal{R}(\overline{\Omega}\times \overline{B}_h): \langle \mu, f\rangle \geq 0\;\text{for all}\;f\in \mathcal{G}_{0,\Omega} \Big\rbrace.
\end{align*}
Here the notion of viscosity subsolution is equivalent to a.e. subsolution in $\Omega$, thanks to $\mathrm{(H4)}$. We also have $\mathcal{G}_{0,\Omega} \subset\mathrm{C}(\overline{\Omega}\times \overline{B}_h)$ is the evaluation cone of $\mathcal{F}_{0,\Omega}$ and $\mathcal{G}'_{0,\Omega}$ is the dual cone of $\mathcal{G}_{0,\Omega}$ in $\mathcal{R}(\overline{\Omega}\times \overline{B}_h)$.

\begin{defn} A measure $\mu$ defined on $\overline{\Omega}\times \overline{B}_h$ is called a \emph{holonomic measures} if
\begin{equation*}
\langle \mu, v\cdot D\psi(x) \rangle = 0 \qquad\text{for all}\; \psi\in \mathrm{C}^1(\overline{\Omega}).
\end{equation*}
\end{defn}

\begin{lem} Measures in $\mathcal{G}'_{0,\Omega}$ are holonomic.
\end{lem}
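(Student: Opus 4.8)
The plan is to read off the conclusion directly from the definition of the dual cone $\mathcal{G}'_{0,\Omega}$: a Radon measure $\mu$ belongs to it exactly when $\langle \mu, f\rangle \geq 0$ for every $f\in \mathcal{G}_{0,\Omega}$. Hence it suffices to show that for each fixed $\psi\in \mathrm{C}^1(\overline{\Omega})$ \emph{both} functions $(x,v)\mapsto v\cdot D\psi(x)$ and $(x,v)\mapsto -v\cdot D\psi(x)$ lie in $\mathcal{G}_{0,\Omega}$; feeding both into the defining inequality for $\mathcal{G}'_{0,\Omega}$ then forces $\langle \mu, v\cdot D\psi(x)\rangle = 0$, which is precisely the holonomy condition.

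To carry this out, first fix $\psi\in \mathrm{C}^1(\overline{\Omega})$ and set $f_\pm(x,v) := \pm\, v\cdot D\psi(x)$. Since $D\psi$ is continuous on the compact set $\overline{\Omega}$ and $v\mapsto v\cdot D\psi(x)$ is linear, we have $f_\pm\in \mathrm{C}(\overline{\Omega}\times\overline{B}_h)$. Next I would compute the associated Hamiltonian: for $f_+$,
\[
H_{f_+}(x,p)=\max_{|v|\le h}\big(p\cdot v - v\cdot D\psi(x)\big)=\max_{|v|\le h} v\cdot\big(p-D\psi(x)\big)=h\,\big|p-D\psi(x)\big|,
\]
and likewise $H_{f_-}(x,p)=h\,|p+D\psi(x)|$. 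Taking $u=\psi$ (resp. $u=-\psi$), which is admissible since $\psi\in \mathrm{C}(\overline{\Omega})$, we get $H_{f_+}(x,D\psi(x))=0\le 0$ in $\Omega$ (resp. $H_{f_-}(x,D(-\psi)(x))=0\le 0$), and because $\psi$ is $\mathrm{C}^1$ this classical pointwise inequality is also the viscosity inequality required in the definition of $\mathcal{F}_{0,\Omega}$. Therefore $(f_+,\psi),(f_-,-\psi)\in \mathcal{F}_{0,\Omega}$, so $f_+,f_-\in \mathcal{G}_{0,\Omega}$.

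Finally, for $\mu\in \mathcal{G}'_{0,\Omega}$ the defining property yields $\langle\mu,f_+\rangle=\langle\mu,v\cdot D\psi\rangle\ge 0$ and $\langle\mu,f_-\rangle=-\langle\mu,v\cdot D\psi\rangle\ge 0$, hence $\langle\mu,v\cdot D\psi(x)\rangle=0$; as $\psi\in \mathrm{C}^1(\overline{\Omega})$ was arbitrary, $\mu$ is holonomic.

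There is essentially no serious obstacle here; the proof is a direct verification. The only points demanding a moment of care are (i) confirming that $v\cdot D\psi(x)$ is a bona fide element of $\mathrm{C}(\overline{\Omega}\times\overline{B}_h)$ and that $\psi$ (resp. $-\psi$) is an admissible companion function in the definition of $\mathcal{F}_{0,\Omega}$, and (ii) checking that for the $\mathrm{C}^1$ function $\psi$ the viscosity-subsolution requirement collapses to the pointwise identity $H_{f_\pm}(x,D(\pm\psi)(x))=0$ — a fact that needs no appeal to $\mathrm{(H4)}$ since a $\mathrm{C}^1$ function is a classical, hence viscosity, subsolution iff the pointwise inequality holds.
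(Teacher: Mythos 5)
Your proposal is correct and is essentially the paper's own argument: the paper's proof likewise observes that $\pm\,(v\cdot D\psi(x),\psi)\in \mathcal{F}_{0,\Omega}$, hence $\pm\, v\cdot D\psi(x)\in \mathcal{G}_{0,\Omega}$, and tests $\mu$ against both signs to conclude $\langle \mu, v\cdot D\psi(x)\rangle = 0$. You have merely filled in the routine verification (the explicit computation $H_{f_\pm}(x,p)=h\,\abs{p\mp D\psi(x)}$ and the classical-implies-viscosity remark for $\mathrm{C}^1$ functions), which is fine.
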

\begin{proof} If $\psi\in \mathrm{C}^1(\Omega)$ then $\pm(v\cdot D\psi(x), \psi)\in \mathcal{F}_{0,\Omega}$, therefore $\pm v\cdot D\psi(x) \in \mathcal{G}_{0,\Omega}$ and thus $\langle \mu, v\cdot D\psi(x) \rangle = 0$.
\end{proof}

\begin{lem}\label{lem:cv3} Fix $z\in \overline{\Omega}$ and $\delta_j\rightarrow 0$. Assume $u_j\in \mathcal{G}_{z,\delta_j,\Omega}'$ and $\mu_j \rightharpoonup \mu$ weakly in the sense of measures, then $\mu\in \mathcal{G}_{0,\Omega}'$.
\end{lem}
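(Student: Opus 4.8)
The plan is to pass to the limit in the inequality defining membership in the dual cone, after manufacturing from an arbitrary element of $\mathcal{G}_{0,\Omega}$ a nearby element of $\mathcal{G}_{z,\delta_j,\Omega}$ for each $j$.

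First I would fix $g\in\mathcal{G}_{0,\Omega}$ and pick $u\in\mathrm{C}(\overline{\Omega})$ with $(g,u)\in\mathcal{F}_{0,\Omega}$, i.e. $H_g(\cdot,Du)\le 0$ in $\Omega$. The crucial move is to normalize: set $\tilde u:=u-\max_{\overline{\Omega}}u\le 0$. Subtracting a constant affects neither the gradient (in the a.e., equivalently viscosity, sense) nor the admissible test functions, so $H_g(\cdot,D\tilde u)\le 0$ in $\Omega$; since moreover $\delta_j\tilde u\le 0$, we get $\delta_j\tilde u+H_g(\cdot,D\tilde u)\le 0$ in $\Omega$, that is $(g,\tilde u)\in\mathcal{F}_{\delta_j,\Omega}$ for every $j$. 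Hence $g-\delta_j\tilde u(z)\in\mathcal{G}_{z,\delta_j,\Omega}$, and since $\mu_j\in\mathcal{G}'_{z,\delta_j,\Omega}$ this gives
\[
\langle\mu_j,g\rangle\ \ge\ \delta_j\,\tilde u(z)\,\langle\mu_j,1\rangle\qquad\text{for every }j.
\]

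Next I would pass to the limit as $j\to\infty$. By Lemma \ref{lem:cv} each $\mu_j$ is a non-negative measure, so $\langle\mu_j,1\rangle=\mu_j(\overline{\Omega}\times\overline{B}_h)\ge 0$; testing the weak convergence $\mu_j\rightharpoonup\mu$ against the constant function $1$ shows $\langle\mu_j,1\rangle\to\langle\mu,1\rangle<\infty$, so these masses are bounded, and together with $\delta_j\to 0$ this forces $\delta_j\tilde u(z)\langle\mu_j,1\rangle\to 0$. Since $g\in\mathrm{C}(\overline{\Omega}\times\overline{B}_h)$ we also have $\langle\mu_j,g\rangle\to\langle\mu,g\rangle$, so the displayed inequality yields $\langle\mu,g\rangle\ge 0$. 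As $g\in\mathcal{G}_{0,\Omega}$ was arbitrary, $\mu\in\mathcal{G}'_{0,\Omega}$. Note that no normalization of the total mass of the $\mu_j$ is required — only their non-negativity and the automatic boundedness coming from weak convergence.

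The argument is essentially routine; the one step that needs care is the normalization $\tilde u=u-\max_{\overline{\Omega}}u$, which makes the discount term $\delta_j\tilde u$ nonpositive and is exactly what lets an ergodic subsolution pair $(g,u)$ be fed into the discounted cone $\mathcal{F}_{\delta_j,\Omega}$ — without this sign control one cannot directly produce an element of $\mathcal{G}_{z,\delta_j,\Omega}$ approximating $g$. Everything else follows directly from Lemma \ref{lem:cv} and the definition of weak convergence of Radon measures on the compact set $\overline{\Omega}\times\overline{B}_h$.
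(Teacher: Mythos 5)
Your proof is correct. The paper itself omits the proof of this lemma (deferring to \cite{Ishii2017a,Hung2019}), and your argument is essentially the standard one: test $\mu_j$ against a perturbation of an arbitrary $g\in\mathcal{G}_{0,\Omega}$ lying in $\mathcal{G}_{z,\delta_j,\Omega}$ and pass to the limit using $\delta_j\to 0$ together with the boundedness of $\langle\mu_j,1\rangle$ coming from weak convergence; the only cosmetic difference is that the usual route keeps $u$ unnormalized and uses $(g+\delta_j u,\,u)\in\mathcal{F}_{\delta_j,\Omega}$, i.e. tests against $g+\delta_j u-\delta_j u(z)$, whereas you subtract $\max_{\overline\Omega}u$ so that $(g,\tilde u)\in\mathcal{F}_{\delta_j,\Omega}$ directly --- both devices are equally valid.
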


\begin{lem}\label{lem:cv2} $\mathcal{F}_{0,\Omega}$ is a convex set, $\mathcal{G}_{0,\Omega}$ is a convex cone with vertex at the origin, and $\mathcal{G}_{0,\Omega}'$ consists of only nonnegative measures.
\end{lem}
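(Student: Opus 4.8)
The plan is to reduce all three assertions to a few elementary properties of the partial Legendre transform $(f,p)\mapsto H_f(x,p)=\max_{|v|\le h}\big(p\cdot v-f(x,v)\big)$. For each fixed $x\in\overline{\Omega}$ this map is a pointwise maximum of maps that are affine in the pair $(f,p)$, hence jointly convex; moreover $H_{tf}(x,tp)=tH_f(x,p)$ for every $t\ge 0$, $H_{f_1+f_2}(x,p_1+p_2)\le H_{f_1}(x,p_1)+H_{f_2}(x,p_2)$, and $H_{f+g}(x,p)\le H_f(x,p)$ whenever $g\ge 0$. I will also use, as recorded right after the definition of $\mathcal{F}_{0,\Omega}$, that a function $u\in\mathrm{C}(\overline{\Omega})$ is a viscosity subsolution of $H_f(x,Du)\le 0$ in $\Omega$ if and only if it is locally Lipschitz and $H_f(x,Du(x))\le 0$ for a.e.\ $x\in\Omega$; this is legitimate because $p\mapsto H_f(x,p)$ is continuous, convex, and coercive uniformly in $x$, since $H_f(x,p)\ge h|p|-\max_{\overline{\Omega}\times\overline{B}_h}|f|$.

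For convexity of $\mathcal{F}_{0,\Omega}$ I would take $(f_1,u_1),(f_2,u_2)\in\mathcal{F}_{0,\Omega}$ and $\theta\in[0,1]$, and set $f=\theta f_1+(1-\theta)f_2$, $u=\theta u_1+(1-\theta)u_2$. Since $u_1,u_2$ are locally Lipschitz they are differentiable a.e., so at a.e.\ $x\in\Omega$ one has $Du(x)=\theta Du_1(x)+(1-\theta)Du_2(x)$, and joint convexity yields
\[
H_f\big(x,Du(x)\big)\le \theta H_{f_1}\big(x,Du_1(x)\big)+(1-\theta)H_{f_2}\big(x,Du_2(x)\big)\le 0
\qquad\text{for a.e. }x\in\Omega,
\]
so $(f,u)\in\mathcal{F}_{0,\Omega}$ after returning from the a.e.\ to the viscosity formulation. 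Then $\mathcal{G}_{0,\Omega}$, being the image of the convex set $\mathcal{F}_{0,\Omega}$ under the linear projection $(f,u)\mapsto f$, is convex; it contains $0$ (witnessed by $(0,0)$, as $H_0(x,0)=0$), it is stable under addition (by subadditivity, $(f_1+f_2,u_1+u_2)\in\mathcal{F}_{0,\Omega}$) and under multiplication by any $t\ge 0$ (by positive homogeneity, $(tf,tu)\in\mathcal{F}_{0,\Omega}$), hence it is a convex cone with vertex at the origin.

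Finally, for $\mathcal{G}_{0,\Omega}'$ the point is that every $g\in\mathrm{C}(\overline{\Omega}\times\overline{B}_h)$ with $g\ge 0$ lies in $\mathcal{G}_{0,\Omega}$: with $u\equiv 0$ the constant is a viscosity subsolution of $H_g(x,Du)\le 0$ in $\Omega$, because at any $x\in\Omega$ where $u-\varphi$ attains a local maximum one has $D\varphi(x)=0$ and $H_g(x,0)\le H_0(x,0)=0$, so $(g,0)\in\mathcal{F}_{0,\Omega}$. Hence any $\mu\in\mathcal{G}_{0,\Omega}'$ satisfies $\langle\mu,g\rangle\ge 0$ for every nonnegative $g\in\mathrm{C}(\overline{\Omega}\times\overline{B}_h)$, which by the Riesz representation theorem is precisely the statement that $\mu$ is a nonnegative Radon measure. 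The one step that is not mere bookkeeping with the $\sup$-structure of $H_f$ is the convexity of $\mathcal{F}_{0,\Omega}$, where one must move between the viscosity and the a.e.\ notions of subsolution for the convex coercive Hamiltonians $H_{f_i}$ and $H_f$ and discard the null sets where $u_1$ or $u_2$ fails to be differentiable; this is the analogue, with the harmless linear term $\delta u$ removed, of the argument behind Lemma \ref{lem:cv}.
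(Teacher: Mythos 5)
Your proof is correct, and it follows essentially the same route the paper intends: the paper omits the proof of Lemma \ref{lem:cv2}, deferring to the periodic-setting arguments in the cited references, which likewise rest on the joint convexity and positive homogeneity of $(f,p)\mapsto H_f(x,p)$, the passage between viscosity and a.e.\ subsolutions for the convex, coercive Hamiltonians $H_f$ (a fact the paper itself records right after defining $\mathcal{F}_{0,\Omega}$), and the observation that every nonnegative $g\in\mathrm{C}(\overline{\Omega}\times\overline{B}_h)$ lies in $\mathcal{G}_{0,\Omega}$ with witness $u\equiv 0$. No gaps to report.
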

\begin{thm}\label{thm:c0} We have
\begin{equation}\label{eq:Mather}
-c(0) = \min_{\mu\in \mathcal{P}\cap \mathcal{G}_{0,\Omega}'} \langle \mu, L \rangle =\min_{\mu\in \mathcal{P}\cap \mathcal{G}_{0,\Omega}'} \int_{\overline{\Omega}\times \overline{B}_h} L(x,v)\;d\mu(x,v) .
\end{equation}
\end{thm}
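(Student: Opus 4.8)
The plan is to pass to the limit $\delta\to 0^+$ in the duality identity of Theorem~\ref{thm:lambdau}, combining it with the elementary fact that $\delta u_\delta\to -c(0)$ uniformly on $\overline{\Omega}$, where $u_\delta$ denotes the unique state-constraint solution of \eqref{state-def}. First I would establish this asymptotics. Pick a Lipschitz state-constraint solution $w$ of \eqref{eq:S_0} (it exists because $c(0)$ is the critical value and, under our assumptions, the infimum in \eqref{eq:c(0)} is attained, with the solution Lipschitz by coercivity), so that $H(x,Dw)\le c(0)$ in $\Omega$ and $H(x,Dw)\ge c(0)$ on $\overline{\Omega}$. A direct computation shows that $w-c(0)/\delta-\|w\|_{\infty}$ is a Lipschitz viscosity subsolution of \eqref{HJ-static} in $\Omega$ and that $w-c(0)/\delta+\|w\|_{\infty}$ is a viscosity supersolution of \eqref{HJ-static} on $\overline{\Omega}$; comparing each with $u_\delta$ via Theorem~\ref{CP continuous} yields $|\delta u_\delta(x)+c(0)|\le 2\delta\|w\|_{\infty}$ for all $x\in\overline{\Omega}$, hence $\delta u_\delta\to -c(0)$.

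Next I would prove the lower bound $\langle\mu,L\rangle\ge -c(0)$ for every $\mu\in\mathcal{P}\cap\mathcal{G}'_{0,\Omega}$. Setting $f:=L+c(0)$, one computes $H_f(x,p)=H(x,p)-c(0)$, so any $w$ with $H(x,Dw)\le c(0)$ in $\Omega$ gives $(L+c(0),w)\in\mathcal{F}_{0,\Omega}$ and therefore $L+c(0)\in\mathcal{G}_{0,\Omega}$. By definition of $\mathcal{G}'_{0,\Omega}$ and $\langle\mu,1\rangle=1$ (since $\mu$ is a probability measure), this gives $\langle\mu,L\rangle+c(0)=\langle\mu,L+c(0)\rangle\ge 0$, as claimed.

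For the matching upper bound and the attainment of the minimum, I would fix $z\in\overline{\Omega}$ and a sequence $\delta_j\to 0^+$, and use Theorem~\ref{thm:lambdau} to pick minimizers $\mu_j\in\mathcal{P}\cap\mathcal{G}'_{z,\delta_j,\Omega}$ with $\langle\mu_j,L\rangle=\delta_j u_{\delta_j}(z)$. Since each $\mu_j$ is a probability measure on the compact set $\overline{\Omega}\times\overline{B}_h$, along a subsequence $\mu_j\rightharpoonup\mu$ weakly with $\mu\in\mathcal{P}$, and $\mu\in\mathcal{G}'_{0,\Omega}$ by Lemma~\ref{lem:cv3}. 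Continuity of $L$ on $\overline{\Omega}\times\overline{B}_h$ gives $\langle\mu_j,L\rangle\to\langle\mu,L\rangle$, while the first step gives $\delta_j u_{\delta_j}(z)\to -c(0)$, so $\langle\mu,L\rangle=-c(0)$. Together with the lower bound, the minimum in \eqref{eq:Mather} equals $-c(0)$ and is attained at this $\mu$.

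I expect the only genuinely delicate point to be the weak-$*$ stability step $\mu_j\rightharpoonup\mu\in\mathcal{G}'_{0,\Omega}$, i.e.\ ensuring that membership in the dual cone survives the limit as $\delta_j\to 0^+$; this is precisely Lemma~\ref{lem:cv3}, and it hinges on the reduction to the compact velocity ball $\overline{B}_h$ in \eqref{eq:H&L}, which makes $L$ continuous and bounded on the ambient compact set and keeps the test cones comparable. The remaining ingredients — the Lipschitz bound and existence of $u_\delta$ (Theorem~\ref{Perron}), the comparison principle (Theorem~\ref{CP continuous}), and the fixed-$\delta$ duality (Theorem~\ref{thm:lambdau}) — are already available, so the argument is essentially a clean combination of these with weak-$*$ compactness.
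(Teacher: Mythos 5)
Your argument is essentially the standard duality proof that the paper itself defers to (\cite{Ishii2017a,Hung2019}): the lower bound $\langle\mu,L\rangle\ge -c(0)$ for every $\mu\in\mathcal{P}\cap\mathcal{G}'_{0,\Omega}$ via $L+c(0)\in\mathcal{G}_{0,\Omega}$, and attainment by passing to a weak-$*$ limit of the discounted minimizing measures from Theorem~\ref{thm:lambdau}, using Lemma~\ref{lem:cv3} and $\delta u_\delta\to-c(0)$. Both halves are correct as written, and the compactness/stability step is exactly where the reduction \eqref{eq:H&L} is used, as you say.

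The one weak point is your justification of the first step. You take a state-constraint solution $w$ of \eqref{eq:S_0} and assert it exists ``because the infimum in \eqref{eq:c(0)} is attained''; but attainment of that infimum only produces a \emph{subsolution} of $H\le c(0)$ in $\Omega$, whereas your upper comparison ($u_\delta\le w-c(0)/\delta+\|w\|_\infty$) needs the supersolution property of $w$ on $\overline{\Omega}$, which does not follow from the definition of $c(0)$. In this paper the existence of such a $w$, and indeed the full statement $\delta u_\delta\to-c(0)$ with the rate \eqref{eq:rate0}, is Theorem~\ref{thm:pre}, proved independently in the Appendix (there the one-sided bound $\limsup_{\delta\to0}\delta u_\delta\le -c(0)$ is obtained from the limiting state-constraint problem and the definition of $c(0)$, not from a supersolution at the critical level). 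So either cite Theorem~\ref{thm:pre} outright and drop your first step, or replace the upper comparison by that compactness argument; with that repair the proof is complete and matches the intended argument.
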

The set of all measures in $\mathcal{P}\cap \mathcal{G}'_{0,\Omega}$ that minimizing \eqref{eq:Mather} is denoted $\mathcal{M}_0$. We call them viscosity Mather measures (\cite{Ishii2017a}). We omit the proofs of Lemmas \ref{lem:cv}, \ref{lem:cv3}, \ref{lem:cv2} and Theorems \ref{thm:lambdau}, \ref{thm:c0} as they are slight modifications of those in the periodic setting, which we refer the interested readers to \cite{Ishii2017a,Hung2019}.

\subsection{Vanishing discount for fixed bounded domains}

In this section we use the representation formulas in Theorem \ref{thm:lambdau} and Theorem \ref{thm:c0} to show the convergence of solution of \eqref{eq:S_lambda} to solution of \eqref{eq:S_0}. See also \cite{Ishii2017a,Hung2019} where the similar technique is used.

\begin{thm}\label{thm:pre} Assume \eqref{H3c}, \eqref{H4} and $\mathrm{(A2)}$. Let $u_\delta\in \mathrm{C}(\overline{\Omega})\cap\mathrm{Lip}(\Omega)$ be the unique solution to \eqref{state-def}. Then $\delta u_\delta(\cdot) \rightarrow -c(0)$ uniformly on $\overline{\Omega}$ as $\delta\rightarrow 0$. Indeed, there exists $C>0$ depends on $H$ and $\mathrm{diam}(\Omega)$ such that for all $x\in \overline{\Omega}$ there holds
\begin{equation}\label{eq:rate0}
\left|\delta u_\delta(x)+c(0)\right| \leq C\delta.
\end{equation}
Also, for each $x_0\in \overline{\Omega}$ there exist a subsequence $\lambda_j$ and $u\in \mathrm{C}(\overline{\Omega})$ solving \eqref{eq:S_0} such that:
\begin{equation*}
\begin{cases}
u_{\delta_j}(x) - u_{\delta_j}(x_0) \rightarrow u(x)\vspace{0.1cm}\\
 u_{\delta _j}(x) + c(0)/\delta_j \rightarrow w(x)
\end{cases} 
\end{equation*}
uniformly on $\overline{\Omega}$ as $\delta_j\rightarrow 0$ and the difference between the two limits are $w(x) - u(x) = w(x_0)$.
\end{thm}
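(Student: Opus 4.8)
The plan is to first establish the quantitative two-sided estimate \eqref{eq:rate0} (from which $\delta u_\delta\to -c(0)$ uniformly on $\overline{\Omega}$ follows at once), and then to extract the subsequential limits $u$ and $w$ by a compactness-and-stability argument. One could also route the rate estimate through the duality representations of Theorems \ref{thm:lambdau} and \ref{thm:c0}, but the direct viscosity-solution argument sketched here is shorter.

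For the upper bound I would argue as follows. Because $u_\delta$ is a viscosity subsolution of \eqref{HJ-static} in $\Omega$, any $\mathrm{C}^1$ test function $\varphi$ touching $u_\delta$ from above at $x$ satisfies $H(x,D\varphi(x))\le -\delta u_\delta(x)\le -\min_{\overline{\Omega}}(\delta u_\delta)$; thus $u_\delta$ --- Lipschitz by \eqref{eq:priori} --- is a viscosity subsolution of $H(x,Du)=-\min_{\overline{\Omega}}(\delta u_\delta)$ in $\Omega$, and the very definition \eqref{eq:c(0)} of $c(0)$ as an infimum forces $c(0)\le -\min_{\overline{\Omega}}(\delta u_\delta)$. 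Combining this with the a priori gradient bound $|Du_\delta|\le C_H$ of Theorem \ref{Perron}, which yields $\max_{\overline{\Omega}}u_\delta-\min_{\overline{\Omega}}u_\delta\le C_H\,\mathrm{diam}(\Omega)$, gives $\delta u_\delta(x)\le \min_{\overline{\Omega}}(\delta u_\delta)+\delta C_H\,\mathrm{diam}(\Omega)\le -c(0)+\delta C_H\,\mathrm{diam}(\Omega)$ for every $x$. For the lower bound I would fix $\varepsilon\in(0,1]$, use \eqref{eq:c(0)} to pick a viscosity subsolution $v_\varepsilon$ of $H(x,Dv_\varepsilon)\le c(0)+\varepsilon$ in $\Omega$ --- Lipschitz with a constant $R_1$ depending only on $H$ by coercivity \eqref{H4}, so $\max_{\overline{\Omega}}v_\varepsilon-\min_{\overline{\Omega}}v_\varepsilon\le R_1\,\mathrm{diam}(\Omega)$ --- and then check by a test-function computation that $w_\varepsilon:=v_\varepsilon-(c(0)+\varepsilon)/\delta-\max_{\overline{\Omega}}v_\varepsilon$ is a Lipschitz viscosity subsolution of \eqref{HJ-static} (with discount $\delta$) in $\Omega$. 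Since $u_\delta$ is a state-constraint solution, hence a supersolution on $\overline{\Omega}$, the comparison principle Theorem \ref{CP continuous} gives $w_\varepsilon\le u_\delta$ on $\overline{\Omega}$; rearranging and letting $\varepsilon\to0^+$ yields $\delta u_\delta(x)\ge -c(0)-\delta R_1\,\mathrm{diam}(\Omega)$. Together these prove \eqref{eq:rate0} with $C=\max\{C_H,R_1\}\,\mathrm{diam}(\Omega)$, and letting $\delta\to0$ gives the uniform convergence.

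For the subsequential limits, I would set $w_\delta:=u_\delta-u_\delta(x_0)$, which is equi-Lipschitz with constant $C_H$ and vanishes at $x_0$, hence uniformly bounded on $\overline{\Omega}$; by the Arzel\`a--Ascoli theorem a subsequence $w_{\delta_j}$ converges uniformly on $\overline{\Omega}$ to a Lipschitz $u$. Rewriting the sub- and supersolution inequalities of $u_{\delta_j}$ in terms of $w_{\delta_j}$ produces $\delta_j w_{\delta_j}+H(x,Dw_{\delta_j})\le -\delta_j u_{\delta_j}(x_0)$ in $\Omega$ and $\delta_j w_{\delta_j}+H(x,Dw_{\delta_j})\ge -\delta_j u_{\delta_j}(x_0)$ on $\overline{\Omega}$; since $\delta_j w_{\delta_j}\to0$ uniformly and $\delta_j u_{\delta_j}(x_0)\to -c(0)$ by \eqref{eq:rate0}, the standard stability of viscosity sub- and supersolutions shows that $u$ solves \eqref{eq:S_0}. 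Finally, writing $u_{\delta_j}(x)+c(0)/\delta_j=w_{\delta_j}(x)+a_j$ with $a_j:=(\delta_j u_{\delta_j}(x_0)+c(0))/\delta_j$, the reals $a_j$ are bounded by \eqref{eq:rate0}, so along a further subsequence $a_j\to a\in\R$ and $u_{\delta_j}+c(0)/\delta_j\to w:=u+a$ uniformly on $\overline{\Omega}$; evaluating at $x_0$ (where $u(x_0)=0$) gives $w(x)-u(x)=a=w(x_0)$.

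I expect the main obstacle to be the two-sided rate \eqref{eq:rate0}, and specifically: (i) ensuring the constant $C$ depends only on $H$ and $\mathrm{diam}(\Omega)$, which rests on the $\delta$-uniform gradient bound of Theorem \ref{Perron} and on coercivity \eqref{H4} controlling the Lipschitz constant of near-optimal subsolutions in \eqref{eq:c(0)}; and (ii) carrying out the shifts and the passage to the ergodic-level inequality strictly at the level of viscosity (not merely a.e.) sub- and supersolutions, so that Theorem \ref{CP continuous} and the characterization \eqref{eq:c(0)} apply. The compactness-and-stability step is then routine.
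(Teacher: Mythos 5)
Your proof is correct, but it takes a genuinely different route from the paper's. The paper argues in the opposite order: it first extracts, by Arzel\`a--Ascoli and Bolzano--Weierstrass, a subsequential limit $u$ of $u_{\delta_j}-u_{\delta_j}(x_0)$ together with a constant $c=-\lim \delta_j u_{\delta_j}(x_0)$, proves that $u$ is a state-constraint solution at level $c$ (the supersolution property on $\overline{\Omega}$ is obtained there by a doubling-of-variables argument with the scaled test functions $\varphi_\delta(x)=(1+\delta)\varphi(x/(1+\delta))$ and $\mathrm{(A2)}$, a device imported from the changing-domain Theorem \ref{thm:pre_conv_sta}), identifies $c=c(0)$ through the comparison principle, and only then obtains the rate \eqref{eq:rate0} by comparing $u_\delta$ with $u-\delta^{-1}c(0)\pm C$. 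You instead prove \eqref{eq:rate0} first and directly from the definition \eqref{eq:c(0)}: the upper bound because $u_\delta$ itself is a subsolution of $H=-\min_{\overline{\Omega}}(\delta u_\delta)$, so $c(0)\le -\min_{\overline{\Omega}}(\delta u_\delta)$, and the lower bound by running Theorem \ref{CP continuous} against the shifted near-critical subsolution $v_\varepsilon-(c(0)+\varepsilon)/\delta-\max_{\overline{\Omega}}v_\varepsilon$; the compactness-and-stability step then comes afterwards, with the boundary supersolution property of the limit obtained by the standard stability argument for constrained supersolutions on the compact set $\overline{\Omega}$ (which is indeed valid and replaces the paper's scaling/doubling computation). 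What your order buys is that the full-sequence convergence of $\delta u_\delta$ and the constant in \eqref{eq:rate0} come straight from \eqref{eq:c(0)} and from sub/supersolutions you exhibit explicitly, without first having to solve \eqref{eq:S_0}; what the paper's order buys is a single template that transfers verbatim to the changing-domain setting, where the scaling argument is genuinely needed. One shared imprecision worth noting: both you and the paper bound oscillations by (Lipschitz constant)$\times\mathrm{diam}(\Omega)$, which strictly speaking involves the intrinsic (path) diameter of $\Omega$ rather than the Euclidean one; under the standing geometric hypotheses this only affects the constant, so it is harmless, but it is the one place where ``$C$ depends on $H$ and $\mathrm{diam}(\Omega)$'' is stated a bit loosely in both arguments.
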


\begin{thm}\label{thm:conv_bdd} Assume \eqref{H3c}, \eqref{H4}, $\mathrm{(H3)}$ and $\mathrm{(A2)}$. Let $u_\delta\in \mathrm{C}(\overline{\Omega})\cap\mathrm{Lip}(\Omega)$ be the unique solution to \eqref{state-def}. Then, $u_\delta+\delta^{-1}c(0) \rightarrow u^0$ uniformly on $\mathrm{C}(\overline{\Omega})$ as $\delta\rightarrow 0^+$ and $u^0$ solves \eqref{eq:S_0}. Furthermore, the limiting solution can be characterized as 
\begin{equation}\label{eq:char}
    u^0 = \sup_{v\in \mathcal{E}} v
\end{equation}
where $\mathcal{E}$ is the set of all subsolutions $v\in \mathrm{C}(\overline{\Omega})$ to $H(x,Dv(x))\leq c(0)$ in $\Omega$ such that $\langle \mu, v\rangle \leq 0$ for all $\mu\in \mathcal{M}_0$, the set of all minimizing measures $\mu\in \mathcal{P}\cap \mathcal{G}'_0$ such that $-c(0) = \langle \mu, L\rangle$.
\end{thm}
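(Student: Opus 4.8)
The plan is to first establish convergence of the family $\{u_\delta + \delta^{-1}c(0)\}_{\delta>0}$ and then identify the limit with the supremum $\sup_{v\in\mathcal E}v$. For the convergence, I would combine Theorem \ref{thm:pre} with the convexity assumption $\mathrm{(H3)}$. Theorem \ref{thm:pre} already gives, along any subsequence $\delta_j\to 0$, a limit $w$ of $u_{\delta_j}+c(0)/\delta_j$ solving \eqref{eq:S_0}; the uniform Lipschitz bound \eqref{eq:priori} and the bound \eqref{eq:rate0} guarantee that $\{u_\delta+\delta^{-1}c(0)\}$ is precompact in $\mathrm C(\overline\Omega)$, so it suffices to show all subsequential limits coincide. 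This is exactly the point where one invokes the standard vanishing-discount selection machinery: I would show (i) every subsequential limit $w$ lies in $\mathcal E$, i.e.\ $w$ is a subsolution of $H(x,Dw)\le c(0)$ and satisfies $\langle\mu,w\rangle\le 0$ for all $\mu\in\mathcal M_0$; and (ii) conversely $w\ge v$ pointwise for every $v\in\mathcal E$. Together these force $w=\sup_{v\in\mathcal E}v=:u^0$ independently of the subsequence, giving both the full convergence and the characterization \eqref{eq:char} at once.

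For step (i), the subsolution property of $w$ is immediate from stability of viscosity subsolutions under uniform limits (and $\delta_j u_{\delta_j}\to -c(0)$). The inequality $\langle\mu,w\rangle\le 0$ for $\mu\in\mathcal M_0$ is the crucial estimate: I would use the duality representation. Fix $\mu\in\mathcal M_0\subset\mathcal P\cap\mathcal G'_{0,\Omega}$ with $\langle\mu,L\rangle=-c(0)$. Since $(L - \delta_j u_{\delta_j}(z)\cdot\mathbf 1, u_{\delta_j})$ — or more precisely the pair realizing $u_{\delta_j}$ in $\mathcal F_{\delta_j,\Omega}$ — produces, via $\mathcal G_{z,\delta_j,\Omega}$, the inequality $\langle\mu_j, L\rangle \ge \delta_j u_{\delta_j}(z)$ for appropriate $\mu_j$, one passes $\mu_j\rightharpoonup\mu$ using Lemma \ref{lem:cv3}. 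Testing $\mu$ against $L$ and against $w$ and using $\langle\mu,L\rangle = -c(0)$ together with $\delta_j u_{\delta_j}(z)\to -c(0)$, the cross terms telescope so that $\langle\mu, w\rangle\le 0$; the key is that $w=\lim(u_{\delta_j}+c(0)/\delta_j)$ and the $\delta_j^{-1}$ factors cancel precisely because $\mu$ is a \emph{minimizing} measure. For step (ii), given $v\in\mathcal E$, one compares $v$ with $u_{\delta_j}+c(0)/\delta_j$: $v$ is a subsolution of the $c(0)$-level equation, hence $v - c(0)/\delta_j + (\text{const})$ is a subsolution of \eqref{HJ-static} with $\delta=\delta_j$ up to controlling the zeroth-order term, and by the comparison principle (Theorem \ref{CP continuous}, using $\mathrm{(A2)}$) one gets $v\le u_{\delta_j}+c(0)/\delta_j + o(1)$, where the error is absorbed precisely by the constraint $\langle\mu,v\rangle\le 0$ evaluated against a minimizing measure; letting $j\to\infty$ yields $v\le w$.

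The main obstacle is step (i), specifically the careful bookkeeping that shows $\langle\mu,w\rangle\le 0$ rather than merely $\langle\mu,w\rangle\le C$ for some uninformative constant: one must exploit that $\mu$ achieves equality in \eqref{eq:Mather} and that the $\delta_j u_{\delta_j}(z)$ terms converge to $-c(0)=\langle\mu,L\rangle$ at the right rate, so that $\langle\mu_j,L\rangle - \delta_j u_{\delta_j}(z) \ge 0$ combined with $\langle\mu_j, \delta_j^{-1}(L+c(0))\rangle$-type quantities remaining bounded forces the limiting inequality. This is the heart of the Davini--Ishii type argument adapted to the state-constraint and duality framework; since the excerpt explicitly says this mirrors \cite{davini_convergence_2016, ishii2017, Ishii2017a, Hung2019}, I would structure the proof to isolate this estimate as the one genuinely new (but routine-by-analogy) computation, deferring the precompactness and stability parts to the already-established Theorem \ref{thm:pre}.
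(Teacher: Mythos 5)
Your overall architecture is the right one and is exactly the standard selection argument the paper defers to (the paper omits this proof, citing \cite{Hung2019}; its own model computation is the proof of Theorem \ref{thm:general}, which is this argument with $r\equiv 0$): precompactness from Theorem \ref{thm:pre}, then show every subsequential limit $w$ lies in $\mathcal{E}$ and dominates every $v\in\mathcal{E}$. However, both of your key estimates are implemented with the wrong mechanism, and as written each has a genuine gap.

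In step (i), to get $\langle\mu,w\rangle\le 0$ for \emph{all} $\mu\in\mathcal{M}_0$ you cannot argue by passing the discounted minimizing measures $\mu_j\rightharpoonup\mu$: a fixed arbitrary $\mu\in\mathcal{M}_0$ is in general not a weak limit of the $\mu_j$, so Lemma \ref{lem:cv3} only constrains the particular limit measure, not the one you fixed. The correct (and simpler) mechanism is to test the fixed $\mu$ against a cone element built from $u_\delta$ itself: since $\delta u_\delta+H(x,Du_\delta)\le 0$ in $\Omega$, the pair $\big(L(x,v)-\delta u_\delta(x),\,u_\delta\big)$ lies in $\mathcal{F}_{0,\Omega}$, so $L-\delta u_\delta\in\mathcal{G}_{0,\Omega}$ and $\langle\mu,L-\delta u_\delta\rangle\ge 0$; with $\langle\mu,L\rangle=-c(0)$ this gives $\langle\mu,u_\delta+\delta^{-1}c(0)\rangle\le 0$ directly, and you pass to the limit along $\delta_j$. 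No representation formula or weak convergence of measures is needed here.

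In step (ii), the comparison-principle route does not close. If $v$ is a subsolution of $H\le c(0)$, then $v-\delta^{-1}c(0)-C$ is a subsolution of \eqref{state-def} only for $C\ge\sup_\Omega v$, and Theorem \ref{CP continuous} then yields $v\le u_\delta+\delta^{-1}c(0)+C$ with a constant $C$ that is \emph{not} $o(1)$; the comparison principle has no way to "absorb" this constant using the constraint $\langle\mu,v\rangle\le 0$, since measures never enter a comparison argument. (Indeed, without that constraint the conclusion $v\le w$ is simply false: add a positive constant to $w$.) The place where the duality representation and Lemma \ref{lem:cv3} are genuinely needed is precisely here: fix $z\in\Omega$, take $\mu_\delta\in\mathcal{P}\cap\mathcal{G}'_{z,\delta,\Omega}$ with $\delta u_\delta(z)=\langle\mu_\delta,L\rangle$ from Theorem \ref{thm:lambdau}, observe that $\big(L+\delta v+c(0),\,v\big)\in\mathcal{F}_{\delta,\Omega}$ because $H(x,Dv)\le c(0)$, hence $\langle\mu_\delta,\,L+\delta v+c(0)-\delta v(z)\rangle\ge 0$, i.e. $u_\delta(z)+\delta^{-1}c(0)+\langle\mu_\delta,v\rangle\ge v(z)$; then pass $\delta_j\to 0$, use that any weak limit $\mu_0$ of $\mu_{\delta_j}$ belongs to $\mathcal{M}_0$, and invoke $\langle\mu_0,v\rangle\le 0$ to conclude $w(z)\ge v(z)$. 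In short, you have the two mechanisms swapped: the measure-limit argument belongs to step (ii), while step (i) is a direct cone-testing inequality; as currently written neither step is proved.
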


We provide the proof of Theorem \ref{thm:pre} in the Appendix. The proof of Theorem \ref{thm:conv_bdd} is omitted as it is a slight modification of the one in \cite{Hung2019}. The characterization \eqref{eq:char} also appears in \cite{davini_convergence_2016, ishii2017, ishii_vanishing_2020} under different settings.

%The proof of Theorem \ref{thm:conv_bdd} for the periodic case is provided in \cite{Hung2019} . 

\subsection{Maximal subsolutions and the Aubry set} For any domain (with nice boundary) $\Omega\subset U$, we recall that the additive eigenvalue of $H$ in $\Omega$ is defined as
\begin{equation*}
        c_\Omega = \inf \Big\lbrace c \in \mathbb{R}: H(x,Dv(x)) \leq c\;\text{has a viscosity subsolution in}\;\Omega \Big\rbrace.
\end{equation*} 
We consider the following equation
\begin{equation}\label{eqn:S_mu}
    H(x,Dv(x)) \leq c_\Omega \qquad 
    \text{in}\;\Omega\tag{$S_{\Omega}$}.
\end{equation}
We note that viscosity subsolutions of \eqref{eqn:S_mu} in $U$ are Lipschitz, and therefore they are equivalent to a.e. subsolutions (see \cite{barron_semicontinuous_1990,Hung2019}). Also it is clear that $c_\Omega\leq c_U$, where $c_U$ is the additive eigenvalue of $H$ in $U$. 

\begin{defn}\label{defn:m_mu} For a fixed $z\in \Omega$ as a vertex, we define
\begin{align*}
    S_{\Omega}(x,z) = \sup \big\lbrace v(x)-v(z): v\;\text{solves}\;\eqref{eqn:S_mu} \big\rbrace, \qquad x\in \Omega.
\end{align*}
There is a unique (continuous) extension $S_{\Omega}: \overline{\Omega}\times \overline{\Omega}\rightarrow \mathbb{R}$, we call $x\mapsto S_{\Omega}(x,z)$ the \emph{maximal subsolution} to \eqref{eqn:S_mu} with vertex $z$.
\end{defn}

\begin{thm}\label{thm:basic} \quad 
\begin{itemize}
    \item[(i)] For each fixed $z\in \overline{\Omega}$ then $x\mapsto S_\Omega(x,z)$ solves
    \begin{equation}\label{eq:sta}
    \begin{cases}
        H(x,Du(x)) \leq c_\Omega &\quad\;\text{in}\;\Omega,\\
        H(x,Du(x)) \geq c_\Omega &\quad\;\text{on}\;\overline{\Omega}\backslash\{z\}.
    \end{cases}
    \end{equation}
    \item[(ii)] We have the triangle inequality $ S_{\Omega}(x,z) \leq S_{\Omega}(x,y) + S_{\Omega}(y,z)$ for all $x,y,z \in \overline{\Omega}$.
\end{itemize}
We call $S_{\Omega}:\overline{\Omega}\times \overline{\Omega}\rightarrow \mathbb{R}$ an intrinsic semi-distance on $\Omega$ (see also \cite{Bardi1997,Barles1994,Ishii2008}). 
\end{thm}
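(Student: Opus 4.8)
The plan is to prove (i) by identifying $x\mapsto S_\Omega(x,z)$ as the \emph{maximal} viscosity subsolution of \eqref{eqn:S_mu} that vanishes at $z$, and then running a Perron-type bump argument for the supersolution inequality away from $z$; part (ii) will be immediate from the definition of $S_\Omega$ as a supremum. For (i), I would first check that $S_\Omega(\cdot,z)$ is itself a subsolution: by the coercivity \eqref{H4} there is $R>0$, depending only on $H$ (through $c_\Omega\le c_U$), with $|Dv|\le R$ a.e.\ for every viscosity subsolution $v$ of \eqref{eqn:S_mu} in $\Omega$, so the family $\{v-v(z)\}$ is equi-Lipschitz on compact subsets of $\Omega$ and pointwise bounded (it vanishes at $z$); hence $S_\Omega(\cdot,z)=\sup_v\big(v(\cdot)-v(z)\big)$ is locally Lipschitz, extends continuously to $\overline\Omega$ as in Definition \ref{defn:m_mu}, and is a viscosity subsolution of \eqref{eqn:S_mu} in $\Omega$ (a locally uniformly bounded supremum of subsolutions is a subsolution). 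Moreover $S_\Omega(z,z)=0$, and any subsolution $w$ of \eqref{eqn:S_mu} with $w(z)=0$ satisfies $w\le S_\Omega(\cdot,z)$ on $\overline\Omega$ (take $v=w$ in the supremum); this maximality is the key tool.

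\textbf{Supersolution on $\overline\Omega\setminus\{z\}$.} Take first $z\in\Omega$. If the supersolution inequality in \eqref{eq:sta} failed at some $x_0\in\overline\Omega\setminus\{z\}$, there would be $\varphi\in\mathrm{C}^1(\overline\Omega)$ with $S_\Omega(\cdot,z)-\varphi$ attaining a local minimum over $\overline\Omega$ at $x_0$ and $H(x_0,D\varphi(x_0))<c_\Omega$. Put $\psi(x)=\varphi(x)+\eps-|x-x_0|^2$. Since $D\psi$ is a small perturbation of $D\varphi$ near $x_0$ and $H$ is continuous, for $\rho>0$ small we have $H(x,D\psi(x))<c_\Omega$ on $\overline{B}(x_0,\rho)\cap\overline\Omega$ and $z\notin\overline{B}(x_0,\rho)$; choosing $\eps<\rho^2/2$ and using $\varphi\le S_\Omega(\cdot,z)$ near $x_0$ gives $\psi<S_\Omega(\cdot,z)$ on $\{|x-x_0|=\rho\}\cap\overline\Omega$ and $\psi(x_0)>S_\Omega(x_0,z)$. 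Then the function $w$ equal to $S_\Omega(\cdot,z)$ off $\overline{B}(x_0,\rho)$ and to $\max\{S_\Omega(\cdot,z),\psi\}$ on $\overline{B}(x_0,\rho)\cap\overline\Omega$ is a Lipschitz subsolution of \eqref{eqn:S_mu} in $\Omega$ (a maximum of two subsolutions, agreeing with $S_\Omega(\cdot,z)$ near $\partial B(x_0,\rho)$), with $w(z)=0$ and $w(x_0)>S_\Omega(x_0,z)$; by continuity $w(x_1)>S_\Omega(x_1,z)$ for some $x_1\in\Omega$, contradicting maximality. Hence $S_\Omega(\cdot,z)$ solves \eqref{eq:sta}. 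For $z\in\partial\Omega$, pick $z_k\in\Omega$ with $z_k\to z$; then $S_\Omega(\cdot,z_k)\to S_\Omega(\cdot,z)$ locally uniformly, and for any $x_0\in\overline\Omega\setminus\{z\}$ each $S_\Omega(\cdot,z_k)$ is a supersolution near $x_0$ once $z_k\ne x_0$, so stability of viscosity supersolutions under locally uniform limits yields the supersolution property at $x_0$.

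\textbf{Triangle inequality and the main obstacle.} For $x,y\in\overline\Omega$, $z\in\Omega$ and any subsolution $v$ of \eqref{eqn:S_mu} (extended continuously to $\overline\Omega$), $v(x)-v(z)=\big(v(x)-v(y)\big)+\big(v(y)-v(z)\big)\le S_\Omega(x,y)+S_\Omega(y,z)$; taking $\sup_v$ gives $S_\Omega(x,z)\le S_\Omega(x,y)+S_\Omega(y,z)$, and the case $z\in\overline\Omega$ follows from continuity of $S_\Omega$ on $\overline\Omega\times\overline\Omega$ and density of $\Omega$. The genuinely delicate point is the bump step for boundary points $x_0\in\partial\Omega$ in part (i): one must ensure that $H(x,D\psi(x))<c_\Omega$ persists on a full relative neighborhood of $x_0$ in $\overline U$, so that $w$ is a bona fide subsolution up to $\partial\Omega$, and that $w$ strictly exceeds $S_\Omega(\cdot,z)$ at some \emph{interior} point. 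Everything else is bookkeeping once the uniform Lipschitz bound (hence well-definedness of the supremum) and the nonemptiness of the competitor class, i.e.\ attainment of $c_\Omega$, are in hand.
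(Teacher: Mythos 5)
Your proposal is correct and follows exactly the route the paper has in mind: the paper omits the proof of Theorem \ref{thm:basic}, describing it as ``a simple variation of Perron's method,'' and your argument (maximality of $S_\Omega(\cdot,z)$ over the subsolution class, a bump construction $\max\{S_\Omega(\cdot,z),\psi\}$ to contradict maximality wherever the supersolution test fails away from $z$, approximation $z_k\to z$ for boundary vertices, and the triangle inequality read off from the supremum) is precisely that Perron-type argument. The points you flag as delicate (persistence of the strict subsolution inequality near a boundary point $x_0$, attainment of $c_\Omega$, equi-Lipschitz bounds) are indeed the standard ingredients and are handled correctly, so no gap remains.
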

In the sense of Definition \ref{defn:1}, inequality \eqref{eq:sta} means $x\mapsto u(x)$ is a subsolution to $H(x,Du(x))= c_\Omega$ in $\Omega$, and $x\mapsto u(x)$ is a supersolution to $H(x,Du(x))= c_\Omega$ in $\overline{\Omega}\backslash \{z\}$. We omit the proof of Theorems \ref{thm:basic} as it is a simple variation of Perron's method.

\begin{defn} Let us define the ergodic problem in $\Omega$ as
\begin{equation}\label{eq:E}
    \begin{cases}
    H(x,Du(x)) \leq c_\Omega &\quad\text{in}\;\Omega,\\ 
    H(x,Du(x)) \geq c_\Omega &\quad\text{on}\;\overline{\Omega}.
    \end{cases}\tag{E}
\end{equation}
The Aubry set $\mathcal{A}$ in $\Omega$ is defined as
\begin{equation*}
    \mathcal{A}_{\Omega} = \Big\lbrace z\in \overline{\Omega}: x\mapsto S_{\Omega}(x,z)\;\text{is a solution to}\;\eqref{eq:E}\Big\rbrace.
\end{equation*}
\end{defn}

\begin{thm}\label{thm:V} Assuming $H(x,p) = |p| - V(x)$ where $V\in \mathrm{C}(\overline{\Omega})$ is nonnegative. 
\begin{itemize}
    \item[(i)] The additive eigenvalue of $H$ in $\Omega$ is $c_\Omega  = -\min_{\overline{\Omega}}V$.
    \item[(ii)] The Aubry set of $H$ in  $\Omega$ is $\mathcal{A}_\Omega = \left\lbrace z\in \overline{\Omega}: V(z) = -c_\Omega = \min_{\overline{\Omega} }V \right\rbrace $.
\end{itemize}
\end{thm}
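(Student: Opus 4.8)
The plan is to exploit the special structure of $H(x,p)=|p|-V(x)$: for every $x$ one has $\min_{p\in\mathbb{R}^n}H(x,p)=H(x,0)=-V(x)$, so $-V(x)$ is the pointwise critical value of $H$ at $x$, and the constant functions are exactly the subsolutions of $H(x,Dv)\le c$ available at the critical level. Throughout write $m:=\min_{\overline{\Omega}}V\ge 0$; since $V\in\mathrm{C}(\overline{\Omega})$ and $\Omega$ is dense in $\overline{\Omega}$, one also has $\inf_\Omega V=m$.

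For part (i): the upper bound $c_\Omega\le -m$ holds because for every $c\ge -m$ the constant function $v\equiv 0$ satisfies $|Dv|-V(x)=-V(x)\le -m\le c$ throughout $\Omega$, hence is a classical (in particular viscosity) subsolution of $H(x,Dv)\le c$, so $c_\Omega\le c$ for all such $c$. For the lower bound, suppose $c<-m$. Using $\inf_\Omega V=m$, choose $x_0\in\Omega$ with $c+V(x_0)<0$ and, by continuity of $V$, a ball with $\overline{B(x_0,\rho)}\subset\Omega$ on which $c+V<0$. Any viscosity subsolution $v$ of $H(x,Dv)\le c$ in $\Omega$ is locally Lipschitz by the coercivity of $H$ in $p$, hence differentiable a.e.; at any point of differentiability $x\in B(x_0,\rho)$ the viscosity inequality reads $|Dv(x)|\le c+V(x)<0$, which is impossible. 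Thus no subsolution exists when $c<-m$, so $c_\Omega\ge -m$, and therefore $c_\Omega=-m=-\min_{\overline{\Omega}}V$.

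For part (ii) I would first record two facts. (a) Since $-V(x)\le -m=c_\Omega$, constant functions solve \eqref{eqn:S_mu}, so by Definition \ref{defn:m_mu} one gets $S_\Omega(x,z)\ge 0=S_\Omega(z,z)$ for all $x\in\overline{\Omega}$; i.e.\ $x\mapsto S_\Omega(x,z)$ attains its global minimum over $\overline{\Omega}$ at $z$. (b) By Theorem \ref{thm:basic}(i), $S_\Omega(\cdot,z)$ is always a subsolution of $H=c_\Omega$ in $\Omega$ and a supersolution on $\overline{\Omega}\setminus\{z\}$; hence $z\in\mathcal{A}_\Omega$ if and only if $S_\Omega(\cdot,z)$ is, in addition, a viscosity supersolution of $H(x,Du)\ge c_\Omega$ at the single point $z$. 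Now I split into cases. If $V(z)=m$, then for any $\varphi\in\mathrm{C}^1(\overline{\Omega})$ such that $S_\Omega(\cdot,z)-\varphi$ has a local minimum over $\overline{\Omega}$ at $z$ we have $H(z,D\varphi(z))=|D\varphi(z)|-V(z)\ge -V(z)=-m=c_\Omega$, so the supersolution inequality at $z$ holds automatically and $z\in\mathcal{A}_\Omega$. If instead $V(z)>m$, take $\varphi\equiv 0$: by fact (a), $S_\Omega(\cdot,z)-\varphi$ has a minimum over $\overline{\Omega}$ at $z$, yet $H(z,D\varphi(z))=H(z,0)=-V(z)<-m=c_\Omega$, so the supersolution inequality at $z$ fails and $z\notin\mathcal{A}_\Omega$. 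Since $V(z)\ge m$ for every $z\in\overline{\Omega}$, this gives $\mathcal{A}_\Omega=\{z\in\overline{\Omega}:V(z)=m\}=\{z\in\overline{\Omega}:V(z)=-c_\Omega=\min_{\overline{\Omega}}V\}$, as claimed.

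I do not expect a serious obstacle: the whole argument rests on the two elementary observations that $-V(x)=\min_p H(x,p)$ and that constants realize the critical subsolution level. The only technical points needing care are invoking the standard coercivity estimate so that a viscosity subsolution of the eikonal-type inequality is locally Lipschitz (making the pointwise contradiction in part (i) legitimate), and handling vertices $z\in\partial\Omega$, where the supersolution test is taken against $\varphi\in\mathrm{C}^1(\overline{\Omega})$ with extrema over $\overline{\Omega}$; both are routine within the framework already set up.
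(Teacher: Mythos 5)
Your proposal is correct, and it follows the same overall skeleton as the paper: get $c_\Omega=-\min_{\overline\Omega}V$ from the elementary bound $|p|\ge 0$, then decide membership in $\mathcal{A}_\Omega$ by testing whether $S_\Omega(\cdot,z)$ is a supersolution at the single point $z$, using that constants are critical subsolutions so that $S_\Omega(\cdot,z)\ge 0=S_\Omega(z,z)$ and hence $0\in D^-S_\Omega(\cdot,z)(z)$ for the exclusion direction. The differences are local and in your favor on simplicity. In (i) you obtain the upper bound directly from the definition of $c_\Omega$ as an infimum, via the constant subsolution $v\equiv 0$, whereas the paper tests the supersolution property of a solution of the ergodic problem at a minimum point; your lower bound via the a.e.\ inequality $|Dv|\le c+V<0$ on a small ball is the same mechanism as the paper's $-V(x)\le |Dv(x)|-V(x)\le c_\Omega$ a.e., and your appeal to Lipschitz regularity of subsolutions is consistent with the paper's own remark that subsolutions of the critical equation are Lipschitz and equivalent to a.e.\ subsolutions. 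In (ii), for the inclusion $\{V=\min V\}\subset\mathcal{A}_\Omega$ the paper proves the finer fact that $S_\Omega(\cdot,z)$ is differentiable at $z$ with vanishing gradient (via a modulus-of-continuity estimate), while you simply observe that $H(z,p)=|p|-V(z)\ge -V(z)=c_\Omega$ for every $p$, so the supersolution test at $z$ is vacuous; this is a genuinely shorter verification of that step, at the cost of not yielding the extra regularity information at $z$ that the paper's argument provides. The exclusion direction is identical in both proofs.
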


\begin{thm}\label{thm:char_Aubry} Given $z\in \Omega$, then $z\notin \mathcal{A}_\Omega$ if and only if there is a subsolution of $H(x,Du(x))\leq c_\Omega$ in $\Omega$ which is strict in some neighborhood of $z$. Here we say $u\in \mathrm{C}(\overline{\Omega})$ is a \emph{strict subsolution} to $H(x,Du(x)) = c_\Omega$ in $B(x_0,r)\subset \Omega$ if there exists some $\varepsilon>0$ such that $H(x,p) \leq c_\Omega - \varepsilon$ for all $p\in D^+u(x), x\in B(x_0,r)$.
\end{thm}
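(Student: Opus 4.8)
The plan is to prove the two implications separately, using the intrinsic semi-distance $S_\Omega(\cdot,z)$ and the characterization of the Aubry set.

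First I would handle the easier direction: if $z \notin \mathcal{A}_\Omega$, produce a strict subsolution near $z$. By definition, $z \notin \mathcal{A}_\Omega$ means $x \mapsto S_\Omega(x,z)$ fails to be a supersolution of $H(x,Du) = c_\Omega$ at the point $z$ itself (by Theorem \ref{thm:basic}(i) it is already a solution on $\overline{\Omega} \setminus \{z\}$). Failure of the supersolution property at $z$ gives a $\mathrm{C}^1$ test function $\varphi$ touching $S_\Omega(\cdot,z)$ from below at $z$ with $H(z, D\varphi(z)) < c_\Omega$; by continuity of $H$ this strict inequality persists on a ball $B(z,r)$, so $\varphi$ itself is a strict subsolution on $B(z,r)$. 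I would then glue $\varphi$ (suitably shifted down) with $S_\Omega(\cdot,z)$ via a max-type Perron construction on $\overline{\Omega}$: since $S_\Omega(\cdot,z)$ is a subsolution on all of $\Omega$ and $\varphi$ is a strict subsolution near $z$, the function $\max\{\varphi - \eta, S_\Omega(\cdot,z)\}$ equals $\varphi - \eta$ near $z$ for small $\eta > 0$ (using $\varphi(z) = S_\Omega(z,z) = 0$ and continuity) and is a subsolution on $\Omega$, hence a global subsolution of $H(x,Du) \leq c_\Omega$ in $\Omega$ that is strict near $z$.

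For the converse, suppose there is a subsolution $u \in \mathrm{C}(\overline{\Omega})$ of $H(x,Du) \leq c_\Omega$ in $\Omega$ which is strict on some $B(z_0, r_0)$ with $z \in B(z_0,r_0)$; I want to conclude $z \notin \mathcal{A}_\Omega$, i.e.\ $S_\Omega(\cdot,z)$ is not a supersolution at $z$. The idea is that the strict subsolution lets one build a subsolution $w$ with $w(z) = 0$, $w \leq S_\Omega(\cdot,z)$ on $\overline{\Omega}$ (by maximality of $S_\Omega(\cdot,z)$), and such that near $z$, $w$ is a strict subsolution — and then $S_\Omega(\cdot,z)$ inherits a point at which it cannot be a supersolution. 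Concretely: normalize $u$ so $u(z) = 0$; then $u \leq S_\Omega(\cdot, z)$ since $S_\Omega(\cdot,z)$ is the maximal subsolution vanishing at $z$. If $S_\Omega(\cdot,z)$ were a supersolution of $H(x,Du) = c_\Omega$ at $z$, I would derive a contradiction by a doubling/inf-convolution argument or, more directly, by noting that $u$ touches $S_\Omega(\cdot,z)$ from below at $z$ (both vanish there and $u \leq S_\Omega(\cdot,z)$), so any $\mathrm{C}^1$ function below $u$ near $z$ that witnesses strictness of $u$ also lies below $S_\Omega(\cdot,z)$, contradicting the supersolution inequality $H(z, p) \geq c_\Omega$ for $p$ in the relevant subdifferential. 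One must be slightly careful since $u$ need not be $\mathrm{C}^1$; I would use the a.e.\ subsolution equivalence (valid here since subsolutions are Lipschitz, as recalled before Definition \ref{defn:m_mu}) together with the super-differential formulation: strictness of $u$ on $B(z_0,r_0)$ means $H(x,p) \leq c_\Omega - \varepsilon$ for all $p \in D^+u(x)$ there, and since $u \leq S_\Omega(\cdot,z)$ with equality at $z$, we have $D^+u(z) \supseteq D^- S_\Omega(z,z) \cap D^+ u(z)$ interacting appropriately — the cleanest route is to pick $p \in D^-S_\Omega(z,z)$ (nonempty at a supersolution test point is not guaranteed, so) instead test $S_\Omega(\cdot,z) - \psi$ with $\psi$ a $\mathrm{C}^1$ support of $u$ from below at $z$, giving a local min of $S_\Omega(\cdot,z) - \psi$ at $z$ and hence $H(z, D\psi(z)) \geq c_\Omega$, while $D\psi(z) \in D^+u(z)$ forces $H(z, D\psi(z)) \leq c_\Omega - \varepsilon$, a contradiction.

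The main obstacle I anticipate is the regularity bookkeeping in the converse direction: arranging a genuine $\mathrm{C}^1$ test function that simultaneously supports $u$ from below at $z$ (so that its gradient lies in $D^+u(z)$ and inherits the strict inequality) and supports $S_\Omega(\cdot,z)$ from above at $z$ (so that the supersolution property of $S_\Omega(\cdot,z)$ applies). One resolves this by first replacing $u$ near $z$ by a smooth strict subsolution via mollification — legitimate because $H$ is Lipschitz in $x$ (assumption \eqref{H3c}) and convex in $p$ (\eqref{H5}), so mollification of an a.e.\ strict subsolution yields a $\mathrm{C}^1$ strict subsolution on a slightly smaller ball with a slightly smaller $\varepsilon$ — after which the touching argument is straightforward. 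The rest is a routine Perron-type gluing, which I would treat as standard (it is "a simple variation of Perron's method" in the same spirit as Theorem \ref{thm:basic}).
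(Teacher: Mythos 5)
First, note that the paper does not actually prove Theorem \ref{thm:char_Aubry}: it quotes it from \cite{ishii_vanishing_2020}. So your proposal has to stand on its own, and as written it has one fixable slip and one genuine gap. The slip is in the first implication: since $\varphi$ touches $S_\Omega(\cdot,z)$ from below at $z$, you have $\varphi\leq S_\Omega(\cdot,z)$ near $z$ (equality at $z$), so $\max\{\varphi-\eta,\,S_\Omega(\cdot,z)\}$ equals $S_\Omega(\cdot,z)$, not $\varphi-\eta$, in a neighborhood of $z$; your glued function is therefore not strict near $z$ (and, moreover, $\varphi-\eta$ is only a subsolution inside $B(z,r)$, so the max is not automatically a subsolution on all of $\Omega$). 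The standard repair is to shift \emph{up}: first make the contact strict (replace $\varphi$ by $\varphi-|x-z|^2$), then set $w=\max\{S_\Omega(\cdot,z),\,\varphi+\eta\}$ on $B(z,r)$ and $w=S_\Omega(\cdot,z)$ outside, with $0<\eta<\min_{\partial B(z,r)}\bigl(S_\Omega(\cdot,z)-\varphi\bigr)$ so the two pieces agree near $\partial B(z,r)$; then $w=\varphi+\eta$ near $z$ and is strict there. This is routine, but the construction you wrote down fails.

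The converse direction is where the real gap lies. A $\mathrm{C}^1$ function $\psi$ supporting $u$ from below at $z$ has $D\psi(z)\in D^-u(z)$, not $D^+u(z)$ as you assert, and $D^-u(z)$ may be empty (a concave kink at $z$), so the test function you need may simply not exist. More fundamentally, to show $z\notin\mathcal{A}_\Omega$ you must \emph{exhibit} some $p\in D^-w(z)$, $w:=S_\Omega(\cdot,z)$, with $H(z,p)<c_\Omega$; the statement ``$w$ is a supersolution at $z$'' is vacuously true whenever $D^-w(z)=\emptyset$, so no purely pointwise touching/contradiction argument based only on $u\leq w$ with equality at $z$ can succeed. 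Your proposed fix by mollification does not rescue this: mollifying $u$ destroys the contact at $z$ (you only retain $u_\rho\leq w+C\rho$), which was the sole link between $u$ and $w$, and there is then no reason for $w-u_\rho$ to attain a local minimum at $z$. What actually closes the argument (and is how the cited reference proceeds) is quantitative and uses the optimal control formula of Theorem \ref{lem:optimal}: with a mollified strict subsolution $\varphi$ on $B(z,2r)$ (your mollification step, using (H1) and convexity, is fine and is indeed needed), every admissible curve from $z$ to $x\in B(z,r)$ has cost at least $\varphi(x)-\varphi(z)+\varepsilon\tau$, where $\tau$ is the time spent in $B(z,2r)$, and $\tau\geq c'|x-z|$ because velocities are effectively bounded after the reduction \eqref{eq:H&L}. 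Hence $S_\Omega(x,z)\geq \varphi(x)-\varphi(z)+c'\varepsilon|x-z|$ near $z$, so $w-\varphi$ has a local minimum at $z$, giving $D\varphi(z)\in D^-w(z)$ with $H(z,D\varphi(z))\leq c_\Omega-\varepsilon$, i.e.\ the supersolution test fails at $z$. Without an argument of this kind (or an equivalent dynamic programming inequality for supersolutions), your converse direction does not go through.
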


\begin{thm}\label{thm:eigenvalue} If $\mathcal{A}_U \subset\subset \Omega \subset U$ then the additive eigenvalue of $H$ in $\Omega$ is $c_\Omega = c_U$.
\end{thm}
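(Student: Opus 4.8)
The plan is to prove the two inequalities $c_\Omega\le c_U$ and $c_\Omega\ge c_U$ separately. The first is immediate: if a constant $c$ admits a viscosity subsolution $w$ of $H(x,Dw)\le c$ in $U$, then the restriction $w|_\Omega$ is a subsolution in $\Omega$ (the subsolution condition is interior, and every local maximum over the open set $\Omega\subset U$ is a local maximum over a small ball contained in $U$). Hence the class of admissible constants for $U$ is contained in that for $\Omega$, so $c_\Omega\le c_U$; and recall (as noted after \eqref{eq:c(0)}) that both infima are attained. The content is therefore the reverse inequality $c_\Omega\ge c_U$, for which I would use the duality representation of Theorem \ref{thm:c0}, applied both on $U$ and on $\Omega$ (all of the duality machinery of Section \ref{sec2} applies verbatim to the ball $U$).

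\textbf{Reduction via a Mather measure of $U$.} Let $\mu^{\ast}\in\mathcal P\cap\mathcal G'_{0,U}$ be a viscosity Mather measure for $U$, i.e.\ a minimizer realizing $-c_U=\langle\mu^{\ast},L\rangle$; such a measure exists by Theorem \ref{thm:c0} for $U$. The key structural input is that the projection onto the $x$-variable of $\operatorname{supp}\mu^{\ast}$ is contained in the Aubry set $\mathcal A_U$ --- this is the standard fact that viscosity Mather measures are carried by the Aubry set (see \cite{Ishii2017a,Hung2019}). By hypothesis $\mathcal A_U\subset\subset\Omega$, so $\operatorname{supp}\mu^{\ast}\subset K_0\times\overline B_h$ for some compact $K_0$ with $\mathcal A_U\subset K_0\subset\Omega$, and we may regard $\mu^{\ast}$ as a probability measure on $\overline\Omega\times\overline B_h$. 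If we show $\mu^{\ast}\in\mathcal G'_{0,\Omega}$, then by Theorem \ref{thm:c0} for $\Omega$ we get $-c_\Omega=\min_{\mu\in\mathcal P\cap\mathcal G'_{0,\Omega}}\langle\mu,L\rangle\le\langle\mu^{\ast},L\rangle=-c_U$, i.e.\ $c_\Omega\ge c_U$, which together with the first inequality closes the proof.

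\textbf{Verifying $\mu^{\ast}\in\mathcal G'_{0,\Omega}$.} Fix $f\in\mathcal G_{0,\Omega}$ with a witness $u\in\mathrm C(\overline\Omega)$, $H_f(x,Du)\le 0$ in $\Omega$; since $H_f$ is coercive in $p$ (take $v=hp/|p|$ in its definition), $u$ is locally Lipschitz in $\Omega$. Choose open sets $\mathcal A_U\subset\subset\Omega_1\subset\subset\Omega_2\subset\subset\Omega$ and a cutoff $\chi\in\mathrm C(\overline U)$ with $0\le\chi\le 1$, $\chi\equiv 0$ on $\overline\Omega_1$, $\chi\equiv 1$ on $\overline U\setminus\Omega_2$. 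Extend $f$ to $\bar f\in\mathrm C(\overline U\times\overline B_h)$, extend $u|_{\overline\Omega_2}$ (which is Lipschitz) to $\hat u\in\mathrm{Lip}(\overline U)$ with the same Lipschitz constant (McShane--Whitney), and put $\hat f:=\bar f+M\chi$. On $\overline U\setminus\Omega_2$ one has $\hat f\ge M-\|\bar f\|_\infty$ while $|D\hat u|$ is bounded, so for $M$ large $H_{\hat f}(x,D\hat u)\le 0$ there; on $\overline\Omega_2$ we have $\hat f\ge\bar f=f$ and $\hat u=u$, hence $H_{\hat f}(x,D\hat u)\le H_f(x,Du)\le 0$. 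Thus $H_{\hat f}(x,D\hat u)\le 0$ a.e.\ in $U$, and since $H_{\hat f}(x,\cdot)$ is convex this is equivalent to the viscosity inequality (cf.\ the discussion preceding Theorem \ref{thm:c0}), so $(\hat f,\hat u)\in\mathcal F_{0,U}$ and $\hat f\in\mathcal G_{0,U}$. Finally $\hat f=f$ on $\overline\Omega_1\times\overline B_h\supset\operatorname{supp}\mu^{\ast}$, so $\langle\mu^{\ast},f\rangle=\langle\mu^{\ast},\hat f\rangle\ge 0$ because $\mu^{\ast}\in\mathcal G'_{0,U}$. As $f\in\mathcal G_{0,\Omega}$ was arbitrary, $\mu^{\ast}\in\mathcal G'_{0,\Omega}$.

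\textbf{Main obstacle.} The delicate point is the concentration of the viscosity Mather measures of $U$ on $\mathcal A_U$; once that is granted, the rest is the gluing/extension above, whose whole purpose is to exploit that $\mu^{\ast}$ only sees $f$ near $\mathcal A_U$, so one is free to make $\hat f$ very large away from $\mathcal A_U$ and thereby keep $\hat u$ a subsolution all the way out to $\partial U$. One could instead attempt a purely PDE argument by contradiction: if $c_\Omega<c_U$, a critical subsolution in $\Omega$ is strict with respect to $c_U$ everywhere in $\Omega$, and gluing it with a critical subsolution of $H\le c_U$ in $U$ which is strict off $\mathcal A_U$ should produce a subsolution of $H\le c_U$ in $U$ that is strict in a neighborhood of some point of $\mathcal A_U$, contradicting Theorem \ref{thm:char_Aubry}; however, arranging the glued function to be \emph{visibly} strict near $\mathcal A_U$ (rather than only on a set that avoids $\mathcal A_U$) requires an extra argument, which is why the measure-theoretic route seems cleaner to carry out.
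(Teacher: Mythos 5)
Your overall strategy (prove $c_\Omega\le c_U$ by restriction, and $c_\Omega\ge c_U$ by inserting a Mather measure of $U$ into the dual characterization of $c_\Omega$ from Theorem \ref{thm:c0}) is genuinely different from the paper's argument. The paper argues by contradiction with PDE tools: if $c_\Omega<c_U$, a strict subsolution in $\Omega$ makes the Dirichlet problem $H=c_U$ in $\Omega$ with boundary data $S_U(\cdot,z)$, $z\in\mathcal{A}_U$, uniquely solvable (Theorem \ref{thm:CP_Dirichlet}); comparing the two explicit solutions $S_U(\cdot,z)$ and $\min_{y\in\partial\Omega}\bigl\lbrace S_U(y,z)+S_U(\cdot,y)\bigr\rbrace$ then forces $S_U(y,z)+S_U(z,y)=0$ for some $y\in\partial\Omega$, i.e.\ $y\in\mathcal{A}_U$, contradicting $\mathcal{A}_U\subset\subset\Omega$. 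Within your route, the easy inequality, the gluing/extension producing $(\hat f,\hat u)\in\mathcal{F}_{0,U}$, and the deduction $-c_\Omega\le\langle\mu^{*},L\rangle=-c_U$ once $\mu^{*}\in\mathcal{P}\cap\mathcal{G}'_{0,\Omega}$ is known, are all correct.

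The genuine gap is the step you call the ``key structural input'': that a minimizing measure $\mu^{*}$ for the state-constraint problem on $U$ has $x$-support contained in $\mathcal{A}_U$. This is exactly where the difficulty of the theorem is concentrated, and it is neither proved in the paper nor quotable wholesale from the cited references in this state-constraint setting. What the paper's tools do give you is the interior half: for a compact $K\subset U\setminus\mathcal{A}_U$, Theorem \ref{thm:char_Aubry} provides, near each point of $K$, a critical subsolution $w$ that is strict on a ball $B$, and feeding the pair $\left(L+c_U-\psi,\,w\right)$ into the cone (with $0\le\psi\le\varepsilon$ a continuous bump supported in $B$) yields $\langle\mu^{*},\psi\rangle\le 0$, hence $\mu^{*}(K\times\overline{B}_h)=0$. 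But Theorem \ref{thm:char_Aubry} is stated only for interior points $z\in\Omega$, so this argument only shows that the $x$-support of $\mu^{*}$ lies in $\mathcal{A}_U\cup\partial U$: nothing you have invoked rules out $\mu^{*}$ charging $\partial U$, or arbitrarily small neighborhoods of it, even though $\mathcal{A}_U\subset\subset\Omega$. Excluding boundary mass requires critical subsolutions on $U$ that are strict near non-Aubry \emph{boundary} points (or an occupation-measure/trajectory argument), which is an additional piece of weak KAM machinery of essentially the same depth as the theorem itself; note that the na\"ive scaling trick only worsens the inequality near $\partial U$, so it does not supply such strictness. Until this boundary exclusion is proved, or a precise reference for the state-constraint version is given, the proof is incomplete; the remainder of your argument stands.
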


We give proofs for \ref{thm:V} and \ref{thm:eigenvalue} in Appendix. A proof of Theorem \ref{thm:V} for the case $\Omega = \mathbb{R}^n$ can be found in \cite{Hung2019}. Theorem \ref{thm:eigenvalue} is taken from \cite[Proposition 5.1]{ishii_vanishing_2020}. Proof of Theorem \ref{thm:char_Aubry} can be found in \cite{ishii_vanishing_2020}.

The maximal solution $S_\Omega(x,y)$ also has an optimal control formulation (minimal exists time) as follows (see \cite{fathi_pde_2005, ishii_vanishing_2020}).

\begin{thm}[Optimal control formula]\label{lem:optimal} Let us define for $\beta \geq \alpha \geq 0$ the following set:
\begin{equation*}
    \mathcal{F}_{\Omega}(x,y; \alpha,\beta) = \Big\lbrace\xi \in \mathrm{AC}\left([0,T], \overline{\Omega}\right);  T > 0, \xi(\alpha) = y, \xi(\beta) = x \Big\rbrace.
\end{equation*}
Then
\begin{equation*}
S_{\Omega}(x,y) = \inf \left\lbrace \int_0^T \Big( c(0)+  L(\xi(s),\dot{\xi}(s))\Big)\;ds: \xi\in \mathcal{F}_{\Omega}(x,y;0, T)   \right\rbrace.
\end{equation*}
%We also have
%\begin{equation*}
%    S_{\Omega}(x,y) = \inf \left\lbrace S_{\Omega}(\xi(t),y) + \int_t^T \Big(c(0)+ L(\xi(s),\dot{\xi}(s))\Big)ds: t\in (0,T),\xi \in \mathcal{F}_{\Omega}(x,\xi(t); t, T) \right\rbrace.
%\end{equation*}
\end{thm}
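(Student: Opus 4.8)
Write $\widetilde{S}_\Omega(x,y)$ for the right-hand side of the claimed identity (recall $c_\Omega=c(0)$ for the base domain $\Omega$). This is a by-now classical fact — the minimal-action / Ma\~n\'e potential representation — so the plan is the standard two-sided comparison, adapted to the state-constraint setting; the proof is sketched in \cite{fathi_pde_2005, ishii_vanishing_2020}.

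\emph{Easy inequality $S_\Omega\le\widetilde{S}_\Omega$.} Let $v$ be any viscosity subsolution of \eqref{eqn:S_mu} in $\Omega$; such $v$ is Lipschitz on $\overline{\Omega}$ and satisfies $H(x,Dv(x))\le c(0)$ for a.e.\ $x\in\Omega$. Given $\xi\in\mathcal{F}_\Omega(x,y;0,T)$, the map $s\mapsto v(\xi(s))$ is absolutely continuous and, by the Fenchel inequality, $Dv(\xi(s))\cdot\dot\xi(s)\le H(\xi(s),Dv(\xi(s)))+L(\xi(s),\dot\xi(s))\le c(0)+L(\xi(s),\dot\xi(s))$ for a.e.\ $s$; integrating over $[0,T]$ gives $v(x)-v(y)\le\int_0^T\big(c(0)+L(\xi(s),\dot\xi(s))\big)\,ds$. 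Taking the infimum over $\xi$ and the supremum over $v$ yields $S_\Omega\le\widetilde{S}_\Omega$ on $\Omega\times\Omega$, and then on $\overline{\Omega}\times\overline{\Omega}$ once $\widetilde{S}_\Omega$ is known to be continuous. The one delicate point is that $Dv$ need not exist, and the subsolution inequality need not hold, where $\xi$ meets $\partial\Omega$; since $\Omega$ is star-shaped with respect to $0\in\Omega$, one first restricts to competitors of the form $\tfrac{1}{1+\varepsilon}\xi$ (which stay in $\Omega$ off the endpoints after relabelling) and then lets $\varepsilon\to0^+$.

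\emph{Reverse inequality $\widetilde{S}_\Omega\le S_\Omega$.} First, $\widetilde{S}_\Omega(\cdot,y)$ is finite: it is bounded below by $v(\cdot)-v(y)$ for a fixed subsolution $v$ (which exists since the infimum in \eqref{eq:c(0)} is attained), and bounded above by the cost of a fixed Lipschitz path of bounded speed joining $y$ to $x$ inside $\overline{\Omega}$ (available by path-connectedness, using star-shapedness explicitly), along which $L$ is bounded. Invoking the geometry assumption $\mathrm{(A1)}$ (equivalently $\mathrm{(A2)}$) near $\partial\Omega$, the same path construction gives Lipschitz continuity of $\widetilde{S}_\Omega$ on $\overline{\Omega}\times\overline{\Omega}$, exactly as in the well-posedness theory \cite{Soner1986,Capuzzo-Dolcetta1990}. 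Next, the dynamic programming principle: for $x_0\in\Omega$, $t>0$ small and $w\in\mathbb{R}^n$ with $x_0-tw\in\Omega$, prepending a straight segment with velocity $w$ gives $\widetilde{S}_\Omega(x_0,y)\le\widetilde{S}_\Omega(x_0-tw,y)+t\big(c(0)+L(x_0,w)\big)+o(t)$; if $\varphi\in\mathrm{C}^1$ is such that $\widetilde{S}_\Omega(\cdot,y)-\varphi$ has a local maximum at $x_0$, substituting $\varphi$, dividing by $t$ and letting $t\to0^+$ gives $D\varphi(x_0)\cdot w\le c(0)+L(x_0,w)$, and taking the supremum over $w$ yields $H(x_0,D\varphi(x_0))\le c(0)$. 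Hence $\widetilde{S}_\Omega(\cdot,y)$ is a subsolution of \eqref{eqn:S_mu} in $\Omega$. Finally $\widetilde{S}_\Omega(y,y)=0$: the constant curve gives $\widetilde{S}_\Omega(y,y)\le\inf_{T>0}T\big(c(0)+L(y,0)\big)=0$, where $c(0)+L(y,0)\ge0$ follows from $0=Dv(y)\cdot0\le H(y,Dv(y))+L(y,0)\le c(0)+L(y,0)$ a.e.\ and continuity, while $\widetilde{S}_\Omega(y,y)\ge v(y)-v(y)=0$ from the easy inequality. Therefore $\widetilde{S}_\Omega(\cdot,y)$ is an admissible competitor in the supremum defining $S_\Omega(\cdot,y)$ in Definition \ref{defn:m_mu}, so $S_\Omega(x,y)\ge\widetilde{S}_\Omega(x,y)-\widetilde{S}_\Omega(y,y)=\widetilde{S}_\Omega(x,y)$, and equality holds.

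The soft dynamic-programming argument is routine; the two substantive points are (i) the continuous (indeed Lipschitz) extension of $\widetilde{S}_\Omega$ up to $\partial\Omega$, which is precisely where the structural hypothesis $\mathrm{(A1)}$ must be used, just as in the state-constraint well-posedness theory, and (ii) the chain-rule/Fenchel step of the easy inequality along curves that may touch $\partial\Omega$, where no subsolution inequality is assumed; both are resolved by the interior-curve approximation afforded by star-shapedness, so I expect (i) to be the main obstacle.
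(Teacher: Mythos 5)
The paper itself gives no proof of this theorem (it simply refers to \cite{fathi_pde_2005, ishii_vanishing_2020}), and your two-sided argument---Fenchel's inequality along admissible curves for $S_\Omega\le\widetilde S_\Omega$, then dynamic programming to show $\widetilde S_\Omega(\cdot,y)$ is a subsolution of \eqref{eqn:S_mu} with $\widetilde S_\Omega(y,y)=0$, fed back into Definition \ref{defn:m_mu}, for the reverse inequality---is exactly the standard Ma\~n\'e-potential proof those references supply, so it is correct and in line with the intended argument. The one step stated too quickly is the chain-rule/Fenchel inequality along an AC curve: a Lipschitz subsolution $v$ need not be differentiable at $\xi(s)$ for a.e.\ $s$, so before integrating one should pass to mollified approximate subsolutions on the slightly shrunken (star-shaped rescaled) domain, which convexity $\mathrm{(H3)}$ makes harmless; this is routine and does not affect the structure of your proof.
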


\subsection{The vanishing discount problem on changing domains} Let $\Omega_\lambda = (1+r(\lambda))\Omega$. For each $\lambda \in (0,1)$ let $u_\lambda \in \mathrm{BUC}(\overline{\Omega}_\lambda)\cap \mathrm{Lip}(\Omega_\lambda)$ be the unique viscosity state-constraint solutions to 
\begin{equation}\label{eq:def_ulambda}
\begin{cases}
\phi(\lambda) u_\lambda(x) + H(x,Du_\lambda(x)) \leq 0 &\quad\text{in}\;\Omega_\lambda,\\
\phi(\lambda) u_\lambda(x) + H(x,Du_\lambda(x)) \geq 0 &\quad\text{on}\;\overline{\Omega}_\lambda.
\end{cases}
\end{equation}
The additive eigenvalue $c(\lambda)$ of $H$ in $\Omega_\lambda$ is the unique constant such that the following ergodic problem can be solved
\begin{equation}\label{eq:cell3}
\begin{cases}
H(x,Du(x)) \leq c(\lambda) &\quad \text{in}\;\Omega_\lambda\\
H(x,Du(x)) \geq c(\lambda) &\quad \text{on}\;\overline{\Omega}_\lambda.
\end{cases}
\end{equation}
By comparison principle, it is clear that if $r(\lambda)\geq 0$ then $c(0)\leq c(\lambda)$ and if $\lambda\mapsto r(\lambda)$ is increasing (decreasing) then $\lambda\mapsto c(\lambda)$ increasing (decreasing) as well.
\begin{thm}\label{thm:pre_conv_sta} Considering the problem \eqref{eq:def_ulambda} with \eqref{H3c}, \eqref{H4} and $\mathrm{(A1)}$.
\begin{itemize}
\item[(i)] We have the priori estimate $\phi(\lambda) |u_\lambda(x)| + |Du_\lambda(x)| \leq C_H$ for $x\in \Omega_\lambda$.
\item[(ii)] We have $\phi(\lambda) u_\lambda(\cdot)\rightarrow -c(0)$ locally uniformly as $\lambda \rightarrow 0^+$. Furthermore for all $x\in \overline{\Omega}_\lambda$ and $\lambda>0$ we have
\begin{equation}\label{eq:rate1}
\begin{cases}
\left|\phi(\lambda)u_\lambda(x) + c(0)\right| \leq C\left(\phi(\lambda) + |r(\lambda)|\right)\\
\left|\phi(\lambda)u_\lambda(x) + c(\lambda)\right| \leq C\phi(\lambda).
\end{cases}
\end{equation}
As a consequence, whenever $r(\lambda)\neq 0$ there holds
\begin{equation}\label{eq:bound}
    \left|\frac{c(\lambda) - c(0)}{r(\lambda)}\right|  \leq C.
\end{equation}
\item[(iii)] For $x_0\in \overline{\Omega}$ there exists a subsequence $\lambda_j$ and $u,w\in \mathrm{BUC}(\overline{\Omega})\cap \mathrm{Lip}(\Omega)$ such that $u_{\lambda_j}(\cdot) - u_{\lambda_j}(x_0) \rightarrow u(\cdot) $ and $u_{\lambda_j}(\cdot) + \phi(\lambda_j)^{-1}c(0) \rightarrow w(\cdot)$ locally uniformly as $\lambda_j\rightarrow 0$ and $u,w$ solve \eqref{eq:S_0} with $w(x) - u(x) = w(x_0)$.
\end{itemize}
\end{thm}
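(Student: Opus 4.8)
The plan is to treat Theorem~\ref{thm:pre_conv_sta} as the changing-domain analogue of Theorem~\ref{thm:pre}, transporting everything to the fixed reference domain $\Omega$ by the dilation $y = (1+r(\lambda))^{-1}x$. Writing $\rho=\rho(\lambda)=1+r(\lambda)$, set $\tilde u_\lambda(y) := u_\lambda(\rho y)$ for $y\in\overline\Omega$; then $D\tilde u_\lambda(y) = \rho\, Du_\lambda(\rho y)$ and $\tilde u_\lambda$ is the state-constraint solution in $\Omega$ of $\phi(\lambda)\tilde u_\lambda + \tilde H_\lambda(y,D\tilde u_\lambda)=0$, where $\tilde H_\lambda(y,p) := H(\rho y, \rho^{-1}p)$. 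Since $\rho\to 1$, assumption \eqref{H3c} gives $|\tilde H_\lambda(y,p)-H(y,p)|\le C(\,|\rho-1|\,|y| + |\rho^{-1}-1|\,|p|\,)\le C|r(\lambda)|(1+|p|)$ on compact $p$-ranges, so the perturbed Hamiltonians converge uniformly (on the relevant bounded $p$-set determined by the a priori gradient bound) to $H$ with an $O(|r(\lambda)|)$ error. Part (i) is then a direct citation: the uniform Lipschitz and $\phi(\lambda)|u_\lambda|$ bound is exactly Theorem~\ref{Perron}/\eqref{eq:priori} applied in $\Omega_\lambda$, whose constant $C_H$ depends only on $H$ (via \eqref{H4}) and not on the domain — here $\mathrm{(A1)}$ guarantees $\mathrm{(A2)}$ uniformly in $\lambda$ for the comparison principle, by Remark~\ref{rem:Ishii} and Lemma~\ref{thm:Ishii}.

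For part (ii), the first inequality in \eqref{eq:rate1} I would obtain by a two-sided comparison: let $w_0\in\mathrm{C}(\overline\Omega)$ solve the ergodic problem $H(x,Dw_0)=c(0)$ in $\Omega$ (extended Lipschitz to $U$), and use $w_0 \mp \tfrac{c(0)}{\phi(\lambda)} \mp \tfrac{C|r(\lambda)|}{\phi(\lambda)}$ (suitably perturbed) as sub/supersolutions to \eqref{eq:def_ulambda}; the $|r(\lambda)|$-sized Hamiltonian perturbation above together with the $\phi(\lambda)$-sized discount correction produces the stated $C(\phi(\lambda)+|r(\lambda)|)$ bound via the comparison principle of Theorem~\ref{CP continuous}. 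The second inequality $|\phi(\lambda)u_\lambda(x)+c(\lambda)|\le C\phi(\lambda)$ is cleaner: it is exactly \eqref{eq:rate0} from Theorem~\ref{thm:pre} applied \emph{verbatim} inside $\Omega_\lambda$ with $\delta=\phi(\lambda)$ and $c(0)$ replaced by $c(\lambda)=c_{\Omega_\lambda}$, and the constant $C$ there depends only on $H$ and $\mathrm{diam}(\Omega_\lambda)\le 2\,\mathrm{diam}(\Omega)$, hence is uniform in $\lambda\ll 1$. Subtracting the two inequalities in \eqref{eq:rate1} and dividing by $r(\lambda)$ (when nonzero) immediately yields \eqref{eq:bound}; the convergence $\phi(\lambda)u_\lambda\to -c(0)$ locally uniformly follows from the first inequality since $\phi(\lambda)+|r(\lambda)|\to 0$.

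For part (iii), I would apply the a priori bounds from (i): on any compact $K\Subset\Omega$, for $\lambda$ small $K\subset\Omega_\lambda$ and $\{u_{\lambda}-u_{\lambda}(x_0)\}$ is uniformly bounded (by the Lipschitz bound and connectedness of $\Omega$) and equi-Lipschitz, so by Arzel\`a--Ascoli and a diagonal argument over an exhaustion of $\Omega$ there is a subsequence $\lambda_j\to 0$ with $u_{\lambda_j}-u_{\lambda_j}(x_0)\to u$ locally uniformly; passing to the viscosity limit in \eqref{eq:def_ulambda}, using $\phi(\lambda_j)u_{\lambda_j}(x_0)\to -c(0)$ from (ii) and $\tilde H_{\lambda_j}\to H$ locally uniformly, shows $u$ solves \eqref{eq:S_0} — the sub- and supersolution properties pass to the limit on $\Omega$ and $\overline\Omega$ respectively by the standard stability of state-constraint viscosity solutions under domain perturbation (the supersolution test points lie in $\overline\Omega\subset\overline\Omega_{\lambda_j}$ when $r(\lambda_j)\ge 0$, and one argues on slightly shrunk domains when $r(\lambda_j)<0$). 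Setting $w := u + w(x_0)$ with $w(x_0):=\lim_j(u_{\lambda_j}(x_0)+\phi(\lambda_j)^{-1}c(0))$, which exists along a further subsequence because it is bounded by the first inequality in \eqref{eq:rate1}, gives $u_{\lambda_j}+\phi(\lambda_j)^{-1}c(0)\to w$ and $w$ solves \eqref{eq:S_0} as well with $w-u=w(x_0)$.

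The main obstacle I anticipate is the \emph{uniformity of all constants in $\lambda$}: the comparison principle, the a priori Lipschitz estimate, and the rate in Theorem~\ref{thm:pre} must all hold with constants independent of $\lambda$ on the moving domains $\Omega_\lambda$. This is where $\mathrm{(A1)}$ (and the fact, Remark~\ref{rem:Ishii}, that $\mathrm{(A1)}$ is \emph{scaling-stable} — if $\Omega$ satisfies \eqref{condA2} with constant $\kappa$, so does $(1+r)\Omega$ with the same $\kappa$ for $r$ near $0$) does the real work; one must check carefully that the exterior-cone/Soner condition $\mathrm{(A2)}$ holds for $\Omega_\lambda$ with a fixed pair $(r,h)$ and a fixed modulus for $\eta$. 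A secondary technical point is the stability of the state-constraint boundary inequality under the inner approximation $r(\lambda)<0$, where $\overline\Omega\not\subset\overline\Omega_{\lambda}$; here I would test supersolutions on the slightly smaller domains $\Omega_{\lambda_j}$ and use continuity up to $\partial\Omega$ of the limit, or equivalently invoke the robustness of the state-constraint condition recorded in \cite{Capuzzo-Dolcetta1990,Soner1986}.
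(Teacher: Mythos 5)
Your parts (i)--(ii) are essentially sound and in fact reorganize the paper's argument in a reasonable way: you prove the quantitative bound \eqref{eq:rate1} first by comparison and deduce $\phi(\lambda)u_\lambda\to -c(0)$ from it, whereas the paper first identifies the limit constant via compactness and a boundary argument and only then runs the comparison (on the fixed domain $\Omega$, against the rescaled $\tilde u_\lambda$ of \eqref{eq:u_tilde}); the second estimate in \eqref{eq:rate1} is obtained exactly as in the paper, by quoting \eqref{eq:rate0} on $\Omega_\lambda$. One caveat: your barrier ``$w_0$ extended Lipschitz to $U$'' does not work as stated, because an arbitrary Lipschitz extension is neither a subsolution nor a state-constraint supersolution of $H=c(0)$ on $\Omega_\lambda\setminus\overline\Omega$ when $r(\lambda)>0$; the correct object is the dilated ergodic solution $w_{0,\lambda}(x)=(1+r(\lambda))\,w_0\bigl(x/(1+r(\lambda))\bigr)$, for which \eqref{H3c} gives the $O(|r(\lambda)|)$ error you invoke. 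With that replacement, your comparison on $\Omega_\lambda$ (using that \eqref{condA2} is stable under the dilation, so Theorem \ref{CP continuous} applies on $\Omega_\lambda$) does yield \eqref{eq:rate1} and \eqref{eq:bound}.

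The genuine gap is in part (iii), precisely at the step the paper works hardest on: showing that the subsequential limit is a \emph{supersolution on} $\overline\Omega$ when $r(\lambda_j)>0$. Your justification --- ``the supersolution test points lie in $\overline\Omega\subset\overline\Omega_{\lambda_j}$'' plus an appeal to ``standard stability of state-constraint solutions under domain perturbation'' --- is not a valid viscosity argument: if $u-\varphi$ has its (strict) minimum over $\overline\Omega$ at $\tilde x\in\partial\Omega$, the perturbed minimum points of $u_{\lambda_j}-\varphi$ over $\overline\Omega$ may lie on $\partial\Omega$, which are \emph{interior} points of $\Omega_{\lambda_j}$; a minimum relative to the smaller set $\overline\Omega$ is neither a local minimum over a full neighborhood in $\Omega_{\lambda_j}$ nor a minimum relative to $\overline\Omega_{\lambda_j}$, so neither the interior nor the state-constraint supersolution test for $u_{\lambda_j}$ applies there. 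There is no citable ``standard'' stability result here --- this is exactly what the paper proves, via the doubling functional $\Phi^\lambda(x,y)=\varphi_\lambda(x)-\tilde u_\lambda(y)-|x-y|^2/(2r(\lambda)^2)$, the estimate $|x_\lambda-y_\lambda|=o(|r(\lambda)|)$, and assumption $\mathrm{(A1)}$ to force the test point $x_\lambda$ into the interior of $(1+|r(\lambda)|)\Omega$, so that the supersolution test can be applied to $\tilde u_\lambda$ at $y_\lambda\in\overline\Omega$ while the gradient is read off from $\varphi_\lambda$ at $x_\lambda$ (equations \eqref{eq:auxiliary4}--\eqref{eq:aug1}). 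A correct repair within your own framework would be to exploit the transported function you introduce at the outset: $\tilde u_\lambda(y)=u_\lambda(\rho y)$ is a state-constraint solution on the \emph{fixed} domain $\overline\Omega$ of $\phi(\lambda)\tilde u_\lambda+\tilde H_\lambda(y,D\tilde u_\lambda)=0$ with $\tilde H_\lambda\to H$ uniformly on $\overline\Omega\times\overline B_{C_H}$, and $\|\tilde u_\lambda-u_\lambda\|_{L^\infty(\overline\Omega)}\le C_H|r(\lambda)|\,\mathrm{diam}\,\Omega\to 0$, so genuine fixed-domain stability applies and transfers to $u_{\lambda_j}$; alternatively, test minima over all of $\overline\Omega_{\lambda_j}$ and use the equi-Lipschitz bound to show the minimizers still converge to $\tilde x$. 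Either route must be spelled out, since it is the core content of the theorem; your inner-approximation remark ($r(\lambda_j)<0$, minima taken over the shrunken $\overline\Omega_{\lambda_j}$) is fine as sketched.
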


\begin{proof}[Proof of Theorem \ref{thm:pre_conv_sta}] The priori estimate is clear from the coercivity assumption \eqref{H4}. Fix $x_0\in \Omega$, by Arzel\`a--Ascoli theorem there exists a subsequence $\lambda_j\rightarrow 0^+$, $c\in \mathbb{R}$ and $u$ defined in $\Omega$ such that $\phi(\lambda_j)u_{\lambda_j}(x_0) \rightarrow -c$ and $u_{\lambda_j}(\cdot) - u_\lambda(x_0) \rightarrow u(\cdot)$ locally uniformly as $\lambda_j\rightarrow 0^+$. The case for $w(\cdot)$ can be done in the same manner as well as the relation between $u$ and $w$. It follows that $u\in \mathrm{BUC}(\overline{\Omega})$ and by stability of viscosity solution we have $H(x,Du(x)) = c$ in $\Omega$. Since $u_\lambda(\cdot)$ is Lipschitz, we deduce also that $\phi(\lambda_j)u_{\lambda_j}(x) \rightarrow -c$ for any $x\in \overline{\Omega}$.

We show that $H(x,Du(x)) \geq c$ on $\overline{\Omega}$. Let $\varphi\in \mathrm{C}^1(\overline{\Omega})$ such that $u-\varphi$ has a strict minimum over $\overline{\Omega}$ at $\tilde{x}\in \partial \Omega$, we aim to show that $ H(\tilde{x},D\varphi(\tilde{x})) \geq c$. Let us define 
\begin{equation}\label{eq:u_tilde}
 \tilde{u}_\lambda(x) = (1+r(\lambda))^{-1} u_\lambda\left((1+r(\lambda))x\right), \qquad x\in \overline{\Omega}   
\end{equation}
then
\begin{equation}\label{eq:auxiliary0}
\begin{cases}
\phi(\lambda)(1+r(\lambda)) \tilde{u}_\lambda(x) + H((1+r(\lambda))x, D\tilde{u}_\lambda(x)) \leq 0 &\quad \text{in}\; \Omega,\\
\phi(\lambda)(1+r(\lambda)) \tilde{u}_\lambda(x) + H((1+r(\lambda))x, D\tilde{u}_\lambda(x)) \geq 0 &\quad \text{on}\; \overline{\Omega}.
\end{cases}
\end{equation}
Let us define
\begin{equation*}
    \varphi_\lambda(x) = (1+|r(\lambda)|)\varphi\left(\frac{x}{1+|r(\lambda)|}\right), \qquad x\in (1+|r(\lambda)|)\overline{\Omega}.
\end{equation*}
Note that $D\varphi_\lambda(x) = D\varphi\left((1+|r(\lambda)|)^{-1}x\right)$ for $x\in  (1+|r(\lambda)|\Omega$. We us define
\begin{equation*}
\Phi^\lambda(x,y) = \varphi_\lambda(x) - \tilde{u}_\lambda(y) - \frac{|x-y|^2}{2r(\lambda)^2},\qquad (x,y)\in \left(1+|r(\lambda)|\right)\overline{\Omega}\times \overline{\Omega}.
\end{equation*}
Assume $\Phi^\lambda(x,y)$ has a maximum over $\left(1+|r(\lambda)|\right)\overline{\Omega}\times \overline{\Omega}$ at $(x_\lambda,y_\lambda)$. By definition we have $\Phi^\lambda(x_\lambda,y_\lambda) \geq \Phi^\lambda(y_\lambda,y_\lambda)$, therefore
\begin{equation*}
\varphi_\lambda(x_\lambda) - \frac{|x_\lambda - y_\lambda|^2}{2r(\lambda)^2} \geq \varphi_\lambda(y_\lambda)
\end{equation*}
and thus
\begin{equation*}
    |x_\lambda - y_\lambda| \leq 2r(\lambda) \Vert \varphi\Vert_{L^\infty(\overline{\Omega})}^{1/2}.
\end{equation*}
From that we can assume that $(x_\lambda,y_\lambda)\rightarrow (\overline{x},\overline{x})$ for some $\overline{x}\in \overline{\Omega}$ as $\lambda\rightarrow 0^+$, then
\begin{equation}\label{eq:auxiliary2}
\limsup_{\lambda\rightarrow 0^+}\left(\frac{|x_\lambda - y_\lambda|^2}{2r(\lambda)^2}\right) \leq \limsup_{\lambda\rightarrow 0^+} \left(\varphi_\lambda(x_\lambda) - \varphi_\lambda(y_\lambda)\right) = 0.
\end{equation}
In other words, $|x_\lambda-y_\lambda| = o\left(|r(\lambda)|\right)$. Now $\Phi^\lambda(x_\lambda,y_\lambda) \geq \Phi^\lambda(\tilde{x},\tilde{x})$ gives us
\begin{equation*}
\varphi_\lambda(x_\lambda) - \tilde{u}_\lambda(y_\lambda) - \frac{|x_\lambda - y_\lambda|^2 }{2r(\lambda)^2} \geq \varphi_\lambda(\tilde{x}) - \tilde{u}_\lambda(\tilde{x}).
\end{equation*}
Take $\lambda \rightarrow 0^+$, by \eqref{eq:auxiliary2} we obtain that $u(\tilde{x}) - \varphi(\tilde{x}) \geq u(\overline{x}) - \varphi(\overline{x})$, which implies that $\tilde{x} = \overline{x}$ as $u-\varphi$ has a strict minimum over $\overline{\Omega}$. From $\mathrm{(A1)}$ and $|x_\lambda - y_\lambda| = o(|r(\lambda)|)$, we deduce that $x_\lambda \in (1+|r(\lambda)|)\Omega$. As $y\mapsto \Phi^\lambda(x_\lambda, y)$ has a max at $y_\lambda$, we deduce that
\begin{equation*}
\tilde{u}_\lambda(y) - \left(- \frac{|x_\lambda-y|^2}{2 r(\lambda)^2}    \right)
\end{equation*}
has a minimum at $y_\lambda$, therefore as $\tilde{u}_\lambda$ is Lipschitz with constant $C_H$ we deduce that
\begin{equation}\label{eq:Aug29-1}
    \left|\frac{x_\lambda-y_\lambda}{r(\lambda)^2}\right|\leq C_H, 
\end{equation}
and we can apply the supersolution test for \eqref{eq:auxiliary0} to obtain
\begin{equation}\label{eq:auxiliary4}
\phi(\lambda) (1+r(\lambda))\tilde{u}_\lambda(y_\lambda) + H\left((1+r(\lambda))y_\lambda, \frac{x_\lambda - y_\lambda}{r(\lambda)^2} \right) \geq 0.
\end{equation}
On the other hand, since $x_\lambda \in (1+|r(\lambda)|)\Omega$ as an interior point and $x\mapsto \Phi^\lambda(x, y_\lambda)$ has a max at $x_\lambda$, we deduce that 
\begin{equation}\label{eq:auxiliary5}
D\varphi_\lambda(x_\lambda) = \frac{x_\lambda - y_\lambda}{r(\lambda)^2} \qquad\Longrightarrow\qquad D\varphi\left(\frac{x_\lambda}{1+r(\lambda)}\right) = \frac{x_\lambda - y_\lambda}{r(\lambda)^2}.
\end{equation}
From \eqref{eq:u_tilde},  \eqref{eq:auxiliary4} and \eqref{eq:auxiliary5} we obtain
\begin{equation}\label{eq:aug1}
\phi(\lambda) u_\lambda\left((1+r(\lambda))y_\lambda\right) + H\left((1+r(\lambda))y_\lambda, D\varphi_\lambda(x_\lambda)\right) \geq 0.
\end{equation}
Recall that $\phi(\lambda_j)u_{\lambda_j}(x)\rightarrow -c$ uniformly as $\lambda_j \rightarrow 0^+$ for any $x\in \overline{\Omega}$, we observe that
\begin{align*}
    \left|\phi(\lambda) u_\lambda\left((1+r(\lambda))y_\lambda\right) +c \right| &\leq \big|\phi(\lambda)u_\lambda(\tilde{x})+c\big| +  \phi(\lambda)\big|u_\lambda\left((1+r(\lambda))y_\lambda\right) - u_\lambda\left(\tilde{x}\right)\big|\\
    &\leq \big|\phi(\lambda)u_\lambda(\tilde{x})+c\big| + \phi(\lambda)C_H\big|(y_\lambda - \tilde{x})+r(\lambda)y_\lambda\big|\\ 
    &\leq \big|\phi(\lambda)u_\lambda(\tilde{x})+c\big| + \phi(\lambda)C_H\big|y_\lambda - \tilde{x}\big| +C_H \phi(\lambda)|r(\lambda)|\mathrm{diam}\Omega.
\end{align*}
Let $\lambda \rightarrow 0^+$ along $\lambda_j$ we obtain
\begin{equation}\label{eq:Aug2}
    \lim_{\lambda_j\rightarrow 0^+} \phi(\lambda_j) u_{\lambda_j}\left((1+r(\lambda_j))y_{\lambda_j}\right) = c.
\end{equation}
From \eqref{eq:Aug29-1} and \eqref{H3c} we have (up to subsequences)
\begin{equation}\label{eq:Aug3}
    \lim_{\lambda_j\rightarrow 0^+}H\left((1+r(\lambda_j))y_{\lambda_j}, D\varphi_{\lambda_j}(x_{\lambda_j})\right) = H(\tilde{x}, D\varphi(\tilde{x})).
\end{equation}
From \eqref{eq:aug1}, \eqref{eq:Aug2} and \eqref{eq:Aug3} we deduce that $H(\tilde{x},D\varphi(\tilde{x})) \geq c$. The comparison principle for state-constraint problem gives us the uniqueness of $c$ and furthermore that $c = c(0)$. 

The estimate \eqref{eq:rate1} can be established using comparison principle. We see that $u(x)-\phi(\lambda)^{-1}c(0)  - C$, $u(x)-\phi(\lambda)^{-1}c(0)  + C$ are subsolution and supersolution, respectively, to
\begin{equation}\label{eqn:i}
\begin{cases}
\phi(\lambda) w(x)+H(x,Dw(x)) \leq 0\qquad&\text{in}\;\Omega,\\
\phi(\lambda) w(x)+H(x,Dw(x)) \geq 0\qquad&\text{on}\;\overline{\Omega}.
\end{cases}
\end{equation}
On the other hand, from \eqref{eq:auxiliary0}, the priori estimate $|\phi(\lambda)u_\lambda|\leq C$ and \eqref{H3c} we have $\tilde{u}_\lambda(x) - C\phi(\lambda)^{-1}|r(\lambda)|$, $\tilde{u}_\lambda(x) + C\phi(\lambda)^{-1}|r(\lambda)|$ are subsolution and supersolution, respectively, to \eqref{eqn:i}. Therefore by comparison principle for \eqref{eqn:i} we have 
\begin{equation*}
    \begin{cases}
    u(x) - \phi(\lambda)^{-1}c(0)-C \leq \tilde{u}_\lambda(x) + C|r(\lambda)|\phi(\lambda)^{-1}c(0) ,\\
    u(x) - \phi(\lambda)^{-1}c(0)+C \geq \tilde{u}_\lambda(x) - C|r(\lambda)|\phi(\lambda)^{-1}c(0).
    \end{cases}
\end{equation*}
Therefore $|\phi(\lambda)\tilde{u}_\lambda(x) + c(0)| \leq C\left(\phi(\lambda) + |r(\lambda)|\right)$. The other estimate in \eqref{eq:rate1} is a direct consequence of \eqref{eq:rate0}.
\end{proof}

\begin{rem}\label{rem:Aug29-2} We note that $\tilde{u}_\lambda$ defined as in \eqref{eq:u_tilde} is not necessarily close to $u_\lambda$. In fact, for $x\in \overline{\Omega}$ we have
\begin{equation*}
    \tilde{u}_\lambda(x) - u_\lambda(x)= \frac{u_\lambda((1+r(\lambda))x) - u_\lambda(x)}{1+r(\lambda)} - \frac{r(\lambda)}{1+r(\lambda)} \left(u_\lambda(x)+\frac{c(0)}{\phi(\lambda)}\right)+\frac{r(\lambda)c(0)}{\phi(\lambda)(1+r(\lambda))}.
\end{equation*}
Using \eqref{eq:rate1} and $u_\lambda$ is Lipschitz, we obtain that
\begin{equation}\label{eq:Aug4}
    \left|\tilde{u}_\lambda(x) - u_\lambda(x)\right| \leq 2C(|r(\lambda)||x|) + 2C|r(\lambda)|\left(1+\left|\frac{r(\lambda)}{\varphi(\lambda)}\right|\right)+2\left|\frac{r(\lambda)}{\phi(\lambda)}c(0)\right|.
\end{equation}
Therefore $\tilde{u}_\lambda$ and $u_\lambda$ are close if $\gamma = 0$, and $\left\lbrace\tilde{u}_\lambda+\phi(\lambda)^{-1}c(0)\right\rbrace_{\lambda>0}$ is uniformly bounded in $\lambda>0$ if $\gamma$ is finite (or more generally if $\left|r(\lambda)/\phi(\lambda)\right|$) is bounded, in which case 
\begin{equation}\label{rem:Aug29-3}
    \lim_{\lambda\rightarrow 0^+} \Big(\tilde{u}_\lambda(x) - u_\lambda(x)\Big) = \gamma c(0).
\end{equation}
\end{rem}

As we are working with domains that are smaller or bigger than $\Omega$, we introduce the scaling of measures for convenience. 

\begin{defn}\label{defn:scaledown} For a measure $\sigma$ defined on $(1+r)\overline{\Omega}\times \overline{B}_h$, we define its scaling $\tilde{\sigma 
}$ as a measure on $\overline{\Omega}\times \overline{B}_h$ by
\begin{equation}\label{def_measures}
    \int_{\overline{\Omega}\times\overline{B}_h} f(x,v)\;d\tilde{\sigma}(x,v) = \int_{(1+r)\overline{\Omega}\times\overline{B}_h} f\left(\frac{x}{1+r},v\right)\;d\sigma(x,v).
\end{equation}
\end{defn}

We introduce the following definition for simplicity, as we will deal with mainly approximation from the inside and outside of $\Omega$.

\begin{defn}\label{def:2domains} For $r(\lambda)\geq 0$, we define $\Omega_\lambda^{\pm} = (1\pm r(\lambda))\Omega$. We denote by $c(\lambda)^{\pm}$ and $u_\lambda^{\pm}$, respectively, the additive eigenvalues of $H$ in $(1\pm r(\lambda))\Omega$ and the solutions to the discounted problem \eqref{state-def} on $(1\pm r(\lambda))\Omega$ with discount factor $\delta = \phi(\lambda)$. We let $u_\lambda^-$ and $u_{\lambda}^+$ be solutions to 
\begin{equation*}
    \begin{cases}
    \phi(\lambda) v(x) + H(x,Dv(x)) \leq 0 \quad\text{in}\;\Omega_\lambda,\\
    \phi(\lambda) v(x) + H(x,Dv(x)) \geq 0 \quad\text{on}\;\overline{\Omega}_\lambda;
    \end{cases}
\end{equation*}
with $\Omega_\lambda$ being replaced by $(1-r(\lambda))\Omega$ and $(1+r(\lambda))\Omega$, respectively.
\end{defn}

\section{The first normalization: convergence and a counter example}\label{sec3} In view of Theorems \ref{thm:pre_conv_sta}, it is natural to ask the question if the convergence of $u_\lambda(x) - u_\lambda(x_0)$ holds for the whole sequence as $\lambda \rightarrow 0^+$. The two natural normalization one can study are
\begin{equation}\label{eq:want}
    \left\lbrace u_\lambda(x) + \frac{c(0)}{\phi(\lambda)} \right\rbrace_{\lambda>0} \qquad\text{and}\qquad \left\lbrace u_\lambda(x) + \frac{c(\lambda)}{\phi(\lambda)} \right\rbrace_{\lambda>0}.
\end{equation}
We observe that from Theorem \ref{thm:pre_conv_sta} we have
\begin{equation}\label{eq:bdd}
    \left|u_\lambda(x) + \frac{c(0)}{\phi(\lambda)}\right| \leq C\left(1+\frac{r(\lambda)}{\phi(\lambda)}\right) \qquad\text{and}\qquad \left|u_\lambda(x) + \frac{c(\lambda)}{\phi(\lambda)}\right| \leq C.
\end{equation}
We observe that $u_\lambda(x)+\phi(\lambda)^{-1}c(0)$ is bounded if $\gamma$ defined in \eqref{eq:asm} is finite, or more generally if $|r(\lambda)| = \mathcal{O}(\phi(\lambda))$ as $\lambda\rightarrow 0^+$, while $u_\lambda(x)+\phi(\lambda)^{-1}c(\lambda)$ is bounded even if $\gamma$ is infinite. The following example show a divergence for $u_\lambda(x)+\phi(\lambda)^{-1}c(0)$ when $\gamma = \infty$.

\begin{exa}\label{ex:a} Let us consider $H(x,p) = |p|+x$, $\Omega =(-1,1)$, $\phi(\lambda) = \lambda$ and $r(\lambda) = \lambda^m$ for $\lambda > 0$.   Using the optimal control formula we obtain 
\begin{equation*}
    u_\lambda(x) = \inf_{\alpha(\cdot)} \left(-\int_0^\infty e^{-\lambda s}y(s)\;ds\right) \qquad\text{where}\qquad \begin{cases}
    \dot{y}(s) &= \alpha(s)\in [-1,1]\\
    y(0) &= x.
    \end{cases}
\end{equation*}
Regarding Definition \ref{def:2domains}, we have $c(0) = 1$, $c(\lambda)^{\pm} = 1\pm\lambda^m$ and
\begin{align*}
    u_\lambda(x)^{\pm} + \frac{c(0)}{\lambda} = \frac{1-x}{\lambda}+\frac{e^{-\lambda(1\pm \lambda^m-x)}-1}{\lambda^2} = \mp \lambda^{m-1} + \frac{(1-x\pm \lambda^m)^2}{2}+ \mathcal{O}(\lambda)
\end{align*}
as $\lambda\rightarrow 0^+$, which are convergent only if $m\geq 1$. On the other hand, we have
\begin{equation*}
    u_\lambda(x)^{\pm} + \frac{c(\lambda)^{\pm}}{\lambda} = \frac{(1-x \pm  \lambda^m)^2}{2} + \mathcal{O}(\lambda)
\end{equation*}
as $\lambda\rightarrow 0^+$, which converge to the same limit for all $m\geq 0$. In this example the family $\left\lbrace u_\lambda+\phi(\lambda)^{-1}c(\lambda)\right\rbrace_{\lambda>0}$ still converges even if $\gamma = \infty$. However it is not true in general, as we will prove an example in Section 5.
\end{exa}

\noindent We give a simple proof for the convergence of both families in \eqref{eq:want} when $\gamma = 0$.

\begin{proof}[Proof of Theorem \ref{thm:subcritical}] Let $v_\lambda\in \mathrm{C}(\overline{\Omega})\cap \mathrm{Lip}(\Omega)$ solving
\begin{equation}\label{eq:def_vlambda}
\begin{cases}
\phi(\lambda) v_\lambda(x) + H(x,Dv_\lambda(x)) \leq 0 &\quad\text{in}\;\Omega,\\
\phi(\lambda) v_\lambda(x) + H(x,Dv_\lambda(x)) \geq 0 &\quad\text{on}\;\overline{\Omega}.
\end{cases}
\end{equation}
By Theorem \ref{thm:conv_bdd}, there exists $u^0$ solves \eqref{eq:S_0} such that $v_\lambda(x) + \phi(\lambda)^{-1}c(0) \rightarrow u^0(x)$ uniformly on $\overline{\Omega}$ as $\lambda \rightarrow 0^+$. Define $\tilde{u}_\lambda(x)$ as in \eqref{eq:u_tilde} then $\tilde{u}_\lambda$ solves \eqref{eq:auxiliary0}. Similarly to Theorem \ref{thm:pre_conv_sta} we obtain that $\tilde{u}_\lambda(x) - C\phi(\lambda)^{-1}|r(\lambda)|, \tilde{u}_\lambda(x) + C\phi(\lambda)^{-1}|r(\lambda)|$ are subsolution and supersolution, respectively, to \eqref{eq:def_vlambda}, therefore
\begin{equation*}
    \left|\left(\tilde{u}_\lambda(x) + \frac{c(0)}{\phi(\lambda)}\right) - \left( v_\lambda(x) + \frac{c(0)}{\phi(\lambda)}\right)\right|  \leq C\frac{|r(\lambda)|}{\phi(\lambda)}.
\end{equation*}
Recall \eqref{eq:Aug4}, as $\gamma = 0$ we have $|\tilde{u}_\lambda - u_\lambda|\rightarrow 0$ as $\lambda\rightarrow 0^+$, therefore we deduce that $u_\lambda(x) + \phi(\lambda)^{-1}c(0)\rightarrow u^0(x)$ locally uniformly as $\lambda\rightarrow 0^+$. From \eqref{eq:bound} in Theorem \ref{thm:pre_conv_sta} and $\gamma = 0$ we obtain 
\begin{equation*}
    \lim_{\lambda \rightarrow 0^+}\left( \frac{c(\lambda) - c(0)}{\phi(\lambda)} \right)= 0.
\end{equation*}
and thus $u_\lambda(x) + \phi(\lambda)^{-1}c(\lambda) \rightarrow u^0(x)$ locally uniformly as $\lambda \rightarrow 0^+$.
\end{proof}

\begin{rem} If $\gamma\neq 0$ then in general the solution $v_\lambda$ to \eqref{eq:def_vlambda} and the solution $u_\lambda$ to \eqref{eq:def_ulambda} are not close to each other, as we will see in example \ref{ex:differ}.
\end{rem}

\begin{exa}\label{ex:differ} Let $H(x,p) = |p| - e^{-|x|}$ on $\Omega = (-1,1)$ and $\phi(\lambda)=r(\lambda) =\lambda$. Using the optimal control formula, solutions to \eqref{eq:def_ulambda} (regarding Definition \ref{def:2domains}) are 
\begin{align*}
u^-_\lambda(x) = \frac{e^{-|x|}}{1+\lambda} + \frac{e^{-(1-\lambda^2)+\lambda|x|}}{\lambda(1+\lambda)}, \qquad x\in [-(1-\lambda), (1-\lambda)],\\
u^+_\lambda(x) = \frac{e^{-|x|}}{1+\lambda} + \frac{e^{-(1+\lambda)^2+\lambda|x|}}{\lambda(1+\lambda)}, \qquad x\in [-(1+\lambda), (1+\lambda)].
\end{align*}
On fixed bounded domain, the solution $v_\lambda$ to \eqref{eq:def_vlambda} is given by 
\begin{equation*}
v_\lambda(x) = \frac{e^{-|x|}}{1+\lambda} + \frac{e^{-1-\lambda+\lambda|x|}}{\lambda(1+\lambda)}, \qquad x\in [-1,1].
\end{equation*}
We have $c(0) = -e^{-1}$ and $c(\lambda)^{\pm} = -e^{-1\mp\lambda}$, thus $c^{(1)}_-=c^{(1)}_+ = e^{-1}$ and
\begin{equation*}
    \lim_{\lambda\rightarrow 0^+}\left( \frac{c(\lambda)_- -  c(0)}{-\lambda}\right)  = \lim_{\lambda\rightarrow 0^+}\left( \frac{c(\lambda)_+ - c(0)}{\lambda}\right) = e^{-1}.
\end{equation*}
The maximal solution (in the sense of Theorem \ref{thm:conv_bdd}) on $\Omega$ is given by
\begin{align*}
  u^0(x)= \lim_{\lambda\rightarrow 0^+} \left(v_\lambda(x) + \frac{c(0)}{\lambda}\right)= e^{-|x|} + e^{-1}|x| - e^{-1}, \qquad x\in [-1,1].
\end{align*}
On the other hand, using notation as in Theorem \ref{thm:general} with $\gamma = 1$ we have
\begin{align*}
   u^{-1}= \lim_{\lambda\rightarrow 0^+} \left(u^-_\lambda(x) + \frac{c(0)}{\lambda}\right) &= e^{-|x|} + e^{-1}|x|, \;\;\qquad\qquad x\in [-1,1],\\
    u^{+1} =\lim_{\lambda\rightarrow 0^+}  \left(u^+_\lambda(x) + \frac{c(0)}{\lambda}\right) &= e^{-|x|} + e^{-1}|x|-2e^{-1}, \;\;\quad x\in [-1,1]
\end{align*}
and
\begin{equation*}
     \lim_{\lambda\rightarrow 0^+} \left(u_\lambda(x)^{\pm} + \frac{c(\lambda)_{\pm}}{\lambda}\right)=  u^0(x), \qquad x\in [-1,1].
\end{equation*}
In this example $u^{+1}(\cdot)+u^{-1}(\cdot) = 2u^0(\cdot)$ and $u_\lambda$ and $v_\lambda$ are not close to each other.
\end{exa}

Using the representation formula as in Theorem \ref{thm:conv_bdd}, we show the convergence of $\left\lbrace u_\lambda+\phi(\lambda)^{-1}c(0)\right\rbrace _{\lambda>0}$ when $\gamma$ is finite. This method also recovers the result of Theorem \ref{thm:subcritical}. The following technical lemma is a consequence from $\mathrm{(H4)}$, we give a proof for it in Appendix.

\begin{lem}\label{lem:regu} Assume $L$ satisfies $\mathrm{(H4)}$ then
\begin{align*}
    \frac{L(x,v) - L((1\pm \delta)x,v)}{\delta} \rightarrow (\mp x)\cdot D_xL(x,v) \quad\text{uniformly on}\;\overline{\Omega}\times \overline{B}_h\;\text{as}\;\delta\rightarrow 0^+
\end{align*}
\end{lem}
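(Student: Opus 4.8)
The plan is to reduce the statement to a first-order Taylor expansion of the map $\delta \mapsto L((1\pm\delta)x, v)$ at $\delta = 0$, exploiting that under $\mathrm{(H4)}$ the function $x \mapsto L(x,v)$ is $\mathrm{C}^1$ on the whole of $\overline{U}$ (and $2\Omega \subseteq U$, so $(1+\delta)x$ stays inside $\overline{U}$ for $x \in \overline{\Omega}$ and small $\delta$). First I would fix $(x,v) \in \overline{\Omega}\times\overline{B}_h$ and apply the mean value theorem along the segment $t \mapsto (1 - t\delta)x$: there is $\theta = \theta(x,v,\delta) \in (0,1)$ with
\begin{equation*}
  \frac{L(x,v) - L((1-\delta)x,v)}{\delta} = \big(D_xL((1-\theta\delta)x, v)\big)\cdot x,
\end{equation*}
and similarly for the $(1+\delta)$ case one gets $\big(D_xL((1+\theta\delta)x,v)\big)\cdot(-x)$ after rearranging signs. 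The target quantity is $(\mp x)\cdot D_xL(x,v)$, so it remains to control the difference $\big(D_xL((1\mp\theta\delta)x,v) - D_xL(x,v)\big)\cdot(\pm x)$.

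The key step is then a uniform continuity argument. Since $\overline{\Omega}\times\overline{B}_h$ is compact and, for $\delta$ small enough, the set $K_\delta := \{((1+s)x, v) : x\in\overline\Omega,\ v\in\overline{B}_h,\ |s|\le\delta\}$ is contained in a fixed compact subset of $\overline{U}\times\overline{B}_h$, the continuous function $D_xL$ is uniformly continuous on that fixed compact set. Given $\eps > 0$, choose $\rho > 0$ so that $|D_xL(y,v) - D_xL(y',v)| < \eps/\sup_{\overline\Omega}|x|$ whenever $|y - y'| < \rho$ (and $(y,v),(y',v)$ lie in the compact set); then for $\delta$ small enough that $\delta \sup_{\overline\Omega}|x| < \rho$, we have $|(1\mp\theta\delta)x - x| = \theta\delta|x| < \rho$ uniformly in $(x,v)$, hence
\begin{equation*}
  \left| \frac{L(x,v) - L((1\pm\delta)x,v)}{\delta} - (\mp x)\cdot D_xL(x,v) \right| \le \sup_{\overline\Omega}|x| \cdot \big| D_xL((1\mp\theta\delta)x,v) - D_xL(x,v)\big| < \eps,
\end{equation*}
uniformly over $\overline{\Omega}\times\overline{B}_h$. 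This gives exactly the claimed uniform convergence.

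I do not expect a genuine obstacle here; the only points requiring a little care are bookkeeping ones. First, one must verify that $(1\pm\delta)x \in \overline{U}$ for all $x\in\overline{\Omega}$ and $|\delta|$ small — this follows from $2\Omega \subseteq U = B(0,R_0)$, which bounds $|x| \le R_0/2$ on $\overline\Omega$, so $|(1\pm\delta)x| \le (1+\delta)R_0/2 < R_0$ once $\delta < 1$. Second, one should note that $\theta$ may depend on $(x,v)$ and $\delta$, but that is harmless: the estimate above never uses any regularity of $\theta$, only the bound $\theta \in (0,1)$. Third, $D_xL$ is continuous but perhaps not itself uniformly continuous on all of $\overline{U}\times\overline{B}_h$ a priori — but restricting to the compact neighborhood $K_{\delta_0}$ for a fixed small $\delta_0$ suffices, and all the relevant points stay in $K_{\delta_0}$ for $\delta \le \delta_0$. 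Hence the proof is a routine mean value theorem plus compactness/uniform continuity argument, which I would write out along exactly these lines.
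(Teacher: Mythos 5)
Your proof is correct and follows essentially the same route as the paper: a mean value/Taylor argument in the scalar parameter $\delta$ combined with uniform continuity of $D_xL$ on a compact neighborhood of $\overline{\Omega}\times\overline{B}_h$ (the paper phrases this via a modulus $\omega_R$ and the bound $\sup_{s\in[0,\delta]}|f'(s)-f'(0)|$ rather than an explicit intermediate point $\theta$, but the mechanism is identical).
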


\begin{proof}[Proof of Theorem \ref{thm:general}] By the reduction step earlier, we may assume that $H$ satisfies \eqref{eq:H&L} for some $h > 0$ and $L \in \mathrm{C}(\overline{\Omega} \times  \overline{B}_h)$. By \eqref{eq:bdd} and $\gamma < \infty$ we have the boundedness of $\left\lbrace u_\lambda(x)+\phi(\lambda)^{-1}c(0)\right\rbrace_{\lambda>0}$. 

Recall Remark \ref{rem:Aug29-2}, let $\tilde{u}_\lambda$ be defined as in \eqref{eq:u_tilde} and $\mathcal{\tilde{U}}$ be the set of accumulation points of $\left\lbrace \tilde{u}_\lambda + \phi(\lambda)^{-1}c(0) \right\rbrace_{\lambda>0}$ in $\mathrm{C}(\overline{U})$ as $\lambda\rightarrow 0^+$. By Theorem \ref{thm:pre_conv_sta} we have $\mathcal{\tilde{U}}$ is nonempty. To show that $\mathcal{\tilde{U}}$ is singleton, we show that if $u,w\in \mathcal{\tilde{U}}$ then $u\equiv w$. 

Assume that there exist $\lambda_j\rightarrow 0$ and $\delta_j \rightarrow 0$ such that $\tilde{u}_{\lambda_j}+\phi(\lambda_j)^{-1}c(0)\rightarrow u$ and $\tilde{u}_{\delta_j}+\phi(\delta_j)^{-1}c(0)\rightarrow w$ locally uniformly as $j \rightarrow \infty$. Let us fix $z\in \Omega$, by Theorem \ref{thm:lambdau} there exists $\mu_{\lambda} \in \mathcal{P}\cap \mathcal{G}'_{z,\phi(\lambda),\Omega_{\lambda}}$ such that  
\begin{equation}\label{eq:xyz}
    \phi(\lambda) u_{\lambda}(z) = \int_{\overline{\Omega}_{\lambda}\times \overline{B}_h} L(x,v)\;d\mu_{\lambda}(x,v) = \min_{\mu\in \mathcal{P}\cap \mathcal{G}'_{z,\phi(\lambda),\Omega_{\lambda}}} \int_{\overline{\Omega}_\lambda\times \overline{B}_h} L(x,v)\;d\mu(x,v).
\end{equation}
Let $\tilde{\mu}_\lambda$ be the measure obtained from $\mu_\lambda$ defined as in Definition \ref{defn:scaledown}, it is clear that $\tilde{\mu}_\lambda$ is a probability measures on $\overline{\Omega}$, therefore the set
\begin{equation}\label{eq:U_*}
    \mathcal{U}_*(z) = \left\lbrace \mu\in \mathcal{P}(\overline{\Omega}\times \overline{B}_h): \tilde{\mu}_\lambda \rightharpoonup \mu\;\text{in measure along some subsequences} \right\rbrace
\end{equation}
is nonempty. By Lemma \ref{lem:stability} we can assume that (up to subsequence) there exists $\mu_0 \in \mathcal{M}_0$ such that $\tilde{\mu}_\lambda\rightharpoonup \mu_0$ in measure. We have $H(x,Dw(x))\leq c(0)$ in $\Omega$, let $w_\lambda(x) = (1+r(\lambda))w\left((1+r(\lambda))^{-1}x\right)$ in $x\in (1+r(\lambda))\overline{\Omega}$ then $w_\lambda(x)\rightarrow w(x)$ pointwise s $\lambda\rightarrow 0^+$ and
\begin{equation*}
\phi(\lambda)w_\lambda(x)+H_{L\left(\frac{x}{1+r(\lambda)},v\right)+\phi(\lambda) w_\lambda(x)+c(0)}(x,Dw_\lambda(x)) \leq 0 \qquad\text{in}\;(1+r(\lambda))\Omega.
\end{equation*}
By definition we obtain
\begin{equation*}
\left( L\left(\frac{x}{1+r(\lambda)},v\right)+\phi(\lambda) w_{\lambda}(x)+c(0), w_{\lambda}(x) \right)\in \mathcal{F}_{\phi(\lambda),\Omega_{\lambda}}
\end{equation*}
and therefore
\begin{equation*}
    \left\langle \mu_{\lambda}, L\left(\frac{x}{1+r(\lambda)},v\right)+\phi(\lambda) w_{\lambda}(x)- \phi(\lambda) w_{\lambda}(z)+c(0)\right\rangle \geq 0.
\end{equation*}
In other words, we have
\begin{equation*}
    \left\langle \mu_\lambda, L\left(\frac{x}{1+r(\lambda)},v\right)\right\rangle+\phi(\lambda)(1+r(\lambda))\left\langle \mu_\lambda, w\left(\frac{x}{1+r(\lambda)}\right)\right\rangle + c(0) \geq \phi(\lambda) w_{\lambda}(z).
\end{equation*}
Combine with $-\langle \mu_\lambda, L(x,v)\rangle + \phi(\lambda) u_\lambda(z) = 0
$ from \eqref{eq:xyz} we obtain
\begin{align*}
     \left\langle \mu_\lambda, L\left(\frac{x}{1+r(\lambda)},v\right) -L(x,v)\right\rangle  &+ \phi(\lambda)(1+r(\lambda)) \big\langle\tilde{\mu}_\lambda,w(x)\big\rangle \\
     &+ \phi(\lambda) u_\lambda(z) + c(0) \geq \phi(\lambda) w_\lambda(z).
\end{align*}
Dividing both sides by $\phi(\lambda)$ we deduce that
\begin{equation*}
    \frac{r(\lambda)}{\phi(\lambda)}\left\langle \tilde{\mu}_{\lambda},\frac{L(x,v) - L((1+r(\lambda))x,v)}{r(\lambda)} \right\rangle  + (1+r(\lambda))\left\langle \tilde{\mu}_{\lambda}, w\right\rangle  + \left( u_{\lambda}(z)+\frac{c(0)}{\phi(\lambda)}\right)\geq w_{\lambda}(z).
\end{equation*}
Since $\tilde{\mu}_{\lambda_j}\rightharpoonup \mu_0$ in measure, using Lemma \ref{lem:regu} we deduce that
\begin{equation}\label{eqn:ge1}
\gamma\left\langle \mu_0, (-x)\cdot D_xL(x,v) \right\rangle +\langle \mu_0, w\rangle + \big(u(z)-\gamma c(0)\big) \geq w(z),
\end{equation}
where $u_\lambda(z)+\phi(\lambda_j)^{-1}c(0) \rightarrow \left(u(z)-\gamma c(0)\right)$ comes from $\tilde{u}_\lambda(z)+\phi(\lambda_j)^{-1}c(0) \rightarrow u(z)$ and \eqref{rem:Aug29-3} in Remark \ref{rem:Aug29-2}. On the other hand, from \eqref{eq:auxiliary0} we have
\begin{equation*}
    \phi(\lambda)\tilde{u}_\lambda(x) + H\big((1+r(\lambda))x,D\tilde{u}_\lambda(x)\big) \leq 0 \qquad\text{in}\;\Omega
\end{equation*}
In other words, we have
\begin{equation*}
    L\big((1+r(\lambda))x,v\big) - \phi(\lambda)(1+r(\lambda))\tilde{u}_\lambda(x) \in \mathcal{F}_{0,\Omega}
\end{equation*}
and thus
\begin{equation*}
    \left\langle \mu,  L\big((1+r(\lambda))x,v\big) - \phi(\lambda)(1+r(\lambda))\tilde{u}_\lambda(x)\right\rangle \geq 0 \qquad\text{for all}\;\mu\in \mathcal{M}_0.
\end{equation*}
Recall that $- \langle \mu, L(x,v)\rangle = c(0)$ for all $\mu\in \mathcal{M}_0$, we have
\begin{equation*}
    \frac{r(\lambda)}{\phi(\lambda)}\left\langle \mu, \frac{L\big((1+r(\lambda))x,v\big) - L(x,v)}{r(\lambda)} \right\rangle \geq (1+r(\lambda)) \left\langle \mu, \tilde{u}_\lambda(x)+\frac{c(0)}{\phi(\lambda)}\right\rangle - \frac{r(\lambda)}{\phi(\lambda)}c(0) 
\end{equation*}
for all $\mu\in \mathcal{M}_0$. Let $\lambda=\delta_j$ then as $j\rightarrow \infty$ we have $\gamma\langle \mu, x\cdot D_xL(x,v)\rangle \geq \langle \mu, w\rangle - \gamma c(0)$, i.e.,
\begin{equation}\label{eq:s1}
    \gamma\langle \mu, (-x)\cdot D_xL(x,v)\rangle +\langle \mu, w\rangle -\gamma c(0) \leq 0, \qquad\text{for all}\;\mu\in \mathcal{M}_0.
\end{equation}
From \eqref{eqn:ge1} and \eqref{eq:s1} we deduce that $u(z) \geq w(z)$. Since $z\in \Omega$ arbitrarily we have $u\geq w$ and similarly $u\leq w$, thus $u\equiv w$ and we have the uniform convergence for the full sequence 
\begin{equation*}
    \lim_{\lambda\to 0}\left(\tilde{u}_\lambda(x) + \frac{c(0)}{\phi(\lambda)}\right)
\end{equation*}
Denote this limit as $\tilde{u}^\gamma$, then from Remark \ref{rem:Aug29-2} we have $u_\lambda+\phi(\lambda)^{-1}c(0)\rightarrow u^\gamma = \tilde{u}^\gamma- \gamma c(0)$ locally uniformly in $\Omega$ as $\lambda\rightarrow0^+$. Clearly $u^\gamma \in \mathcal{E}^\gamma$ thanks to \eqref{eq:s1}. If $v\in \mathcal{E}^\gamma$ then since $\mu_0\in \mathcal{M}_0$, we can establish \eqref{eqn:ge1} with $w$ being replaced by $v$ to obtain $u^\gamma\geq v$, hence $u^\gamma = \sup \mathcal{E}^\gamma$.
\end{proof}

\begin{cor}\label{cor:my} For any $\mu \in \mathcal{U}_*(z)$ there holds $\gamma\big\langle \mu, (-x)\cdot D_xL(x,v)\big\rangle + \big\langle \mu, u^\gamma   \big\rangle = 0$. 
\end{cor}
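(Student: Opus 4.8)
The plan is to extract the identity from the two inequalities already produced inside the proof of Theorem~\ref{thm:general}, now that that proof has upgraded the convergence of $\tilde u_\lambda+\phi(\lambda)^{-1}c(0)$, and hence of $u_\lambda+\phi(\lambda)^{-1}c(0)$, to the full family. First I would record that $\mathcal{U}_*(z)\subset\mathcal{M}_0$: if $\tilde\mu_{\lambda_j}\rightharpoonup\mu$ in the sense of measures, then passing to the limit in $\langle\mu_{\lambda_j},L\rangle=\phi(\lambda_j)u_{\lambda_j}(z)\to -c(0)$ (using $\langle\mu_{\lambda_j},L\rangle=\langle\tilde\mu_{\lambda_j},L((1+r(\lambda_j))x,v)\rangle$ together with the uniform convergence $L((1+r(\lambda))x,v)\to L(x,v)$, or simply Lemma~\ref{lem:stability}) gives $\langle\mu,L\rangle=-c(0)$, i.e.\ $\mu\in\mathcal{M}_0$. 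The ``$\leq$'' half of the claim is then immediate: since $u^\gamma\in\mathcal{E}^\gamma$, the defining inequality of $\mathcal{E}^\gamma$ applied at $\mu\in\mathcal{M}_0$ reads $\gamma\langle\mu,(-x)\cdot D_xL(x,v)\rangle+\langle\mu,u^\gamma\rangle\leq0$.

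For the reverse inequality I would fix $\mu\in\mathcal{U}_*(z)$ and choose $\lambda_j\to0^+$ with $\tilde\mu_{\lambda_j}\rightharpoonup\mu$, where $\tilde\mu_\lambda$ is the scaling (Definition~\ref{defn:scaledown}) of the minimizing measure $\mu_\lambda\in\mathcal{P}\cap\mathcal{G}'_{z,\phi(\lambda),\Omega_\lambda}$ furnished by Theorem~\ref{thm:lambdau}. Along $\lambda_j$, Theorem~\ref{thm:general} gives $u_{\lambda_j}(z)+\phi(\lambda_j)^{-1}c(0)\to u^\gamma(z)$. Then I would rerun, verbatim along $\lambda_j$, the chain of inequalities in the proof of Theorem~\ref{thm:general} that culminates in \eqref{eqn:ge1}, but with the competitor $w$ there taken to be $u^\gamma$ itself (a subsolution of \eqref{eq:S_0}): the rescaling $w_\lambda(x)=(1+r(\lambda))u^\gamma\big((1+r(\lambda))^{-1}x\big)$, the identity $-\langle\mu_\lambda,L\rangle+\phi(\lambda)u_\lambda(z)=0$ from \eqref{eq:xyz}, Lemma~\ref{lem:regu}, and the weak convergence $\tilde\mu_{\lambda_j}\rightharpoonup\mu$ are all available unchanged. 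This produces exactly \eqref{eqn:ge1} with $\mu_0$ replaced by $\mu$ and $w$ by $u^\gamma$, namely
\[
    \gamma\big\langle\mu,(-x)\cdot D_xL(x,v)\big\rangle+\big\langle\mu,u^\gamma\big\rangle+u^\gamma(z)\ \geq\ u^\gamma(z),
\]
where the third term on the left is $\lim_{j}\big(u_{\lambda_j}(z)+\phi(\lambda_j)^{-1}c(0)\big)=u^\gamma(z)$ (this is the term $u(z)-\gamma c(0)$ in \eqref{eqn:ge1}, using \eqref{rem:Aug29-3}) and the right-hand side is $w(z)=u^\gamma(z)$. Cancelling $u^\gamma(z)$ yields $\gamma\langle\mu,(-x)\cdot D_xL(x,v)\rangle+\langle\mu,u^\gamma\rangle\geq0$, which together with the ``$\leq$'' inequality above is the assertion.

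This is essentially bookkeeping rather than a new argument, so there is no real obstacle; the one point that demands attention is keeping the shift $u^\gamma=\tilde u^\gamma-\gamma c(0)$ (equation~\eqref{rem:Aug29-3}) straight when passing to the limit, and making sure that nothing in the derivation of \eqref{eqn:ge1} used the particular accumulation measure beyond ``$\tilde\mu_{\lambda_j}\rightharpoonup\mu$ with $\mu\in\mathcal{M}_0$'' and the full-sequence convergence of $u_\lambda+\phi(\lambda)^{-1}c(0)$; both hold here precisely because Theorem~\ref{thm:general} has been established. No fresh estimates are required.
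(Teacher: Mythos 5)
Your proposal is correct and follows exactly the route the paper intends: the corollary is read off from the proof of Theorem \ref{thm:general}, with the ``$\leq$'' half coming from $u^\gamma\in\mathcal{E}^\gamma$ (i.e.\ \eqref{eq:s1}) together with $\mathcal{U}_*(z)\subseteq\mathcal{M}_0$ (Lemma \ref{lem:stability}), and the ``$\geq$'' half from rerunning the chain leading to \eqref{eqn:ge1} with the competitor $w=u^\gamma$ and the now-established full-family convergence $u_\lambda+\phi(\lambda)^{-1}c(0)\to u^\gamma$. The only cosmetic remark is that your first justification of $\mu\in\mathcal{M}_0$ (via $\langle\mu,L\rangle=-c(0)$ alone) omits the needed membership $\mu\in\mathcal{G}'_{0,\Omega}$, but your alternative citation of Lemma \ref{lem:stability} supplies precisely that, so nothing is missing.
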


\begin{lem}\label{rem: on L} For any $\mu\in \mathcal{M}_0$ there holds $\langle \mu, (-x)\cdot D_xL(x,v)\rangle \geq 0$.
\end{lem}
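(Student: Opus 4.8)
The plan is to test the measure $\mu$ against the critical value problem on the shrunken domains $(1-\delta)\Omega$ via the duality formula of Theorem~\ref{thm:c0}, and then to differentiate the resulting inequality at $\delta=0^+$ using Lemma~\ref{lem:regu}; indeed $(-x)\cdot D_xL(x,v)$ is, up to sign, the $\delta$-derivative at $0$ of $\delta\mapsto L((1-\delta)x,v)$, so a one-sided estimate on $\langle\mu,L((1-\delta)x,v)\rangle$ yields the claim.

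First I would fix $\mu\in\mathcal{M}_0$, so $\mu\in\mathcal{P}\cap\mathcal{G}'_{0,\Omega}$ with $\langle\mu,L\rangle=-c(0)$. For $\delta\in(0,1)$ set $\Omega^\delta:=(1-\delta)\Omega$ (a rescaling of $\Omega$, hence again satisfying $\mathrm{(A1)}$) and let $c^\delta$ be the additive eigenvalue of $H$ in $\Omega^\delta$; since $\Omega^\delta\subset\Omega$, every subsolution of $H(x,Dv)\le c$ in $\Omega$ restricts to one in $\Omega^\delta$, so $c^\delta\le c(0)$. Let $\mu^\delta$ be the probability measure on $\overline{\Omega^\delta}\times\overline{B}_h$ defined by $\langle\mu^\delta,f\rangle=\int f((1-\delta)x,v)\,d\mu(x,v)$ (the analogue of Definition~\ref{defn:scaledown} for the factor $1-\delta$). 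The main step is to verify $\mu^\delta\in\mathcal{P}\cap\mathcal{G}'_{0,\Omega^\delta}$. Given $f\in\mathcal{G}_{0,\Omega^\delta}$, choose $\widetilde u\in\mathrm{C}(\overline{\Omega^\delta})$ with $H_f(\cdot,D\widetilde u)\le0$ in $\Omega^\delta$, i.e.\ $D\widetilde u(y)\cdot v\le f(y,v)$ for a.e.\ $y\in\Omega^\delta$ and all $|v|\le h$; then $u(x):=(1-\delta)^{-1}\widetilde u((1-\delta)x)$ satisfies $Du(x)\cdot v=D\widetilde u((1-\delta)x)\cdot v\le f((1-\delta)x,v)$ a.e.\ in $\Omega$, so that $(x,v)\mapsto f((1-\delta)x,v)$ belongs to $\mathcal{G}_{0,\Omega}$ and hence has nonnegative $\mu$-action; this is precisely $\langle\mu^\delta,f\rangle\ge0$.

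Applying Theorem~\ref{thm:c0} on $\Omega^\delta$ together with $c^\delta\le c(0)$ then gives $\langle\mu^\delta,L\rangle\ge-c^\delta\ge-c(0)=\langle\mu,L\rangle$, that is,
\begin{equation*}
    \big\langle\mu,\,L(x,v)-L((1-\delta)x,v)\big\rangle\ \le\ 0\qquad\text{for every }\delta\in(0,1).
\end{equation*}
Dividing by $\delta$ and letting $\delta\to0^+$, Lemma~\ref{lem:regu} provides uniform convergence of the difference quotient, so the limit passes inside the integral against the finite measure $\mu$ and yields $\langle\mu,\,x\cdot D_xL(x,v)\rangle\le0$, i.e.\ $\langle\mu,(-x)\cdot D_xL(x,v)\rangle\ge0$, as desired.

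I expect the only real obstacle to be the admissibility step $\mu^\delta\in\mathcal{G}'_{0,\Omega^\delta}$: it relies on the fact that, because $H_f$ is coercive in $p$ and $\mathrm{(H4)}$ holds, viscosity subsolutions of $H_f(y,D\widetilde u)\le0$ coincide with a.e.\ subsolutions, so the linear rescaling $\widetilde u\mapsto u$ preserves the subsolution property and the chain rule may be used a.e.; the rest is a routine check on the rescaled measure and one passage to the limit.
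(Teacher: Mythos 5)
Your proof is correct and is essentially the paper's own argument: both rest on the same rescaling of the minimizing measure by the factor $1-\delta$, the same verification (via rescaled a.e.\ subsolutions) that the scaled measure stays in the relevant dual cone, and the same passage to the limit through Lemma \ref{lem:regu}. The only cosmetic difference is that the paper keeps the scaled measure on $\Omega$ itself, so that $\langle \mu_\delta, L\rangle \geq \langle \mu, L\rangle$ follows directly from the minimality of $\mu$ over $\mathcal{P}\cap\mathcal{G}'_{0,\Omega}$, whereas you place it on $(1-\delta)\Omega$ and route through Theorem \ref{thm:c0} on the shrunken domain together with the monotonicity $c^\delta\leq c(0)$; both are valid.
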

\begin{proof}[Proof of Lemma \ref{rem: on L}] For $\mu\in \mathcal{M}_0$ and $0<\lambda \ll 1$ we define $\mu_\lambda$ by
\begin{equation*}
    \langle \mu_\lambda, f(x,v) \rangle := \langle \mu, f((1-\lambda)x,v)\rangle, \qquad\text{for}\;f\in \mathrm{C}(\overline{\Omega}\times \overline{B}_h).
\end{equation*}
It is easy to see that $\mu_\lambda$ is a probability measure on $\overline{\Omega}\times \overline{B}_h$. Furthermore $\mu_\lambda\in \mathcal{G}_{0,\Omega}'$ as well. In fact, if $f\in \mathcal{G}_{0,\Omega}$ then there exists $u\in \mathrm{C}(\overline{\Omega})$ such that $H_f(x,Du(x))\leq 0$ in $\Omega$. It is clear that $H_{f((1-\lambda)x,v)}(x,D\tilde{u}(x))\leq 0$ in $\Omega$ as well where $\tilde{u}(x) = (1-\lambda)^{-1}u((1-\lambda)x)$, therefore $f((1-\lambda)x,v)\in \mathcal{G}_{0,\Omega}$, hence
\begin{equation*}
    \left\langle \mu_\lambda, f(x,v) \right\rangle = \left\langle \mu, f((1-\lambda)x,v) \right\rangle \geq 0.
\end{equation*}
As $\mu_\lambda\in \mathcal{P}\cap \mathcal{G}'_{0,\Omega}$, we deduce that
\begin{equation*}
\left\langle \mu, L((1-\lambda)x,v) \right\rangle =     \left\langle \mu_\lambda, L(x,v) \right\rangle \geq \left\langle \mu, L(x,v) \right\rangle.
\end{equation*}
Let $\lambda\rightarrow 0^+$ we deduce that $\langle \mu, (-x)\cdot D_xL(x,v)\rangle \geq 0$.
\end{proof}

\begin{rem} If the domain is periodic then by translation invariant, we can get an invariant for Mather measures as 
\begin{equation*}
    \langle \mu, (-x)\cdot D_xL(x,v)\rangle = 0 \qquad\text{for all}\; \mu\in \mathcal{M}_0.
\end{equation*}
We refer the reader's to \cite{biryuk_introduction_2010} for more properties like this in the case of periodic domain. In our setting, it is natural to expect a similar invariant holds. Indeed, it is interesting that
\begin{equation*}
    \begin{cases}
    \langle \mu,(-x)\cdot D_xL(x,v)\rangle = \text{constant}\\ 
    \text{for all}\; \mu\in \mathcal{M}_0
    \end{cases} \qquad\Longleftrightarrow\qquad \lambda\mapsto c(\lambda)\;\text{is differentiable at}\;\lambda = 0
\end{equation*}
and if that is the case, the constant in the above is $c'(0)$, the derivative of the map $\lambda\to c(\lambda)$ at $\lambda = 0$. For instance, in Example \ref{ex:differ} we have $c'(0) = e^{-1}$.
\end{rem}

\begin{lem}\label{lem:stability} We have $\mathcal{U}_*(z)\subseteq \mathcal{M}_0$ where $\mathcal{U}_*(z)$ is defined as in \eqref{eq:U_*}.
\end{lem}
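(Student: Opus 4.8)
**Proof proposal for Lemma~\ref{lem:stability} ($\mathcal{U}_*(z) \subseteq \mathcal{M}_0$).**

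The plan is to take an arbitrary $\mu \in \mathcal{U}_*(z)$, so that $\tilde{\mu}_{\lambda_j} \rightharpoonup \mu$ weakly in the sense of measures for some sequence $\lambda_j \to 0^+$, where $\mu_{\lambda_j} \in \mathcal{P} \cap \mathcal{G}'_{z,\phi(\lambda_j),\Omega_{\lambda_j}}$ is the optimal measure realizing $\phi(\lambda_j) u_{\lambda_j}(z) = \langle \mu_{\lambda_j}, L\rangle$ from Theorem~\ref{thm:lambdau}, and $\tilde{\mu}_{\lambda_j}$ is its rescaling (Definition~\ref{defn:scaledown}). Since each $\tilde{\mu}_{\lambda_j}$ is a probability measure on the compact set $\overline{\Omega} \times \overline{B}_h$, the weak limit $\mu$ is again a probability measure, so $\mu \in \mathcal{P}$. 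It then remains to show (i) $\mu \in \mathcal{G}'_{0,\Omega}$, i.e. $\langle \mu, f\rangle \geq 0$ for every $f \in \mathcal{G}_{0,\Omega}$, and (ii) $\mu$ is minimizing, i.e. $\langle \mu, L\rangle = -c(0)$, which together give $\mu \in \mathcal{M}_0$.

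For step (i), I would mimic the argument of Lemma~\ref{lem:cv3} but with the extra rescaling bookkeeping. Fix $f \in \mathcal{G}_{0,\Omega}$, so there is $u \in \mathrm{C}(\overline{\Omega})$ with $H_f(x, Du(x)) \leq 0$ in $\Omega$. Rescaling as in the proof of Theorem~\ref{thm:general}, set $u_\lambda(x) = (1+r(\lambda)) u((1+r(\lambda))^{-1}x)$ on $\Omega_\lambda$; then $(L((1+r(\lambda))^{-1}\,\cdot\,,v) + \phi(\lambda)u_\lambda - \phi(\lambda)u_\lambda(z) + \langle \text{const}\rangle, u_\lambda)$-type combinations land in $\mathcal{F}_{\phi(\lambda),\Omega_\lambda}$, hence pairing against $\mu_\lambda \in \mathcal{G}'_{z,\phi(\lambda),\Omega_\lambda}$ gives a nonnegative action. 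Unwinding the rescaling via \eqref{def_measures}, this becomes a statement about $\tilde{\mu}_\lambda$ paired against $f$ plus error terms of order $\phi(\lambda)$ and $r(\lambda)$ (controlled by the a priori bound $\phi(\lambda)|u_\lambda| \leq C_H$ from Theorem~\ref{thm:pre_conv_sta}(i), and uniform continuity of $L$ from \eqref{H3c}, together with Lemma~\ref{lem:regu} for the $L$-difference quotient). Passing to the limit $\lambda = \lambda_j \to 0^+$ using $\tilde{\mu}_{\lambda_j} \rightharpoonup \mu$, the error terms vanish and we obtain $\langle \mu, f\rangle \geq 0$; more carefully, one should also handle a general element $f \in \mathcal{G}_{0,\Omega}$ possibly not of the above "$L$ plus correction" form by noting $\mathcal{G}_{0,\Omega}$ is a cone and using that any $f$ with $H_f(x,Dv) \le 0$ rescales analogously.

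For step (ii), I would use the rate estimate \eqref{eq:rate1} from Theorem~\ref{thm:pre_conv_sta}(ii): $|\phi(\lambda) u_\lambda(z) + c(0)| \leq C(\phi(\lambda) + |r(\lambda)|)$. Since $\langle \mu_{\lambda_j}, L\rangle = \phi(\lambda_j) u_{\lambda_j}(z) \to -c(0)$, and since $\langle \mu_{\lambda_j}, L\rangle = \langle \tilde{\mu}_{\lambda_j}, L((1+r(\lambda_j))\,\cdot\,,v)\rangle$ differs from $\langle \tilde{\mu}_{\lambda_j}, L\rangle$ by at most $C|r(\lambda_j)| \to 0$ (by \eqref{H3c} and boundedness of the support), we get $\langle \tilde{\mu}_{\lambda_j}, L\rangle \to -c(0)$, hence $\langle \mu, L\rangle = -c(0)$ by weak convergence and continuity of $L$ on the compact domain. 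Combined with $\mu \in \mathcal{P} \cap \mathcal{G}'_{0,\Omega}$ from step (i) and Theorem~\ref{thm:c0} (which says $-c(0) = \min_{\mathcal{P}\cap \mathcal{G}'_{0,\Omega}}\langle\,\cdot\,, L\rangle$), this shows $\mu$ is a minimizer, i.e. $\mu \in \mathcal{M}_0$.

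The main obstacle I anticipate is the careful handling of the rescaling in step (i): ensuring that the perturbation of the Lagrangian $L((1+r(\lambda))x,v)$ versus $L(x,v)$, the perturbation of the test function $u_\lambda$ versus $u$, and the subtraction of the point value at $z$ all produce only $o(1)$ errors after dividing by the appropriate scale — in contrast to the proof of Theorem~\ref{thm:general}, here we pair directly against $f \in \mathcal{G}_{0,\Omega}$ (no division by $\phi(\lambda)$), so the error terms are genuinely $O(\phi(\lambda)) + O(r(\lambda)) \to 0$ without needing the finiteness of $\gamma$. This is why the lemma holds in full generality, and it is the bookkeeping — not a deep idea — that requires care.
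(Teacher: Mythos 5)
Your proposal is correct and follows essentially the same route as the paper: rescale the subsolution $u$ associated to $f\in\mathcal{G}_{0,\Omega}$ onto $\Omega_\lambda$, pair the resulting element of $\mathcal{G}_{z,\phi(\lambda),\Omega_\lambda}$ against $\mu_\lambda$, unwind the scaling to $\tilde{\mu}_\lambda$, and pass to the limit, while the minimality $\langle\mu,L\rangle=-c(0)$ comes from \eqref{eq:xyz} together with $\phi(\lambda)u_\lambda(z)\to -c(0)$ and the uniform convergence $L((1+r(\lambda))x,v)\to L(x,v)$. The only superfluous element is the appeal to Lemma \ref{lem:regu}: since one pairs directly against $f$ without dividing by $r(\lambda)$ or $\phi(\lambda)$, uniform continuity of $L$ and the bound $\phi(\lambda)\|\tilde{u}\|_\infty\to 0$ already suffice, exactly as in the paper's argument.
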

\begin{proof} Assume $\tilde{\mu}_{\lambda_j}\rightharpoonup \mu_0$. From \eqref{eq:xyz} it is clear that $-c(0) = \langle \mu_0, L\rangle$. For $f\in \mathcal{G}_{0,\Omega}$ there exists $u\in \mathrm{C}(\overline{\Omega})$ such that $H_f(x,Du(x))\leq 0$ in $\Omega$. Let us define $\tilde{u}$ as in \eqref{eq:u_tilde}, we have
\begin{equation*}
 \phi(\lambda)\tilde{u}(x) + H_{f\left(\frac{x}{1+r(\lambda)},v\right)+\phi(\lambda)\tilde{u}(x)}(x,D\tilde{u}(x)(x))\leq 0\quad\text{in}\;(1+r(\lambda))\Omega.
\end{equation*}
By definition we deduce that
\begin{equation*}
\big\langle \tilde{\mu}_\lambda, f(x,v) +\phi(\lambda)(1+r(\lambda))\left(u - u(z)\right) \big\rangle=     \left\langle \mu_\lambda, f\left(\frac{x}{1+r(\lambda)},v\right) + \phi(\lambda)\left(\tilde{u} - \tilde{u}(z)\right) \right\rangle \geq 0
\end{equation*}
Let $\lambda\rightarrow 0^+$ along $\lambda_j$ we deduce that $\langle \mu_0, f \rangle \geq 0$, hence $\mu_0\in \mathcal{M}_0$.
\end{proof}

\begin{proof}[Proof of Corollary \ref{cor:mycor}] From \eqref{eq:s1} we have 
\begin{equation*}
    \begin{cases}
    \alpha\langle \mu, (-x)\cdot D_xL(x,v)\rangle +\langle \mu, u^\alpha\rangle \leq 0 \\
    \beta\langle \mu, (-x)\cdot D_xL(x,v)\rangle +\langle \mu, u^\beta\rangle \leq 0 
    \end{cases}
\end{equation*}
for all $\mu\in \mathcal{M}_0$. If $\theta = \beta- \alpha \geq 0$ then
\begin{equation*}
    \theta \left\langle\mu, (-x)\cdot D_xL(x,v)\right\rangle + \alpha \left\langle\mu, (-x)\cdot D_xL(x,v)\right\rangle + \big\langle \mu, u^\beta\big\rangle \leq 0
\end{equation*}
for all $\mu\in \mathcal{M}_0$. Since $\theta \left\langle\mu, (-x)\cdot D_xL(x,v)\right\rangle\geq 0$, we have $u^\beta \in \mathcal{E}^\alpha$, therefore $u^\beta\leq u^\alpha$. Denote $\gamma = (1-\lambda)\alpha+\lambda \beta$ for $\lambda\in (0,1)$, we have
\begin{equation*}
    \gamma\left\langle \mu, (-x)\cdot D_xL(x,v)\right\rangle + \left\langle\mu, (1-\lambda) u^\alpha +\lambda u^\beta \right\rangle  \leq 0, \qquad\text{for all}\;\mu\in \mathcal{M}_0.
\end{equation*}
By the convexity of $H$ we see that $u = (1-\lambda)u^\alpha+\lambda u^\beta$ belongs to $\mathcal{E}^{(1-\lambda)\alpha+\lambda \beta}$, therefore $(1-\lambda)u^\alpha+\lambda u^\beta \leq u^{(1-\lambda)\alpha+\lambda \beta}$.
\end{proof}

\section{The asymptotic expansion of the eigenvalue}\label{sec4}
\subsection{The expansion at zero}
In this section, we want to study the asymptotic expansion of $c(\lambda)$ as $\lambda\rightarrow 0^+$. If the following limit exist
\begin{equation}\label{lim2}
c^{(1)} = \lim_{\lambda\rightarrow 0^+}\left(\frac{c(\lambda) - c(0)}{r(\lambda)}\right)
\end{equation}
then heuristically we have $c(\lambda) = c(0) + c^{(1)}r(\lambda) + o(r(\lambda))$ as $\lambda \rightarrow 0^+$. 

\begin{rem} The dependence of $\lambda$ and the eigenvalue should be $c(r(\lambda))$ but in fact $c^{(1)}$ is independent of $r(\lambda)$ if it exists. Indeed, assume $\lambda\mapsto c(\lambda) = c(r(\lambda))$ is differentiable at $\lambda = 0$, for any $\mu\in \mathcal{M}_0$ by scaling into a measure $\mu_\lambda$ on $(1+r(\lambda))\Omega$, we can show that $-c(r(\lambda)) \leq \langle \mu_\lambda,L\rangle$, hence
\begin{equation*}
    \big\langle \mu,L((1+r(\lambda))x,v)\big\rangle + c(r(\lambda)) \geq 0 
\end{equation*}
for all $\lambda$ with an equality at $\lambda = r(\lambda) = 0$, therefore
\begin{equation*}
    c^{(1)} = c'(0) = \langle \mu, (-x)\cdot D_xL\rangle.
\end{equation*}
Thus we can simply write $c(\lambda)$ for simplicity and we can simply choose $r(\lambda) = \pm\lambda$.
\end{rem}

We will show that \eqref{lim2} holds when $\lambda\mapsto r(\lambda)$ does not change its sign around $0$, provided that \eqref{H3c} is satisfied. Example \ref{ex:10} shows that it can be divergent if \eqref{H3c} is violated and Example \ref{ex:differ} shows that in general it is not zero. 

\begin{exa}\label{ex:10} Let $H(x,p) = |p|-\sqrt{1-|x|}$ for $(x,p)\in [-1,1]\times \mathbb{R}$. Let $r(\lambda) = \lambda \in (0,1)$, the $c(\lambda) = -\sqrt{\lambda}$ and the limit \eqref{lim2} does not exist.
\end{exa}

\noindent
Let $\nu_\lambda$ be measures in $\mathcal{P}\cap \mathcal{G}'_{0,\Omega_\lambda}$ such that
\begin{equation}\label{eq:limit-p0}
    -c(\lambda) = \int_{\overline{\Omega}_\lambda\times \overline{B}_h} L\left(x,v\right)\;d\nu_\lambda(x,v) = \min_{\nu \in \mathcal{P}\cap \mathcal{G}'_{0,\Omega_\lambda}}\int_{\overline{\Omega}_\lambda\times \overline{B}_h} L\left(x,v\right)\;d\nu(x,v).
\end{equation}
Let $\tilde{\nu}_\lambda$ be the corresponding measures on $\overline{\Omega}$ after scaling from $\nu_\lambda$ as in Definitions \ref{defn:scaledown}, it is easy to see that $\tilde{\nu}_\lambda$ is still a probability measure on $\overline{\Omega}$, thus by compactness the set of weak limit points $\mathcal{V}_*$ of $ \left\lbrace\tilde{\nu}_\lambda\right\rbrace_{\lambda>0}$ is nonempty.

\begin{lem}\label{lem:inM0} We have $\mathcal{V}_*\subseteq \mathcal{M}_0$. 
\end{lem}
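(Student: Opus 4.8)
The plan is to replicate the argument of Lemma \ref{lem:stability} in the purely ergodic regime, using the scaling of measures from Definition \ref{defn:scaledown}. Fix a subsequence $\lambda_j\to0^+$ along which $\tilde\nu_{\lambda_j}\rightharpoonup\nu_0$ weakly as measures; since every $\tilde\nu_{\lambda}$ is a probability measure carried by the \emph{fixed} compact set $\overline\Omega\times\overline{B}_h$, the limit $\nu_0$ is again a probability measure on $\overline\Omega\times\overline{B}_h$. It then suffices to verify (a) $\langle\nu_0,L\rangle=-c(0)$ and (b) $\nu_0\in\mathcal G'_{0,\Omega}$, for then $\nu_0\in\mathcal M_0$ by the definition of viscosity Mather measures.

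For (a), I would start from Definition \ref{defn:scaledown} and \eqref{eq:limit-p0} to write
\[
\langle\tilde\nu_\lambda,L\rangle=\Big\langle\nu_\lambda,L\big(\tfrac{x}{1+r(\lambda)},v\big)\Big\rangle=-c(\lambda)+\Big\langle\nu_\lambda,L\big(\tfrac{x}{1+r(\lambda)},v\big)-L(x,v)\Big\rangle,
\]
and then bound the error term: for $\lambda$ small one has $\Omega_\lambda\subseteq2\Omega\subseteq U$, and $L$ is Lipschitz in $x$ uniformly in $v\in\overline{B}_h$ (from \eqref{H3c} together with the a priori gradient bound behind \eqref{eq:H&L}), so the last bracket is at most $C|r(\lambda)|\,\mathrm{diam}(\Omega)\to0$. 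Combined with $c(\lambda)\to c(0)$ — which follows by subtracting the two estimates in \eqref{eq:rate1} of Theorem \ref{thm:pre_conv_sta} — this gives $\langle\tilde\nu_\lambda,L\rangle\to-c(0)$, and since $L\in\mathrm C(\overline\Omega\times\overline{B}_h)$, weak convergence yields $\langle\nu_0,L\rangle=-c(0)$.

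For (b), given $f\in\mathcal G_{0,\Omega}$ pick $u\in\mathrm C(\overline\Omega)$ with $H_f(x,Du(x))\le0$ in $\Omega$ and transplant it to $\Omega_\lambda$ by $u^\lambda(x)=(1+r(\lambda))\,u\big(\tfrac{x}{1+r(\lambda)}\big)$. Since $Du^\lambda(x)=Du\big(\tfrac{x}{1+r(\lambda)}\big)$, a one-line change of variables in the definition of $H_f$ shows $H_{f(\frac{x}{1+r(\lambda)},v)}(x,Du^\lambda(x))\le0$ in $\Omega_\lambda$, hence $f\big(\tfrac{x}{1+r(\lambda)},v\big)\in\mathcal G_{0,\Omega_\lambda}$ and so $\langle\nu_\lambda,f(\tfrac{x}{1+r(\lambda)},v)\rangle\ge0$; by Definition \ref{defn:scaledown} this is precisely $\langle\tilde\nu_\lambda,f\rangle\ge0$. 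Passing to the limit along $\lambda_j$ (again using $f\in\mathrm C(\overline\Omega\times\overline{B}_h)$) gives $\langle\nu_0,f\rangle\ge0$, i.e.\ $\nu_0\in\mathcal G'_{0,\Omega}$.

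I do not expect a genuine obstacle here: the proof is essentially the bookkeeping of Lemma \ref{lem:stability} with the discount term removed. The only step requiring mild care is the changing-domain scaling — one must confirm that dividing the space variable by $1+r(\lambda)$ simultaneously keeps all the measures on the fixed compact $\overline\Omega\times\overline{B}_h$ (so the weak limit is legitimate) and maps $\mathcal G_{0,\Omega}$ into $\mathcal G_{0,\Omega_\lambda}$ — together with the accompanying uniform estimate on $L(\tfrac{x}{1+r(\lambda)},v)-L(x,v)$, which relies on the Lipschitz regularity of $L$ in $x$ coming from \eqref{H3c}.
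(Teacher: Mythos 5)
Your proposal is correct and follows essentially the same route as the paper: change variables via Definition \ref{defn:scaledown} to get $-c(\lambda)=\langle\tilde\nu_\lambda, L((1+r(\lambda))x,v)\rangle$, pass to the limit using the uniform convergence of $L((1+r(\lambda))x,v)$ to $L(x,v)$ and $c(\lambda)\to c(0)$, and then verify $\nu_0\in\mathcal G'_{0,\Omega}$ by scaling a subsolution of $H_f\le0$ from $\Omega$ to $\Omega_\lambda$ exactly as in the paper. The only differences are cosmetic: you make explicit the uniform-in-$v$ estimate on $L(\tfrac{x}{1+r(\lambda)},v)-L(x,v)$ and the source of $c(\lambda)\to c(0)$, which the paper's proof uses implicitly.
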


\begin{proof}[Proof of Lemma \ref{lem:inM0}] From \eqref{eq:limit-p0} we have
\begin{equation*}
    -c(\lambda) = \int_{\overline{\Omega}\times \overline{B}_h} L\big(\left(1+r(\lambda)\right)x,v\big)\;d\tilde{\nu}_\lambda(x,v).
\end{equation*}
Assume $\tilde{\nu}_{\lambda_j}\rightharpoonup \nu_0$, then since $L\big(\left(1+r(\lambda)\right)x,v\big)\rightarrow L(x,v)$ uniformly as $\lambda\rightarrow 0^+$, we deduce that $-c(0) = \langle \nu_0,L\rangle$. Let $f\in \mathcal{G}_{0,\Omega}$, there exists $u\in \mathrm{C}(\overline{\Omega})$ such that $H_f(x,Du(x))\leq 0$ in $\Omega$. Let us define $\tilde{u}$ as in \eqref{eq:u_tilde}, then
\begin{equation*}
 H_{f_\lambda}(x,D\tilde{u}(x))\leq 0\quad\text{in}\;\Omega_\lambda
\end{equation*}
where $f_\lambda(x,v) = f\left(\frac{x}{1+r(\lambda)},v\right)$. By definition of $\nu_\lambda$ we have
\begin{equation*}
    \left\langle \tilde{\nu}_\lambda, f(x,v) \right\rangle=  \left\langle \nu_\lambda, f_\lambda(x,v) \right\rangle \geq 0.
\end{equation*}
Let $\lambda_j\rightarrow 0^+$ we deduce that $\langle \nu_0, f \rangle \geq 0$, thus $\nu_0\in \mathcal{M}_0$.
\end{proof}

\begin{proof}[Proof of Theorem \ref{thm:limit}] Let us consider the case $r(\lambda)\geq 0$. Let $w_\lambda$ be a solution to \eqref{eq:cell3} and $\tilde{w}_\lambda$ be its scaling as in \eqref{eq:u_tilde}, then
\begin{equation*}
    H_{L((1+r(\lambda))x,v) +c(\lambda)}\big(x,D\tilde{w}_\lambda(x)\big) \leq 0 \qquad\text{in}\;\Omega.
\end{equation*}
Therefore $L((1+r(\lambda))x,v) +c(\lambda)\in \mathcal{F}_{0,\Omega}$, thus 
\begin{equation}\label{e:nice}
    \Big\langle \mu, L((1+r(\lambda))x,v) + c(\lambda)\Big\rangle \geq 0
\end{equation}
for any $\mu\in \mathcal{M}_0$. Using the fact that $-\langle \mu, L(x,v)\rangle = c_0$ we deduce that
\begin{equation*}
    \Big\langle \mu, L((1+r(\lambda))x,v) - L(x,v)\Big\rangle +(c(\lambda) - c(0)) \geq 0.
\end{equation*}
Thus if $r(\lambda)>0$ then for all $\mu\in \mathcal{M}_0$ we have
\begin{equation*}
    \left\langle\mu, \frac{L((1+r(\lambda))x,v) - L(x,v)}{r(\lambda)} \right\rangle + \left(\frac{c(
    \lambda)-c(0)}{r(\lambda)}\right) \geq 0.
\end{equation*}
Using \eqref{eq:bound} and the fact that $r(\lambda)$ is not identically zero near $0$, as $\lambda\rightarrow 0^+$ we have
\begin{equation}\label{eq:limit-p3}
    \langle\mu, x\cdot D_xL(x,v)\rangle + \liminf_{\lambda\rightarrow 0^+}\left(\frac{c(\lambda)-c(0)}{r(\lambda)}\right) \geq 0 \qquad\text{for all}\; \mu \in \mathcal{M}_0.
\end{equation}
Let $\lambda_j\rightarrow 0^+$ be the subsequence such that
\begin{equation*}
    \limsup_{\lambda\rightarrow 0^+}\left(\frac{c(\lambda)-c(0)}{r(\lambda)}\right)  =\lim_{j\rightarrow \infty}\left(\frac{c(\lambda_j)-c(0)}{r(\lambda_j)}\right).
\end{equation*}
For simplicity we can assume that (up to subsequence) $\tilde{\nu}_{\lambda_j}\rightharpoonup \nu_0$ and $\nu_0\in \mathcal{M}_0$. Let $w$ be a solution to \eqref{eq:S_0}, then $\tilde{w}(x) = (1+r(\lambda))w\left((1+r(\lambda))^{-1}x\right)$ solves
\begin{equation*}
    H_{L\left(\frac{x}{1+r(\lambda)},v \right)+c(0)}\left(x,D\tilde{w}(x)\right) \leq 0 \qquad\text{in}\;(1+r(\lambda))\Omega.
\end{equation*}
As $v_\lambda\in \mathcal{P}\cap \mathcal{G}'_{0,\Omega_\lambda}$ and $\langle \nu_\lambda, L\rangle = -c(\lambda)$, we obtain that
\begin{equation*}
    \left\langle \nu_\lambda, L\left(\frac{x}{1+r(\lambda)},v\right) - L\left(x,v\right)\right\rangle -c(\lambda) + c(0) \geq 0.
\end{equation*}
By definition of $\tilde{\nu}_\lambda$, it is equivalent to
\begin{equation}\label{e:nice2}
    \Big\langle \tilde{\nu}_\lambda, L\left(x,v\right) - L\left((1+r(\lambda))x,v\right)\Big\rangle\geq c(\lambda)-c(0).
\end{equation}
As $r(\lambda_j)\geq 0$, let $\lambda_j\rightarrow 0^+$ we obtain 
\begin{equation}\label{eq:limit-p9}
    \left\langle \nu_0, (-x)\cdot D_xL(x,v)\right\rangle \geq  \limsup_{\lambda\rightarrow 0^+} \left(\frac{c(\lambda) - c(0)}{r(\lambda)}\right).
\end{equation}
In \eqref{eq:limit-p3}, take $\mu = \nu_0\in \mathcal{M}_0$ and together with \eqref{eq:limit-p9} we conclude that
\begin{equation*}
    \lim_{\lambda\rightarrow 0^+}\left(\frac{c(\lambda)-c(0)}{r(\lambda)}\right)  =  \left\langle \nu_0,(-x)\cdot D_xL(x,v)\right\rangle = \sup_{\mu\in \mathcal{M}_0} \left\langle \mu,(-x)\cdot D_xL(x,v)\right\rangle.
\end{equation*}
Similarly, if  $r(\lambda)\leq 0$ as $\lambda\rightarrow 0^+$ then 
    \begin{equation*}
        \lim_{\lambda\rightarrow 0^+} \left(\frac{c(\lambda)-c(0)}{r(\lambda)}\right) = \min_{\mu\in \mathcal{M}_0} \left\langle \mu,(-x)\cdot D_xL(x,v)\right\rangle.
    \end{equation*}
For an oscillating $r(\lambda)$ such that neither $r^-(\lambda) = \min\{0,r(\lambda)\}$ nor $r^+(\lambda) = \max\{0,r(\lambda)\}$ is identical to zero as $\lambda\rightarrow 0^+$, by applying the previous results we have (we consider the limit $(c(\lambda) - c(0))/r(\lambda)$ along subsequences where $r(\lambda)\neq 0$) 
\begin{align*}
    &\lim_{\substack{\lambda\rightarrow 0^+\\ r(\lambda) > 0}} \left(\frac{c(\lambda) - c(0)}{r(\lambda)}\right) = \max_{\mu\in \mathcal{M}_0} \left\langle \mu, (-x)\cdot D_xL(x,v)\right\rangle = c^{(1)}_+,\\
    &\lim_{\substack{\lambda\rightarrow 0^+\\ r(\lambda) < 0}} \left(\frac{c(\lambda) - c(0)}{r(\lambda)}\right) = \min_{\mu\in \mathcal{M}_0} \left\langle \mu, (-x)\cdot D_xL(x,v)\right\rangle= c^{(1)}_- .
\end{align*}
For any given subsequence $\lambda_j\rightarrow 0^+$ along which $(c(\lambda_j)-c(0))/r(\lambda_j)$ converges, by decomposing $\lambda_j$ into subsequences where $r(\lambda_j)>0$ and $r(\lambda_j)<0$ respectively, we see that $(c(\lambda_j)-c(0))/r(\lambda_j)$ can only converge either to $c^{(1)}_+$ or $c^{(1)}_-$, and therefore we obtain the conclusion of the theorem.
\end{proof}

\begin{proof}[Proof of Corollary \ref{cor:Aug30-1}] If $\left\langle \mu,(-x)\cdot D_xL(x,v)\right\rangle = c^{(1)}$ for all $\mu\in \mathcal{M}_0$ then from Theorem \ref{thm:general} with $\gamma\in \mathbb{R}$ we have $\left\langle \mu, u^\gamma + \gamma c^{(1)}  \right\rangle \leq 0$ for all $\mu\in \mathcal{M}_0$, thus $u^\gamma + \gamma c^{(1)} \in \mathcal{E}$ and hence $u^\gamma + \gamma c_{(1)} \leq u^0$. On the other hand, $\left\langle\mu, u^0\right\rangle = \gamma c^{(1)} + \left\langle \mu, u^0-\gamma c^{(1)} \right\rangle \leq 0$ for all $\mu\in \mathcal{M}_0$, therefore
\begin{equation*}
    \gamma\langle\mu, (-x)\cdot D_xL(x,v)\rangle + \left\langle\mu, u^0-\gamma c^{(1)}  \right\rangle  \leq 0 \qquad\text{for all}\;\mu\in \mathcal{M}_0.
\end{equation*}
Thus $u^0 - \gamma c^{(1)} \in \mathcal{E}^\gamma$, hence $u^0 - \gamma c^{(1)} \leq u^\gamma$.
\end{proof}

   \begin{rem}\label{remark:Aubry}  Here are some examples where $c^{(1)}_- = c^{(1)}_+ = c^{(1)}$.
\begin{itemize}
    \item[(i)] If $H(x,p) = H(p)+V(x)$ with $x\cdot \nabla V(x) \leq 0$  for all $x\in \Omega$. Indeed, Lemma \ref{rem: on L} says that $\langle\mu, x\cdot \nabla V(x)\rangle \geq 0$ for all $\mu\in \mathcal{M}_0$, thus in this case we have $\langle \mu, (-x)\cdot D_xL(x,v)\rangle = 0$ for all $\mu\in \mathcal{M}_0$, hence $c^{(1)}_- = c^{(1)}_+ = 0$. %If $\Omega = B(0,1)$ and $V$ is a radical function with maximum at 0 then every $\mu\in \mathcal{M}_0$ has the projection to $B(0,1)$ is exactly $\delta_0$.
    \item[(ii)] If the Aubry set $\mathcal{A}$ of $H$ is compactly supported in $\Omega$, then by Theorem \ref{thm:eigenvalue} we have $c(\lambda) = c(0)$ for all $\lambda>0$ small enough, therefore $c^{(1)}=0$.
    \item[(iii)] Recall from Example \ref{ex:differ} that if $H(x,p) = |p| - e^{-|x|}$ and $\Omega = (-1,1)$ then $c^{(1)}_+ = c^{(1)}_- =c^{(1)} = e^{-1}$.
\end{itemize}
\end{rem}

We state the following lemma concerning properties of limits of minimizing measures on $\Omega$, which will be used to prove Corollary \ref{cor:equala}.

\begin{lem}\label{lem:sigma0} 
Let $v_\lambda\in \mathrm{C}(\overline{\Omega})$ be the solution to \eqref{eq:def_vlambda}. For $z\in \Omega$, let $\sigma_\lambda\in \mathcal{P}\cap \mathcal{G}'_{z,\phi(\lambda),\Omega}$ be the minimizing measure such that $\phi(\lambda)v_\lambda(z) = \langle \sigma_\lambda,L\rangle$. Let us define 
\begin{equation*}
    \mathcal{U}_0(z) = \left\lbrace \mu\in \mathcal{P}(\overline{\Omega}\times \overline{B}_h): \sigma_\lambda\rightharpoonup \mu\;\text{in measures along some subsequences}\right\rbrace
\end{equation*}
then 
\begin{itemize}
    \item[(i)] $\langle \sigma_0, u^0\rangle =0$ for all $\sigma_0\in \mathcal{U}_0(z)$.
    \item[(ii)] $ u^0(z)  \geq u^\gamma(z) + \gamma\big\langle \sigma_0,(-x)\cdot D_xL(x,v) \big\rangle$ for all $\sigma_0\in \mathcal{U}_0(z)$ and $\gamma\in \R$.
\end{itemize}
\end{lem}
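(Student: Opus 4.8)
The plan is to derive both parts from the single inequality
\begin{equation}\label{eq:master-plan}
    u^0(z)\ge w(z)-\langle\sigma_0,w\rangle ,
\end{equation}
valid for every $\sigma_0\in\mathcal{U}_0(z)$ and every $w\in\mathrm{C}(\overline{\Omega})$ that is a viscosity subsolution of $H(x,Dw(x))\le c(0)$ in $\Omega$. Granting \eqref{eq:master-plan}, part (i) follows by choosing $w=u^0$ (a subsolution of $H(x,Du^0)\le c(0)$ by Theorem \ref{thm:conv_bdd}): this gives $\langle\sigma_0,u^0\rangle\ge0$, and since also $u^0\in\mathcal{E}$ and $\sigma_0\in\mathcal{M}_0$ we have $\langle\sigma_0,u^0\rangle\le0$, hence $\langle\sigma_0,u^0\rangle=0$. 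Part (ii) follows by choosing $w=u^\gamma$, which by Theorem \ref{thm:general} is a subsolution of $H(x,Du^\gamma)\le c(0)$ belonging to $\mathcal{E}^\gamma$: then \eqref{eq:master-plan} gives $u^0(z)\ge u^\gamma(z)-\langle\sigma_0,u^\gamma\rangle$, while the membership $u^\gamma\in\mathcal{E}^\gamma$ applied with $\mu=\sigma_0\in\mathcal{M}_0$ gives $-\langle\sigma_0,u^\gamma\rangle\ge\gamma\langle\sigma_0,(-x)\cdot D_xL(x,v)\rangle$; adding these yields $u^0(z)\ge u^\gamma(z)+\gamma\langle\sigma_0,(-x)\cdot D_xL(x,v)\rangle$.

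To carry this out I first record that $\mathcal{U}_0(z)\subseteq\mathcal{M}_0$. If $\sigma_{\lambda_j}\rightharpoonup\sigma_0$ with $\sigma_{\lambda_j}\in\mathcal{P}\cap\mathcal{G}'_{z,\phi(\lambda_j),\Omega}$, then $\sigma_0$ is a probability measure, Lemma \ref{lem:cv3} gives $\sigma_0\in\mathcal{G}'_{0,\Omega}$, and by Theorem \ref{thm:pre},
\begin{equation*}
    \langle\sigma_0,L\rangle=\lim_j\langle\sigma_{\lambda_j},L\rangle=\lim_j\phi(\lambda_j)v_{\lambda_j}(z)=-c(0),
\end{equation*}
so $\sigma_0$ minimizes \eqref{eq:Mather}, i.e.\ $\sigma_0\in\mathcal{M}_0$. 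To establish \eqref{eq:master-plan} I would run the cone manipulation already used in the proof of Theorem \ref{thm:general}: after the reduction \eqref{eq:H&L} one has $H=H_L$, so a direct viscosity computation shows $\big(L(x,v)+c(0)+\phi(\lambda)w(x),\,w\big)\in\mathcal{F}_{\phi(\lambda),\Omega}$ whenever $w$ is a subsolution of $H(x,Dw)\le c(0)$, and hence $L(x,v)+c(0)+\phi(\lambda)\big(w(x)-w(z)\big)\in\mathcal{G}_{z,\phi(\lambda),\Omega}$. Pairing this with $\sigma_\lambda$, using that $\sigma_\lambda$ is a probability measure and that $\langle\sigma_\lambda,L\rangle=\phi(\lambda)v_\lambda(z)$, and dividing by $\phi(\lambda)>0$, gives
\begin{equation*}
    \Big(v_\lambda(z)+\frac{c(0)}{\phi(\lambda)}\Big)+\langle\sigma_\lambda,w\rangle-w(z)\ge0 .
\end{equation*}
Letting $\lambda=\lambda_j\to0^+$ and using $v_{\lambda_j}+\phi(\lambda_j)^{-1}c(0)\to u^0$ uniformly on $\overline{\Omega}$ (Theorem \ref{thm:conv_bdd}) together with $\sigma_{\lambda_j}\rightharpoonup\sigma_0$ produces \eqref{eq:master-plan}.

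The only step needing care is the verification of the cone membership $\big(L+c(0)+\phi(\lambda)w,\,w\big)\in\mathcal{F}_{\phi(\lambda),\Omega}$ for a merely Lipschitz $w$, which must be read in the viscosity sense; but this is exactly the computation already performed for the scaled subsolution $w_\lambda$ in the proof of Theorem \ref{thm:general}, so no real obstacle arises. Everything else reduces to bookkeeping with the duality pairing and the weak-$*$ convergence of the $\sigma_\lambda$.
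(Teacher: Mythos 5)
Your proof is correct, and while part (i) coincides with the paper's own argument, part (ii) takes a genuinely different and shorter route. For (i) the paper derives exactly your master inequality (its \eqref{june19-p1}--\eqref{Sep2-1}) by pairing $\sigma_\lambda$ against the cone element $L+c(0)+\phi(\lambda)(w-w(z))$ built from a solution $w$ of \eqref{eq:S_0}; only the subsolution property of $w$ enters there, so your extension to arbitrary subsolutions at level $c(0)$ is harmless, and the conclusion $\langle\sigma_0,u^0\rangle=0$ is obtained in the same way. For (ii), however, the paper does not reuse this inequality: it runs a second duality computation, pairing $\sigma_\lambda$ against the cone element $L((1+r(\lambda))x,v)-\phi(\lambda)r(\lambda)\tilde{u}_\lambda(x)$ coming from the scaled changing-domain solutions, and then passes to the limit using Lemma \ref{lem:regu}, the normalization bookkeeping of Remark \ref{rem:Aug29-2}, and $r(\lambda)/\phi(\lambda)\to\gamma$. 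You instead apply the master inequality with $w=u^\gamma$ and combine it with the membership $u^\gamma\in\mathcal{E}^\gamma$, i.e.\ inequality \eqref{eq:s1}, evaluated at $\mu=\sigma_0\in\mathcal{M}_0$ (your verification that $\mathcal{U}_0(z)\subseteq\mathcal{M}_0$ via Lemma \ref{lem:cv3}, Theorem \ref{thm:pre} and Theorem \ref{thm:c0} is correct and more detailed than the paper, which only asserts it). Both routes ultimately rest on Theorem \ref{thm:general}; yours buys brevity and makes transparent that (ii) needs only the two structural facts $\mathcal{U}_0(z)\subseteq\mathcal{M}_0$ and $u^\gamma\in\mathcal{E}^\gamma$, so it applies to any real $\gamma$ without re-introducing the particular $r(\lambda)$, whereas the paper's version is a self-contained limit computation at the discounted level that uses $u^\gamma$ only as the limit of $\tilde{u}_\lambda+\phi(\lambda)^{-1}c(0)$. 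One point worth recording explicitly in your write-up: the statement of Theorem \ref{thm:general} gives only $u^\gamma=\sup\mathcal{E}^\gamma$; the facts that $u^\gamma$ is itself a subsolution at level $c(0)$ (needed for your cone membership) and satisfies the measure inequality, i.e.\ $u^\gamma\in\mathcal{E}^\gamma$, are established inside the proof of that theorem (via \eqref{eq:s1} and stability of \eqref{eq:auxiliary0}), not in its statement.
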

\begin{proof}[Proof of Lemma \ref{lem:sigma0}] It is clear that $\mathcal{U}_0(z)\subset \mathcal{M}_0$.
\begin{itemize}
    \item[(i)] For any $w$ solves \eqref{eq:S_0} we have
\begin{equation}\label{june19-p1}
    \Big\langle \sigma_\lambda, L(x,v) + c(0) +  \phi(\lambda)w(x) - \phi(\lambda)w(z)\Big\rangle \geq 0.
    \end{equation}
Since $v_\lambda+\varphi(\lambda)^{-1}c(0)\rightarrow u^0$ by Theorem \ref{thm:conv_bdd}, from \eqref{june19-p1} we deduce that 
\begin{equation}\label{Sep2-1}
    u^0(z) + \langle \sigma_0, w\rangle \geq w(z)
\end{equation}
for some $\sigma_0\in \mathcal{U}_0(z)$. Let $w = u^0$ we obtain $\langle \sigma_0, u^0\rangle \geq 0$, thus $\langle \sigma_0, u^0\rangle = 0$ since $\langle \mu, u^0\rangle\leq 0$ for all $\mu\in \mathcal{M}_0$. 
\item[(ii)] To connect $u^0$ with $u^\gamma$, we use the approximation $u_\lambda$ on $(1+r(\lambda))\Omega$. Recall that after scaling $\tilde{u}_\lambda(x) = (1+r(\lambda))^{-1}u_\lambda\big((1+r(\lambda))x\big)$ for $x\in \overline{\Omega}$ we have
\begin{equation*}
    L((1+r(\lambda))x,v) - \phi(\lambda)r(\lambda)\tilde{u}_\lambda(x) \in \mathcal{F}_{z,\phi(\lambda),\Omega}.
\end{equation*}
Recall the definition of $\sigma_\lambda\in \mathcal{P}\cap \mathcal{G}'_{z,\phi(\lambda),\Omega}$ from Lemma \ref{lem:sigma0}, we have
\begin{equation*}
    \big\langle \sigma_\lambda, L((1+r(\lambda))x,v) - \phi(\lambda)r(\lambda)\tilde{u}_\lambda - \phi(\lambda)\tilde{u}_\lambda(z) \big\rangle \geq 0.
\end{equation*}
Using $-\langle \sigma_\lambda, L(x,v)\rangle + \phi(\lambda)v_\lambda(z)= 0$ where $v_\lambda$ solves \eqref{eq:def_vlambda} we obtain
\begin{equation*}
    \frac{r(\lambda)}{\phi(\lambda)}\left\langle \sigma_\lambda, \frac{L((1+r(\lambda))x,v) - L(x,v)}{r(\lambda)}\right\rangle + v_\lambda(z) - r(\lambda)\langle\sigma_\lambda,\tilde{u}_\lambda\rangle \geq \tilde{u}_\lambda(z).
\end{equation*}
Taking into account the normalization, we deduce that
\begin{align*}
    &\frac{r(\lambda)}{\phi(\lambda)}\left\langle \sigma_\lambda, \frac{L((1+r(\lambda))x,v) - L(x,v)}{r(\lambda)}\right\rangle +\left( v_\lambda(z) + \frac{c(0)}{\phi(\lambda)}\right) \\
    &\qquad\qquad\qquad - r(\lambda)\left\langle\sigma_\lambda,\tilde{u}_\lambda+\frac{c(0)}{\phi(\lambda)}\right\rangle \geq \left(\tilde{u}_\lambda(z)+ \frac{c(0)}{\phi(\lambda)}\right)-\frac{r(\lambda)}{\phi(\lambda)}c(0).
\end{align*}
Assume $\sigma_\lambda\rightharpoonup \sigma_0$ for some $\sigma_0\in \mathcal{U}_0(z)$, then as $\lambda\to 0$ we have
\begin{equation*}
    \gamma\langle \sigma_0,(+x)\cdot D_xL(x,v) \rangle +u^0(z)  \geq u^\gamma(z)
\end{equation*}
and thus the conclusion $u^0(z)\geq  u^\gamma(z)+ \gamma \left\langle \sigma_0, (-x)\cdot D_xL(x,v) \right\rangle$ follows.
\end{itemize}
\end{proof}

\begin{proof}[Proof of Corollary \ref{cor:equala}] If $\gamma > 0$ then from Lemma \ref{lem:sigma0} there exists $\sigma_0\in \mathcal{U}_0(z)$ such that 
\begin{equation*}
    0 = u^0(z) - u^\gamma(z) \geq \gamma \langle \sigma_0, (-x)\cdot D_xL(x,v)\rangle \geq\gamma c^{(1)}_- \geq 0
\end{equation*}
and thus $c^{(1)}_- = 0$.
\end{proof}

\subsection{The additive eigenvalues as a function}
A natural question that comes from Theorem \ref{thm:limit} is when do we have the invariant \begin{equation*}
    \langle \mu, (-x)\cdot D_xL(x,v)\rangle = c^{(1)}
\end{equation*}
for all $\mu\in \mathcal{M}_0$? In other words, when is the map $\lambda\mapsto c(\lambda)$ is differentiable at $\lambda = 0$? We can indeed study the map $\lambda\mapsto c(\lambda)$ on an open interval $I$ including zero, and ask the question at what point $\lambda$ where $c'(\lambda)$ exists. It is clear that $\lambda\mapsto c(\lambda)$ is Lipschitz, thus it is differentiable almost everywhere. We will show a stronger claim that indeed the set of points where $c'(\lambda)$ does not exists is almost countable. Without loss of generality (from Theorem \ref{thm:limit}) We can assume $r(\lambda) = \lambda$ for $\lambda \in (-\varepsilon,\varepsilon)$ for some $\varepsilon>0$ in this section.

\begin{thm} Assume $\mathrm{(H1)},\mathrm{(H2)},\mathrm{(H3)}, \mathrm{(H4)},\mathrm{(A1)}$ and $\lambda\in (-\varepsilon,\varepsilon)$.
\begin{itemize}
    \item[(a)] The map $\lambda\mapsto c(\lambda)$ is left-differentiable and right-differentiable everywhere on its domain.
    \item[(b)] The left derivative $\lambda\mapsto c'_-(\lambda)$ is left continuous and the right derivative $\lambda\mapsto c'_+(\lambda)$ is right continuous on their domains.
    \item[(c)] The map $\lambda\mapsto c(\lambda)$ is differentiable except countably many points on its domain. 
\end{itemize}
\end{thm}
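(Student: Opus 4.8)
Since (as noted before the statement) we may take $r(\lambda)=\lambda$ on $I:=(-\varepsilon,\varepsilon)$, I would choose $\varepsilon<1$ so small that $(1+\varepsilon)\overline\Omega\subseteq 2\overline\Omega\subseteq U$ and every $\Omega_\lambda=(1+\lambda)\Omega$ still satisfies $\mathrm{(A1)}$ (with constant $(1+\lambda)\kappa$), hence $\mathrm{(A2)}$; the reduction \eqref{eq:H&L} then holds with a uniform $h$. The key observation is that the cone $\mathcal{G}_{0,\Omega_\lambda}$ rescales onto $\mathcal{G}_{0,\Omega}$: the dilation $f\mapsto f((1+\lambda)\,\cdot\,,v)$ maps $\mathcal{G}_{0,\Omega_\lambda}$ bijectively onto $\mathcal{G}_{0,\Omega}$ (compose the subsolution with the dilation and divide by $1+\lambda$, using $H_{f((1+\lambda)\cdot,v)}(y,q)=H_f((1+\lambda)y,q)$), and if $\tilde\nu$ is the scaling of $\nu$ as in Definition~\ref{defn:scaledown} then $\langle\nu,f\rangle=\langle\tilde\nu,f((1+\lambda)\,\cdot\,,v)\rangle$; combining with Theorem~\ref{thm:c0} applied on $\Omega_\lambda$ this gives
\begin{equation*}
-c(\lambda)=m(\lambda):=\min_{\mu\in\mathcal{K}}\Phi(\lambda,\mu),\qquad \Phi(\lambda,\mu):=\langle\mu,L_\lambda\rangle,\quad L_\lambda(x,v):=L((1+\lambda)x,v),
\end{equation*}
where $\mathcal{K}:=\mathcal{P}(\overline\Omega\times\overline B_h)\cap\mathcal{G}'_{0,\Omega}$ is a \emph{fixed} convex, weak-$*$ compact, metrizable set, and $\mathcal{A}(\lambda):=\{\mu\in\mathcal{K}:\Phi(\lambda,\mu)=m(\lambda)\}$ is (the scaling to $\overline\Omega$ of) the set of viscosity Mather measures of $\Omega_\lambda$, nonempty compact convex. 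By $\mathrm{(H4)}$ and Lemma~\ref{lem:regu} (applied on $\overline U$), $\lambda\mapsto L_\lambda$ is $\mathrm{C}^1$ and $(\lambda,x,v)\mapsto\partial_\lambda L_\lambda(x,v)=x\cdot D_xL((1+\lambda)x,v)$ is jointly continuous; hence $\Phi$ and $\partial_\lambda\Phi(\lambda,\mu)=\langle\mu,\partial_\lambda L_\lambda\rangle$ are jointly continuous on $I\times\mathcal{K}$.

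\textbf{A Danskin argument gives (a) and the formulas.} Comparing the difference quotient of $m$ with that of $\lambda\mapsto\Phi(\lambda,\mu)$ for $\mu\in\mathcal{A}(\lambda)$ (upper bound), and using minimizers $\mu_h\in\mathcal{A}(\lambda+h)$ together with the expansion $\Phi(\lambda+h,\mu)=\Phi(\lambda,\mu)+h\,\partial_\lambda\Phi(\lambda,\mu)+o(h)$ \emph{uniformly in} $\mu\in\mathcal{K}$ and the upper semicontinuity of $\lambda\mapsto\mathcal{A}(\lambda)$ (as in Lemmas~\ref{lem:inM0}, \ref{lem:stability}) for the lower bound, one obtains that $m$ is Lipschitz and $m'_+(\lambda)=\min_{\mu\in\mathcal{A}(\lambda)}\partial_\lambda\Phi(\lambda,\mu)$, $m'_-(\lambda)=\max_{\mu\in\mathcal{A}(\lambda)}\partial_\lambda\Phi(\lambda,\mu)$. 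Thus $c=-m$ is left- and right-differentiable everywhere, with $c'_+(\lambda)=\max_{\mu\in\mathcal{A}(\lambda)}\psi(\lambda,\mu)$ and $c'_-(\lambda)=\min_{\mu\in\mathcal{A}(\lambda)}\psi(\lambda,\mu)$ for $\psi:=-\partial_\lambda\Phi$ (note $\psi(0,\mu)=\langle\mu,(-x)\cdot D_xL\rangle$, so Theorem~\ref{thm:limit} is the case $\lambda=0$); in particular $c'_-\le c'_+$, and $c'=c'_+=c'_-$ a.e.\ since $c$ is Lipschitz. This proves (a).

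\textbf{Refined stability of the argmin.} The crux is: if $\lambda_n\downarrow\lambda_0$, $\mu_n\in\mathcal{A}(\lambda_n)$, $\mu_n\rightharpoonup\mu_*$, then $\mu_*$ lies in the sub-face $\mathcal{A}^+_{\lambda_0}:=\operatorname*{arg\,min}_{\mu\in\mathcal{A}(\lambda_0)}\partial_\lambda\Phi(\lambda_0,\mu)$. Indeed, for any $\nu\in\mathcal{A}(\lambda_0)$ and $h_n:=\lambda_n-\lambda_0>0$, using $\Phi(\lambda_0,\mu_n)\ge m(\lambda_0)=\Phi(\lambda_0,\nu)$ and the uniform expansion, $h_n\,\partial_\lambda\Phi(\lambda_0,\mu_n)+o(h_n)\le m(\lambda_n)-m(\lambda_0)\le h_n\,\partial_\lambda\Phi(\lambda_0,\nu)+o(h_n)$; dividing by $h_n$ and letting $n\to\infty$ gives $\partial_\lambda\Phi(\lambda_0,\mu_*)\le\partial_\lambda\Phi(\lambda_0,\nu)$, and $\mu_*\in\mathcal{A}(\lambda_0)$ by upper semicontinuity. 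Symmetrically, limit points of $\mathcal{A}(\lambda)$ as $\lambda\uparrow\lambda_0$ lie in $\mathcal{A}^-_{\lambda_0}:=\operatorname*{arg\,max}_{\mu\in\mathcal{A}(\lambda_0)}\partial_\lambda\Phi(\lambda_0,\mu)$. Crucially, $\partial_\lambda\Phi(\lambda_0,\cdot)$ is constant ($=m'_+(\lambda_0)$) on $\mathcal{A}^+_{\lambda_0}$ and constant ($=m'_-(\lambda_0)$) on $\mathcal{A}^-_{\lambda_0}$, so $\psi(\lambda_0,\cdot)\equiv c'_+(\lambda_0)$ on $\mathcal{A}^+_{\lambda_0}$ and $\psi(\lambda_0,\cdot)\equiv c'_-(\lambda_0)$ on $\mathcal{A}^-_{\lambda_0}$.

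\textbf{Conclusion of (b) and (c), and the main obstacle.} For (b): given $\lambda_n\downarrow\lambda_0$, pick $\mu_n\in\mathcal{A}(\lambda_n)$ attaining $c'_+(\lambda_n)=\psi(\lambda_n,\mu_n)$; any weak-$*$ subsequential limit $\mu_*$ lies in $\mathcal{A}^+_{\lambda_0}$, whence $\psi(\lambda_n,\mu_n)\to\psi(\lambda_0,\mu_*)=c'_+(\lambda_0)$, and since all subsequential limits coincide, $c'_+(\lambda_n)\to c'_+(\lambda_0)$; so $c'_+$ is right continuous, and symmetrically $c'_-$ is left continuous. For (c): running the same choice with $\lambda_n\uparrow\lambda_0$ gives $\mu_*\in\mathcal{A}^-_{\lambda_0}$ and $c'_+(\lambda_n)\to\psi(\lambda_0,\mu_*)=c'_-(\lambda_0)$, so $c'_+$ has a left limit at every point, equal to $c'_-(\lambda_0)$. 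Hence $c'_+$ is right continuous with left limits everywhere, so it has at most countably many discontinuities; at each continuity point $\lambda_0$ one has $c'_-(\lambda_0)=c'_+(\lambda_0)$, i.e.\ $c$ is differentiable there, so the non-differentiability set of $c$ is contained in the countable discontinuity set of $c'_+$. I expect the main difficulty to be exactly the refined argmin-stability and, with it, the fact that $\psi(\lambda_0,\cdot)$ is \emph{constant} on the sub-faces $\mathcal{A}^\pm_{\lambda_0}$ — this is what upgrades the mere one-sided semicontinuity of $c'_\pm$ to genuine one-sided continuity, and it rests essentially on the joint continuity of $\partial_\lambda\Phi$ in $(\lambda,\mu)$ (hence on $\mathrm{(H4)}$/Lemma~\ref{lem:regu}); without it $m$ would be a minimum of merely pointwise-$\mathrm{C}^1$ functions, for which (b) and (c) genuinely fail. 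The scale-invariance of $\mathcal{G}_{0,\Omega_\lambda}$ and this joint continuity are the routine but essential technical points.
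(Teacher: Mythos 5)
Your proposal is correct and takes essentially the same route as the paper: one-sided differentiability via the Mather-measure representation of $-c(\lambda)$ with the scaling sandwich (your Danskin upper/lower bounds are exactly the paper's inequalities \eqref{e:nice} and \eqref{e:nice2}), the refined stability forcing one-sided limits of minimizing measures into the extremal face of $\mathcal{M}_{\lambda_0}$, and the uniform convergence of $(-x)\cdot D_xL((1+\lambda)x,v)$ from $\mathrm{(H4)}$ (Lemma \ref{lem:regu}) to get one-sided continuity of $c'_{\pm}$. The only differences are presentational: you set everything over the fixed compact set $\mathcal{K}=\mathcal{P}\cap\mathcal{G}'_{0,\Omega}$ using the exact scale-invariance of the cone, where the paper argues at $\lambda=0$ and invokes the same argument at other base points, and you give a self-contained right-continuous-with-left-limits argument for part (c), where the paper appeals to classical references.
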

\begin{proof} For (a), by the same argument as in the proof of Theorem \ref{thm:limit} we see that $\lambda\mapsto c(\lambda)$ is left and right differentiable with 
\begin{equation*}
\begin{split}
    c_+'(\lambda) &= \max_{\mu\in \mathcal{M}_\lambda} \int_{\overline{\Omega}_\lambda\times \overline{B}_h} (-x)\cdot D_xL(x,v)d\mu(x,v)\\
    c_-'(\lambda) &= \min_{\mu\in \mathcal{M}_\lambda} \int_{\overline{\Omega}_\lambda\times \overline{B}_h} (-x)\cdot D_xL(x,v)d\mu(x,v)
\end{split}
\end{equation*}
where $\mathcal{M}_\lambda$ is the set of minimizing Mather measures on $\Omega_\lambda$.\\

\noindent
For $\lambda\in (-\varepsilon,\varepsilon)$ and $\nu_\lambda$ be any measure in $\mathcal{M}_\lambda$ then by the usual scaling as in Lemma \ref{lem:inM0} we have $-c(\lambda) = \left\langle \tilde{\nu}_\lambda,L((1+\lambda)x,v) \right\rangle$ and any subsequential weak limit $\tilde{\nu}_\lambda \rightharpoonup \nu_0$ in $\mathcal{P}(\overline{\Omega}\times \overline{B}_h)$ satisfies $\nu_0\in \mathcal{M}_0$. We claim further that 
\begin{equation*}
    \int_{\overline{\Omega}\times \overline{B}_h} (-x)\cdot D_xL(x,v)d\nu_0(x,v) = \begin{cases}
    c'_-(0) &\quad \text{if}\;\lambda\to 0^-,\\
    c'_+(0) &\quad \text{if}\;\lambda\to 0^+.
    \end{cases}
\end{equation*}
It is rather clear from \eqref{e:nice} and \eqref{e:nice2}, since, for instance if $\lambda\to 0^-$ then
\begin{align*}
    &\left\langle \mu, \frac{L((1+\lambda)x,v) - L(x,v)}{\lambda}\right\rangle + \frac{c(\lambda) - c(0)}{\lambda} \leq 0 \qquad\text{for all}\;\mu\in \mathcal{M}_0\\
    &\left\langle \tilde{\nu}_\lambda, \frac{L(x,v) - L((1+\lambda)x,v)}{\lambda}\right\rangle \leq \frac{c(\lambda) - c(0)}{\lambda}.
\end{align*}
Therefore, together with Theorem \ref{thm:limit} we deduce that
\begin{equation*}
    \begin{split}
       &c'_-(0) \leq \big\langle \mu, (-x)\cdot D_xL(x,v) \big\rangle \qquad\text{for all}\;\mu\in \mathcal{M}_0\\
       &  \big\langle \nu_0, (-x)\cdot D_xL(x,v)\big\rangle \leq c'_-(0).
    \end{split}
\end{equation*}
We conclude that 
\begin{equation*}
    \big\langle \nu_0, (-x)\cdot D_xL(x,v)\big\rangle = c'_-(0).
\end{equation*}
Now (b) follows easily. To see that $\lambda\mapsto c'_-(\lambda)$ is left continuous, it suffices to show it is left continuous at $0$. If $\lambda\to 0^-$, let $\nu_{\lambda}\in \mathcal{M}_\lambda$ that realizes $c'_-(\lambda)$, i.e.,
\begin{equation*}
\begin{split}
    c'_-(\lambda) &= \int_{\overline{\Omega}_\lambda\times \overline{B}_h} (-x)\cdot D_xL(x,v)d\nu_\lambda(x,v) = (1+\lambda) \int_{\overline{\Omega}\times \overline{B}_h} (-x)\cdot D_xL\big((1+\lambda)x,v\big)d\tilde{\nu}_\lambda(x,v).
\end{split}
\end{equation*}
From $\mathrm{(H4)}$ we have that $(-x)\cdot D_xL((1+\lambda)x,v)\to (-x)\cdot D_xL(x,v)$ uniformly on $\overline{\Omega}\times\overline{B}_h$, and since the limit of the right hand side is $c'_-(0)$ independent of subsequence, we deduce that
\begin{equation*}
    \lim_{\lambda \to 0^-} c_-'(\lambda) = c'_-(0).
\end{equation*}
The case $\lambda\to 0^+$ can be done in the same manner. Finally the fact that $\lambda\mapsto c(\lambda)$ is differentiable except countably many points is standard, since $\lambda \mapsto c'(\lambda)$ is defined almost everywhere and is non-decreasing, or one can argue as in \cite[Theorem 17.9]{hewitt_real_1965} or \cite[Theorem 4.2]{bruckner_differentiation_1978}. 
\end{proof}
With some additional information about the Hamiltonian, we can say something more about the map $\lambda\mapsto c(\lambda)$. 

\begin{lem}\label{thm:convexnew} Assume $\mathrm{(H1)}, \mathrm{(H2)}, \mathrm{(H3)}, \mathrm{(H4)}$, $\mathrm{(A1)}$ and further that $(x,p)\mapsto H(x,p)$ is \textit{jointly convex}, then $\lambda\mapsto c(\lambda)$ is convex. 
\end{lem}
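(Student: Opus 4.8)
The plan is to reduce the additive eigenvalue $c(\lambda)$ on the dilated domain $\Omega_\lambda=(1+\lambda)\Omega$ (by Theorem~\ref{thm:limit} we may take $r(\lambda)=\lambda$ on $(-\varepsilon,\varepsilon)$) to an eigenvalue-type problem on the \emph{fixed} domain $\Omega$ for a moving Hamiltonian, and then to use the \emph{joint} convexity of $H$ to combine extremal subsolutions. First I would note that, since viscosity subsolutions of $H(x,Dv(x))\le c$ in an open set are Lipschitz and hence equivalent to a.e.\ subsolutions (coercivity $\mathrm{(H2)}$; see the discussion after \eqref{eqn:S_mu}), one has
\[
c(\lambda)=\inf\big\{c\in\R:\ \exists\,v\in\mathrm{W}^{1,\infty}(\Omega_\lambda),\ H(x,Dv(x))\le c\ \text{a.e.\ in }\Omega_\lambda\big\}.
\]
The scaling $v\mapsto\hat v$, $\hat v(y)=(1+\lambda)^{-1}v\big((1+\lambda)y\big)$ (as in \eqref{eq:u_tilde}), is a bijection of $\mathrm{W}^{1,\infty}(\Omega_\lambda)$ onto $\mathrm{W}^{1,\infty}(\Omega)$ with $D\hat v(y)=Dv\big((1+\lambda)y\big)$, under which $H(x,Dv(x))\le c$ a.e.\ in $\Omega_\lambda$ becomes $H\big((1+\lambda)y,D\hat v(y)\big)\le c$ a.e.\ in $\Omega$; hence
\[
c(\lambda)=\inf\big\{c\in\R:\ \exists\,w\in\mathrm{W}^{1,\infty}(\Omega),\ H\big((1+\lambda)y,Dw(y)\big)\le c\ \text{a.e.\ in }\Omega\big\},
\]
and, just as for the fixed-domain eigenvalue \eqref{eq:c(0)}, this infimum is attained, say by $w_\lambda\in\mathrm{W}^{1,\infty}(\Omega)$.

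Second, I would fix $\lambda_0,\lambda_1\in(-\varepsilon,\varepsilon)$, $t\in[0,1]$, set $\lambda_t=(1-t)\lambda_0+t\lambda_1$, and take $w:=(1-t)w_{\lambda_0}+t\,w_{\lambda_1}\in\mathrm{W}^{1,\infty}(\Omega)$ with $w_{\lambda_0},w_{\lambda_1}$ as above. For a.e.\ $y\in\Omega$ we have $Dw(y)=(1-t)Dw_{\lambda_0}(y)+t\,Dw_{\lambda_1}(y)$, while $(1+\lambda_t)y=(1-t)(1+\lambda_0)y+t(1+\lambda_1)y$; therefore $\big((1+\lambda_t)y,Dw(y)\big)$ is the corresponding convex combination of $\big((1+\lambda_0)y,Dw_{\lambda_0}(y)\big)$ and $\big((1+\lambda_1)y,Dw_{\lambda_1}(y)\big)$ in the convex set $\overline{U}\times\R^n$ (note $(1\pm\varepsilon)\overline{\Omega}\subset\overline{U}$ for $\varepsilon$ small, since $2\Omega\subseteq U$). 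By joint convexity of $H$, $H\big((1+\lambda_t)y,Dw(y)\big)\le(1-t)c(\lambda_0)+t\,c(\lambda_1)$ a.e.\ in $\Omega$, so $w$ is admissible in the characterization of $c(\lambda_t)$ above, and thus $c(\lambda_t)\le(1-t)c(\lambda_0)+t\,c(\lambda_1)$. Since $\lambda_0,\lambda_1,t$ are arbitrary, $\lambda\mapsto c(\lambda)$ is convex.

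I do not expect a genuine obstacle here, only bookkeeping; the one delicate point — and the sole place the hypothesis enters — is the last inequality, where $w$ is verified to solve the $\lambda_t$-problem. This works precisely because the reparametrization $(\lambda,y)\mapsto(1+\lambda)y$ is affine, so a convex combination in $\lambda$ of the state variables is again of the form $(1+\lambda_t)y$; together with the convex combination of gradients this produces a competitor, and it is here (not elsewhere) that convexity of $H$ in $(x,p)$ jointly — rather than merely in $p$, that is $\mathrm{(H3)}$ — is used. Note that $\mathrm{(H4)}$ is not needed for this statement. As an aside, the same conclusion can be read off the duality formula: arguing as in Lemma~\ref{lem:inM0} one checks that the scaled constraint set coincides with the $\lambda$-independent set $\mathcal{P}\cap\mathcal{G}'_{0,\Omega}$, so $-c(\lambda)=\min_{\mu\in\mathcal{P}\cap\mathcal{G}'_{0,\Omega}}\big\langle\mu,L\big((1+\lambda)x,v\big)\big\rangle$ is an infimum over $\mu$ of concave functions of $\lambda$ — concavity of $\lambda\mapsto L\big((1+\lambda)x,v\big)$ coming from writing it, via the partial Legendre transform of the jointly convex $H$, as an infimum of affine functions of $\lambda$ — whence $c$ is convex; the subsolution argument above is shorter and uses only facts already in place.
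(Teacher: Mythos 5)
Your proof is correct and is essentially the argument the paper has in mind---the paper omits it as a ``simple modification of Corollary~\ref{cor:mycor}'', and your reduction to the fixed domain via the scaling \eqref{eq:u_tilde} followed by convexly combining the two extremal Lipschitz a.e.\ subsolutions, with \emph{joint} convexity absorbing the affine dependence of the spatial argument $(1+\lambda)y$ on $\lambda$, is exactly that modification. The only imprecision is in your optional aside: $L(x,v)=\sup_p\big(p\cdot v-H(x,p)\big)$ is a supremum (not an infimum) of functions concave in $x$, so concavity of $x\mapsto L(x,v)$ under joint convexity requires the standard argument of convex-combining near-optimal $p$'s rather than being read off as an ``infimum of affine functions''; the conclusion there is still true, and your main argument does not rely on it.
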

We omit the proof of this lemma as it is a simple modification of Corollary \ref{cor:mycor}.

\section{The second normalization: convergence and a counter example}\label{sec5}
From Theorems \ref{thm:general} and \ref{thm:limit} we obtain the convergence of the second normalization \eqref{eq:familyclambda} when $\gamma$ is finite as in Corollary \ref{cor:second_norm}. In this section we provide an example where given any $r(\lambda)$, we can construct $\phi(\lambda)$ such that $\gamma = \infty$ and $\left\lbrace u_\lambda+\phi(\lambda)^{-1}c(\lambda)\right\rbrace_{\lambda>0}$ is divergent along some subsequence. To simplify notations, we will consider $r(\lambda) \geq 0$ and denote $c(\lambda)$ to be the eigenvalue of $H$ in $\Omega_\lambda = (1-r(\lambda)\Omega$. Let us consider the following Hamiltonian
\begin{equation}\label{eq:H_counter}
H(x,p) = |p| - V(x), \qquad (x,p)\in \overline{\Omega}\times \mathbb{R}^n,
\end{equation}
where $V:\overline{\Omega}\rightarrow \mathbb{R}$ is uniformly bounded continuous and is nonnegative. For a given $r(\lambda)$, we will construct $\phi(\lambda)$ so that $\left\lbrace u_\lambda+\phi(\lambda)^{-1}c(\lambda)\right\rbrace_{\lambda>0}$ is divergent as $\lambda \rightarrow 0^+$. The example is constructed based on an instability of the Aubry set $\mathcal{A}_{\Omega_\lambda}$ of $H$ on $\Omega_\lambda$, when $\lambda\rightarrow 0^+$. We recall from Theorem \ref{thm:V} that
\begin{equation*}
-c(0) = \min_{\overline{\Omega}} V \qquad\text{and}\qquad\mathcal{A}_{\Omega} = \left\lbrace x\in \overline{\Omega}: V(x) = \min_{\overline{\Omega} }V \right\rbrace. 
\end{equation*} 
Also, the Lagrangian is nonnegative in this case, since
\begin{equation}\label{L_counter}
    L(x,v) = \begin{cases}
    V(x)     &\qquad \text{if}\;|v|\leq 1,\\
    +\infty  &\qquad \text{if}\;|v|> 1.
    \end{cases}
\end{equation}
\begin{lem}\label{lem:equal} Assume \eqref{H3c}, \eqref{H4}, $\mathrm{(H3)}$ and $\mathrm{(A2)}$. Let
\begin{equation*}
    S_{\Omega}(x,y) = \sup \Big\lbrace u(x) - u(y): u\;\text{is a subsolution}\; H(x,Du(x))\leq c(0)\;\text{in}\;\Omega\Big\rbrace.
\end{equation*}
We can extend $S_{\Omega}$ uniquely to $\overline{\Omega}\times \overline{\Omega}$. If $\mathcal{A}_{\Omega} = \{z_0\}$ is a singleton then $u^0(x) \equiv S_\Omega(x,z_0)$ where $u^0$ is the maximal solution on $\Omega$ defined in Theorem \ref{thm:conv_bdd}.
\end{lem}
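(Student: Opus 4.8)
The plan is to show the two inequalities $u^0 \leq S_\Omega(\cdot, z_0)$ and $u^0 \geq S_\Omega(\cdot, z_0)$ separately. For the first, recall from Theorem \ref{thm:conv_bdd} that $u^0 = \sup_{v \in \mathcal{E}} v$, where $\mathcal{E}$ is the set of subsolutions $v$ to $H(x,Dv(x)) \leq c(0)$ in $\Omega$ satisfying $\langle \mu, v \rangle \leq 0$ for all $\mu \in \mathcal{M}_0$. Since $u^0$ itself is a subsolution to $H(x,Du^0(x)) \leq c(0)$, its values are controlled by the intrinsic semi-distance: by the very definition of $S_\Omega$ as a supremum over such subsolutions, $u^0(x) - u^0(z_0) \leq S_\Omega(x, z_0)$ for all $x$. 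Hence it suffices to prove $u^0(z_0) = 0$, i.e.\ that the maximal solution vanishes at the (unique) Aubry point. For this I would use the characterization via Mather measures: when $\mathcal{A}_\Omega = \{z_0\}$ is a singleton, I claim the Dirac mass $\delta_{(z_0, 0)}$ (or more precisely a Mather measure supported over $z_0$) lies in $\mathcal{M}_0$. Indeed, with $H(x,p) = |p| - V(x)$ and $c(0) = -\min_{\overline\Omega} V = -V(z_0)$, the measure $\mu_0 = \delta_{z_0} \otimes \delta_0$ is holonomic (trivially, since $v = 0$ there), lies in $\mathcal{P} \cap \mathcal{G}'_{0,\Omega}$, and satisfies $\langle \mu_0, L \rangle = L(z_0, 0) = V(z_0) = -c(0)$ by \eqref{L_counter}, so $\mu_0 \in \mathcal{M}_0$. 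Then $\langle \mu_0, u^0 \rangle \leq 0$ forces $u^0(z_0) \leq 0$; combined with $\langle \mu_0, u^0 \rangle \le 0$ applied appropriately and the fact that $S_\Omega(z_0,z_0) = 0$ with $u^0 \ge$ the constant subsolution considerations, one gets $u^0(z_0) = 0$.

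For the reverse inequality $u^0 \geq S_\Omega(\cdot, z_0)$, I would check that $x \mapsto S_\Omega(x, z_0)$ belongs to the admissible class $\mathcal{E}$. By Theorem \ref{thm:basic}(i), $S_\Omega(\cdot, z_0)$ is a subsolution to $H(x,Du(x)) \leq c(0)$ in $\Omega$ (in fact a solution away from $z_0$), so the PDE constraint is satisfied. It remains to verify $\langle \mu, S_\Omega(\cdot, z_0) \rangle \leq 0$ for every $\mu \in \mathcal{M}_0$. Here is where the singleton assumption on the Aubry set is essential: I would argue that every $\mu \in \mathcal{M}_0$ is supported on $\mathcal{A}_\Omega \times \overline{B}_h = \{z_0\} \times \overline{B}_h$ (projecting to $z_0$ in the $x$-variable), using the fact that minimizing measures concentrate on the Aubry set — a point one can extract from the representation $-c(0) = \langle \mu, L \rangle$ together with $L(x,v) \geq V(x) \geq V(z_0) = -c(0)$, which forces $V(x) = -c(0)$ $\mu$-a.e., i.e.\ $\mathrm{supp}_x \mu \subseteq \mathcal{A}_\Omega = \{z_0\}$. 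Then $\langle \mu, S_\Omega(\cdot, z_0) \rangle = S_\Omega(z_0, z_0) = 0 \leq 0$, so $S_\Omega(\cdot, z_0) \in \mathcal{E}$ and therefore $S_\Omega(\cdot, z_0) \leq u^0$.

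The main obstacle I anticipate is the careful justification that every viscosity Mather measure $\mu \in \mathcal{M}_0$ has its $x$-marginal supported on the Aubry set, and the interplay with the holonomy constraint $\langle \mu, v \cdot D\psi(x)\rangle = 0$ — one must be sure this is compatible with concentration at a single point (it is, since the holonomic condition at an isolated point is automatically satisfied by a measure of the form $\delta_{z_0} \otimes \rho$ with $\int v \, d\rho(v)$ unconstrained only if $z_0 \in \partial\Omega$, and otherwise forces $\int v\, d\rho(v) = 0$). Once this support property is in hand, both inequalities follow cleanly from Theorems \ref{thm:conv_bdd}, \ref{thm:basic}, and \ref{thm:V}, and the identity $u^0 = S_\Omega(\cdot, z_0)$ is established.
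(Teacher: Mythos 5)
Your proposal reaches the right conclusion but by a genuinely different route than the paper. The paper's argument is: both $u^0$ and $S_\Omega(\cdot,z_0)$ solve the ergodic problem \eqref{eq:S_0} (the latter because $z_0\in\mathcal{A}_\Omega$), the Aubry set is a uniqueness set for \eqref{eq:S_0}, and Lemma \ref{lem:sigma0}(i) supplies one particular measure $\sigma_0\in\mathcal{M}_0$ with $\langle\sigma_0,u^0\rangle=0$; once one knows $\mathrm{supp}\,\sigma_0$ projects onto $\{z_0\}$, this gives $u^0(z_0)=0=S_\Omega(z_0,z_0)$, and the uniqueness-set property finishes the proof. You instead bypass the uniqueness-set property entirely: you use the maximality characterization $u^0=\sup_{v\in\mathcal{E}}v$ of Theorem \ref{thm:conv_bdd} together with the definition of $S_\Omega$ as a supremum over subsolutions, proving $u^0-u^0(z_0)\le S_\Omega(\cdot,z_0)$ and $S_\Omega(\cdot,z_0)\in\mathcal{E}$, hence $S_\Omega(\cdot,z_0)\le u^0$. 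This is more self-contained (no appeal to the cited uniqueness results), but it needs the stronger input that \emph{every} $\mu\in\mathcal{M}_0$ has $x$-marginal concentrated on $\{z_0\}$, rather than just the single measure $\sigma_0$ the paper uses.

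Two caveats. First, your justification of that concentration property (and of $\delta_{z_0}\otimes\delta_0\in\mathcal{M}_0$) uses the explicit formula \eqref{L_counter} for $L$, i.e.\ it is valid only for the eikonal Hamiltonian $H(x,p)=|p|-V(x)$ of Section \ref{sec5}, whereas the lemma is stated under the general hypotheses $\mathrm{(H1)}$--$\mathrm{(H3)}$, $\mathrm{(A2)}$; for general convex $H$ the fact that viscosity Mather measures are supported over the Aubry set is a genuine weak KAM statement that must be invoked or proved (the paper's own proof also only asserts it, so this is a gap in generality rather than a wrong step, and your argument is complete in the setting where the lemma is actually applied). Second, your derivation of $u^0(z_0)=0$ in the first half is circular as written: the inequality $u^0(z_0)\le 0$ comes from testing $u^0$ against a Mather measure over $z_0$, but the reverse inequality $u^0(z_0)\ge 0$ only comes from the second half, namely $S_\Omega(\cdot,z_0)\in\mathcal{E}$ and hence $S_\Omega(\cdot,z_0)\le u^0$ evaluated at $z_0$. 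The two halves therefore have to be combined in that order; once reorganized this way (and once the side remark about holonomy is corrected --- since $\psi\in\mathrm{C}^1(\overline{\Omega})$, the condition $\langle\mu,v\cdot D\psi\rangle=0$ forces $\int v\,d\rho=0$ for $\mu=\delta_{z_0}\otimes\rho$ whether $z_0$ is interior or on $\partial\Omega$, though you never actually need this), the argument closes correctly.
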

\begin{proof}[Proof of Lemma \ref{lem:equal}] One can show that $\mathcal{A}_{\Omega}$ is a uniqueness set for \eqref{eq:S_0} (see \cite{ fathi_pde_2005,ishii_vanishing_2020, jing_generalized_2020, mitake_uniqueness_2018}). From Lemma \ref{lem:sigma0} there exists $\sigma_0\in \mathcal{M}_0$ such that $\langle \sigma_0,u^0\rangle = 0$. If $\mathcal{A}_{\Omega} = \{z_0\}$ then we can show that $\mathrm{supp}\;(\sigma_0)\subset \{z_0\}$ and thus $\sigma_0 \equiv \delta_{z_0}$, hence 
\begin{equation*}
u^0(z_0) = \langle \sigma_0,u^0\rangle = 0 = S_\Omega(z_0,z_0).
\end{equation*}
Therefore $u^0(x)\equiv S_\Omega(x,z_0)$.
\end{proof}

\begin{defn}[Definition of the potential $V(x)$]\label{defV} We will construct a potential $V$ to use for the proof of Theorem \ref{thm:counter-example} on $\Omega = (-1,1)$. We start with the first step, the building block will be as follows.

%\pgfplotsset{compat=1.15}
\usetikzlibrary{arrows}
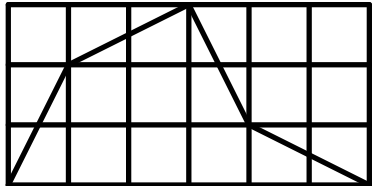
\begin{figure}[H]
   \centering
\begin{tikzpicture}[line cap=round,line join=round,>=triangle 45,x=0.4cm,y=0.4cm]
\clip(-10.,-0.) rectangle (10.,7.);
\draw [line width=2.pt] (-6.,6.)-- (6.,6.);
\draw [line width=2.pt] (6.,6.)-- (6.,0.);
\draw [line width=2.pt] (-6.,6.)-- (-6.,0.);
\draw [line width=2.pt] (-4.,6.)-- (-4.,0.);
\draw [line width=2.pt] (-2.,6.)-- (-2.,0.);
\draw [line width=2.pt] (2.,6.)-- (2.,0.);
\draw [line width=2.pt] (4.,6.)-- (4.,0.);
\draw [line width=2.pt] (-6.,4.)-- (6.,4.);
\draw [line width=2.pt] (6.,2.)-- (-6.,2.);
\draw [line width=2.pt] (0.,6.)-- (-4.,4.);
\draw [line width=2.pt] (-4.,4.)-- (-6.,0.);
\draw [line width=2.pt] (0.,6.)-- (2.,2.);
\draw [line width=2.pt] (2.,2.)-- (6.,0.);
\draw [line width=2.pt] (0.,6.)-- (0.,0.);
\draw [line width=2.pt] (-6.,0.)-- (6.,0.);
\end{tikzpicture}
\caption{The first step.}
   \label{fig1}
\end{figure}
Next, we apply the same construction but with a smaller scale, which gives us Figure \ref{fig2}.
%\pgfplotsset{compat=1.15}
\usetikzlibrary{arrows}
\begin{figure}[H]
   \centering
\begin{tikzpicture}[line cap=round,line join=round,>=triangle 45,x=0.4cm,y=0.4 cm]
\clip(-10.,-3.) rectangle (10.,7.);
\draw [line width=2.pt] (-6.,6.)-- (6.,6.);
\draw [line width=2.pt] (6.,6.)-- (6.,0.);
\draw [line width=2.pt] (-6.,6.)-- (-6.,0.);
\draw [line width=2.pt] (-4.,6.)-- (-4.,0.);
\draw [line width=2.pt] (-2.,6.)-- (-2.,0.);
\draw [line width=2.pt] (2.,6.)-- (2.,0.);
\draw [line width=2.pt] (4.,6.)-- (4.,0.);
\draw [line width=2.pt] (-6.,4.)-- (6.,4.);
\draw [line width=2.pt] (6.,2.)-- (-6.,2.);
\draw [line width=2.pt] (6.,0.)-- (6.,-3.);
\draw [line width=2.pt] (6.,0.)-- (9.,0.);
\draw [line width=2.pt] (9.,0.)-- (9.,-3.);
\draw [line width=2.pt] (9.,-3.)-- (6.,-3.);
\draw [line width=2.pt] (7.,0.)-- (7.,-3.);
\draw [line width=2.pt] (8.,0.)-- (8.,-3.);
\draw [line width=2.pt] (6.,-1.)-- (9.,-1.);
\draw [line width=2.pt] (9.,-2.)-- (6.,-2.);
\draw [line width=2.pt] (-9.,0.)-- (-9.,-3.);
\draw [line width=2.pt] (-9.,-3.)-- (-6.,-3.);
\draw [line width=2.pt] (-6.,-3.)-- (-6.,0.);
\draw [line width=2.pt] (-6.,0.)-- (-9.,0.);
\draw [line width=2.pt] (-8.,0.)-- (-8.,-3.);
\draw [line width=2.pt] (-7.,0.)-- (-7.,-3.);
\draw [line width=2.pt] (-9.,-1.)-- (-6.,-1.);
\draw [line width=2.pt] (-6.,-2.)-- (-9.,-2.);
\draw [line width=2.pt] (0.,6.)-- (-4.,4.);
\draw [line width=2.pt] (-4.,4.)-- (-6.,0.);
\draw [line width=2.pt] (0.,6.)-- (2.,2.);
\draw [line width=2.pt] (2.,2.)-- (6.,0.);
\draw [line width=2.pt] (-6.,0.)-- (-7.,-2.);
\draw [line width=2.pt] (-7.,-2.)-- (-9.,-3.);
\draw [line width=2.pt] (6.,0.)-- (8.,-1.);
\draw [line width=2.pt] (8.,-1.)-- (9.,-3.);
\draw [line width=2.pt] (0.,6.)-- (0.,0.);
\draw [line width=2.pt] (-6.,0.)-- (6.,0.);
\end{tikzpicture}
\caption{The second step.}
   \label{fig2}
\end{figure}
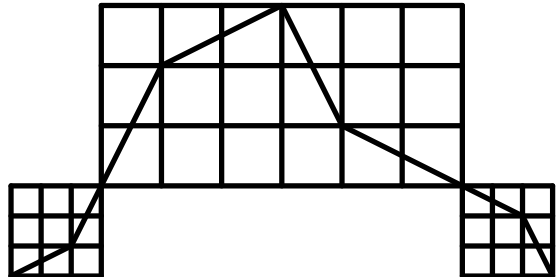

Keep switching the small box with this construction and with an appropriate initial length to start with, the graph of $V$ is given as in Figure \ref{fig3}.

\begin{figure}[H]
       \centering
      \includegraphics[scale=0.3]{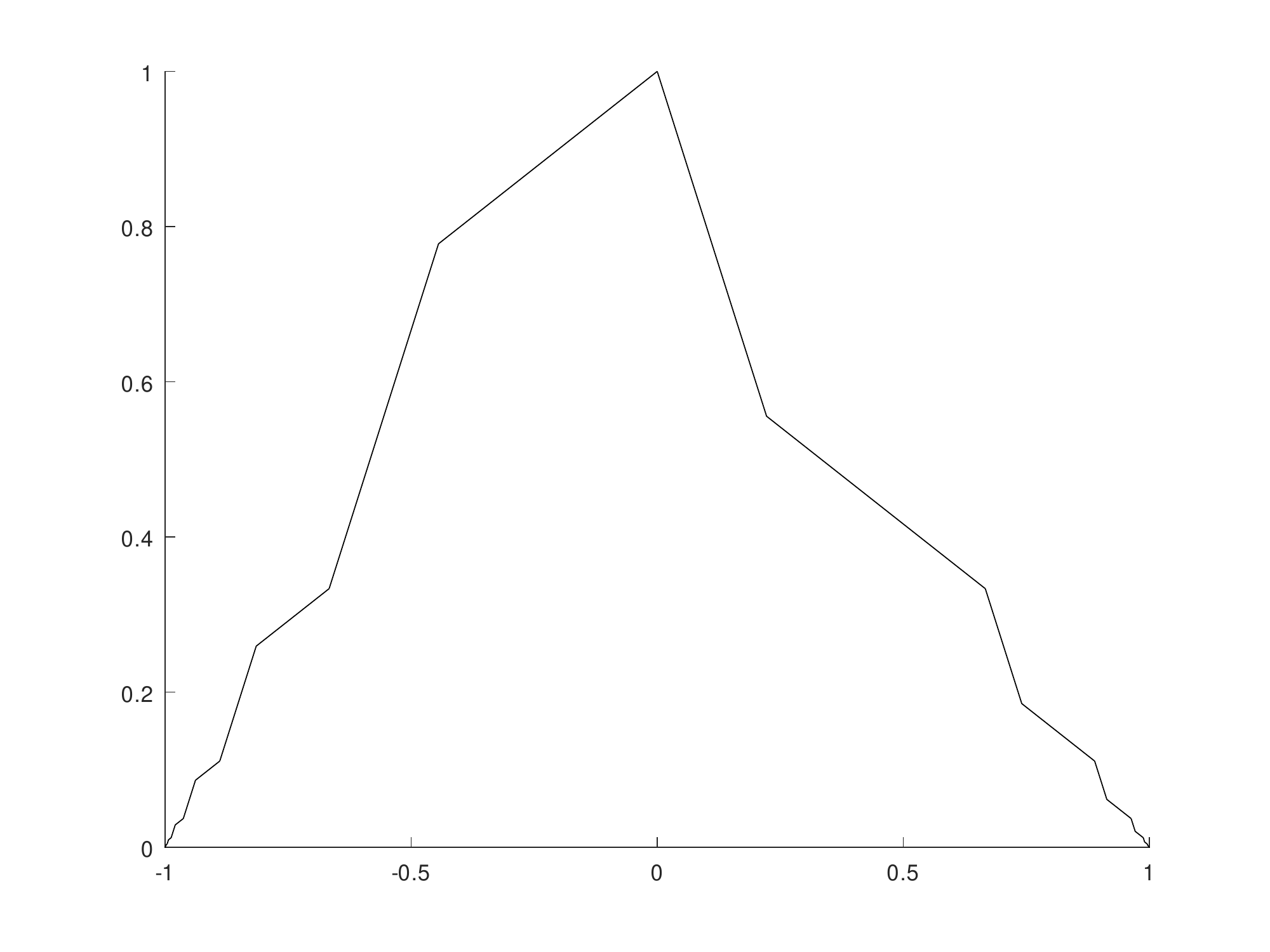}
       \caption{Graph of the function $V$.}
       \label{fig3}
\end{figure}
%\FloatBarrier 
\end{defn}

\begin{lem}\label{lem:divergence} Let $V(x)$ defined as in Definition \ref{defV} and $\Omega_\lambda = (-1+r(\lambda), 1-r(\lambda))$. Then the maximal solution on $\Omega_\lambda$ (as in Theorem \ref{thm:conv_bdd}), denoted by $u^0_\lambda(x)$, does not converge as $\lambda\rightarrow 0^+$.
\end{lem}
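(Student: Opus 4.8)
The plan is to make the maximal solution completely explicit for the Hamiltonian $H(x,p)=|p|-V(x)$ and then to read off from the self-similar geometry of $V$ two sequences of parameters along which $u^0_\lambda$ has \emph{different} limits. Set $m_\lambda:=\min_{\overline{\Omega_\lambda}}V$. By Theorem \ref{thm:V}, $c(\lambda)=-m_\lambda$ and $\mathcal A_{\Omega_\lambda}=\{z\in\overline{\Omega_\lambda}:V(z)=m_\lambda\}$. Since $L(x,v)=V(x)$ for $|v|\le h$ and $+\infty$ otherwise, and $V-m_\lambda\ge 0$ on $\overline{\Omega_\lambda}$, the optimal control formula (Theorem \ref{lem:optimal}, applied on $\Omega_\lambda$) shows that the cheapest trajectory between two points of the \emph{interval} $\overline{\Omega_\lambda}$ is the monotone unit-speed segment, so
\[
S_{\Omega_\lambda}(x,y)=\bigl|W_\lambda(x)-W_\lambda(y)\bigr|,\qquad W_\lambda(t):=\int_0^t\bigl(V(s)-m_\lambda\bigr)\,ds .
\]
Moreover, for this $H$ the set $\mathcal M_\lambda$ consists exactly of the probability measures carried by $\mathcal A_{\Omega_\lambda}\times\{0\}$, so the constraint $\langle\mu,v\rangle\le 0$ for all $\mu\in\mathcal M_\lambda$ reads $v\le 0$ on $\mathcal A_{\Omega_\lambda}$, and Theorem \ref{thm:conv_bdd} gives
\[
u^0_\lambda(x)=\min_{z\in\mathcal A_{\Omega_\lambda}}S_{\Omega_\lambda}(x,z)=\min_{z\in\mathcal A_{\Omega_\lambda}}\bigl|W_\lambda(x)-W_\lambda(z)\bigr| ;
\]
when $\mathcal A_{\Omega_\lambda}$ is a single point this is precisely Lemma \ref{lem:equal}.

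Next I use Definition \ref{defV}: $V$ is continuous, nonnegative, positive on $(-1,1)$ with $V(\pm1)=0$, and near each endpoint it is built from a scaled sequence of "tents" whose valley (local-minimum) values on each side form a strictly positive, strictly decreasing null sequence, with valley positions accumulating at $\pm1$. The construction is deliberately \emph{asymmetric}, and the point of the precise shape of the tents in Figures \ref{fig1}--\ref{fig3} is to interleave the positions and depths of the left and right valleys so that, as the symmetric interval $\Omega_\lambda=(-1+r(\lambda),1-r(\lambda))$ grows toward $(-1,1)$, the running minimum $m_\lambda$ is attained at a single valley whose side \emph{alternates}: for a cofinal set of $\lambda$, $\mathcal A_{\Omega_\lambda}=\{z_\lambda\}$ with $z_\lambda>0$, and for a cofinal set of $\lambda$, $\mathcal A_{\Omega_\lambda}=\{z_\lambda\}$ with $z_\lambda<0$. (Since $r$ is continuous and not identically $0$ near $0$, $1-r(\lambda)$ sweeps through an entire interval $(1-\varepsilon,1)$, hence meets all of the relevant thresholds and realizes both alternatives infinitely often.)

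Granting this, pick $\lambda_j\downarrow 0$ of the first type and $\mu_j\downarrow 0$ of the second type; then $z_{\lambda_j}\to 1$, $z_{\mu_j}\to -1$, $m_{\lambda_j},m_{\mu_j}\to 0$, and $\mathcal A_{\Omega_{\lambda_j}}$, $\mathcal A_{\Omega_{\mu_j}}$ are singletons, so $u^0_{\lambda_j}=S_{\Omega_{\lambda_j}}(\cdot,z_{\lambda_j})$ and $u^0_{\mu_j}=S_{\Omega_{\mu_j}}(\cdot,z_{\mu_j})$. Letting $j\to\infty$ in the explicit formula above, and using that $W_\lambda\to\int_0^\cdot V$ uniformly, I get, locally uniformly on $(-1,1)$,
\[
u^0_{\lambda_j}(x)\longrightarrow \int_x^{1}V(t)\,dt ,\qquad u^0_{\mu_j}(x)\longrightarrow \int_{-1}^{x}V(t)\,dt .
\]
The first limit is non-increasing in $x$, the second is non-decreasing, and both are non-constant because $V\ge 0$, $V\not\equiv 0$; hence they are distinct functions and $u^0_\lambda$ cannot converge as $\lambda\to 0^+$. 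The local uniform convergence along the two subsequences is legitimate although the domains vary: on the solution one has $|Du^0_\lambda|=V-m_\lambda\le\|V\|_{\infty}$ and $u^0_\lambda$ vanishes at a point of $\mathcal A_{\Omega_\lambda}$, so the family is equi-Lipschitz and equibounded and Arzel\`a--Ascoli applies, exactly as in Theorem \ref{thm:pre_conv_sta}.

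The substantial point — the main obstacle — is the alternation claim: verifying on the explicit object of Definition \ref{defV} that the minimizing valley of $V$ over $(-1+r(\lambda),1-r(\lambda))$ is \emph{unique} and \emph{switches sides} for a cofinal set of $\lambda$. This is an elementary but careful bookkeeping of the scales $\rho^n$ of the successive building blocks against the depths of their valleys; it uses neither convexity nor the regularity assumption $\mathrm{(H4)}$, only the ordering of a few interleaved sequences, and it is where the particular asymmetric shape of the tents is exploited. (A minor point to record along the way is that $m_\lambda\to 0$, equivalently $c(\lambda)\to c(0)=0$, which is immediate from $V(\pm1)=0$ and continuity.)
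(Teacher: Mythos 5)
Your analytic reductions are correct and, in this one--dimensional eikonal setting, genuinely more explicit than the paper's: the closed form $S_{\Omega_\lambda}(x,y)=\bigl|W_\lambda(x)-W_\lambda(y)\bigr|$ replaces the paper's concatenation argument (based on Theorem \ref{lem:optimal}) for the stability $S_{\Omega_{\lambda_j}}(\cdot,z_{\lambda_j})\to S_{\Omega}(\cdot,\pm1)$, and it makes the distinctness of the two limit functions immediate, whereas the paper argues separately that $S_\Omega(1,-1)\geq \Vert V\Vert_{L^1(\Omega)}>0$. Two small caveats on the parts you did carry out: $\mathcal{M}_\lambda$ is not literally the set of measures carried by $\mathcal{A}_{\Omega_\lambda}\times\{0\}$ (for instance $\delta_{z}\otimes\tfrac12(\delta_{v_0}+\delta_{-v_0})$ with $z$ a minimizer of $V$ and $0<|v_0|\leq 1$ is also a minimizing measure), but this overstatement is harmless because only the $x$-marginal, which is forced onto $\mathcal{A}_{\Omega_\lambda}$ by $\langle\mu,L\rangle=-c(\lambda)$, enters the constraint $\langle\mu,w\rangle\leq 0$; and the direction you actually need, that every Dirac $\delta_{(z,0)}$ with $z\in\mathcal{A}_{\Omega_\lambda}$ belongs to $\mathcal{G}'_{0,\Omega_\lambda}$, deserves a line of proof (it follows from $f(x,0)\geq 0$ for every $f\in\mathcal{G}_{0,\Omega_\lambda}$).

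The genuine gap is exactly the point you name and then grant: that for the specific $V$ of Definition \ref{defV} the minimizer of $V$ over $\overline{\Omega}_\lambda$ is (essentially) unique and sits alternately near $-1$ and near $+1$ for cofinal sets of $\lambda$, together with the check that the given continuous $r(\cdot)$ realizes both regimes at arbitrarily small $\lambda$. This is not a finishing touch; it is the entire content of the lemma, since for a generic potential whose minimum over $[-1,1]$ is attained at both endpoints the conclusion fails (e.g.\ for symmetric $V$ the family $u^0_\lambda$ does converge, to $\min\{S_\Omega(\cdot,-1),S_\Omega(\cdot,1)\}$). A complete proof must extract from the interleaved, asymmetric tent construction the sequence of scales at which the running minimum switches sides, verify the (near-)uniqueness of the minimizer at those scales, and then run the intermediate-value argument you sketch for $r$. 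To be fair, the paper is also terse here — it asserts the switching behaviour and the two subsequences directly ``by the construction of $V$'' and then works with a singleton vertex via Lemma \ref{lem:equal} — but it does tie the argument to Definition \ref{defV}; by your own labelling of this verification as the main obstacle, your write-up leaves the lemma's distinguishing hypothesis unused, so the proposal, while following essentially the paper's strategy with a cleaner explicit formula, is incomplete as a proof.
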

\begin{proof}[Proof of Lemma \ref{lem:divergence}] By Theorem \ref{thm:V} the additive eigenvalue of $H$ on $\Omega_\lambda$, denoted by $c(\lambda)$, is given by $-c(\lambda) = \min_{x\in \overline{\Omega}_\lambda} V(x)$. By the construction of $V$, there are exactly two points, denoted by $z_\lambda^+$ and $z_\lambda^-$ such that 
\begin{equation}\label{eq:crucial0}
\left\lbrace z\in \overline{\Omega}_\lambda: V(z) =  \min_{x\in \overline{\Omega}_{\lambda}} V(x) = -c(\lambda) \right\rbrace = \big\lbrace z_\lambda^+,z_\lambda^-\big\rbrace.
\end{equation}
%We will only work on these $\lambda$'s satisfying \eqref{eq:crucial0}. 
We can find two subsequence of $\lambda_j\rightarrow 0^+$ and $\delta_j\rightarrow 0^+$ such that $\lim_{\lambda_j\rightarrow 0^+} z_{\lambda_j} = -1$ and $\lim_{\delta_j\rightarrow 0^+} z_{\delta_j} = 1$. We claim that 
\begin{equation}\label{counter:claim}
    \lim_{\lambda_j\rightarrow 0^+} u_{\lambda_j}^0(x) = S_{\Omega}(x,-1) \qquad\text{and}\qquad  \lim_{\delta_j\rightarrow 0^+} u_{\delta_j}^0(x) = S_{\Omega}(x,1).
\end{equation}
For those $z_\lambda$ satisfying \eqref{eq:crucial0} we have $ u_{\lambda}^0(x) \equiv S_{\Omega_{\lambda}}\left(x, z_{\lambda}\right)$ for $x\in \overline{\Omega}_{\lambda}$. We show that \begin{equation}\label{eq:claim_counter}
    \lim_{\lambda_j\rightarrow 0} S_{\Omega_{\lambda_j}}\left(x,z_{\lambda_j}\right) = S_{\Omega}\left(x,z_0\right) \qquad\text{for}\;x\in \Omega
\end{equation}
where $z_0 = -1$. The other case is similar. If $x\in \Omega$ then for all $\lambda$ small enough we have $x\in \Omega_\lambda$, by Theorem \ref{lem:optimal} we have
\begin{align*}
    S_{\Omega_\lambda}(x,z_\lambda) &= \inf\left\lbrace \int_0^T \Big( c(\lambda)+ L(\xi(s),\dot{\xi}(s))\Big)ds: \xi\in \mathrm{AC}\left([0,T];\overline{\Omega}_\lambda\right), \xi(0) = z_\lambda, \xi(T) = x \right\rbrace,\\
    S_{\Omega}(x,z_\lambda) &= \inf\left\lbrace \int_0^T \Big(c(0)+L(\xi(s),\dot{\xi}(s))\Big)ds: \xi\in \mathrm{AC}\left([0,T];\overline{\Omega}\right), \xi(0) = z_\lambda, \xi(T) = x \right\rbrace.
\end{align*}
We show that $S_{\Omega_\lambda}(x,z_\lambda) \leq S_{\Omega}(x,z_0)$. Take any $\xi\in \mathcal{F}_{\Omega}(x,z_0; 0,T)$ (defined in Theorem \ref{lem:optimal}) and define $t_\lambda = \inf \; \big\lbrace s>0: \xi(s) = z_\lambda\big\rbrace  \in (0,T)$, then
\begin{equation*}
    \eta(s) = \begin{cases}
    z_\lambda &\qquad s\in [0,t_\lambda],\\
    \xi(s) &\qquad s\in [t_\lambda, T],
    \end{cases}
\end{equation*}
belongs to $\mathcal{F}_{\Omega_\lambda}(x,z_\lambda; 0,T)$, therefore together with \eqref{L_counter} we have
\begin{align*}
    \int_0^T \Big(c(0)+L(\xi(s),\dot{\xi}(s))\Big)ds &=  \int_0^{t_\lambda} \Big(c(0)+L(\xi(s),\dot{\xi}(s))\Big)ds + \int_0^T \Big(c(0)+L(\eta(s),\dot{\eta}(s))\Big)ds \\
    &\geq  \int_0^{t_\lambda} \Big(c(0)+L(\xi(s),\dot{\xi}(s))\Big)ds + \int_0^T \Big(c(\lambda)+L(\eta(s),\dot{\eta}(s))\Big)ds \\
    &\geq \max \Big\lbrace S_{\Omega}(z_\lambda,z_0),0\Big\rbrace +  S_{\Omega_\lambda}(x,z_\lambda).
\end{align*}
Therefore taking the infimum over all possible $\xi$ we deduce that
\begin{equation*}
    S_{\Omega}(x,z_0) \geq \max \Big\lbrace S_{\Omega}(z_\lambda,z_0),0\Big\rbrace +  S_{\Omega_\lambda}(x,z_\lambda)
\end{equation*}
and thus
\begin{equation}\label{eq:couter_1}
    \limsup_{\lambda\rightarrow 0^+} S_{\Omega_\lambda}(x,z_\lambda) \leq S_{\Omega}(x,z_0).
\end{equation}
Now let us start with $\xi_n\in \mathcal{F}_{\Omega_\lambda}(x,z_\lambda; 0, T_n)$ such that
\begin{equation}\label{eq:couter_2}
    \int_0^{T_n} \Big(L(\xi(s),\dot{\xi}(s))+c(\lambda)\Big)ds < S_{\Omega_\lambda}(x,z_\lambda) + \frac{1}{n}.
\end{equation}
Let us connect $z_0$ and $z_\lambda$ by the straight line $\zeta(s) = (1-s)z_0 + sz_\lambda$ for $s\in [0,1]$. Since $|\dot{\zeta}(s)| = |z_0-z_\lambda| \ll 1$, therefore from \eqref{L_counter} we have
\begin{equation*}
    \int_0^1 L\left(\zeta(s),\dot{\zeta}(s)\right)\;ds = \int_0^1 V(\zeta(s))\;ds \leq \max_{x\in [z_0,z_\lambda]} V(x) = -c(\lambda).
\end{equation*}
Therefore
\begin{equation}\label{eq:couter_3}
    \int_0^1 \Big(L(\zeta(s),\dot{\zeta}(s)) + c(\lambda)\Big)ds \leq 0.
\end{equation}
Let us define
\begin{equation*}
    \eta_n(s) = 
    \begin{cases}
    \zeta(s) &\qquad\text{for}\; s\in [0,1],\\
    \xi(s-1) &\qquad\text{for}\; s\in [1,T_n+1]
    \end{cases}
\end{equation*}
then $\eta_n\in \mathcal{F}_{\Omega}(x,z_0;0,T_{n+1})$. From \eqref{eq:couter_2} and \eqref{eq:couter_3} we have
\begin{align*}
S_{\Omega_\lambda}(x,z_\lambda) + \frac{1}{n} &>  \int_0^1 \Big(L(\zeta(s),\dot{\zeta}(s)) + c(\lambda)\Big)ds  + \int_0^{T_n} \Big(L(\xi(s),\dot{\xi}(s))+c(\lambda)\Big)ds\\
%&= \int_0^1 \Big(L(\eta_n(s),\dot{\eta}_n(s)) + c(\lambda)\Big)ds  + \int_{1}^{T_n+1} \Big(L(\xi(s-1),\dot{\xi}(s-1))+c(\lambda)\Big)ds\\
%&= \int_0^1 \Big(L(\eta_n(s),\dot{\eta}_n(s)) + c(\lambda)\Big)ds  + \int_{1}^{T_n+1} \Big(L(\eta_n(s),\dot{\eta}_n(s))+c(\lambda)\Big)ds\\
&= \int_0^{T_n+1} \Big(L(\eta_n(s),\dot{\eta}_n(s)) + c(\lambda)\Big)ds\geq S_{\Omega}(x,z_0)
\end{align*}
since $\eta_n\in \mathcal{F}_{\Omega}(x,z_0;0,T_{n+1})$. Let $\lambda\rightarrow 0^+$ and then $n\rightarrow \infty$ we have
\begin{equation}\label{eq:couter_4}
    \liminf_{\lambda\rightarrow 0^+} S_{\Omega_{\lambda}}(x,z_\lambda) \geq S_{\Omega}(x,z_0).
\end{equation}
From \eqref{eq:couter_1} and \eqref{eq:couter_4} we obtain \eqref{eq:claim_counter} and \eqref{counter:claim} follows. We finally observe that $S_\Omega(x,-1) \neq S_\Omega(x,1)$, since otherwise $S_\Omega(-1,1) = S_\Omega(1,-1) = 0$, which is impossible. Indeed, if $\xi\in \mathcal{F}_\Omega(1,-1;0,T)$ then, as \eqref{L_counter} implies that $|\dot{\xi}(s)|\leq 1$ a.e., we deduce from \eqref{L_counter} that 
\begin{equation*}
    \int_0^T L(\xi(s),\dot{\xi}(s))ds = \int_0^T V(\xi(s))ds \geq \int_0^T V(\xi(s))\dot{\xi}(s)ds = \int_{-1}^1 V(x)\;dx = \Vert V\Vert_{L^1(\Omega)} > 0.
\end{equation*}
Therefore $S_\Omega(-1,1) > 0$.
\end{proof}

 \begin{proof}[Proof of Theorem \ref{thm:counter-example}] Let $H$ be defined as in \eqref{eq:H_counter}, we consider the following discounted problems:
\begin{equation}\label{eq:SP1}
    \begin{cases}
    \delta u_\delta(x) + H(x,Du_\delta(x)) \leq 0 \qquad\text{in}\;\Omega_\lambda,\\
    \delta u_\delta(x) + H(x,Du_\delta(x)) \geq 0 \qquad\text{on}\;\overline{\Omega}_\lambda.
    \end{cases}
\end{equation}
Let $c(\lambda)$ be the eigenvalue of $H$ over $\Omega_\lambda$. By Theorem \ref{thm:conv_bdd} we know that
\begin{equation*}
   \lim_{\delta\rightarrow 0^+}\left( u_\delta(x) + \frac{c(\lambda)}{\delta}\right) \rightarrow u^0_\lambda(x) 
\end{equation*}
uniformly on $\overline{\Omega}$, where $u^0_\lambda(x)$ is a maximal solution on $\Omega_\lambda$. For each $\lambda>0$, we can find $\tau(\lambda)>0$ such that
\begin{equation}\label{eq:finalcrucial}
    \sup_{x\in \overline{\Omega}_{\lambda}} \left|\left(u_\delta(x) + \frac{c(\lambda)}{\delta}\right) - u^0_\lambda(x) \right|\leq r(\lambda)\qquad\text{for all}\; \delta \leq \tau(\lambda).
\end{equation}
Set $\phi(\lambda) = \tau(\lambda)r(\lambda)^2$, then $\phi(\lambda)\rightarrow 0$ as $\lambda\rightarrow 0^+$ and $\gamma = \infty$. The function $\phi(\lambda)$ can be modified to be decreasing. Now by \eqref{eq:finalcrucial} and Lemma \ref{lem:divergence}, along two subsequences $\lambda_j$ and $\delta_j$ we have
\begin{equation*}
    \lim_{\lambda_j\rightarrow 0^+} \left(u_{\lambda_j}(x) + \frac{c(\lambda_j)}{\phi(\lambda_j)}\right) = S_{\Omega}(x,-1) \neq S_\Omega(x,1) = \lim_{\delta_j\rightarrow 0^+} \left(u_{\delta_j}(x) + \frac{c(\delta_j)}{\phi(\delta_j)}\right).
\end{equation*}
Thus we have the divergence of $\left\lbrace u_\lambda+\phi(\lambda)^{-1}c(\lambda) \right\rbrace _{\lambda > 0}$ in this case.
\end{proof}
 
\begin{rem} Note that the parametrization here $u_\lambda$ means $u_{\phi(\lambda)}$, which is the same as in the original definition \eqref{eq:def_ulambda}. In \eqref{eq:def_ulambda} we should have used $u_{\phi(\lambda)}$ instead of $u_\lambda$ but we simplify the notation for clarity.
\end{rem}

\section*{Appendix}

\begin{lem}\label{thm:Ishii} Assume that $\Omega$ is bounded, open and $0\in \Omega$. Assume further that \eqref{condA2} holds for some $\kappa >0$, then $\Omega$ is star-shaped and $\mathrm{(A2)}$ holds.
\end{lem}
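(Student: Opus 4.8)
The plan is to prove the two assertions separately, the common device being that \eqref{condA2} transfers information freely between $\overline{\Omega}$ and its dilates $(1+r)\overline{\Omega}$.

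\emph{Star-shapedness.} I would argue by contradiction. If $\Omega$ is not star-shaped with respect to the origin, there is $x_0\in\Omega$ with $[0,x_0]\not\subseteq\Omega$. The set $\{t\in[0,1]:tx_0\in\Omega\}$ is relatively open in $[0,1]$ and contains $0$ (as $0\in\Omega$) and $1$ (as $x_0\in\Omega$), so its complement in $[0,1]$ is nonempty, relatively closed, and contains neither $0$ nor $1$; hence $\tau:=\inf\{t\in[0,1]:tx_0\notin\Omega\}$ lies in $(0,1)$, is attained, and $tx_0\in\Omega$ for all $t\in[0,\tau)$. Therefore $\tau x_0\notin\Omega$ while $\tau x_0=\lim_{t\uparrow\tau}tx_0\in\overline{\Omega}$, i.e. $\tau x_0\in\partial\Omega$. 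Setting $r:=\tfrac1\tau-1=\tfrac{1-\tau}{\tau}>0$ we have $x_0=(1+r)(\tau x_0)\in(1+r)\partial\Omega$, so \eqref{condA2} forces $\mathrm{dist}(x_0,\overline{\Omega})\ge\kappa r>0$, which contradicts $x_0\in\Omega\subseteq\overline{\Omega}$. The only delicate point here is the topological claim that the first exit point of the segment lands exactly on $\partial\Omega$.

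\emph{Verification of $\mathrm{(A2)}$.} First I would note that $\overline{\Omega}$ is also star-shaped with respect to the origin (take limits of points of $\Omega$ and use continuity of $w\mapsto sw$). The heart of the matter is the quantitative interior estimate
\begin{equation*}
\mathrm{dist}\!\left((1-t)x,\partial\Omega\right)\ \ge\ \kappa t\qquad\text{for all }x\in\overline{\Omega}\ \text{and}\ t\in(0,1).
\end{equation*}
To prove it, fix $x\in\overline{\Omega}$ and $r>0$, put $1-t=\tfrac1{1+r}$ (so $t=\tfrac{r}{1+r}$), and choose, using compactness of $\partial\Omega$, a point $z\in\partial\Omega$ nearest to $(1-t)x=\tfrac{x}{1+r}$. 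Then $(1+r)z\in(1+r)\partial\Omega$, so \eqref{condA2} gives $\mathrm{dist}((1+r)z,\overline{\Omega})\ge\kappa r$; since $x\in\overline{\Omega}$ this yields $\kappa r\le|(1+r)z-x|=(1+r)\,|z-\tfrac{x}{1+r}|=(1+r)\,\mathrm{dist}((1-t)x,\partial\Omega)$, and dividing by $1+r$ gives the estimate. With this in hand I would take $\eta(x):=-x$, which is $1$-Lipschitz on the bounded set $\overline{\Omega}$ and hence lies in $\mathrm{BUC}(\overline{\Omega};\mathbb{R}^n)$, together with $h:=\tfrac12$ and any $\rho\in(0,\kappa)$. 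For $x\in\overline{\Omega}$ and $t\in(0,h]$ we have $x+t\eta(x)=(1-t)x\in\overline{\Omega}$ by star-shapedness, and the estimate gives $\mathrm{dist}((1-t)x,\partial\Omega)\ge\kappa t>\rho t$; since the open ball $B((1-t)x,\rho t)$ is connected, disjoint from $\partial\Omega$, and contains the point $(1-t)x\in\Omega$, it is contained in $\Omega$. This is precisely $\mathrm{(A2)}$.

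I expect the interior estimate above to be the main obstacle: once one sees that applying \eqref{condA2} at the nearest boundary point of $\tfrac1{1+r}\overline{\Omega}$ produces a distance bound that is exactly linear in the dilation parameter, everything else is bookkeeping, and the choice $\eta(x)=-x$ (pointing toward the star-center) is the only natural one, since no constant vector field can work for a general star-shaped domain.
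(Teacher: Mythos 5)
Your proof is correct and follows essentially the same route as the paper: star-shapedness by contradiction via the first exit point $\tau x_0\in\partial\Omega$ and the scaling $x_0=(1+r)(\tau x_0)$, and then $\mathrm{(A2)}$ with $\eta(x)=-x$, $h=\tfrac12$ and radius below $\kappa$, obtained by applying \eqref{condA2} to dilated boundary points to get the linear clearance $\mathrm{dist}((1-t)x,\partial\Omega)\ge\kappa t$. The only cosmetic difference is that you derive this clearance pointwise via a nearest boundary point, whereas the paper phrases it as the set inclusion $(1-r)\overline{\Omega}+B\left(0,\tfrac{\kappa r}{2}\right)\subset\Omega$.
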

\begin{proof}[Proof of Lemma \ref{thm:Ishii}] Suppose that $\Omega$ is not star-shaped, there exists $x\in \overline{\Omega}$ and $0< \theta < 1$ such that $\theta x\notin \Omega$. Since $0$ is an interior point of $\Omega$, there exists $0<\delta<\theta$ such that $\tau x \in \Omega$ for all $0<\sigma \leq \delta$. Let us define $\eta = \sup \big\lbrace \tau>0: \tau x \in \Omega \big\rbrace$ then $0< \delta \leq \eta \leq \theta$ and $\eta x\in \partial \Omega$. Set $y = \eta x\in \partial \Omega$, we see that 
\begin{equation*}
    x = \eta^{-1}y = (1+r)y \in (1+r)\partial\Omega
\end{equation*}
where $\eta^{-1} = 1 + r$. Now \eqref{condA2} gives us that $0=\mathrm{dist}(x,\Omega) \geq \kappa r$ which is a contradiction and thus $\Omega$ is star-shaped. 

For $0<r<1$, as $\Omega$ is star-shaped, $(1-r)\overline{\Omega}\subset(1+r)^{-1}\Omega$ and $B\left(0,\frac{\kappa r}{1+r}\right)\subset B\left(0,\frac{\kappa r}{2}\right)$. From \eqref{condA2} we have $\Omega+B(0,\kappa r) \cap (1+r)\partial \Omega =\emptyset$ for all $r\in (0,1)$, therefore 
\begin{equation*}
     (1+r)^{-1}\Omega + B\left(0,\frac{\kappa r}{1+r}\right) \cap \partial\Omega = \emptyset \quad\Longrightarrow\quad (1-r)\overline{\Omega}+ B\left(0,\frac{\kappa r}{2}\right) \cap \partial \Omega = \emptyset.
\end{equation*}
From \eqref{condA2} we deduce that $(1-r)\overline{\Omega}+ B\left(0,\frac{\kappa r}{2}\right) \subset \Omega$. We observe that
\begin{equation*}
    B\left(x - rx, \frac{\kappa r}{2}\right) = (1-r)x+B\left(0,\frac{\kappa r}{2}\right) \subset (1-r)\overline{\Omega}+ B\left(0,\frac{\kappa r}{2}\right) \subset \Omega.
\end{equation*}
This implies $\mathrm{(A2)}$ with $\eta(x) = -x$, $r = \frac{\kappa}{2}$ and $h = \frac{1}{2}$.
\end{proof}

%\subsection*{Vanishing discount for fixed bounded domains}
\begin{proof}[Proof of Theorem \ref{thm:pre}] By the priori estimate $\delta |u_\delta(x)| + |Du_\delta(x)|\leq C_H$ for $x\in \overline{\Omega}$. Fix $x_0\in \overline{\Omega}$, then by Aezel\`a-Ascoli theorem there exists a subequence $\delta_j$ and $u\in \mathrm{C}(\overline{\Omega})$ such that $u_{\delta_j}(\cdot) - u_{\delta_j}(x_0) \rightarrow u(\cdot)$ uniformly on $\overline{\Omega}$ for some $u\in \mathrm{C}(\overline{\Omega})$. By Bolzano-Weiertrass theorem there exists $c\in \mathbb{R}$ such that (upto subsequence) $\delta_ju_{\delta_j}(x_0)\rightarrow -c$. 

By stability of viscosity solution we have $H(x,Du(x)) = c$ in $\Omega$. We will show $H(x,Du(x))\geq c$ on $\overline{\Omega}$. Let $\tilde{x}\in \partial\Omega$ and $\varphi\in \mathrm{C}^1(\overline{\Omega})$ such that $u-\varphi$ has a strict minimum over $\overline{\Omega}$ at $\tilde{x}$, we show that $H(\tilde{x},D\varphi(\tilde{x})) \geq c$. Without loss of generality we can assume that $(u-\varphi)(x) \geq (u-\varphi)(\tilde{x}) = 0$ for $x\in \overline{\Omega}$.

Define $\varphi_\delta(x) = (1+\delta)\varphi\left(\frac{x}{1+\delta}\right)$ for $x\in (1+\delta)\overline{\Omega}$. Let us define 
\begin{equation*}
\Phi(x,y) = \varphi_\delta(x) - u_\delta(y) - \frac{|x-y|^2}{2\delta^2}, \qquad (x,y) \in (1+\delta)\overline{\Omega}\times \overline{\Omega}.
\end{equation*}
Assume $\Phi$ has maximum over $(1+\delta)\overline{\Omega}\times \overline{\Omega}$ at $(x_\delta,y_\delta)$. As $\Phi(x_\delta,y_\delta)\geq \Phi(y_\delta,y_\delta)$, we obtain $|x_\delta - y_\delta|\leq C\delta$. By compactness we deduce that $(x_\delta,y_\delta) \rightarrow (\overline{x},\overline{x})$ for $\overline{x}\in \overline{\Omega}$ as $\delta \rightarrow 0^+$. We deduce further that
\begin{equation*}
\limsup_{\delta\rightarrow 0}\frac{|x_\delta - y_\delta|^2}{2\delta^2} \leq \limsup_{\delta \rightarrow 0} \Big(\varphi(x_\delta) - \varphi(y_\delta)\Big) = 0 \quad\Longrightarrow\quad |x_\delta - y_\delta| = o(\delta).
\end{equation*}
Also $\Phi(x_\delta, y_\delta) \geq \Phi(\tilde{x},\tilde{x})$, let $\delta \rightarrow 0$ we have $\Phi(\overline{x},\overline{x}) \geq \Phi(\tilde{x},\tilde{x})$ which implies that $\overline{x} = \tilde{x}$. By $\mathrm{(A2)}$ we deduce that $x_\delta \in (1+\delta)\Omega$. Now by supersolution test as $y\mapsto \Phi^\delta(x_\delta,y)$ has a max at $y_\delta$, we obtain
\begin{equation*}
\delta u_\delta(y_\delta) + H\Big(y_\delta, \delta^{-2}(x_\delta - y_\delta)\Big) \geq 0.
\end{equation*}
As $x\mapsto \Phi(x,y_\delta)$ has a max at $x_\delta \in (1+\delta)\Omega$ as an interior point of $(1+\delta)\Omega$, we deduce that $D\varphi_\delta(x_\delta) = \delta^{-2}(x_\delta - y_\delta)$. Therefore
\begin{equation*}
\delta u_\delta(y_\delta) + H\Big(y_\delta, D\varphi_\delta(x_\delta)\Big) \geq 0.
\end{equation*}
As $u_\delta(\cdot)$ is Lipschitz with constant $C_H$, we have $u_\delta(y_\delta)\rightarrow u(\tilde{x})$ along the subsequence $\delta_j$. Therefore as $\delta_j\rightarrow 0$ we have $H(\tilde{x}, D\varphi(\tilde{x})) \geq c$.

Now with the help of comparison principle, we obtain the uniqueness of $c = c(0)$ and thus the convergence of the full sequence $\delta u_\delta(x_0)\rightarrow -c(0)$ follows. If we use the following normalization \begin{equation*}
    \lim_{j\rightarrow \infty} \left(u_{\delta_j}(x) + \frac{c(0)}{\delta_j}\right) = w(x)
\end{equation*}
then by a similar argument we can show $w$ solves \eqref{eq:S_0} as well, and 
\begin{align*}
   u(x) &= \lim_{j\rightarrow \infty}\left( u_{\delta_j}(x) - u_{\delta_j}(x_0)\right) \\
   &= \lim_{j\rightarrow \infty}\left( u_{\delta_j}(x) + \frac{c(0)}{\delta_j}\right) - \lim_{j\rightarrow \infty}\left( u_{\delta_j}(x_0) + \frac{c(0)}{\delta_j}\right) = w(x) - w(x_0).
\end{align*}
We have left to show \eqref{eq:rate0}. Let $u$ be defined as the limit of $u_{\delta_j}(\cdot) - u_{\delta_j}(x_0)$, we have $|u(x)|+|Du(x)|\leq C$ for $x\in \Omega$ where $C$ depends on $C_H$ and $\mathrm{diam}(\Omega)$. It is clear that $u(x)-\delta^{-1}c(0)\pm C$ are, respectively, subsolution and supersolution to \eqref{state-def}, therefore by comparison principle we obtain \eqref{eq:rate0}.
\end{proof}

\begin{proof}[Proof of Theorem \ref{thm:V}] If $v\in \mathrm{C}(\overline{\Omega})$ is a solution to \eqref{eq:E} then for a.e. $x\in \overline{\Omega}$ we have $-V(x)\leq|Dv(x)|-V(x) = c_\Omega$, therefore $c_\Omega \geq \max_{\overline{\Omega}}(-V) = -\min_{\overline{\Omega}} V$. Assume $V$ attains its minimum over $\overline{\Omega}$ at $x_0$ then by supersolution test at that point we have $0\geq -V(x_0)\geq c_\Omega$, therefore $c_\Omega = -\min_{\overline{\Omega}} V$.

Let $z\in \overline{\Omega}$ such that $V(z) = -c_\Omega$, we check that $x\mapsto S_\Omega(x,z)$ is a supersolution at $x=z$. Let $\omega(\cdot)$ be the modulus of continuity of $V$ on $\overline{\Omega}$, we have $|V(x)+c_\Omega|\leq \omega(r)$ for all $x\in B(z,r)\cap \overline{\Omega}$. From \eqref{eq:E} as $x\mapsto u(x) = S_\Omega(x,z)$ is a subsolution in $\Omega$, we have
\begin{equation*}
    |Du(x)|-V(x) \leq c_\Omega\qquad\Longrightarrow\qquad |Du(x)|\leq V(x)+c_\Omega\leq \omega(r) 
\end{equation*}
for a.e. $x\in B(z,r)\cap \overline{\Omega}$ and for all $r>0$, thus
\begin{equation*}
    |u(x)| = |u(x)-u(z)|\leq \int_0^1 |Du(sx + (1-s)z)\cdot (x-z)|\;ds \leq \omega(r)r
\end{equation*}
for $x\in B(z,r)\cap \overline{U}$. That means $x\mapsto u(x)$ is differentiable at $x=z$ and $Du(z) = 0$, thus $x\mapsto u(x) = S_\Omega(x,z)$ is a solution to \eqref{eq:E}.

Conversely, if $V(z)=-c_\Omega + \varepsilon$ for some $\varepsilon>0$, then at $x=z$ we have $0\in D^-u(z)$ where $u(x) = S_\Omega(x,z)$, therefore if the supersolution test holds then we must have $-V(z) \geq c_\Omega$, hence $\varepsilon < 0$ which is a contradiction, thus $x\mapsto S_\Omega(x,z)$ fails to be a supersolution at $x=z$. 
\end{proof}

\begin{proof}[Proof of Theorem \ref{thm:eigenvalue}] Without loss of generality we assume $c_U = 0$. Let $z\in \mathcal{A}_U \subset \Omega$ and $w(x) = S_U(x,z)$ solves \eqref{eq:E}, we have $H(x,Dw(x)) = 0$ in $\Omega$. We have
\begin{equation*}
c_\Omega = \inf \big\lbrace c\in \mathbb{R}: H(x,Du(x)) = c\;\text{admits a viscosity subsolution in}\;\Omega\big\rbrace \leq 0.
\end{equation*}
Assume the contrary that $c_\Omega < 0$ then there exists $u\in \mathrm{C}(\overline{\Omega})\cap \mathrm{W}^{1,\infty}(\Omega)$ solves $H(x,Du(x)) \leq c(0) < 0$ in $\Omega$. Let us consider $g(x) = w(x)$ defined for $x\in \partial \Omega$ and the boundary value problem
\begin{equation}\label{eq:bdrS0}
\begin{cases}
H(x, Dv(x)) = 0 &\quad\;\text{in}\;\Omega,\\ 
\;\,\qquad\qquad v = g &\quad\;\text{on}\;\partial \Omega.
\end{cases}
\end{equation}
As there exists a solution $u$ such that $H(x,Du(x)) < 0$ in $\Omega$, by Theorem \ref{thm:CP_Dirichlet} the problem \eqref{eq:bdrS0} cannot have more than one solution. On the other hand, the following function is a solution to \eqref{eq:bdrS0}
\begin{equation*}
\mathcal{V}(x) = \min_{y\in \partial \Omega} \Big\lbrace g(y) + S_U(x,y) \Big\rbrace.
\end{equation*}
Indeed, for each $y\in \partial \Omega$, $x\mapsto g(y)+S_U(x,y)$ is a Lipschitz viscosity solution to $H(x,Dv(x)) = 0$ in $\Omega$, therefore by the convexity of $H$ is obtain $\mathcal{V}$ is a viscosity solution to $H(x,D\mathcal{V}(x)) = 0$ in $\Omega$ as well. On the boundary we see that $\mathcal{V}(x) \leq g(x)$, and also for any $y\in \partial\Omega$ then 
\begin{equation*}
    g(y)+S_U(x,y) = S_U(y,z) + S_U(x,y) \geq S_U(x,z) = g(x),
\end{equation*}
which implies that $\mathcal{V}(x) = g(x)$ on $\partial\Omega$. Therefore we must have $\mathcal{V}(x) = S_U(x,z)$ for all $x\in \overline{\Omega}$, hence $\mathcal{V}(z) = S_U(z,z) = 0$ and as a consequence there exists $y\in \partial \Omega$ such that
\begin{equation*}
S_U(y,z) + S_U(z,y) = 0.
\end{equation*}
This implies that $y\in \mathcal{A}_U$ (see \cite{fathi_pde_2005, ishii_vanishing_2020}), which is a contradiction since $\mathcal{A}_U$ is supported inside $\Omega$, therefore we must have $c_\Omega = 0$. 
\end{proof}

\begin{proof}[Proof of Lemma \ref{lem:regu}] From $\mathrm{(H4)}$ for each $R>0$ there is a nondecreasing function $\omega_R:[0,\infty)\rightarrow [0,\infty)$ with $\omega_R(0) = 0$ such that $ |D_xL(x,v) - D_xL(y,v)|\leq \omega_R(|x-y|)$ if $|x|,|v|\leq R$. Fix $(x,v)\in \overline{\Omega}\times \overline{B}_h$, we can assume $h$ is large so that $\overline{\Omega}\subset \overline{B}_h$. Let $f(\delta) = L\left((1-\delta)x,v\right)$ then $\delta\mapsto f(\delta)$ is continuously differentiable and 
\begin{align*}
    \left|\frac{f(\delta) - f(0)}{\delta} - f'(0)\right| &\leq \sup_{s\in [0,\delta]}|f'(s) - f'(0)|\\
    &= \sup_{s\in [0,\delta]} |x|.\left|D_xL\left((1-s)x,v\right) - D_xL(x,v)\right| \leq \big(\mathrm{diam}\;\Omega\big) \omega_h\left(\delta|x|\right).
\end{align*}
Therefore
\begin{equation*}
    \lim_{\delta\rightarrow 0^+} \left( \sup_{(x,v)\in \overline{\Omega}\times \overline{B}_h} \left|\frac{L\left((1-\delta)x,v\right) - L(x,v)}{\delta} - (-x)\cdot D_xL(x,v)\right|\right) = 0.
\end{equation*}
The conclusions follow from here.
\end{proof}
\begin{thm}[Comparison principle for Dirichlet problem, \cite{Bardi1997}]\label{thm:CP_Dirichlet} Let $\Omega$ be a bounded open subset of $\mathbb{R}^n$. Assume $u_1,u_2\in \mathrm{C}(\overline{\Omega})$ are, respectively viscosity subsolution and supersolution of $H(x,Du(x)) = 0$ in $\Omega$ with $u_1\leq u_2$ on $\partial \Omega$. Assume further that 
\begin{itemize}
    \item $|H(x,p) - H(y,p)|\leq \omega((1+|p|)|x-y|)$ for all $x,y\in \Omega$ and $p\in \mathbb{R}^n$.
    \item $p\mapsto H(x,p)$ is convex for each $x\in \Omega$.
    \item There exists $\varphi\in \mathrm{C}(\overline{\Omega})$ such that $\varphi\leq u_2$ in $\overline{\Omega}$ and $H(x,D\varphi(x)) < 0$ in $\Omega$.
    \end{itemize}
    Then $u_1\leq u_2$ in $\Omega$.
\end{thm}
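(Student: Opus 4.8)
The plan is to run the classical doubling-of-variables (Crandall--Ishii) argument, using the strict subsolution $\varphi$ and the convexity of $p\mapsto H(x,p)$ to replace $u_1$ by a \emph{strict} subsolution near its contact set; the strictness is exactly what will produce the contradiction, since without it the doubling only yields the vacuous inequality $0\le\omega(\cdots)$.

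First I would argue by contradiction: suppose $M:=\max_{\overline{\Omega}}(u_1-u_2)>0$. Since $u_1\le u_2$ on $\partial\Omega$, the maximum is attained at an interior point. For $\mu\in(0,1)$ set $u_1^{\mu}:=\mu u_1+(1-\mu)\varphi$. Because $\varphi\le u_2$ on $\overline{\Omega}$ and $u_1\le u_2$ on $\partial\Omega$, one still has $u_1^{\mu}\le u_2$ on $\partial\Omega$; moreover $u_1^{\mu}\to u_1$ uniformly on $\overline{\Omega}$ as $\mu\to1^-$, so for $\mu$ close to $1$ the quantity $M_\mu:=\max_{\overline{\Omega}}(u_1^{\mu}-u_2)\ge M/2>0$ and is still attained at an interior point. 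Using convexity of $H$ in $p$ together with the convex-combination lemma for viscosity subsolutions (Bardi--Capuzzo-Dolcetta), $u_1^{\mu}$ is a viscosity subsolution of $H(x,Du_1^{\mu})\le(1-\mu)H(x,D\varphi)$, and on a suitable compact neighbourhood $K\subset\Omega$ of the argmax the right-hand side is $\le-\sigma<0$. Hence, after relabelling, I may assume $u_1$ itself satisfies $H(x,p)\le-\sigma$ whenever $x\in K$ and $p\in D^{+}u_1(x)$, for some $\sigma>0$.

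Next comes the doubling. For $\varepsilon>0$ let $\Phi_\varepsilon(x,y)=u_1(x)-u_2(y)-\tfrac{1}{2\varepsilon}|x-y|^2$ on $\overline{\Omega}\times\overline{\Omega}$, and let $(x_\varepsilon,y_\varepsilon)$ be a maximum point. The standard estimates give $\tfrac{1}{2\varepsilon}|x_\varepsilon-y_\varepsilon|^2\to0$, $\Phi_\varepsilon(x_\varepsilon,y_\varepsilon)\to M$, and every accumulation point of $(x_\varepsilon,y_\varepsilon)$ is of the form $(\hat x,\hat x)$ with $u_1(\hat x)-u_2(\hat x)=M>0$; consequently $\hat x\in\Omega$ and, for $\varepsilon$ small, $x_\varepsilon,y_\varepsilon\in K$. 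Testing the subsolution inequality for $u_1$ at the interior point $x_\varepsilon$ against the $C^1$ function $x\mapsto u_2(y_\varepsilon)+\tfrac{1}{2\varepsilon}|x-y_\varepsilon|^2$ gives $p_\varepsilon:=(x_\varepsilon-y_\varepsilon)/\varepsilon\in D^{+}u_1(x_\varepsilon)$ and $H(x_\varepsilon,p_\varepsilon)\le-\sigma$; testing the supersolution inequality for $u_2$ at $y_\varepsilon$ against $y\mapsto u_1(x_\varepsilon)-\tfrac{1}{2\varepsilon}|x_\varepsilon-y|^2$ gives the same $p_\varepsilon\in D^{-}u_2(y_\varepsilon)$ and $H(y_\varepsilon,p_\varepsilon)\ge0$. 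Subtracting and using the modulus hypothesis,
\[
\sigma\;\le\;H(y_\varepsilon,p_\varepsilon)-H(x_\varepsilon,p_\varepsilon)\;\le\;\omega\big((1+|p_\varepsilon|)\,|x_\varepsilon-y_\varepsilon|\big).
\]
Since $|p_\varepsilon|\,|x_\varepsilon-y_\varepsilon|=\tfrac{1}{\varepsilon}|x_\varepsilon-y_\varepsilon|^2\to0$ and $|x_\varepsilon-y_\varepsilon|\to0$, the right-hand side tends to $0$, contradicting $\sigma>0$. Therefore $M\le0$, i.e. $u_1\le u_2$ on $\overline{\Omega}$, and in particular in $\Omega$.

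The main obstacle is the strict-subsolution reduction in the second paragraph: verifying that $\mu u_1+(1-\mu)\varphi$ is a viscosity subsolution with strictly negative right-hand side when $u_1,\varphi$ are merely continuous (here $H$ need not be coercive, so one cannot presuppose Lipschitz bounds, and the naive identity $D^{+}(\mu u_1+(1-\mu)\varphi)=\mu D^{+}u_1+(1-\mu)D^{+}\varphi$ is false in general). In full generality this is handled by the convexity lemma of Bardi--Capuzzo-Dolcetta, whose proof itself uses an auxiliary weighted doubling; alternatively, in the Lipschitz/coercive situation actually needed elsewhere in this paper (e.g. in the proof of Theorem~\ref{thm:eigenvalue}), $u_1,u_2,\varphi$ are Lipschitz, viscosity subsolutions coincide with a.e.\ subsolutions by convexity (as recalled after \eqref{eqn:S_mu}), and the combination is strict a.e.\ by convexity of $H$, so the reduction is immediate. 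Everything after the reduction is the routine Crandall--Ishii computation above.
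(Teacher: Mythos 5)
The paper never proves Theorem \ref{thm:CP_Dirichlet}: it is quoted directly from \cite{Bardi1997} and used only as a black box in the proof of Theorem \ref{thm:eigenvalue}, so there is no in-paper argument to compare yours against; what you have written is, in outline, the standard textbook proof of the quoted result. The doubling half of your argument is correct as written: the interior localization of the maximum points, the two viscosity tests producing $p_\varepsilon=(x_\varepsilon-y_\varepsilon)/\varepsilon\in D^{+}u_1(x_\varepsilon)\cap D^{-}u_2(y_\varepsilon)$, and the contradiction $\sigma\le\omega\big((1+|p_\varepsilon|)|x_\varepsilon-y_\varepsilon|\big)\to 0$ are all fine, and the reduction via $u_1^{\mu}=\mu u_1+(1-\mu)\varphi$ is the right idea, with the boundary inequality and the persistence of an interior positive maximum correctly checked.

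The one step you assert without justification is the passage from the hypothesis $H(x,D\varphi(x))<0$ in $\Omega$ --- for a merely continuous $\varphi$, hence a pointwise viscosity statement --- to the uniform bound ``$\le-\sigma$ on a compact neighbourhood $K$'' for the combination. Pointwise strictness does not upgrade to a uniform margin on compact subsets by a soft compactness argument, since $x\mapsto D^{+}\varphi(x)$ has neither locally bounded images nor a closed graph for a general continuous function; so this, together with the convex-combination property itself, is precisely the technical weight carried by the cited machinery in \cite{Bardi1997}, and deferring it is legitimate but it is the heart of the theorem rather than a routine auxiliary lemma (your displayed inequality ``$H(x,Du_1^{\mu})\le(1-\mu)H(x,D\varphi)$'' is also only formal, since $D\varphi$ is not a function). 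Your fallback remark is the relevant one for this paper: in the sole application here (proof of Theorem \ref{thm:eigenvalue}) the strict subsolution is Lipschitz with a uniform margin $H(x,D\varphi)\le c_\Omega<0$, and by convexity viscosity and a.e.\ subsolutions coincide for Lipschitz functions, so the convex combination is an a.e.\ (hence viscosity) subsolution with margin $(1-\mu)|c_\Omega|$ and your argument then closes without gaps.
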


\section*{Acknowledgments}
The author would like to express his appreciation to his advisor, Hung V. Tran for giving him this interesting problem and for his invaluable guidance and patience. The author also would live to thanks Hiroyoshi Mitake for useful discussion on the subject when he visited Madison in September 2019, and for many great comments including suggestions that lead the oscillating behavior of $r(\lambda)$ and Corollary \ref{cor:equala}. The author also would like to thanks Hitoshi Ishii for many useful suggestions, including Lemma \ref{thm:Ishii} and a different perspective on Theorem \ref{thm:general}. Finally, the author would like to thank Michel Alexis, Dohyun Kwon for useful discussion and Jingrui Cheng for pointing our mistakes in the earlier version.\\

%\textbf{Declarations of interest:} none.

%\bibliography{references.bib}{}
\bibliography{zzzzlibrary.bib}{}
\bibliographystyle{acm}

%\printbibliography

\end{document}